\documentclass[11pt]{article}

\usepackage[utf8]{inputenc}
\usepackage{lmodern}
\usepackage[T1]{fontenc}
\usepackage{amsmath}
\usepackage{amssymb}
\usepackage{amsthm}
\usepackage{enumitem}
\usepackage[pdfstartview=Fit, linktocpage]{hyperref}
\usepackage{url}		
\urlstyle{sf}

\usepackage{mathrsfs}								
\usepackage{bbm}

\numberwithin{equation}{section}
\usepackage[margin=3cm]{geometry}

%General definitions
\newcommand{\N}{\mathbb{N}}		%Natural numbers
		%Integer numbers
\newcommand{\Q}{\mathbb{Q}}		%Rational numbers
\newcommand{\R}{\mathbb{R}}		%Real numbers
		%Complex numbers
\newcommand{\expo}{\mathrm{e}}	%Euler's number

\def\D{{\mathcal D}}
\def\PP{\mathbb P}

\def\EE{\mathbb E}

\renewcommand{\k}{k}				%Variable curvature bound 
\newcommand{\kk}{{\underline{k}}}	
\newcommand{\kkk}{\ell}				%Lower, "regular" k
\newcommand{\kkkk}{{\underline{\ell}}}	%Lower, "regular" kk
\newcommand{\kkkkk}{{\overline{k}}}
\newcommand{\kkkkkk}{{\overline{\ell}}}

% Specific definitions
\newcommand{\RCD}{\mathrm{RCD}}		%RCD condition
\newcommand{\CD}{\mathrm{CD}}		%CD condition
\newcommand{\BE}{\mathrm{BE}}		%Bakry-Émery condition
\newcommand{\EVI}{\mathrm{EVI}}		%EVI condition
\newcommand{\GE}{\mathrm{GE}}  		%Gradient estimate
\newcommand{\PCP}{\mathrm{PCP}}   	%Pathwise Coupling property
 	%Differential transport estimate
	%Strong DTE
	%Weak DTE
\newcommand{\PTE}{\mathrm{TE}} 		%(Perturbed) transport estimate 
\newcommand{\B}{\mathsf{b}}			%Brownian motion
\newcommand{\X}{\B^x}				%Other Brownian motion 
\newcommand{\mms}{X}				%Metric measure space X
\newcommand{\meas}{\mathfrak{m}}	%Measure on X
\newcommand{\Ell}{L}				%Lebesgue space
\newcommand{\TestF}{\mathrm{TestF}}	%Test Functions
\newcommand{\Lip}{\mathrm{Lip}}		%Lipschitz constant
\newcommand{\bdd}{\mathrm{b}}		%Bounded
\newcommand{\loc}{\mathrm{loc}}		%Local
\newcommand{\eval}{\mathsf{e}}		%Evaluation map
\newcommand{\push}{\sharp}			%Push forward sign
\newcommand{\Sob}{W}				%Sobolev space
		%Sobolev class		
\newcommand{\Borel}{\mathscr{B}}	%Borel sigma-field
\newcommand{\W}{W}

\newcommand{\Cont}{\mathrm{C}}		%Continuous functions
\newcommand{\DELTA}{\mathbf{\Delta}}%Measure-valued Laplacian
\newcommand{\bs}{\mathrm{bs}}		%Bounded support

\newcommand{\green}{\mathrm{g}}		%Green's function

\newcommand{\Schr}{\mathsf{P}}		%Schrödinger semigroup
\newcommand{\ChHeat}{\mathsf{P}}	%Cheeger heat semigroup
\newcommand{\WHeat}{\mathsf{H}}		%Wasserstein heat semigroup

\newcommand{\Prob}{\mathscr{P}}		%Probability measures
\newcommand{\Leb}{\mathscr{L}}		%Lebesgue measure
		%Markov kernel on path space
		%Markov semigroup on path space
\newcommand{\Exp}{\mathbb E}		%Expectation
\newcommand{\One}{\mathbbm{1}}		%Indicator function

\newcommand{\Diff}{\mathrm{D}}		%Capital differential
\newcommand{\diff}{\mathrm{d}}		%Small differential
\renewcommand*\d{\mathop{}\!\mathrm{d}}	%Integral d

\newcommand{\PS}[1]{{\Pi_{#1}}}		%Path space

	%Measure restriction

\DeclareMathOperator{\met}{\mathsf{d}}	%Metric on X
\DeclareMathOperator{\Ent}{Ent}			%Entropy
\DeclareMathOperator{\supp}{supp}		%Support
\DeclareMathOperator{\Hess}{Hess}		%Hessian
\theoremstyle{definition}
\newtheorem{bump}{Bump}[section]

\newenvironment{remark}{\pushQED{\qed}\remarkx}{\popQED\endremarkx}

\theoremstyle{plain}
\newtheorem{thm}[bump]{Theorem}
\newtheorem{Prop}[bump]{Proposition}
\newtheorem{Def}[bump]{Definition}
\newtheorem{lma}[bump]{Lemma}
\newtheorem{cor}[bump]{Corollary}

\newtheoremstyle{cited}
	{\topsep}		%Space above
	{\topsep}		%Space below
	{\itshape}		%Body font
  	{}				%Indent amount
	{\bfseries}		%Theorem head font
	{\textbf{.}}	%Punctuation after Theorem head
	{.5em}			%Space after Theorem head
	{\thmname{#1} \thmnumber{#2} \thmnote{\normalfont#3}}		%Theorem head

\theoremstyle{cited}	%Umgebungen für Zitate (d.h. ohne runde Klammern)

\newtheorem{citedefinition}[bump]{Definition}

\newcommand{\Geo}{\mathrm{Geo}}
\newcommand{\G}{\mathrm{G}}
\newcommand{\Ric}{\mathrm{Ric}}

\newcommand{\Dom}{\mathrm{Dom}}

\newcommand{\E}{\mathscr{E}}
\newcommand{\lip}{\mathrm{lip}}

    \makeatletter
    \let\@fnsymbol\@arabic
    \makeatother

\makeatletter
\def\blfootnote{\xdef\@thefnmark{}\@footnotetext}
\makeatother

\title{Optimal transport, gradient estimates, and pathwise Brownian coupling on spaces with variable Ricci bounds}
\date{\today}
\author{Mathias Braun\footnote{\textsf{braun@iam.uni-bonn.de}.
  Funded by the European Union through the ERC-AdG ``RicciBounds''.}\and
Karen Habermann\footnote{\textsf{habermann@iam.uni-bonn.de}.
  Funded by the Deutsche Forschungsgemeinschaft (DFG, German Research
  Foundation) under Germany's Excellence Strategy - GZ 2047/1,
  Projekt-ID 390685813.}\and
  Karl-Theodor Sturm\footnote{\textsf{sturm@uni-bonn.de}. Funded by the
  Deutsche Forschungsgemeinschaft (DFG, German Research Foundation)
  under Germany's Excellence Strategy - GZ 2047/1, Projekt-ID
  390685813 as well as through the Collaborative Research Center 1060,
  and funded by the European Union through the ERC-AdG ``RicciBounds''.}}
  
\let\originalleft\left			
\let\originalright\right
\renewcommand{\left}{\mathopen{}\mathclose\bgroup\originalleft}
\renewcommand{\right}{\aftergroup\egroup\originalright}

\begin{document}
\blfootnote{{\it Key words and phrases.} Ricci curvature, Bakry--Émery
  estimate, gradient estimate, optimal transport, coupling.}
\blfootnote{University of Bonn, Institute for Applied Mathematics,
  Endenicher Allee 60, 53115 Bonn, Germany.}
\maketitle

\begin{abstract}
Given a metric measure space $(\mms,\met,\meas)$  and a lower semicontinuous, lower bounded function $k\colon \mms\to\R$, we prove the equivalence of the synthetic approaches to \emph{Ricci curvature at $x\in\mms$ being bounded from below by $k(x)$} in terms of
\begin{itemize}
\item the Bakry--Émery estimate $\Delta\Gamma(f)/2 - \Gamma(f,\Delta f) \geq  k\,\Gamma(f)$ in an appropriate weak formulation, and
\item the curvature-dimension condition $\CD(k,\infty)$ in the sense Lott--Sturm--Villani with variable $k$.
\end{itemize}
Moreover, for all $p\in(1,\infty)$, these properties 
hold if and only if the perturbed $p$-transport cost
\begin{equation*}
W_p^{\kk}(\mu_1,\mu_2,t):=\inf_{(\B^1,\B^2)}  \EE\Big[\expo^{\int_0^{2t} p \kk\left(\B^1_{r}, \B^2_{r}\right)/2\d r} \met^p\left(\B^1_{2t},\B^2_{2t} \right)\Big]^{1/p}
\end{equation*}
is nonincreasing in $t$.
The infimum here is taken over pairs of coupled Brownian motions $\B^1$ and $\B^2$ on $\mms$ with given initial distributions $\mu_1$ and $\mu_2$, respectively, and $\smash{\kk(x,y)} := \smash{\inf_\gamma \int_0^1 \k(\gamma_s)\d s}$ denotes the ``average'' of $\k$ along geodesics $\gamma$ connecting $x$ and $y$.

Furthermore, for any pair of initial distributions $\mu_1$ and $\mu_2$ on $X$, we prove the existence of a pair of coupled Brownian motions $\B^1$ and $\B^2$ such that a.s.~for every $s,t\in[0,\infty)$ with $s\leq t$, we have
\begin{equation*}
\met\left(\B_t^1,\B_t^2\right)\le
\expo^{-\int_s^t  \kk\left(\B_r^1,\B_r^2\right)/2\d r} \met\left(\B_s^1,\B_s^2\right).
\end{equation*}
\end{abstract}
        
\tableofcontents

\section{Introduction}
Throughout this paper, the triple $(\mms,\met,\meas)$ is a \emph{metric measure space}, that is, a complete and separable metric space $(\mms,\met)$ equipped with a locally finite measure $\meas$ defined on the Borel $\sigma$-field $\Borel(\mms)$, and $\k\colon\mms\to\R$ is a lower semicontinuous function which is bounded from below. We always assume that $(\mms,\met,\meas)$ is an $\RCD(K,\infty)$ space for some $K\in\R$.

Denote by $\Prob(\mms)$ the space of Borel probability measures on $(\mms,\met)$. For $p\in[1,\infty)$, $\Prob_p(\mms)$ is the set of $\mu\in\Prob(\mms)$ with $\smash{\int_\mms \met^p(x,y)\d\mu(y) < \infty}$ for some $x\in \mms$. As usual, $\W_p$ denotes the $p$-Kantorovich--Wasserstein distance defined through
\begin{equation*}
\W_p(\mu,\nu) := \inf_\pi  \Big(\int_{\mms\times\mms} \met^p(x,y)\d\pi(x,y)\Big)^{1/p},
\end{equation*}
where the infimum is taken over all $\pi\in\Prob(\mms\times\mms)$ with marginals $\mu$ and $\nu$. If it exists, the limit $\vert\dot{\gamma}_t\vert := \lim_{h\to 0} \met(\gamma_{t+h},\gamma_t)/\vert h\vert$ is called \emph{metric speed} of the curve $\gamma\in\Cont([0,1];\mms)$ at $t\in [0,1]$, and we write $\vert \dot{\gamma}\vert$ if $\vert\dot\gamma_t\vert = \vert\dot{\gamma}_s\vert$ for every $s,t\in[0,1]$. Moreover, $\Geo(\mms)$ denotes the space of
 \emph{geodesics} on $\mms$, i.e.~the set of $\gamma\in\Cont([0,1];\mms)$ with $\met(\gamma_t,\gamma_s) = \vert t-s\vert\,\met(\gamma_0,\gamma_1)$ for all $s,t\in [0,1]$.
Similarly, we define $\Geo(\Prob_p(\mms))$ as the space of $\W_p$-geodesics in the space of probability measures. We say that $\boldsymbol{\pi}\in\Prob(\Geo(\mms))$ \emph{represents the $\W_p$-geodesic $(\mu_t)_{t\in [0,1]}$} if $\mu_t = (\eval_t)_\push\boldsymbol{\pi}$ for all $t\in
[0,1]$, where $\eval_t\colon \Cont([0,1];\mms) \to \mms$ is the evaluation map defined by $\eval_t(\gamma) := \gamma_t$.
By \cite{lisini2007}, every $\W_p$-geodesic can be represented by some $\boldsymbol{\pi}\in\Prob(\Geo(\mms))$. 

We present various synthetic approaches to the definition of \emph{Ricci curvature at $x\in\mms$ bounded from below by $\k(x)$} and prove their equivalence. These characterizations are suitable extensions of the curvature-dimension condition, the evolution variational inequality, Bochner's inequality, gradient estimates and transport estimates to nonconstant curvature bounds. To this list, we add a description in terms of pathwise coupling of Brownian motions.  In total, our main result is the following.

\begin{thm}\label{Th:Huge thm} Let $(\mms,\met,\meas)$ be an $\RCD(K,\infty)$ space for some $K\in\R$, and let $\k\colon\mms\to\R$ be a lower semicontinuous, lower bounded function. For all exponents $p\in (1,\infty)$ and $q\in [1,\infty)$, the following properties are equivalent:
\begin{enumerate}[label=\textnormal{(\roman*)}]
\item the curvature-dimension condition $\CD(\k,\infty)$,
\item the evolution variational inequality $\EVI(\k)$,
\item the $q$-Bochner inequality $\BE_q(\k,\infty)$,
\item the $q$-gradient estimate $\GE_q(\k)$,
\item the $p$-transport estimate $\PTE_p(k)$, and
\item the pathwise coupling property $\PCP(k)$.
\end{enumerate}

Moreover, any of these properties yields \textnormal{(iii), (iv)} and \textnormal{(v)} for all exponents $p,q\in [1,\infty)$.
\end{thm}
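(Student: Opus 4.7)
The plan is to establish the six-way equivalence by a cyclic chain together with a side implication for the pathwise coupling:
\[
\text{(i)}\Rightarrow\text{(ii)}\Rightarrow\text{(iii)}\Rightarrow\text{(iv)}\Rightarrow\text{(v)}\Rightarrow\text{(i)},\qquad \text{(iv)}\Rightarrow\text{(vi)}\Rightarrow\text{(v)}.
\]
Since a background $\RCD(K,\infty)$ structure is assumed, the Ambrosio--Gigli--Savaré machinery (the heat flow as $\EVI(K)$-gradient flow of entropy, $\Gamma_2$-calculus on test functions, existence of an $\mms$-valued Brownian motion with law $\ChHeat_t\delta_x$) is available, and the real work is to upgrade each tool from the constant curvature $K$ to the variable function $\k$, tracking the geodesic average $\kk(x,y)=\inf_\gamma\int_0^1\k(\gamma_s)\,\d s$ at every step.

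For the classical block $\text{(i)}$--$\text{(v)}$ I would proceed as follows. The implication $\CD(\k,\infty)\Rightarrow \EVI(\k)$ adapts the AGS argument by reading the $\k$-weighted entropy convexity along a $\W_2$-geodesic $(\mu_t)_{t\in[0,1]}$ against the lifted distortion $\smash{\int_0^1\int_\mms\k(\gamma_s)\,s(1-s)\,\d\boldsymbol{\pi}(\gamma)\,\d s}$; lower semicontinuity of $\k$ and compactness of optimal dynamic plans $\boldsymbol{\pi}$ make this functional stable under the weak convergences required. For $\EVI(\k)\Rightarrow \BE_2(\k,\infty)$ I would use Kuwada's duality: evaluating $\EVI(\k)$ along a Wasserstein geodesic jointly with its heat regularization produces, through Grönwall, the exponential factor $\expo^{-\int_0^t\k/2}$. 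The equivalence $\BE_2(\k,\infty)\Leftrightarrow \GE_2(\k)$ is differential; at $t=0$ one recovers Bochner's inequality, while integrating $s\mapsto\ChHeat_s(\Gamma(\ChHeat_{t-s}f))$ with a variable-coefficient Grönwall yields the exponential semigroup estimate. Self-improvement between exponents $q\in[1,\infty)$ for $\BE_q$ and $\GE_q$, and between exponents $p\in(1,\infty)$ for $\PTE_p$, is carried out in Savaré's style by testing against powers $\varphi_q(\Gamma(f))$: the $\k$-dependent term is linear in $\Gamma(f)$ and hence propagates unchanged. To close the cycle, $\PTE_p(\k)\Rightarrow \CD(\k,\infty)$ is the Kuwada recovery of entropy convexity from heat-flow contraction, where the $\kk$-weighted exponent along the $\W_2$-geodesic between endpoints produces the variable distortion after a Fatou argument based on the lower semicontinuity of $\kk$.

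The genuinely new ingredient is the pathwise coupling property $\PCP(\k)$. My plan for $\GE_2(\k)\Rightarrow\PCP(\k)$ is a discretization--compactness argument on a fixed horizon $[0,T]$: at dyadic times $t_i=iT/2^n$ I build, using the transport estimate, a one-step coupling $(\B_{t_i}^{1,n},\B_{t_i}^{2,n})\mapsto(\B_{t_{i+1}}^{1,n},\B_{t_{i+1}}^{2,n})$ that nearly achieves the $\expo^{\int p\kk/2}\met^p$-cost, paste these into a Markovian coupling on $[0,T]$, and extract a weak limit in $\Prob(\Cont([0,T];\mms)^2)$. Tightness follows from $\RCD(K,\infty)$ heat-kernel regularity via a Kolmogorov criterion, and identification of the limiting marginals as Brownian motions is via the martingale problem for $\Delta$. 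The a.s.~pathwise estimate then arises from passing to the limit in the discrete contractions combined with a Borel--Cantelli argument over the dyadic grid. The converse $\PCP(\k)\Rightarrow\PTE_p(\k)$ is immediate: integrating the pathwise inequality against the law of the coupling and applying Jensen yields monotonicity of $W_p^\kk$ for \emph{every} $p\in[1,\infty)$, which also accounts for the $p$-independence stated in the last clause of the theorem.

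The main obstacle will be this last step. First, one must verify that the limit coupling has the correct Brownian marginals; this needs tightness in path space together with identification via the martingale problem, and care is required because each dyadic step is only approximately optimal. Second, the $\kk$-weighted cost is only lower semicontinuous on $\mms\times\mms$ and the integrated exponent $r\mapsto\int_s^t\kk(\B_r^1,\B_r^2)\,\d r$ is only lower semicontinuous along weakly converging paths; the geodesic definition of $\kk$, together with stability of optimal $\W_2$-geodesics in $\RCD$ spaces and the lower semicontinuity of $\k$, is exactly what is needed to run Fatou in the limit. Once these two technical points are secured, all six conditions collapse to a single synthetic notion of variable Ricci bound.
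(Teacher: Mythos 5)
Your high-level chain is a reasonable reshuffling of the paper's architecture, and the ideas for the classical block $\text{(i)}$--$\text{(v)}$ are in the right spirit (even if the paper runs $\CD\Rightarrow\EVI\Rightarrow\text{(DTE}_2)\Rightarrow\PTE_2\Rightarrow\GE_2\Rightarrow\BE_2$ rather than a direct Gr\"onwall from $\EVI$ to $\BE_2$, and the converse $\GE_2\Rightarrow\CD$ is a genuinely delicate localization with Gaussian tail estimates for the heat flow -- see Lemmas~\ref{lma-trunc} and \ref{tail estimate} and Theorem~\ref{Thm:GE2 to CD} -- which your sketch glosses over). However, there is a real gap in your side implication $\GE_2(\k)\Rightarrow\PCP(\k)$. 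Kuwada duality from $\GE_2$ gives you only $\PTE_2$, hence at each dyadic step a coupling controlling the $L^2$-moment of $\expo^{\int\kk/2}\met$. To upgrade this to an almost-sure pathwise inequality you must send the transport exponent $p\to\infty$, which forces you to have $\PTE_p$ for all large $p$ and hence (by Kuwada duality) $\GE_q$ for $q$ near $1$. But $\GE_2$ does not imply $\GE_1$ by itself: the nestedness Lemma~\ref{hierarchy} only goes the wrong way ($\GE_q\Rightarrow\GE_{q'}$ for $q'\ge q$). So you must first prove the $q$-independence of the Bakry--\'Emery/gradient-estimate hierarchy (Theorem~\ref{Th:q independence}) \emph{before} attempting the pathwise coupling. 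This is exactly where the paper invests effort: going from $\BE_q$ with $q>2$ down to $\BE_2$ is not a Savar\'e-style self-improvement (that direction handles $q\le 2$ only), but an iterative fixed-point argument adapted from Han, which your ``testing against powers $\varphi_q(\Gamma(f))$'' description does not cover. Your phrase ``the $\k$-dependent term is linear in $\Gamma(f)$'' is also imprecise: the term is $\k\,\Gamma(f)^{q/2}$, and the point is rather that the chain rule for the measure-valued Laplacian transforms it correctly.

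A second, smaller issue concerns your identification of the limit coupling's marginals via ``the martingale problem for $\Delta$.'' In a general $\RCD(K,\infty)$ space this characterization is not readily available; the paper sidesteps it entirely by building the dyadic couplings from genuine Brownian laws using bridge measures and a measurable selection, so that the marginals are Brownian by construction and are preserved under the weak limit because the set of such marginals is closed. Your lower-semicontinuity/Fatou concerns in the last paragraph are valid and are indeed handled by approximating $\kkkkk$ from below by Lipschitz functions and using the weak lower semicontinuity of $\gamma\mapsto\expo^{\int p\kkkkk(\gamma^1_r,\gamma^2_r)/2\,\d r}\met^p(\gamma^1_{2t},\gamma^2_{2t})$, but the core missing step is the $q$-independence that licenses $p\to\infty$.
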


Let us now introduce each of these extensions and give an overview of the organization of our reasoning. Throughout, we assume the reader to be familiar with the theory of $\RCD(K,\infty)$ spaces and basic properties of these. An account on this will be collected in Section \ref{Ch:Preliminaries} which can be read independently of the rest of this paper. 

\subsection{Lagrangian formulation of synthetic variable Ricci bounds}

Here and in the sequel, $\green(s,t) := \min\{s(1-t),t(1-s)\}$ denotes the Green's function of the unit interval $[0,1]$. Define the \emph{Boltzmann entropy} $\Ent_\meas \colon \Prob(\mms) \to [-\infty,\infty]$ as
\begin{equation*}
\Ent_\meas(\mu) := \int_\mms\rho\log\rho\d\meas\quad \text{if }\mu \ll \meas\text{ with }\mu = \rho\ \!\meas,\quad\Ent_\meas(\mu) := \infty\quad\text{otherwise}.
\end{equation*}
We put $\Dom(\Ent_\meas) := \{\mu\in\Prob(\mms): \Ent_\meas(\mu)\in\R\}$.

\begin{citedefinition}[{\cite[Definition 3.2]{sturm2015}}]\label{CD-def} A metric measure
space $(\mms,\met,\meas)$ is said to satisfy the \emph{curvature-dimension
condition}  with variable curvature bound $\k$, briefly $\CD(\k,\infty)$, if for every
$\mu_0,\mu_1\in\Prob_2(\mms)\cap\Dom(\Ent_\meas)$ there exists a
measure $\boldsymbol{\pi}\in\Prob(\Geo(\mms))$ representing some 
$\W_2$-geodesic $(\mu_t)_{t\in [0,1]}$ connecting $\mu_0$ and $\mu_1$
 such that, for all $t\in
[0,1]$,
\begin{equation*}
\Ent_\meas(\mu_t) \leq (1-t)\Ent_\meas(\mu_0) + t\Ent_\meas(\mu_1) - \int_0^1\int_{\Geo(\mms)} \green(s,t)\,\k(\gamma_s)\,\vert\dot\gamma\vert^2\d\boldsymbol{\pi}(\gamma)\d s.
\end{equation*}
\end{citedefinition}

\begin{citedefinition}[{\cite[Definition 3.3]{sturm2015}}] \label{Def:EVI}
A metric measure
space $(\mms,\met,\meas)$ is said
to satisfy the \emph{evolution variational inequality} with
variable curvature bound $\k$, briefly $\EVI(\k)$, if for
every $\mu_0\in\Prob_2(\mms)$ there exists a locally absolutely continuous 
curve $(\mu_t)_{t>
0}$ in $\Dom(\Ent_\meas)$ with $\W_2(\mu_t,\mu_0) \to 0$ as $t\to 0$,
and for every $t>0$ and $\nu\in\Prob_2(\mms)$ there exists a
measure $\boldsymbol{\pi}_t \in \Prob(\Geo(\mms))$ representing some $\W_2$-geodesic connecting
$\mu_t$ and $\nu$
such that
\begin{equation*}
\frac{\diff^+}{\diff t} \frac{1}{2}\W_2^2(\mu_t,\nu) + \int_0^1\int_{\Geo(\mms)} (1-s)\,\k(\gamma_s)\,\vert\dot\gamma\vert^2\d\boldsymbol{\pi}_t(\gamma)\d s \leq \Ent_\meas(\nu) - \Ent_\meas(\mu_t).
\end{equation*}
\end{citedefinition}

From \cite[Theorem 3.4]{sturm2015}, it is already known that $\CD(\k,\infty)$ is equivalent to $\EVI(\k)$, which establishes the equivalence of (i) and (ii) in Theorem \ref{Th:Huge thm}.

\subsection{Eulerian formulation of synthetic variable Ricci bounds}
Let us now switch to the Eulerian picture which, to shorten the presentation, is directly presented for arbitrary exponents. Define the \emph{Cheeger energy} $\E\colon \Ell^2(\mms,\meas)\to[0,\infty]$ as
\begin{equation*}
\E(f) := \inf\!\Big\lbrace \liminf_{n \to \infty} \int_\mms\lip(f_n)^2\d\meas : f_n \in \Lip_\bdd(\mms),\ \! f_n \to f \text{ in }\Ell^2(\mms,\meas)\Big\rbrace,
\end{equation*}
where $\lip(f)(x) := \limsup_{y\to x} \vert f(x) - f(y)\vert/\!\met(x,y)$ denotes the \emph{local Lipschitz slope} at $x\in\mms$. We put $\Dom(\E) := \smash{\big\lbrace f\in\Ell^2(\mms,\meas) : \E(f) < \infty\big\rbrace}$.

\begin{Def} Given $q\in[1,\infty)$, we say that $(\mms,\met,\meas)$ satisfies the $q$-\emph{Bochner inequality} or $q$-\emph{Bakry--Émery estimate} with variable curvature bound $\k$, briefly  $\BE_q(k,\infty)$, if
\begin{equation*}
\int_\mms \Big(\frac{1}{q}\Gamma(f)^{q/2}\,\Delta\phi - \Gamma(f)^{q/2-1}\,\Gamma(f,\Delta f)\,\phi\Big)\d\meas \geq \int_\mms k\,\Gamma(f)^{q/2}\,\phi\d\meas
\end{equation*}
holds for all $f\in\Dom(\Delta)$ with $\Delta f \in \Dom(\E)$ as well as $\Gamma(f) \in\Ell^\infty(\mms,\meas)$ and for every nonnegative $\phi \in \Dom(\Delta)\cap \Ell^\infty(\mms,\meas)$ with $\Delta\phi\in\Ell^\infty(\mms,\meas)$.
\end{Def}

The equivalence of (i) and (iii) for $q=2$ in our major Theorem \ref{Th:Huge thm} above states that the variable Eulerian and Lagrangian approaches to synthetic lower Ricci bounds coincide, i.e.~$\CD(\k,\infty)$ is equivalent to $\BE_2(\k,\infty)$. If $\k$ is constant, this has been proved by Ambrosio, Gigli and Savaré in their groundbreaking work \cite{ambrosio2015}. In the nonconstant case, this remained open in previous contributions \cite{ketterer2015, ketterer2017, sturm2015}. 

The implication from $\BE_2(\k,\infty)$ to $\CD(\k,\infty)$ follows from Theorem \ref{Th:BEq equiv GEq} and Theorem \ref{Thm:GE2 to CD}.
The proof of the converse is a consequence of Proposition \ref{Pr:EVI implies DTE}, Theorem \ref{thm:nonincreasing}, Theorem \ref{Th: Kuwada TEp to GEq} and eventually Theorem \ref{Th:BEq equiv GEq}. This requires a detailed heat flow analysis, both at the level of functions and measures, 
and in particular an extension of Kuwada's duality \cite[Theorem 2.2]{kuwada2010} between $q$-gradient estimates and $p$-transport estimates for dual $p$ and $q$. This is quite demanding -- indeed, until now not even a formulation of an appropriate $p$-transport estimate with nonconstant curvature bound existed.

The ``self-improvement property'' of the $q$-Bochner inequality will be another key result. Indeed, the $\BE_q(k,\infty)$ condition is \emph{independent} of $q$, see Theorem \ref{Th:q independence}, which provides the equivalence of (i) and (iii) in Theorem \ref{Th:Huge thm} for general $q$.

\subsection{Improved gradient estimates}

Following \cite{sturm2015}, let $\smash{(\Schr^{qk}_t)_{t\geq 0}}$ be the Schrödinger semigroup on $\Ell^2(\mms,\meas)$ associated to the generator $\Delta - qk$ for $q\in [1,\infty)$.
It extends to a strongly continuous semigroup on $\Ell^r(\mms,\meas)$ for each $r\in[1,\infty)$.
In terms of the Brownian motion $({\PP}_x, \B)$ on $\mms$ starting in $x\in\mms$, it can be expressed through the Feynman--Kac formula
\begin{equation}\label{Feynman-Kac}
\Schr^{qk}_t f(x) = \Exp_x\Big[\expo^{-\int_0^{2t}qk(\B_{r})/2\d r}\, f(\B_{2t})\Big]\quad\text{for every }f\in \Ell^r(\mms,\meas).
\end{equation}

\begin{Def} We say that a \emph{$q$-gradient estimate} with variable curvature bound $\k$, briefly $\GE_q(\k)$, holds whenever
\begin{equation*}
\Gamma(\ChHeat_t f)^{q/2} \leq \Schr_t^{qk}\big(\Gamma(f)^{q/2}\big)\quad\meas\text{-a.e.}
\end{equation*}
is satisfied for every $f\in\Dom(\E)$ and every $t\geq 0$.
\end{Def}

Adapting the well-known arguments for constant Ricci curvature bounds from \cite{bakry1985, savare2014}, we establish, as stated in Theorem \ref{Th:BEq equiv GEq}, that $\BE_q(\k,\infty)$ holds if and only if $\GE_q(\k)$ is satisfied. This yields the equivalence of (iii) and (iv) in Theorem \ref{Th:Huge thm} for general $q\in[1,\infty)$.

\subsection{Variable transport estimates}\label{Transport def}

In order to formulate a dual $p$-transport estimate for $p\in[1,\infty)$, we consider evolutions on the product space $\mms\times\mms$. Denoting by $\G_\varepsilon(x,y)$ the set of $\gamma\in\Geo(\mms)$ with $\gamma_0 \in \overline{B}_\varepsilon(x)$ and $\gamma_1\in\overline{B}_\varepsilon(y)$, we introduce the function $\smash{\kk\colon\mms\times\mms\to\R}$ defined by
\begin{equation}\label{k hut}
\kk(x,y) := \lim_{\varepsilon\to 0}\, \inf_{\gamma\in \G_\varepsilon(x,y)}\, \int_0^1 \k(\gamma_s)\d s.
\end{equation}
Its basic properties are summarized in Section \ref{Ch:Preliminaries}. As we will see in Remark \ref{sup improvement}, Theorem \ref{pte to pcp} and Theorem \ref{propn:transport2gradient 1}, it turns out that $\kk$ can indeed equivalently be replaced in all relevant quantities by the larger function $\smash{\kkkkk\colon\mms\times\mms\to\R}$ defined by
\begin{equation}\label{kkkkk def}
\kkkkk(x,y) := \liminf_{(x_n,y_n)\to(x,y)}\, \sup_{\gamma\in\G_0(x_n,y_n)}\,\int_0^1\k(\gamma_s)\d s.
\end{equation}

Given $\mu_1,\mu_2\in\Prob_p(\mms)$, we define the
\emph{perturbed $p$-transport cost} at time $t\geq 0$ by
\begin{equation*}
W_p^{\kk}(\mu_1,\mu_2,t):=\inf_{(\PP, \B^1,\B^2)} \EE\Big[\expo^{\int_0^{2t} p \kk\left(\B^1_{r}, \B^2_{r}\right)/2\d r} \met^p\!\big(\B^1_{2t},\B^2_{2t} \big)\Big]^{1/p},
\end{equation*} 
where the infimum is taken over all pairs of coupled Brownian motions $\smash{\big(\PP,\B^1\big)}$ and $\smash{\big(\PP,\B^2\big)}$ on $\mms$, restricted to $[0,2t]$ and modeled on a common probability space, with initial distributions $\mu_1$ and $\mu_2$, respectively. Note that $\smash{W_p^{\kk}(\mu_1,\mu_2,0)= W_p(\mu_1,\mu_2)}$ and that for general $t\geq 0$, if $\k$ is constant, say $\k = K$, the perturbed $p$-transport cost can be expressed in terms of the usual $p$-transport cost via
\begin{equation*}
\smash{W_p^{\kk}(\mu_1,\mu_2,t)=\expo^{Kt}\,W_p(\WHeat_t\mu_1,\WHeat_t\mu_2)}.
\end{equation*}

\begin{Def} Given any $p\in[1,\infty)$, we say that a \emph{$p$-transport estimate} with variable curvature bound $\k$, briefly $\PTE_p(\k)$, holds if the map $\smash{t\mapsto \W_p^\kk(\mu_1,\mu_2,t)}$ is nonincreasing on $[0,\infty)$ for every pair $\mu_1,\mu_2\in\Prob_p(\mms)$.
\end{Def}

Having at our disposal appropriate replacements for the expressions
$\smash{\expo^{-qKt}\,\ChHeat_t\big(\Gamma(f)^{q/2}\big)}$ and $\smash{\expo^{Kt}\, W_p(\WHeat_t\mu_1,\WHeat_t\mu_2)}$ in terms of Feynman--Kac formulas with potentials $q\k$ for the Brownian motion on $\mms$ and $-p\kk$ for 
pairs of coupled Brownian motions on $\mms\times \mms$, respectively,
we are in a position to formulate and prove a generalization of the fundamental Kuwada duality  in the case of nonconstant $\k$. This addresses the equivalence of (iv) and (v) in Theorem \ref{Th:Huge thm}.

\begin{thm}\label{gra-tra} For every $p,q\in(1,\infty)$ with $1/p+1/q=1$, the following are equivalent:
\begin{enumerate}[label=\textnormal{(\roman*)}]\setcounter{enumi}{3}
\item the $q$-gradient estimate $\GE_q(k)$, and
\item the $p$-transport estimate $\PTE_p(k)$.
\end{enumerate}
\end{thm}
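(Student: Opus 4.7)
The plan is to adapt Kuwada's classical duality \cite{kuwada2010} between gradient and transport estimates to the variable-$\k$ setting. The bridge is encoded in the definitions above: on the function side, the Schrödinger semigroup $\Schr_t^{q\k}$ appearing in $\GE_q(\k)$ admits the Feynman--Kac representation \eqref{Feynman-Kac} along a single Brownian motion on $\mms$; on the measure side, the perturbed $p$-transport cost $W_p^{\kk}(\mu_1,\mu_2,t)$ in $\PTE_p(\k)$ is an infimum over couplings of pairs of Brownian motions on $\mms\times\mms$ with a multiplicative running weight. The task is then to match these two pictures via Kantorovich duality between $\Ell^q$ gradients and $\Ell^p$ transport.

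For the implication $\GE_q(\k)\Rightarrow\PTE_p(\k)$, I would fix $\mu_1,\mu_2\in\Prob_p(\mms)$ and $t\geq0$, and seek a coupling of Brownian motions witnessing the bound. Testing the dual Kantorovich problem for $W_p(\WHeat_t\mu_1,\WHeat_t\mu_2)$ against a bounded Lipschitz potential $\phi$ reduces the question to controlling integrals of $\ChHeat_{2t}\phi$ against $\mu_1-\mu_2$. Applying $\GE_q(\k)$ pointwise then yields $\Gamma(\ChHeat_{2t}\phi)^{q/2}\leq\Schr_{2t}^{q\k}(\Gamma(\phi)^{q/2})$. The key step is to translate this pointwise bound into a pathwise one: integrate the gradient of $\ChHeat_{2t}\phi$ along geodesics in $\mms$, apply Hölder's inequality with conjugate exponents $p,q$, and express the resulting Feynman--Kac weight in the coupled form $\expo^{\int_0^{2t}p\kk(\B^1_r,\B^2_r)/2\d r}$ by constructing $(\B^1,\B^2)$ as a weak limit of local optimal couplings along a time discretization $0=t_0<\dots<t_N=2t$, using tightness in $\Prob(\mms\times\mms)$ and the Markov property.

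For the reverse direction $\PTE_p(\k)\Rightarrow\GE_q(\k)$, take $f\in\Dom(\E)$ bounded Lipschitz with $\Gamma(f)\in\Ell^\infty(\mms,\meas)$, and fix $x,y\in\mms$. The transport estimate applied to $\mu_1=\delta_x,\mu_2=\delta_y$ produces a coupling $(\B^1,\B^2)$ with
\[\EE\bigl[\expo^{\int_0^{2t}p\kk(\B^1_r,\B^2_r)/2\d r}\met^p(\B^1_{2t},\B^2_{2t})\bigr]^{1/p}\leq\met(x,y).\]
Writing $\ChHeat_t f(x)-\ChHeat_t f(y)=\EE[f(\B^1_{2t})-f(\B^2_{2t})]$ and bounding $|f(\B^1_{2t})-f(\B^2_{2t})|$ by the integral of $\Gamma(f)^{1/2}$ along a geodesic joining $\B^1_{2t}$ and $\B^2_{2t}$, one inserts a cancelling Feynman--Kac factor and applies Hölder's inequality with exponents $p,q$ to split the expectation into a $p$-factor (bounded by $\met(x,y)$ via $\PTE_p(\k)$) and a Feynman--Kac-weighted $q$-factor in $\Gamma(f)^{q/2}$. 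Letting $y\to x$ collapses $\kk(\B^1_r,\B^2_r)$ to $\k(\B^1_r)$, so the $q$-factor converges to $\Schr_t^{q\k}(\Gamma(f)^{q/2})(x)$ by \eqref{Feynman-Kac}, yielding $\GE_q(\k)$ in local-Lipschitz form, and then the $\Ell^2$ version by density.

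The principal obstacle is the construction of the limit coupling in the forward direction: since the Feynman--Kac weight depends on the trajectories of $(\B^1,\B^2)$ themselves, the infimum defining $W_p^{\kk}$ is not a standard linear transport problem and classical Kantorovich duality does not apply directly. The iterative time-partition construction requires lower semicontinuity of the path functional $\gamma\mapsto\int_0^{2t}\kk(\gamma^1_r,\gamma^2_r)\d r$ under weak convergence of couplings, which is delicate since $\kk$ is merely lower semicontinuous. A further subtlety lies in the interplay between $\kk$ and $\kkkkk$ from \eqref{kkkkk def}: the pathwise estimates naturally involve $\kkkkk$, and matching them with $\kk$ relies on the robustness promised in Remark~\ref{sup improvement}.
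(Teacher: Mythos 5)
Your proposal correctly identifies the high-level structure — adapt Kuwada's duality, use the Feynman--Kac representation on the function side and coupled Brownian motions on the measure side, and apply Hölder with exponents $p,q$ to pass between them. However, both directions have genuine gaps compared to what actually works.

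For $\GE_q(\k)\Rightarrow\PTE_p(\k)$, you propose to construct a coupling $(\B^1,\B^2)$ directly as a weak limit of local optimal couplings along a time partition, noting yourself that the lower semicontinuity of the path functional under weak convergence is delicate. This is precisely the obstruction, and the paper avoids it: rather than exhibiting a near-minimizing coupling for the perturbed cost, it establishes the \emph{differential} $p$-transport estimate at $t=0$ (namely $\frac{\diff^+}{\diff t}\big\vert_{t=0}\W_p^p(\WHeat_t\delta_x,\WHeat_t\delta_y)\leq -p\,\kk(x,y)\met^p(x,y)$) and then invokes Theorem \ref{thm:nonincreasing}, whose Gronwall-type integration in $t$ rests on a Markov property for perturbed costs (Proposition \ref{markov}), not on constructing a global-in-time coupling. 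The differential estimate itself is obtained via a ``vertical'' metric $\met_{p,\k,t}^0$ built from the Hopf--Lax semigroup (Lemma \ref{Le:Curve inequality}), combined with a localization to balls where $\k$ is approximately constant (Lemma \ref{Le:Infinitesimal estimate}) and a patching argument along a covering of a connecting geodesic (Theorem \ref{Th: Kuwada GEq to TEp}). Your sketch omits all of this localization machinery, without which the Feynman--Kac potential $q\k$ along a single Brownian path cannot be matched with the averaged quantity $\kk$ along two coupled paths.

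For $\PTE_p(\k)\Rightarrow\GE_q(\k)$, the step ``letting $y\to x$ collapses $\kk(\B^1_r,\B^2_r)$ to $\k(\B^1_r)$'' does not hold as stated: $\kk$ is merely lower semicontinuous, and even with $x=y$ the two coupled Brownian paths separate instantly, so $\kk(\B^1_r,\B^2_r)$ need not converge to $\k(\B^1_r)$ in any useful sense. The paper again deals with this by localizing to a small ball $B_\delta(z)$ and small time $t$ on which a Lipschitz minorant $\kkk$ of $\k$ is nearly constant (Proposition \ref{Pr:Local duality} plus Lemma \ref{Le:Infinitesimal estimate}). The resulting estimate only holds for $t\leq t_*$, so it does \emph{not} directly yield $\GE_q(\k)$; instead it is differentiated at $t=0$ to produce the \emph{local} Bochner inequality $\BE_{q,\loc}(\k,\infty)$ (Definition \ref{Def:Local BE}), which requires the nontrivial local-to-global result (Theorem \ref{Thm:BE local global}) before one can pass back to the global $\GE_q(\k)$ via Theorem \ref{Th:BEq equiv GEq}. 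Your density-argument closing step cannot substitute for this local-to-global passage, because the obstruction is in the time variable and the spatial support, not in the regularity class of $f$.
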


This result is a consequence of Theorem \ref{Th: Kuwada GEq to TEp} and Theorem \ref{Th: Kuwada TEp to GEq}. For both results, it is crucial to use a localization argument in regions where $\k$ or $\kk$ are ``approximately constant'' and then use tail estimates for Brownian paths to control the remainder terms. 

Suitable extensions to the case $q=1$ and $p=\infty$ will be discussed, and eventually shown to be equivalent, in Theorem \ref{Th:Vertical Wasserstein contraction}, Theorem \ref{propn:transport2gradient 1} and Theorem \ref{pte to pcp}. Therefore, making sense of an appropriate $\PTE_p(\k)$ condition for $p= \infty$ is the content of the subsequent Section \ref{Sec:PW coupling}.

\begin{remark}\label{Re:WDTE} It is often convenient to use the characterization of $\PTE_p(\k)$, which is zeroth-order in nature, through a first-order condition via the \emph{differential $p$-transport inequality}
\begin{equation*}
\frac{\diff^+}{\diff t}\bigg\vert_{t=0} \W_p^p(\WHeat_t\delta_x,\WHeat_t\delta_y) \leq - p\,\kk(x,y)\met^p(x,y)\quad\text{for every }x,y\in\mms,
\end{equation*}
very much in the spirit of the connection between $\BE_q(\k,\infty)$ and $\GE_q(\k)$. The equivalence of $\PTE_p(\k)$ and the foregoing estimate, which for constant $\k$ is essentially Gronwall's lemma and a standard coupling technique, is treated in Corollary \ref{Cor:TE ==> WDTE}. 

A posteriori, for every $p\in (1,\infty)$, any of the conditions (i) to (vi) from Theorem \ref{Th:Huge thm} will indeed give the much stronger estimate
\begin{equation*}
\frac{\diff^+}{\diff t}\W_p^p(\WHeat_t\mu_1,\WHeat_t\mu_2) \leq -p\int_0^1\int_{\Geo(\mms)} \k(\gamma_s)\,\vert\dot\gamma\vert^p\d\boldsymbol{\pi}_t(\gamma)\d s\quad\text{for every }t\geq 0,
\end{equation*}
where $\mu_1,\mu_2\in\Prob(\mms)$ have finite $\W_p$-distance to each other, and $\boldsymbol{\pi}_t\in\Prob(\Geo(\mms))$ is an \emph{arbitrary} measure representing a $\W_p$-geodesic from $\WHeat_t\mu_1$ to $\WHeat_t\mu_2$, see Corollary \ref{Cor:Derivative of W_p}.
\end{remark}

\subsection{Pathwise coupling of Brownian motions}\label{Sec:PW coupling}
Finally, we reinforce the $p$-transport estimate by passing to the limit $p\to\infty$ and by replacing the mean value estimates by a pathwise one.

\begin{Def}\label{Def:PCP} We say that the \emph{pathwise coupling property} with variable curvature bound $\k$, briefly $\PCP(\k)$, holds if for every pair $\mu_1,\mu_2\in\Prob(\mms)$ there exists a pair $\smash{\big(\PP,\B^1\big)}$ and $\smash{\big(\PP,\B^2\big)}$ of coupled Brownian motions on $\mms$ with initial distributions $\mu_1$ and $\mu_2$, respectively, such that $\PP$-a.s., we have
\begin{equation*}
\met\!\big(\B_t^1,\B_t^2\big)\le
\expo^{-\int_s^t \kk\left(\B_r^1,\B_r^2\right)/2\d r} \met\!\big(\B_s^1,\B_s^2\big)\quad\text{for every }s,t\in [0,\infty)\text{ with }s\leq t.
\end{equation*}
\end{Def}

It is proved in \cite[Theorem 4.1]{arnaudon2011} that complete Riemannian manifolds with Ricci curvature bounded from below by $K\in\R$ satisfy $\PCP(\k)$ with constant $\k = K$. The work \cite[Theorem 2.9]{sturm2015} extended this to general $\RCD(K,\infty)$ spaces. A first result into the nonconstant direction is due to \cite[Theorem 6]{veysseire2011}. Again on Riemannian manifolds with a uniform lower bound on the Ricci curvature, it deduces the existence of a pair $\smash{\big(\B^1, \B^2\big)}$ of coupled Brownian motions starting in $(x,y)$ obeying for every $t\geq 0$, on the event that $\smash{\big(\B_r^1,\B_r^2\big)}$ does not belong to the cut-locus of $\mms$ for all $r\in [0,t]$, the estimate
\begin{equation*}
\met\!\big(\B_t^1,\B_t^2\big) \leq \expo^{-\int_0^t\kappa\big(\B_r^1,\B_r^2\big)/2\d r}\met(x,y),
\end{equation*}
where $\smash{\kappa(x,y) := - \frac{\diff^+}{\diff t}\big\vert_{t=0}\log\W_1(\WHeat_t\delta_x,\WHeat_t\delta_y)}$ denotes the \emph{coarse curvature} at $x,y\in\mms$, $x\neq y$. For $x,y$ close to each other, say $y=\exp_x(\varepsilon v)$ with $\varepsilon> 0$, $v\in T_x\mms$, we have 
\begin{equation*}
\kappa(x,y) = \Ric_x(v,v)+ \mathrm{o}(1),
\end{equation*}
see \cite[Theorem 19 and Remark 20]{veysseire2011}.
The construction of this process deeply relies on smooth calculus tools, which are unavailable in our setting and thus cannot be adopted. 
%Notice by Remark \ref{Re:WDTE} that if $\PTE_p(\k)$ holds for some $p \in [1,\infty)$, then $\kk(x,y) \leq \kappa(x,y)$. 

Our main theorem extends these results in terms of $\kk$ and circumvents regularity issues involving the variable curvature bound. The existence of a process satisfying the $\PCP(\k)$ condition is even equivalent to $\CD(\k,\infty)$. Indeed, given $\PTE_p(\k)$ for every large enough $p\in (1,\infty)$, we deduce $\PCP(\k)$ by means of Theorem \ref{pte to pcp}, the content of which is the implication from (v) to (vi) in Theorem \ref{Th:Huge thm}. Note that according to the previous Theorem \ref{gra-tra} and nestedness of $q$-gradient estimates, see Lemma \ref{hierarchy}, the $1$-gradient estimate $\GE_1(\k)$ implies $\PTE_p(\k)$ for all $p\in (1,\infty)$ and thus $\PCP(\k)$. The converse of this, i.e.~the implication from $\PCP(\k)$ to $\GE_1(\k)$, is addressed in Theorem \ref{propn:transport2gradient 1}.

\paragraph*{Acknowledgments}  The authors warmly thank Matthias Erbar for a number of fruitful and enlightening discussions.

\section{Preliminaries}\label{Ch:Preliminaries}
\paragraph*{Notations} We write $\Cont(\mms)$ and $\Lip(\mms)$ for the spaces of continuous and Lipschitz functions $f\colon \mms \to \R$, respectively. We set $\Lip(f) := \sup_{x\neq y}\vert f(x) - f(y)\vert/\!\met(x,y)$ for $f\in\Lip(\mms)$. The space of \emph{bounded} continuous functions on $\mms$ is denoted by $\Cont_\bdd(\mms)$, and the space of functions in $\Cont(\mms)$ with \emph{bounded support} is called $\Cont_\bs(\mms)$, and similarly for $\Lip_\bdd(\mms)$ and $\Lip_\bs(\mms)$. 

\paragraph*{The Riemannian curvature-dimension condition} We say that the metric measure space $(\mms,\met,\meas)$ is \emph{infinitesimally Hilbertian} if the Cheeger energy $\E$ is a quadratic form (in other words, if it satisfies the parallelogram identity). Furthermore, we say that $(\mms,\met,\meas)$ satisfies the \emph{Riemannian curvature-dimension condition} $\RCD(\k,\infty)$ if it is infinitesimally Hilbertian and satisfies the curvature-dimension condition $\CD(\k,\infty)$ according to Definition \ref{CD-def}. 
As said, we always assume that $(\mms,\met,\meas)$ is an $\RCD(K,\infty)$ space for some constant $K\in\R$. The value of $K$ does not enter any of our results. Without restriction $\k\geq K$ on $\mms$. Indeed, one should think of $\k$ as being much larger than $K$ everywhere on $\mms$. 

The $\RCD(K,\infty)$ assumption carries numerous important consequences for $(\mms,\met,\meas)$. Further details on the subsequent results can be found in \cite{ambrosio2014a, ambrosio2014b, rajala2014, savare2014}.
\begin{enumerate}[label=\alph*.]
\item{\bf Volume growth.} For each $z\in\mms$ there exists a nonnegative constant $C$ such that $\meas[B_r(z)]\le \expo^{Cr^2}$ for every $r>0$.
\item{\bf Nondegeneracy of entropy.} $\Ent_\meas$ is well-defined and does not attain the value $-\infty$.
\item{\bf Uniqueness of {\boldmath{$W_2$}}-geodesics.} For each pair of $\meas$-absolutely continuous measures $\mu_0,\mu_1\in\Prob_2(\mms)$, there exists a unique $W_2$-geodesic connecting them.
\item{\bf Dirichlet form.} By polarization, $\E$ defines a quasi-regular, strongly local, conservative Dirichlet form, unambiguously denoted by $\E$, on $\Ell^2(\mms,\meas)$ with dense domain $\Sob^{1,2}(\mms):=\Dom(\E)$. The latter is a Hilbert space w.r.t.~$\smash{\big[\Vert f \Vert_{\Ell^2(\mms,\meas)}^2 + \E(f)\big]^{1/2}}$. 
The generator of $\E$, i.e.~the self-adjoint operator $\Delta$ on $\Ell^2(\mms,\meas)$ defined by putting $f\in\Dom(\Delta)$ and $h = \Delta f$ if and only if
\begin{equation*}
\E(f,g) = -\int_\mms h\,g\d\meas\quad\text{for every }g\in\Sob^{1,2}(\mms),
\end{equation*}
is called \emph{Laplacian}.

\item{\bf Heat flow.} The Dirichlet form $\E$ 
defines the heat semigroup $(\ChHeat_t)_{t\ge0}$ as its gradient flow in $\Ell^2(\mms,\meas)$, or alternatively via spectral calculus as $\ChHeat_t=\expo^{\Delta t}$, $t\geq 0$. This semigroup is $\meas$-symmetric and extends to a strongly continuous contraction semigroup on $\Ell^r(\mms,\meas)$ for any $r\in[1,\infty)$. It can be chosen to be strong Feller, more precisely, $\ChHeat_t$ maps $\Ell^\infty(\mms,\meas)$ to $\Lip(\mms)$ for $t>0$ with $\Lip(\ChHeat_tf) \leq \Vert f\Vert_{\Ell^\infty(\mms,\meas)}/\sqrt{t}$ if $K=0$, while if $K\neq 0$, then
\begin{equation}\label{Lip reg est}
\Lip(\ChHeat_tf)^2 \leq \frac{K}{\expo^{2Kt}-1}\, \Vert f\Vert_{\Ell^\infty(\mms,\meas)}^2\quad\text{for every }f\in\Ell^\infty(\mms,\meas).
\end{equation}
The semigroup $(\ChHeat_t)_{t\geq 0}$ is in duality with the semigroup $(\WHeat_t)_{t\ge0}$ defined as the gradient flow of $\Ent_\meas$ in $\Prob_2(\mms)$ and extended  to  $\Prob(\mms)$ by continuity, i.e.
\begin{equation*}
\int_\mms f\d\WHeat_t\mu=\int_\mms \ChHeat_t f\d\mu\quad\text{for every } f\in\Cont_\bdd(\mms)\text{ and }\mu\in\Prob(\mms).
\end{equation*}
In particular, 
$\WHeat_t(g\ \!\meas)=(\ChHeat_tg)\ \! \meas$ for every $g\in\Ell^1(\mms,\meas)$. 

\item{\bf Uniqueness of EVI curves.} Every curve $(\mu_t)_{t\geq 0}$ in $\Prob_2(\mms)$ satisfying the obstructions from Definition \ref{Def:EVI} with arbitrary choice of $\k\geq K$ necessarily coincides with the heat flow $(\WHeat_t\mu_0)_{t \geq  0}$ starting at $\mu_0$.

\item {\bf Brownian motion.} For each $\mu\in\Prob(\mms)$, there exists a conservative Markov process $(\PP,(\B_t)_{t\geq 0})$ on $\mms$, or $(\PP,\B)$ for short, unique in law, with continuous sample paths and transition semigroup given by
\begin{equation*}
\EE\big[f(\B_{t+s}) \mid \B_s\big] = \ChHeat_{t/2}f(\B_s)\quad\text{for every } s,t\in [0,\infty)\text{ and }f\in\Cont_\bdd(\mms),
\end{equation*}
and with 
$(\B_0)_\push\PP=\mu$. This process is called \emph{the} Brownian motion on $\mms$ with initial distribution $\mu$. If we want to stress the dependence on the initial distribution, we write $\PP_\mu$ instead of $\PP$, where we abbreviate $\PP_{\delta_x}$ by $\PP_x$ for $x\in\mms$.

\item{\bf Carré du champ.} The set $\Lip(\mms)\cap \Ell^2(\mms,\meas)$ 
is a core for $\E$. A quadratic functional $\Gamma\colon \Sob^{1,2}(\mms)\to \Ell^1(\mms,\meas)$
can be defined by requiring
\begin{equation*}
\int_\mms\Gamma(f)\,g\d\meas=\E(f,f\,g)-\frac12\E\big(f^2,g\big)\quad\text{for every } g\in \Lip_\bdd(\mms).
\end{equation*}
Indeed, $\Gamma(f)^{1/2}$ coincides $\meas$-a.e.~with the \emph{minimal weak upper gradient} $\vert \Diff f\vert$.

\item{\bf Test functions.} The set 
\begin{equation}\label{Eq:Test functions}
\TestF(\mms) := \big\lbrace f \in\Dom(\Delta) \cap \Ell^\infty(\mms,\meas) : \Gamma(f) \in \Ell^\infty(\mms,\meas),\ \! \Delta f\in\Sob^{1,2}(\mms)\big\rbrace
\end{equation}
is a core  for $\E$ and an algebra w.r.t.~pointwise multiplication. 

\item{\bf Twice differentiability.} We have $\Gamma(f)^{1/2}\in\Dom(\E)$ for all $f\in\D(\Delta)$ and
\begin{equation*}
\E\big(\Gamma(f)^{1/2}\big)\le\|f\|_{\Ell^2(\mms,\meas)}^2-K\|\Delta f\|_{\Ell^2(\mms,\meas)}^2.
\end{equation*}
\item{\bf Sobolev-to-Lipschitz property.} Every $f\in\Sob^{1,2}(\mms,\meas)$ with  $\vert\Diff f\vert\in\Ell^\infty(\mms,\meas)$ has a Lipschitz representative $\overline{f}$ with $\Lip(\overline{f}) \leq \Vert \vert\Diff f\vert\Vert_{\Ell^\infty(\mms,\meas)}$.
\end{enumerate}

\paragraph*{Hopf--Lax evolution} For later use, we summarize the main properties of the general $p$-Hopf--Lax (or Hamilton--Jacobi) semigroup $(Q_s)_{s\geq 0}$, $p\in (1,\infty)$. A detailed account on this topic in general metric spaces can be found in \cite{ambrosio2013, ambrosio2014a, ambrosio2014b}. 

Fix a Lipschitz function $f$ on $\mms$. Its $p$-Hopf--Lax evolution $(Q_sf)_{s\geq 0}$ is defined by
\begin{equation*}
Q_0f := f\quad\text{and}\quad Q_sf(x) := \inf_{y\in\mms} \Big\lbrace f(y) + \frac{\met^p(x,y)}{ps^{p-1}}\Big\rbrace\quad\text{for every }s\in (0,\infty)\text{ and } x\in\mms.
\end{equation*}
The map $s \mapsto Q_sf$ belongs to $\Lip([0,\infty);\Cont(\mms))$, where $\Cont(\mms)$ is endowed with the usual supremum metric. We also have $Q_sf\in \Lip(\mms)$ with $\Lip(Q_sf) \leq p\,\Lip(f)$ for all $s\in (0,\infty)$. Denoting by $q\in (1,\infty)$ the dual exponent to $p$, for every $x\in\mms$, we have
\begin{equation*}\label{Hamilton-Jacobi equ}
\frac{\diff}{\diff s} Q_s f(x) +\frac{1}{q}\lip(Q_sf)^q(x)\leq 0
\end{equation*}
for all but at most countably many $s\in (0,\infty)$, and equality holds e.g.~if $(\mms,\met)$ is geodesic.

Using the $p$-Hopf--Lax semigroup gives a nice duality formula for the $p$-Kantorovich--Wasser\-stein distance, see \cite{kuwada2010, villani2009} for details: for all $\mu,\nu\in\Prob(\mms)$, one has
\begin{equation}\label{Eq:Kantorovich}
\frac{1}{p}\W_p^p(\mu,\nu) = \sup\!\Big\lbrace \int_\mms Q_1f\d\mu - \int_\mms f\d\nu : f\in\Lip_\bdd(\mms)\Big\rbrace.
\end{equation}

\paragraph*{The function {\boldmath{$\kk$}} and Lipschitz approximation} Recall that $\k$ is lower semicontinuous and bounded from below by $K$, and so is $\kk$ by construction. If $\k$ is also bounded from above, say by $C\in\R$, then so is $\kk$. By reparameterization of geodesics, we get $\kk(x,y) = \kk(y,x)$ for every $x,y\in\mms$. Note that $\k$ can be reconstructed from $\kk$, since $\k(x) = \kk(x,x)$. 
Lastly, the function $\kk$ defined in \eqref{k hut} is the pointwise monotone limit from below of bounded Lipschitz functions $\kk_n$, and so is the function $\k$ by considering $\kk_n$ on the diagonal. We intend Lipschitz continuity on $\mms\times\mms$ w.r.t.~the product metric $\met_{\mms\times\mms}$ given by $\smash{\met_{\mms\times\mms}\!\big((x,y),(x',y')\big) := \big[\!\met^2(x,x') + \met^2(y,y')\big]^{1/2}}$. The former fact will be used frequently. Following \cite{ambrosio2008}, we can, for instance, define $\kk_n\colon \mms\times\mms \to \R$ for $n\in\N$ by
\begin{equation*}
\kk_n(x,y) := \inf\!\big\lbrace\! \min\{\kk(x',y'),n\} + n \met_{\mms\times\mms}\!\big((x,y),(x',y')\big) : x',y'\in\mms\big\rbrace.
\end{equation*}

\begin{lma}\label{Le:kk approximation} The above functions $\kk_n$, $n\in\N$, have the following properties:
\begin{enumerate}[label=\textnormal{(\roman*)}]
\item for every $n\in\N$, the function $\kk_n$ is Lipschitz on $\mms\times\mms$ with $\Lip(\kk_n) \leq n$,
\item for all $x\in\mms$ and each $n\in\N$, we have $K \leq \kk_{n}(x)\leq \kk_{n+1}(x) \leq n+1$, and
\item the sequence $(\kk_n)_{n\in\N}$ converges pointwise from below to $\kk$.
\end{enumerate}
\end{lma}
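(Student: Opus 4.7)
The plan is to treat this as a straightforward Pasch--Hausdorff / inf-convolution computation applied to the truncated lower semicontinuous, lower bounded function $\min\{\kk,n\}$ on $\mms\times\mms$. I verify each of the three assertions in turn; there is no genuine obstacle, and the only mild subtlety is in (iii), where I must ensure that the truncation at level $n$ becomes inactive along approximate minimizers.

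For (i), I fix $n\in\N$ and observe that, for every $(x',y')\in\mms\times\mms$, the map $(x,y)\mapsto \min\{\kk(x',y'),n\} + n\,\met_{\mms\times\mms}((x,y),(x',y'))$ is $n$-Lipschitz in $(x,y)$ by the triangle inequality for the product metric. Taking the pointwise infimum over $(x',y')$ preserves $n$-Lipschitz continuity as long as the result is finite, which is guaranteed by the lower bound from (ii).

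For (ii), I note that without loss of generality $K\leq 0$, since $K$ may be replaced by any smaller lower bound of $\k$. The upper bound $\kk_n(x,y)\leq n+1$ follows by plugging the admissible competitor $(x',y')=(x,y)$ into the infimum, yielding $\kk_n(x,y)\leq\min\{\kk(x,y),n\}\leq n$. For the lower bound, $\kk\geq K$ together with $K\leq 0\leq n$ forces $\min\{\kk(x',y'),n\}\geq K$ for all $(x',y')$, while $n\met_{\mms\times\mms}\geq 0$; taking the infimum gives $\kk_n\geq K$. Monotonicity in $n$ is transparent from the two pointwise inequalities $\min\{\kk(x',y'),n\}\leq\min\{\kk(x',y'),n+1\}$ and $n\met_{\mms\times\mms}\leq(n+1)\met_{\mms\times\mms}$, which are inherited by the infimum.

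For (iii), the inequality $\sup_n\kk_n(x,y)\leq\kk(x,y)$ is contained in the computation above. For the converse, I select approximate minimizers $(x_n',y_n')$ with $\min\{\kk(x_n',y_n'),n\}+n\,\met_{\mms\times\mms}((x,y),(x_n',y_n'))\leq\kk_n(x,y)+1/n$. Combining $\kk_n(x,y)\leq\kk(x,y)$ with $\min\{\kk(x_n',y_n'),n\}\geq K$, I obtain $\met_{\mms\times\mms}((x,y),(x_n',y_n'))\leq(\kk(x,y)+1-K)/n$, so that $(x_n',y_n')\to(x,y)$. If $\kk(x_n',y_n')\geq n$ held along a subsequence, I would have $n\leq\kk(x,y)+1$ along that subsequence, a contradiction for large $n$; hence $\min\{\kk(x_n',y_n'),n\}=\kk(x_n',y_n')$ eventually. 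Lower semicontinuity of $\kk$ at $(x,y)$ together with the monotonicity established in (ii) then yields
$$\kk(x,y)\leq\liminf_{n\to\infty}\kk(x_n',y_n')\leq\liminf_{n\to\infty}\bigl(\kk_n(x,y)+1/n\bigr)=\lim_{n\to\infty}\kk_n(x,y),$$
which together with the previous direction gives pointwise convergence and completes the proof.
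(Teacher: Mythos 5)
Your proof is correct and uses the standard Pasch--Hausdorff (inf-convolution) argument; the paper itself gives no proof of this lemma and only points to \cite{ambrosio2008}, so your write-up supplies exactly the details the paper leaves to the reader. Your observation that the asserted lower bound $K \leq \kk_n$ can only hold when $K \leq n$ (otherwise the truncation $\min\{\kk(\cdot,\cdot),n\}$ already drops below $K$) identifies a genuine, if minor, imprecision in the lemma as stated, and handling it by passing to a nonpositive lower bound is the right fix, since nothing downstream uses the precise constant. The argument for (iii) — approximate minimizers drift to $(x,y)$ at rate $O(1/n)$, the truncation at level $n$ becomes inactive for large $n$, and lower semicontinuity of $\kk$ closes the gap — is the classical one and is carried out correctly.
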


\section[Gradient estimates, Bochner's inequality, and their self-improvements]{Gradient estimates, Bochner's inequality, and their self-im\-provements}\label{Ch:Bochner}
In this section, we adapt the well-known arguments of \cite{bakry1985, savare2014} for constant curvature lower bounds to derive the equivalence of the $q$-Bochner inequality with the $q$-gradient estimate with exponent $q\in[1,\infty)$. Moreover, we prove that these properties are independent of $q$.

Up to replacing $\k$ by $\k_n := \min\{\k,n\}$, $n\in\N$, we may assume throughout this chapter that $\k$ is bounded. In the general case, each of the subsequent results still holds for $\k$ since $\BE_q(\k,\infty)$ and $\GE_q(\k)$ trivially imply $\BE_q(\k_n,\infty)$ and $\GE_q(\k_n)$ for every $n\in\N$, respectively, and conversely, if $\BE_q(\k_n,\infty)$ and $\GE_q(\k_n)$ hold for each $n\in\N$, the monotone convergence theorem implies $\BE_q(\k,\infty)$ and $\GE_q(\k)$, respectively.

\subsection{Equivalence of Bochner and gradient estimate
}\label{Sec:2.2}

First, we review the measure-valued Laplacian $\DELTA$ and the measure-valued $\Gamma_2$-operator $\boldsymbol{\Gamma}_2$ as introduced and analyzed in \cite{gigli2018, savare2014}, defined by means of
\begin{align}\label{DELTA definition}
\int_\mms g\d\DELTA f &= -\int_\mms \Gamma(g,f)\d\meas\quad\text{for every }g\in\Lip_\bs(\mms)\quad\text{and}\\
\boldsymbol{\Gamma}_2(f) &:= \frac{1}{2}\DELTA \Gamma(f) - \Gamma(f,\Delta f)\ \! \meas\nonumber
\end{align}
for suitable $f\in\Sob^{1,2}(\mms)$. We write $f\in\Dom(\DELTA)$ if the signed measure $\DELTA f$ exists, which is then uniquely determined by \eqref{DELTA definition}. We  denote the density of the $\meas$-absolutely continuous part of $\boldsymbol{\Gamma}_2(f)$ by $\gamma_2(f)$. The singular part of $\boldsymbol{\Gamma}_2(f)$ w.r.t.~$\meas$ is a nonnegative measure. 
Both $\DELTA f$ and $\boldsymbol{\Gamma}_2(f)$ are well-defined for $f\in\TestF(\mms)$. Lastly, a well-known consequence of the generic calculus rules of $\Gamma$ proved in \cite{savare2014} is the following chain rule for $\DELTA$.

\begin{lma}\label{Le:Chain rule DELTA} Fix $f\in\Dom(\DELTA)\cap\Ell^\infty(\mms,\meas)$, an interval $I\subset\R$ with $0\in I$ containing the image of $f$, and a function $\Phi\in\Cont^2(I)$ such that $\Phi(0) = 0$. Then $\Phi(f) \in \Dom(\DELTA)$ and
\begin{equation}\label{Eq:DELTA Phi(f)}
\DELTA\Phi(f) = \Phi'(f)\ \!\DELTA f + \Phi''(f)\, \Gamma(f)\ \!\meas.
\end{equation}
\end{lma}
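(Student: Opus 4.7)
The plan is to unfold the definition \eqref{DELTA definition} applied to the candidate right-hand side and reduce everything to the carré du champ calculus rules together with the hypothesis $f\in\Dom(\DELTA)$. Since $f\in\Ell^\infty(\mms,\meas)$, the essential image of $f$ is contained in some compact subinterval of $I$, so $\Phi(f)$, $\Phi'(f)$, $\Phi''(f)$ are all bounded. Moreover the standing $\Phi(0)=0$ assumption ensures $\Phi(f)\in\Sob^{1,2}(\mms)$ by the usual chain rule for minimal weak upper gradients (or equivalently for $\Gamma$), while $\Phi'(f),\Phi''(f)\in\Ell^\infty(\mms,\meas)$.

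Fix $g\in\Lip_\bs(\mms)$; the goal is to show
\begin{equation*}
-\int_\mms\Gamma(g,\Phi(f))\d\meas=\int_\mms g\,\Phi'(f)\d\DELTA f+\int_\mms g\,\Phi''(f)\,\Gamma(f)\d\meas,
\end{equation*}
which by the uniqueness part of the definition of $\DELTA$ identifies $\Phi(f)\in\Dom(\DELTA)$ with the claimed density and singular decomposition. I would first invoke the $\Cont^1$-chain rule for $\Gamma$ to rewrite $\Gamma(g,\Phi(f))=\Phi'(f)\,\Gamma(g,f)$, and then apply Leibniz for $\Gamma$ together with the chain rule $\Gamma(\Phi'(f),f)=\Phi''(f)\,\Gamma(f)$ to get
\begin{equation*}
\Phi'(f)\,\Gamma(g,f)=\Gamma(g\,\Phi'(f),f)-g\,\Phi''(f)\,\Gamma(f).
\end{equation*}
Integrating over $\mms$ and using the defining identity of $\DELTA f$ against the test function $g\,\Phi'(f)$ gives exactly the displayed identity.

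The main technical obstacle is that $g\,\Phi'(f)$ need not lie in $\Lip_\bs(\mms)$, which is the class appearing in \eqref{DELTA definition}; however it does belong to $\Sob^{1,2}(\mms)\cap\Ell^\infty(\mms,\meas)$ and has bounded support, since a direct computation using Leibniz and chain rules for $\Gamma$ together with the boundedness of $\Phi',\Phi''$ on the image of $f$ and the Lipschitz, bounded-support nature of $g$ shows $\Gamma(g\,\Phi'(f))\in\Ell^1(\mms,\meas)$. I would therefore approximate $g\,\Phi'(f)$ in $\Sob^{1,2}$-norm by a sequence $(h_n)$ in $\Lip_\bs(\mms)$ supported in a fixed neighborhood of $\supp g$ (a standard density result on $\RCD(K,\infty)$ spaces, using mollification by the heat semigroup together with Lipschitz cutoffs), apply the definition of $\DELTA f$ to each $h_n$, and pass to the limit on both sides using continuity of $\Gamma(\,\cdot\,,f)$ as a map $\Sob^{1,2}\to\Ell^1$ and the fact that $\DELTA f$ is a locally finite signed measure.

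Finally, the bookkeeping of absolutely continuous versus singular parts: the term $\Phi''(f)\,\Gamma(f)\,\meas$ is manifestly $\meas$-absolutely continuous, while $\Phi'(f)\,\DELTA f$ is a well-defined signed measure because $\Phi'(f)$ is a bounded Borel function; absolute continuity of one, the other, or neither is inherited from $\DELTA f$, and no further work is required for the statement as phrased. No new input beyond the standard $\Gamma$-calculus of \cite{savare2014} and the density of $\Lip_\bs(\mms)$ in the relevant Sobolev sense is needed.
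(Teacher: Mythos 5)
The paper states this lemma without proof, citing it as a well-known consequence of the $\Gamma$-calculus in \cite{savare2014}, so there is no in-paper argument to compare against. Your reconstruction follows what is indeed the standard route: rewrite $\Gamma(g,\Phi(f))=\Phi'(f)\Gamma(g,f)$, use Leibniz plus the second chain rule to convert $\Phi'(f)\Gamma(g,f)$ into $\Gamma(g\Phi'(f),f)-g\Phi''(f)\Gamma(f)$, integrate, and invoke \eqref{DELTA definition} with the test function $g\Phi'(f)$. The algebra is correct and matches the expected proof.

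Two technical points deserve more care than you give them. First, the passage to the limit $\int h_n\d\DELTA f\to\int g\Phi'(f)\d\DELTA f$ is not justified merely by ``$\DELTA f$ is a locally finite signed measure.'' Convergence $h_n\to g\Phi'(f)$ in $\Sob^{1,2}(\mms)$ together with a uniform $\Ell^\infty$ bound gives, after passing to a subsequence, only quasi-uniform convergence of quasi-continuous representatives; to conclude you need that $\DELTA f$ does not charge sets of zero $\E$-capacity (a true fact in this framework, but one that must be invoked explicitly — it is the content that makes the extension of \eqref{DELTA definition} from $\Lip_\bs(\mms)$ to bounded Sobolev functions with bounded support legitimate). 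Second, when $\meas(\mms)=\infty$ and $\Phi'(0)\neq0$, the function $\Phi'(f)$ is not in $\Ell^2(\mms,\meas)$, so ``$\Gamma(\Phi'(f),f)=\Phi''(f)\Gamma(f)$'' is not directly meaningful as a statement about $\Sob^{1,2}(\mms)$; one should either work with $\Sob^{1,2}_\loc$ and use locality of $\Gamma$, or replace $\Phi'$ by $\Psi:=\Phi'-\Phi'(0)$ (which vanishes at $0$) and treat the constant $\Phi'(0)$ separately. Neither issue changes the conclusion, and both are routine to repair, but as written the proposal elides them. With those repairs your argument is a correct proof of the lemma along the expected lines.
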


Once $\BE_2(\k,\infty)$ holds, one can argue exactly as for \cite[Lemma 3.2]{savare2014} to get
\begin{align*}
\E\big(\Gamma(f)\big) &\leq -\int_\mms 2\k\, \Gamma(f)^2 + \Gamma(f)\, \Gamma(f,\Delta f)\d\meas\quad\text{and}\\
\k\,\Gamma(f)\ \!\meas &\leq \frac{1}{2}\DELTA\Gamma(f) - \Gamma(f,\Delta f)\ \! \meas.
\end{align*}
for every $f\in\TestF(\mms)$. 
Taking these estimates into account, one can argue exactly as in the proof of \cite[Theorem 3.4]{savare2014} to obtain that, for every $f\in\TestF(\mms)$,
\begin{equation}\label{iterated gamma}
\Gamma(\Gamma(f)) \leq 4 \big(\gamma_2(f) - \k\,\Gamma(f)\big)\,\Gamma(f)\quad\meas\text{-a.e.}
\end{equation}
Using this, we deduce the whole range of $q$-Bochner inequalities from $\BE_2(\k,\infty)$.

\begin{Prop}\label{Th:BE self-improvement} The condition $\BE_2(\k,\infty)$ implies $\BE_q(\k,\infty)$ for every $q\in [1,\infty)$.
\end{Prop}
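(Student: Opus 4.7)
The plan is to reduce to $f\in\TestF(\mms)$ --- where both $\BE_2(\k,\infty)$ in its measure form $\boldsymbol{\Gamma}_2(f)\geq k\Gamma(f)\,\meas$ and the self-improved bound \eqref{iterated gamma} hold, and where $\Gamma(f)\in\Dom(\DELTA)\cap\Dom(\E)\cap\Ell^\infty(\mms,\meas)$ --- and then to convert this measure-valued information into the distributional $\BE_q(\k,\infty)$ by applying the chain rule \eqref{Eq:DELTA Phi(f)} to a suitable power of $G:=\Gamma(f)$, pairing with the test function $\phi$, and integrating by parts. The extension from $\TestF(\mms)$ to every $f$ admissible in $\BE_q(\k,\infty)$ is then a routine approximation argument, since $\TestF(\mms)$ is a core for $\E$.

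Since $x\mapsto x^{q/2}$ fails to be $\Cont^2$ at $0$ unless $q\geq 2$, I would fix $\epsilon>0$ and regularize via $G_\epsilon:=G+\epsilon$ and $\Phi_\epsilon(x):=(x+\epsilon)^{q/2}-\epsilon^{q/2}$. Lemma \ref{Le:Chain rule DELTA} yields
\begin{equation*}
\DELTA\Phi_\epsilon(G) = \tfrac{q}{2}\,G_\epsilon^{q/2-1}\,\DELTA G + \tfrac{q}{2}\big(\tfrac{q}{2}-1\big)\,G_\epsilon^{q/2-2}\,\Gamma(G)\,\meas,
\end{equation*}
and, after substituting $\DELTA G = 2\boldsymbol{\Gamma}_2(f)+2\,\Gamma(f,\Delta f)\,\meas$ and testing against a nonnegative $\phi\in\Dom(\Delta)\cap\Ell^\infty(\mms,\meas)$ with $\Delta\phi\in\Ell^\infty(\mms,\meas)$, integration by parts on the left produces
\begin{equation*}
\int\Phi_\epsilon(G)\,\Delta\phi\,\d\meas = q\!\int\!\phi\,G_\epsilon^{q/2-1}\,\d\boldsymbol{\Gamma}_2(f) + q\!\int\!\phi\,G_\epsilon^{q/2-1}\,\Gamma(f,\Delta f)\,\d\meas + \tfrac{q}{2}\big(\tfrac{q}{2}-1\big)\!\int\!\phi\,G_\epsilon^{q/2-2}\,\Gamma(G)\,\d\meas.
\end{equation*}
Since $\phi\,G_\epsilon^{q/2-1}\geq 0$ and the singular part of $\boldsymbol{\Gamma}_2(f)$ is nonnegative, the $\boldsymbol{\Gamma}_2(f)$-piece is bounded below by $q\!\int\!\phi\,G_\epsilon^{q/2-1}\,\gamma_2(f)\,\d\meas$, and by $\BE_2(\k,\infty)$ in turn by $q\!\int\!\phi\,G_\epsilon^{q/2-1}\,kG\,\d\meas$.

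When $q\geq 2$ the residual $\Gamma(G)$-integral is already nonnegative and can be dropped; dividing by $q$ and letting $\epsilon\to 0$ by dominated convergence (using $\Gamma(f)\in\Ell^\infty(\mms,\meas)$ and boundedness of $k$) produces $\BE_q(\k,\infty)$ directly. When $q\in[1,2)$ the coefficient $\tfrac{q}{2}(\tfrac{q}{2}-1)$ is negative, so I would invoke \eqref{iterated gamma}, i.e.~$\Gamma(G)\leq 4(\gamma_2(f)-kG)\,G$ $\meas$-a.e., to get the lower bound
\begin{equation*}
\tfrac{q}{2}\big(\tfrac{q}{2}-1\big)\,G_\epsilon^{q/2-2}\,\Gamma(G) \geq q(q-2)\,G_\epsilon^{q/2-2}\,G\,(\gamma_2(f)-kG).
\end{equation*}
Passing to $\epsilon\to 0$ converts $G_\epsilon^{q/2-2}G$ into $G^{q/2-1}$ on $\{G>0\}$, and pooling this with the a.c.~contribution $q\,G^{q/2-1}\gamma_2(f)$ produces the integrand $q\,G^{q/2-1}\big[(q-1)\gamma_2(f)+(2-q)kG\big]$; since $q-1\geq 0$, a second application of $\gamma_2(f)\geq kG$ collapses the bracket to $kG$ and yields $\BE_q(\k,\infty)$ after division by $q$.

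The hard part is the algebraic interplay in the range $q\in[1,2)$: $\BE_2(\k,\infty)$ must be used both as a measure inequality (on the $\boldsymbol{\Gamma}_2(f)$-piece) and, via \eqref{iterated gamma}, as a pointwise inequality on the a.c.~density $\gamma_2(f)$, and the two contributions must line up so that the $\gamma_2(f)$-terms cancel and only $kG^{q/2}$ remains on the right. A secondary concern is the limit $\epsilon\to 0$ for $q<2$: the factor $G_\epsilon^{q/2-2}$ blows up on $\{G=0\}$, but this is neutralised by the locality identity $\Gamma(G)\,\One_{\{G=0\}}=0$ $\meas$-a.e., after which dominated convergence applies thanks to $\Gamma(f),k\in\Ell^\infty(\mms,\meas)$. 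The integration by parts $\int\phi\,\d\DELTA\Phi_\epsilon(G)=\int\Phi_\epsilon(G)\,\Delta\phi\,\d\meas$ used above is a consequence of the regularity assumed on $\phi$ and the finiteness of the signed measure $\DELTA\Phi_\epsilon(G)$, standard in this setting.
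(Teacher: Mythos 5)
Your plan is close to the paper's --- same $\Phi_\epsilon$-regularization, same key self-improved estimate \eqref{iterated gamma}, same chain rule from Lemma~\ref{Le:Chain rule DELTA}, same reduction to $f\in\TestF(\mms)$. The $q\geq 2$ branch is correct, and in fact bypasses \eqref{iterated gamma} entirely (the paper treats all $q\in[1,\infty)$ in one sweep via the coefficient bound $2-q\leq 1$). However, the $q\in[1,2)$ branch has a genuine gap at the $\epsilon\to 0$ step. Your ``pooling'' after passing to the limit implicitly requires convergence of
\begin{equation*}
\int_\mms\phi\,G_\epsilon^{q/2-1}\,\gamma_2(f)\,\d\meas,
\end{equation*}
and this is not justified when $q<2$: on $\{\Gamma(f)=0\}$ the weight $G_\epsilon^{q/2-1}=\epsilon^{q/2-1}$ blows up, and, unlike $\Gamma(G)$ or $\Gamma(f,\Delta f)$, the density $\gamma_2(f)$ need \emph{not} vanish there by locality. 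The neutralisation argument you invoke covers only the $\Gamma(G)$-piece, not the $\boldsymbol{\Gamma}_2$-piece.

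The repair --- and this is precisely what the paper's rearrangement accomplishes --- is to apply $\gamma_2(f)\geq k\,G$ \emph{before} letting $\epsilon\to 0$. Combining the lower bound on the $\boldsymbol{\Gamma}_2$-piece with the \eqref{iterated gamma}-bound on the $\Gamma(G)$-piece, the pooled integrand is
\begin{equation*}
q\,\phi\,G_\epsilon^{q/2-2}\big[(q-1)G+\epsilon\big]\big(\gamma_2(f)-k\,G\big) + q\,\phi\,G_\epsilon^{q/2-1}\,k\,G,
\end{equation*}
and the first summand is nonnegative (since $q\geq 1$ and $\gamma_2(f)\geq k\,G$ by $\BE_2(\k,\infty)$), so it may be discarded from a lower bound. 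What remains, together with the $\Gamma(f,\Delta f)$-term, is exactly the paper's pre-limit inequality
\begin{equation*}
2\int_\mms\Phi_\epsilon'(\Gamma(f))\,\big(\Gamma(f,\Delta f) + k\,\Gamma(f)\big)\,\phi\,\d\meas \leq \int_\mms\Phi_\epsilon(\Gamma(f))\,\Delta\phi\,\d\meas,
\end{equation*}
whose left integrand vanishes on $\{\Gamma(f)=0\}$ for every $\epsilon>0$, so the limit is then routine. With this reordering your argument coincides with the paper's.
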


\begin{proof} Fix $f\in\TestF(\mms)$  and a nonnegative $\phi\in\Dom(\Delta)\cap\Ell^\infty(\mms,\meas)$ with $\Delta\phi\in\Ell^\infty(\mms,\meas)$. Given $\varepsilon > 0$,  consider the smooth function $\smash{\Phi_\varepsilon(r) := (r+\varepsilon)^{q/2} - \varepsilon^{q/2}}$ defined for $r\geq 0$. Since $2-q\leq 1$, we obtain the $\meas$-a.e.~inequalities
\begin{equation*}
-\Gamma(\Gamma(f)) \, \Phi_\varepsilon''(\Gamma(f)) \leq \frac{q}{4}\Gamma(\Gamma(f))\, \big(\Gamma(f) + \varepsilon\big)^{q/2-2} \leq 2 \big(\gamma_2(f) - \k\,\Gamma(f)\big)\, \Phi_\varepsilon'(\Gamma(f))
\end{equation*}
by means of \eqref{iterated gamma}. Multiplying this by $\phi$ and integrating, one gets
\begin{align*}
&-\int_\mms \Gamma(\Gamma(f)) \,\Phi_\varepsilon''(\Gamma(f))\,\phi\d\meas\\
&\qquad\qquad\leq 2\int_\mms  \Phi_\varepsilon'(\Gamma(f))\,\phi\d\boldsymbol{\Gamma}_2(f) - 2\int_\mms  \k\,\Gamma(f)\,\Phi_\varepsilon'(\Gamma(f))\,\phi\d\meas\\
&\qquad\qquad = \int_\mms \Phi_\varepsilon'(\Gamma(f))\,\phi\d\DELTA\Gamma(f) - 2\int_\mms \Phi_\varepsilon'(\Gamma(f))\,\big(\Gamma(f,\Delta f) + \k\,\Gamma(f)\big)\,\phi\d\meas.
\end{align*}
Invoking Lemma \ref{Le:Chain rule DELTA}, this amounts to
\begin{equation*}
2\int_\mms\Phi_\varepsilon'(\Gamma(f))\,\big(\Gamma(f,\Delta f) + \k\,\Gamma(f)\big)\,\phi\d\meas \leq \int_\mms \phi\d\DELTA\Phi_\varepsilon(\Gamma(f)) = \int_\mms\Phi_\varepsilon(\Gamma(f))\,\Delta\phi\d\meas.
\end{equation*}
Note that the left integrand vanishes $\meas$-a.e.~on the set $\{\Gamma(f) = 0\}$ for every $\varepsilon > 0$. Therefore, letting $\varepsilon \downarrow 0$ in the preceding inequality gives the $\BE_q(\k,\infty)$ inequality for $f\in\TestF(\mms)$. 

To extend this to general $f\in\Dom(\Delta)$ with $\Delta f\in \Sob^{1,2}(\mms)$ and $\Gamma(f) \in \Ell^\infty(\mms,\meas)$, we approximate it in $\Sob^{1,2}(\mms)$ by means of its heat flow regularizations $\ChHeat_tf\in\TestF(\mms)$ as $t\downarrow 0$. Since $\Gamma(\ChHeat_tf) \to \Gamma(f)$ and $\Gamma(\ChHeat_tf,\Delta\ChHeat_tf) \to \Gamma(f,\Delta f)$ in $\Ell^1(\mms,\meas)$ as $t\downarrow 0$, $\Gamma(\ChHeat_tf)$ is uniformly bounded in $\Ell^\infty(\mms,\meas)$ for small enough $t$, and $\Gamma(\Delta\ChHeat_tf)^{1/2}$ is uniformly bounded in $\Ell^2(\mms,\meas)$ for small enough $t$, we easily get
\begin{equation*}
\lim_{t\downarrow 0} \Gamma(\ChHeat_tf)^{q/2} = \Gamma(f)^{q/2}\quad\text{and}\quad \lim_{t\downarrow 0}\Gamma(\ChHeat_t f)^{q/2-1}\, \Gamma(\ChHeat_tf,\Delta\ChHeat_tf) = \Gamma(f)^{q/2-1}\,\Gamma(f,\Delta f)
\end{equation*}
in $\Ell^1(\mms,\meas)$. This yields the claim.
\end{proof}

By the Feynman--Kac representation \eqref{Feynman-Kac} of $\smash{\Schr_t^{q\k}}$ and Jensen's inequality, the following hierarchy between gradient estimates is immediate. This and the above self-improvement property of $\BE_2(\k,\infty)$ will be used in the proof of Theorem \ref{Th:BEq equiv GEq} below.

\begin{lma}\label{hierarchy} If $\GE_q(\k)$ holds for some $q\in [1,\infty)$, then $\GE_{q'}(\k)$ is satisfied for all $q'\in [q,\infty)$.
\end{lma}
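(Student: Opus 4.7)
The plan is to raise the $\GE_q(\k)$ inequality pointwise to the power $\alpha := q'/q \geq 1$, and then control the resulting right-hand side by Jensen's inequality under the Brownian motion law. Since both sides of $\Gamma(\ChHeat_t f)^{q/2} \leq \Schr_t^{q\k}(\Gamma(f)^{q/2})$ are nonnegative and $\alpha \geq 1$, the inequality survives taking $\alpha$-th powers, giving
\begin{equation*}
\Gamma(\ChHeat_t f)^{q'/2} \leq \big[\Schr_t^{q\k}\big(\Gamma(f)^{q/2}\big)\big]^\alpha \quad \meas\text{-a.e.}
\end{equation*}
for every $f \in \Dom(\E)$ and $t \geq 0$. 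It therefore suffices to show that $\big[\Schr_t^{q\k}(\Gamma(f)^{q/2})\big]^\alpha \leq \Schr_t^{q'\k}(\Gamma(f)^{q'/2})$ pointwise.

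The key step is to recognize, via the Feynman--Kac formula \eqref{Feynman-Kac}, that
\begin{equation*}
\Schr_t^{q\k}\big(\Gamma(f)^{q/2}\big)(x) = \EE_x\Big[\expo^{-\int_0^{2t} q\k(\B_r)/2\d r}\, \Gamma(f)^{q/2}(\B_{2t})\Big]
\end{equation*}
is the expectation of a nonnegative random variable under the \emph{probability} measure $\PP_x$. Applying Jensen's inequality to the convex function $y \mapsto y^\alpha$ and using the identities $\alpha q = q'$ and $\alpha q/2 = q'/2$, one gets exactly
\begin{equation*}
\big[\Schr_t^{q\k}\big(\Gamma(f)^{q/2}\big)(x)\big]^\alpha \leq \EE_x\Big[\expo^{-\int_0^{2t} q'\k(\B_r)/2\d r}\, \Gamma(f)^{q'/2}(\B_{2t})\Big] = \Schr_t^{q'\k}\big(\Gamma(f)^{q'/2}\big)(x),
\end{equation*}
and chaining with the previous display yields $\GE_{q'}(\k)$.

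I do not foresee a significant obstacle. The lower bound $\k \geq K$ makes the Feynman--Kac weight $\expo^{-\int q\k/2}$ bounded above by a deterministic constant on any finite time interval, so all of the expectations above are well defined; whenever the right-hand side is infinite the claimed inequality is trivial, and otherwise the $\meas$-a.e.~statement is exactly what Jensen delivers. In effect, once the right-hand side of $\GE_q(\k)$ is rewritten through Feynman--Kac as a probabilistic expectation, the monotonicity in $q$ is just the convexity of the power function.
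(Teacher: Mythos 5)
Your proof is correct and takes exactly the route the paper indicates: rewrite $\Schr_t^{q\k}$ via the Feynman--Kac formula, recognize that $\expo^{-\int_0^{2t}q'\k(\B_r)/2\d r}\,\Gamma(f)^{q'/2}(\B_{2t})$ is the $\alpha$-th power of the corresponding integrand for exponent $q$, and apply Jensen's inequality for the convex map $y\mapsto y^\alpha$ under the probability measure $\PP_x$. The paper states this as immediate; you have simply spelled out the same argument.
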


\begin{thm}\label{Th:BEq equiv GEq} For every $q\in [1,\infty)$, the properties  $\BE_q(\k,\infty)$ and  $\GE_q(\k)$ are equivalent to each other.
\end{thm}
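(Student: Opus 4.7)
The strategy is the classical semigroup interpolation \`a la Bakry--\'Emery, adapted to the variable lower bound $k$ by means of the Schr\"odinger semigroup $\Schr^{qk}$ from \eqref{Feynman-Kac}.

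For the implication $\BE_q(k,\infty) \Rightarrow \GE_q(k)$, I would fix $f \in \TestF(\mms)$ and $t > 0$, set $g_s := \ChHeat_{t-s}f$, and consider the interpolating map
$$F(s) := \Schr_s^{qk}\!\bigl(\Gamma(g_s)^{q/2}\bigr), \qquad s \in [0,t].$$
Using $\partial_s g_s = -\Delta g_s$ together with the identity $\partial_s \Schr_s^{qk} h = \Schr_s^{qk}\!\bigl((\Delta - qk)h\bigr)$ for the generator $\Delta - qk$, a direct computation gives
$$\frac{d}{ds}F(s) = \Schr_s^{qk}\!\Bigl[\Delta\Gamma(g_s)^{q/2} - q\,\Gamma(g_s)^{q/2-1}\Gamma(g_s,\Delta g_s) - qk\,\Gamma(g_s)^{q/2}\Bigr],$$
and the bracket is precisely the quantity that $\BE_q(k,\infty)$ asserts to be nonnegative in the weak sense when paired with test functions $\phi$. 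Hence $F$ is nondecreasing, so $F(0) \leq F(t)$ yields $\GE_q(k)$ for $f \in \TestF(\mms)$; the extension to all $f \in \Dom(\E)$ follows by density of $\TestF(\mms)$ in $\Sob^{1,2}(\mms)$ and continuity of both sides of the gradient estimate.

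For the converse $\GE_q(k) \Rightarrow \BE_q(k,\infty)$, I would differentiate the gradient estimate at $t = 0^+$ after pairing it with a test function. Testing $\Gamma(\ChHeat_t f)^{q/2} \leq \Schr_t^{qk}\bigl(\Gamma(f)^{q/2}\bigr)$ against a nonnegative $\phi \in \Dom(\Delta) \cap \Ell^\infty(\mms,\meas)$ with $\Delta\phi \in \Ell^\infty(\mms,\meas)$, and exploiting the $\Ell^2$-symmetry of $\Schr^{qk}$ on the right, both sides coincide at $t = 0$ while their right derivatives at $t = 0$ equal
$$q\!\int_\mms\! \Gamma(f)^{q/2-1}\Gamma(f,\Delta f)\,\phi\d\meas \quad \text{and} \quad \int_\mms\! \Gamma(f)^{q/2}\bigl(\Delta\phi - qk\,\phi\bigr)\d\meas,$$
respectively. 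Comparing them and dividing by $q$ gives exactly $\BE_q(k,\infty)$ for $f \in \TestF(\mms)$, and the extension to general admissible $f$ with $\Delta f \in \Dom(\E)$ and $\Gamma(f) \in \Ell^\infty(\mms,\meas)$ proceeds by heat-flow regularization $\ChHeat_\tau f \in \TestF(\mms)$, $\tau \downarrow 0$, exactly as at the end of the proof of Proposition \ref{Th:BE self-improvement}.

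The main obstacle is the singularity of $r \mapsto r^{q/2-1}$ at $r = 0$ when $q < 2$, which appears both in the identity for $\frac{d}{ds}F(s)$ and in the derivative computation at $t=0$. I would resolve it exactly as in Proposition \ref{Th:BE self-improvement}, by replacing $\Gamma(g_s)^{q/2}$ throughout by $\Phi_\varepsilon(\Gamma(g_s)) := (\Gamma(g_s)+\varepsilon)^{q/2} - \varepsilon^{q/2}$, applying the chain rule of Lemma \ref{Le:Chain rule DELTA} to expand $\DELTA\Phi_\varepsilon(\Gamma(g_s))$, and passing to the limit $\varepsilon \downarrow 0$: the singular integrand is supported on $\{\Gamma(g_s) > 0\}$ and dominated there by a fixed $\Ell^1$ function, courtesy of the $\TestF$-bound $\Gamma(g_s) \in \Ell^\infty(\mms,\meas)$ inherited from the heat-flow regularity. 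A secondary technical point is justifying the differentiation under $\Schr_s^{qk}$ and the weak-sense manipulation of $\Delta\Gamma(g_s)^{q/2}$ through the measure-valued Laplacian $\DELTA$; this is standard using the $\TestF$ calculus and the Feynman--Kac bound $\Vert \Schr_t^{qk}\Vert \leq \expo^{-qKt}$. The variable nature of $k$ introduces no genuinely new difficulty, since $k$ appears linearly in both inequalities and functions as a pointwise potential of the Schr\"odinger semigroup.
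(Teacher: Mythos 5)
Your forward implication $\BE_q(\k,\infty)\Rightarrow\GE_q(\k)$ matches the paper's proof: the interpolating quantity $F_\varepsilon(s)=\int_\mms\Schr_s^{q\k}\big(\Phi_\varepsilon(\Gamma(\ChHeat_{t-s}f))\big)\phi\d\meas$ is $\Cont^1$, its derivative is nonnegative in the limit $\varepsilon\downarrow 0$ by $\BE_q(\k,\infty)$, and Fatou closes the argument. Your converse for $q\geq 2$ is also sound and is what the paper does: differentiate $F_0$ at $s=0$, which is legitimate because $\Phi_0(r)=r^{q/2}\in\Cont^1([0,\infty))$ in that range.

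The gap is in the converse for $q\in[1,2)$. You propose to ``resolve the singularity exactly as in Proposition \ref{Th:BE self-improvement}, by replacing $\Gamma(g_s)^{q/2}$ throughout by $\Phi_\varepsilon(\Gamma(g_s))$,'' but this replacement is only available in the \emph{forward} direction, where $\BE_q(\k,\infty)$ is a pointwise statement you are free to plug the smoothed quantity into and you are \emph{constructing} the interpolant. In the converse direction, $\GE_q(\k)$ is a fixed inequality between $\Gamma(\ChHeat_tf)^{q/2}$ and $\Schr_t^{q\k}\big(\Gamma(f)^{q/2}\big)$; nothing in $\GE_q(\k)$ gives you the $\varepsilon$-regularized comparison $\Phi_\varepsilon(\Gamma(\ChHeat_tf))\leq\Schr_t^{q\k}\big(\Phi_\varepsilon(\Gamma(f))\big)$, and without such an inequality there is no curve of nondecreasing quantities to differentiate at $t=0$. (One could try to derive the $\varepsilon$-version via Jensen for the convex function $a\mapsto\Phi_\varepsilon(a^{2/q})$ with value $0$ at $0$, but that would require $\Schr_t^{q\k}$ to be sub-Markov, i.e.\ $\k\geq 0$, which is not part of the hypotheses.) The paper sidesteps this entirely: for $q\in[1,2)$, it invokes Lemma \ref{hierarchy} to upgrade $\GE_q(\k)$ to $\GE_2(\k)$ (Jensen on the Feynman--Kac representation), deduces $\BE_2(\k,\infty)$ by the smooth $q\geq 2$ argument, and then uses the self-improvement Proposition \ref{Th:BE self-improvement} to land on $\BE_q(\k,\infty)$. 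That downstream-and-back route is the missing ingredient in your proposal.
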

\begin{proof}
By density of $\TestF(\mms)$ in $\Sob^{1,2}(\mms)$ and an argument as in the proof of Proposition \ref{Th:BE self-improvement}, the function $f$ under consideration may be assumed to belong to $\TestF(\mms)$.

Suppose that $\BE_q(\k,\infty)$ is satisfied. Fix any $t>0$, $f$ as above and a nonnegative $\phi\in\Dom(\Delta) \cap \Ell^\infty(\mms,\meas)$ with $\Delta\phi\in \Ell^\infty(\mms,\meas)$. 
Given any $\varepsilon > 0$, consider the function $\Phi_\varepsilon$ as defined in the proof of Proposition \ref{Th:BE self-improvement} above. Define $F_\varepsilon\colon [0,t]\to \R$ by
\begin{equation*}
F_\varepsilon(s) := \int_\mms \Schr_s^{q\k}\big(\Phi_\varepsilon\big(\Gamma(\ChHeat_{t-s}f)\big)\big)\,\phi\d\meas =  \int_\mms \Phi_\varepsilon\big(\Gamma(\ChHeat_{t-s}f)\big)\, \Schr_s^{q\k}\phi\d\meas.
\end{equation*}
This function belongs to $\Cont^1([0,t])$ since the functions $s\mapsto \smash{\Schr_s^{qk}\phi}$ and $s\mapsto \Phi_\varepsilon\big(\Gamma(\ChHeat_{t-s}f)\big)$ as well as their derivatives in $\Ell^2(\mms,\meas)$ are bounded on $[0,t]$, see also \cite[Lemma 2.1]{ambrosio2015} for a similar argument. Thus
\begin{align*}
\liminf_{\varepsilon \downarrow 0} F_\varepsilon'(s) &\geq \liminf_{\varepsilon\downarrow 0}\int_\mms \Phi_\varepsilon\big(\Gamma(\ChHeat_{t-s}f)\big)\,(\Delta-q\k)\Schr_s^{q\k}\phi\d\meas\\
&\qquad\qquad - 2\,\limsup_{\varepsilon \downarrow 0}\int_\mms \Phi_\varepsilon'\big(\Gamma(\ChHeat_{t-s}f)\big)\, \Gamma(\ChHeat_{t-s}f,\Delta\ChHeat_{t-s}f)\,\Schr_s^{q\k}\phi\d\meas,
\end{align*}
which is nonnegative by $\BE_q(\k,\infty)$. Fatou's lemma gives
\begin{equation*}
F_0(t) - F_0(0) = \liminf_{\varepsilon \downarrow 0} \big(F_\varepsilon(t) - F_\varepsilon(0)\big) \geq \int_0^t\liminf_{\varepsilon\downarrow 0} F_\varepsilon'(s)\d s \geq 0,
\end{equation*}
which establishes $\GE_q(\k)$ for $f\in\TestF(\mms)$ by the arbitrariness of $\phi$.

Conversely, assume $\GE_q(\k)$ for $q\in[2,\infty)$. As $\Phi_0\in \Cont^1([0,\infty))$ for such $q$, we deduce $F_0'(0) \geq 0$, which is a reformulation of the $\BE_q(\k,\infty)$ inequality with $\ChHeat_t f$ in place of $f$. Letting $t\downarrow 0$ gives the desired conclusion. If, on the other hand, we have $q \in [1,2)$, we cannot rely on the above regularity of $\Phi_0$. However, Lemma \ref{hierarchy} ensures $\GE_2(\k)$, which implies $\BE_2(\k,\infty)$ by the previous discussion. Therefore, $\BE_q(\k,\infty)$ holds by Proposition \ref{Th:BE self-improvement}.
\end{proof}

\subsection[Independence of the $q$-Bochner inequality on $q$]{Independence of the {\boldmath{$q$}}-Bochner inequality on {\boldmath{$q$}}}

\begin{thm}\label{Th:q independence} If the $q$-Bakry--Émery estimate $\BE_q(\k,\infty)$ holds for some $q\in[1,\infty)$, then it holds for every $q\in[1,\infty)$.
\end{thm}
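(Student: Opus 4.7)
Plan. By Proposition \ref{Th:BE self-improvement}, the whole claim reduces to the single implication
\[
\BE_q(\k,\infty) \ \Longrightarrow\ \BE_2(\k,\infty),
\]
since then Proposition \ref{Th:BE self-improvement} itself recovers $\BE_{q'}(\k,\infty)$ for every remaining $q' \in [1,\infty)$. I would split the argument according to whether $q \le 2$ or $q > 2$, with only the second case presenting any real difficulty.

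\emph{Easy case $q \in [1,2]$.} This is immediate from what is already at hand: Theorem \ref{Th:BEq equiv GEq} converts $\BE_q(\k,\infty)$ into $\GE_q(\k)$; Lemma \ref{hierarchy}, whose hypothesis $q \le 2$ is now satisfied, upgrades this to $\GE_2(\k)$; and Theorem \ref{Th:BEq equiv GEq} translates back to $\BE_2(\k,\infty)$.

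\emph{Hard case $q \in (2,\infty)$.} Here the hierarchy of Lemma \ref{hierarchy} points in the wrong direction, so a genuine self-improvement of $\BE_q(\k,\infty)$ is required. My plan is to run the argument used in the proof of Proposition \ref{Th:BE self-improvement} in reverse: for $f \in \TestF(\mms)$ and a nonnegative $\psi \in \TestF(\mms)$, I would test $\BE_q(\k,\infty)$ against a suitable regularization of $\phi := \psi\,(\Gamma(f) + \varepsilon)^{1 - q/2}$, so that the factors $\Gamma(f)^{q/2}$ and $\Gamma(f)^{q/2-1}$ in the definition of $\BE_q(\k,\infty)$ collapse to $\Gamma(f)$ and $1$, respectively. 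Using the chain rule of Lemma \ref{Le:Chain rule DELTA} for $\DELTA$ to expand $\Delta\phi$, combined with a Cauchy--Schwarz treatment of the cross term $\Gamma(\Gamma(f), \psi)$ in the spirit of \cite{savare2014}, should recover the iterated-gamma inequality \eqref{iterated gamma} directly from $\BE_q(\k,\infty)$. Since \eqref{iterated gamma} trivially implies $\gamma_2(f) \geq \k\,\Gamma(f)$ $\meas$-a.e., this yields $\BE_2(\k,\infty)$ and we are done.

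\emph{Main obstacle.} The delicate point for $q > 2$ is that the exponent $1-q/2$ is negative, so the candidate $\phi$ above is unbounded and a priori not in the admissible class $\{\phi \in \Dom(\Delta) \cap \Ell^\infty(\mms,\meas) : \Delta\phi \in \Ell^\infty(\mms,\meas)\}$ required by the definition of $\BE_q(\k,\infty)$; moreover $\Gamma(f)$ need not itself lie in $\Dom(\Delta)$. I would overcome this by first replacing $f$ by its heat-flow regularization $\ChHeat_s f \in \TestF(\mms)$ (whose carré du champ enjoys improved regularity), multiplying $\phi$ by an increasing truncation of $(\Gamma(\ChHeat_s f) + \varepsilon)^{1 - q/2}$ to force admissibility, and then taking successive limits as the truncation is removed, $\varepsilon \downarrow 0$, and finally $s \downarrow 0$, mirroring the closing approximation step in the proof of Proposition \ref{Th:BE self-improvement}.
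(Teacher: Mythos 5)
Your easy case $q \in [1,2]$ is exactly the paper's argument (pass $\BE_q \to \GE_q \to \GE_2 \to \BE_2$ via Theorem~\ref{Th:BEq equiv GEq} and Lemma~\ref{hierarchy}). The hard case $q \in (2,\infty)$, however, has a genuine gap: testing $\BE_q(\k,\infty)$ against $\phi = \psi\,(\Gamma(f)+\varepsilon)^{1-q/2}$ does \emph{not} recover the iterated-gamma inequality \eqref{iterated gamma}. If you run the chain rule through, the $\Gamma(\Gamma(f))$ contribution coming from $\Phi''(\Gamma(f))$ with $\Phi(r) = r^{1-q/2}$ carries the sign $(1-q/2)(-q/2) > 0$, and after simplification the inequality you obtain is precisely the reformulation \eqref{Eq:Observation}, namely
\begin{equation*}
\gamma_2(f) + \tfrac{q-2}{4}\,\tfrac{\Gamma(\Gamma(f))}{\Gamma(f)} \;\geq\; \k\,\Gamma(f)
\qquad\meas\text{-a.e.~on } \{\Gamma(f)>0\},
\end{equation*}
whereas \eqref{iterated gamma} asserts the strictly stronger
\begin{equation*}
\gamma_2(f) - \tfrac{1}{4}\,\tfrac{\Gamma(\Gamma(f))}{\Gamma(f)} \;\geq\; \k\,\Gamma(f).
\end{equation*}
The two estimates differ by the nonnegative quantity $\tfrac{q-1}{4}\,\Gamma(\Gamma(f))/\Gamma(f)$, so ``reversing'' the computation from Proposition~\ref{Th:BE self-improvement} only takes you back to the starting point, not forward to \eqref{iterated gamma}. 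No single Cauchy--Schwarz step on the cross term $\Gamma(\Gamma(f),\psi)$ can flip that sign either, since the sign is forced by the second derivative of the power function you have inserted, not by the cross term.

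What is actually needed for $q > 2$ is a genuinely iterative self-improvement, and this is what the paper does, following \cite{han2018}: starting from \eqref{Eq:Observation} with coefficient $\tfrac{q-2}{4}$ on $\Gamma(\Gamma(f))$, one establishes via the calculus rules for $\boldsymbol{\Gamma}_2$ (in the spirit of \cite{savare2014}) that the coefficient can be strictly decreased by a quantified amount, leading to the recursion $P(r) = r - \tfrac{1}{4(r+1)}$. By \cite[Lemma~3.3]{han2018}, iterating $P$ drives the coefficient to any prescribed $\varepsilon > 0$, giving \eqref{Eq:Epsilon inequality}, and letting $\varepsilon \downarrow 0$ yields $\gamma_2(f) \geq \k\,\Gamma(f)$, i.e.~$\BE_2(\k,\infty)$. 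This multi-step contraction is precisely the ingredient missing from your plan, and I don't see a way to collapse it into a single test-function substitution. Your regularization and truncation ideas for making $\phi$ admissible are reasonable in themselves, but they address a technical issue, not the structural one.
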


Lemma \ref{hierarchy} and Proposition \ref{Th:BE self-improvement} give the assertion of this theorem when starting with $\BE_q(\k,\infty)$ for $q\in [1,2]$. To cover the range $q\in (2,\infty)$, we adapt the arguments of \cite{han2018} to prove that a $q$-Bakry--Émery inequality $\BE_q(\k,\infty)$ for some $q\in [1,\infty)$ implies $\BE_2(\k,\infty)$. A crucial point in this argument is that our a priori assumption $\RCD(K,\infty)$ guarantees  $\Gamma(f)^{q/2} \in \Dom(\DELTA)$ for all $f\in\TestF(\mms)$ and \emph{every} $q\in[1,\infty)$.

Arguing exactly as in the constant situation in \cite[Lemma 3.2]{han2018} (see also \cite[Theorem 3.4]{savare2014}), one can show that $\BE_q(\k,\infty)$ holds if and only if the inequalities
\begin{align}\label{Eq:Observation}
\frac{1}{2} \Gamma(f)\, \delta(\Gamma(f)) + \frac{q-2}{4}\Gamma(\Gamma(f)) \geq \Gamma(f)\,\Gamma(f,\Delta f) + \k\,\Gamma(f)^2\quad\meas\text{-a.e.}\quad\text{and}\quad \overline{\Gamma(f)}\,\DELTA_{\perp}\Gamma(f) \geq 0
\end{align}
are valid for every $f\in\TestF(\mms)$. Here, $\delta(\Gamma(f))$ denotes the density of the $\meas$-absolutely continuous part of $\DELTA\Gamma(f)$ w.r.t.~$\meas$, $\DELTA_{\perp}\Gamma(f)$ stands for the corresponding $\meas$-singular part, and $\overline{\Gamma(f)}$ is the quasi-continuous representative of $\Gamma(f)$.
\begin{Prop} Let $\BE_q(\k,\infty)$ be satisfied for some $q\in[1,\infty)$. Then $\BE_2(\k,\infty)$ holds.
\end{Prop}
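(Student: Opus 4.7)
The plan is to derive from $\BE_q(\k,\infty)$ the $q$-free self-improvement estimate \eqref{iterated gamma}, from which $\BE_2(\k,\infty)$ follows at once. For $q\in[1,2]$ this is already in reach: by Theorem \ref{Th:BEq equiv GEq}, $\BE_q(\k,\infty)\Rightarrow\GE_q(\k)$, then Lemma \ref{hierarchy} upgrades to $\GE_2(\k)$, and the direction $\GE_q\Rightarrow\BE_q$ of Theorem \ref{Th:BEq equiv GEq} for $q\geq 2$ (which does not invoke $\BE_2$) returns $\BE_2(\k,\infty)$. So I focus on the remaining range $q\in(2,\infty)$, following \cite{han2018}.

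The key idea is to apply the equivalent pointwise formulation \eqref{Eq:Observation} of $\BE_q(\k,\infty)$ not to $f\in\TestF(\mms)$ directly, but to perturbations $\phi(f)$, where $\phi\in\Cont^\infty(\R)$ is chosen so that $\phi(f)\in\TestF(\mms)$. Using the chain rules for $\Gamma$ and the Laplacian together with Lemma \ref{Le:Chain rule DELTA}, each of $\Gamma(\phi(f))$, $\delta(\Gamma(\phi(f)))$, $\Gamma(\phi(f),\Delta\phi(f))$, and $\Gamma(\Gamma(\phi(f)))$ expands as a polynomial in $\phi'(f),\phi''(f),\phi'''(f)$ with coefficients drawn from $\Gamma(f)$, $\gamma_2(f)$, $\Gamma(\Gamma(f))$, $\Gamma(f,\Gamma(f))$, and $\Delta f$. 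Substituting into \eqref{Eq:Observation} for $\phi(f)$, the $\phi'''(f)$-contributions cancel and one arrives, $\meas$-a.e., at a quadratic inequality of the form
\begin{equation*}
(\phi'(f))^2 A + (q-1)\,\phi'(f)\phi''(f)\,\Gamma(f)\,\Gamma(f,\Gamma(f)) + (q-1)\,(\phi''(f))^2\,\Gamma(f)^3 \geq 0,
\end{equation*}
where $A := \Gamma(f)\bigl(\gamma_2(f) - k\,\Gamma(f)\bigr) + \tfrac{q-2}{4}\Gamma(\Gamma(f))$ is the $\BE_q$ defect for $f$. Since at any density point $x$ with $\Gamma(f)(x)>0$ the parameters $\phi'(f(x))$ and $\phi''(f(x))$ can be prescribed independently by choice of $\phi$ near $f(x)$, this quadratic form in two real variables must be nonnegative on all of $\R^2$.

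The nonnegativity of the quadratic form, combined with the Cauchy--Schwarz inequality $\Gamma(f,\Gamma(f))^2\leq \Gamma(f)\,\Gamma(\Gamma(f))$ for the bilinear form $\Gamma$, and, where needed, a second iteration of the perturbation argument applied to the discriminant inequality $(q-1)\,\Gamma(f,\Gamma(f))^2\leq 4A\,\Gamma(f)$, extracts the $q$-free estimate \eqref{iterated gamma}. From here $\BE_2(\k,\infty)$ follows in its equivalent form \eqref{Eq:Observation} with $q=2$: on $\{\Gamma(f)>0\}$, dividing \eqref{iterated gamma} by $4\Gamma(f)$ gives $\gamma_2(f)-k\,\Gamma(f)\geq 0$; on $\{\Gamma(f)=0\}$ both sides vanish; and the singular-part condition $\overline{\Gamma(f)}\,\DELTA_\perp\Gamma(f)\geq 0$ from \eqref{Eq:Observation} is already $q$-independent.

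The main obstacle is the algebraic heart of the self-improvement: carrying out the discriminant/iteration step cleanly enough to eliminate the $(q-2)$ factor from $A$, rather than being left with the weaker intermediate bound $(q-1)\Gamma(f,\Gamma(f))^2\leq 4A\,\Gamma(f)$. This is precisely where the a priori $\RCD(K,\infty)$ hypothesis is used: it guarantees $\Gamma(f)^{q/2}\in\Dom(\DELTA)$ for every $q\in[1,\infty)$ and every $f\in\TestF(\mms)$, providing the regularity needed to run all chain-rule computations rigorously at the measure-valued level.
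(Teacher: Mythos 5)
Your proposal follows the same route as the paper: reduce to $q\in(2,\infty)$, pass to the equivalent pointwise formulation \eqref{Eq:Observation}, and run Han's self-improvement via $\Cont^\infty$-perturbations $\phi(f)$ of $f\in\TestF(\mms)$, using the $\RCD(K,\infty)$ hypothesis to guarantee $\Gamma(f)^{q/2}\in\Dom(\DELTA)$ and noting that the singular-part condition in \eqref{Eq:Observation} is $q$-independent. Up to this point the two arguments agree.

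There is, however, a genuine gap in how you close the iteration. After the first perturbation step the discriminant condition for your quadratic form gives, as you correctly record, $(q-1)\Gamma(f,\Gamma(f))^2\leq 4A\,\Gamma(f)$ with $A=\Gamma(f)\bigl(\gamma_2(f)-k\,\Gamma(f)\bigr)+\tfrac{q-2}{4}\Gamma(\Gamma(f))$. You then invoke Cauchy--Schwarz $\Gamma(f,\Gamma(f))^2\leq\Gamma(f)\,\Gamma(\Gamma(f))$ and ``a second iteration of the perturbation argument'' to extract \eqref{iterated gamma}. This does not work: Cauchy--Schwarz leaves the sign of $(q-1)\Gamma(f,\Gamma(f))^2-(q-2)\Gamma(f)\Gamma(\Gamma(f))$ undetermined, and a single further iteration does not in general kill the $(q-2)$ factor. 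What actually happens, and what the paper does, is that the discriminant inequality is reinterpreted as a $\BE_{q'}$-type condition with a \emph{strictly smaller but still positive} coefficient governed by the polynomial $P(r)=r-\tfrac{1}{4(r+1)}$. One then needs \cite[Lemma 3.3]{han2018}: for any prescribed $\varepsilon>0$ there are $n\in\N$ and $q'\geq q$ with $P^n(q')=\varepsilon$, so an \emph{unbounded-in-advance} number of iterations is required to drive the $\Gamma(\Gamma(f))$-coefficient below $\varepsilon$. The iteration therefore yields \eqref{Eq:Epsilon inequality} for every $\varepsilon>0$, equivalently $\BE_{2+4\varepsilon}(\k,\infty)$, and $\BE_2(\k,\infty)$ follows by letting $\varepsilon\downarrow 0$ with monotone convergence. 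Your plan to obtain \eqref{iterated gamma} directly from the iteration is also slightly miscalibrated: \eqref{iterated gamma} is a \emph{consequence} of $\BE_2(\k,\infty)$ (derived at the start of Section \ref{Ch:Bochner} via Savaré's argument), not the natural terminus of the self-improvement loop. These are fillable defects, but as written the proposal understates the heart of Han's argument.
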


\begin{proof} As discussed above, it suffices to show the claimed implication starting from $\GE_q(\k)$ with $q\in (2,\infty)$. 
Due to our standing assumption $\RCD(K,\infty)$, the set $\TestF(\mms)$ is dense in $\W^{1,2}(\mms)$, thus it is enough to check the $\BE_2(\k,\infty)$ inequality for $f\in\TestF(\mms)$. Moreover, note that $\GE_q(\k)$ already yields $\overline{\Gamma(f)}\,\DELTA_{\perp}\Gamma(f) \geq 0$ by \eqref{Eq:Observation} which is independent of $q$.

The crucial point is to show that
\begin{equation}\label{Eq:Epsilon inequality}
\frac{1}{2} \Gamma(f)\,\delta(\Gamma(f)) + \varepsilon\Gamma(\Gamma(f)) \geq \Gamma(f)\,\Gamma(f,\Delta f) + \k\,\Gamma(f)^2\quad\meas\text{-a.e.}
\end{equation}
for every $\varepsilon > 0$. Given the observation \eqref{Eq:Observation}, this will imply $\BE_{2+4\varepsilon}(\k,\infty)$ for each $\varepsilon > 0$, and eventually letting $\varepsilon \downarrow 0$ and applying the monotone convergence theorem, we get the claimed $\BE_2(\k,\infty)$ condition.

Given $\BE_{q'}(\k,\infty)$ for arbitrary $q'\geq q$, it is straightforward to follow the proof of \cite[Theorem 3.6]{han2018}, which relies on generic calculus rules for $\boldsymbol{\Gamma}_2$ and closely follows the strategy presented in \cite{savare2014}, to prove \eqref{Eq:Epsilon inequality} with $\varepsilon$ replaced by $\smash{q' - \frac{1}{4(q'+1)}}$. Now, according to \cite[Lemma 3.3]{han2018}, given any $\varepsilon > 0$ there exist $n\in\N$ and $q' \geq q$ so that $P^n(q') = \varepsilon$, where $P(r) := \smash{r - \frac{1}{4(r+1)}}$ and $P^n$ is the $n$-fold composition of $P$. Since $\BE_q(\k,\infty)$ yields $\BE_{q'}(\k,\infty)$, iterating the foregoing reasoning allows us to finally reach the inequality \eqref{Eq:Epsilon inequality}.
\end{proof}

As for \cite[Proposition 3.7]{han2018}, it is possible to obtain an equivalent characterization of $\BE_2(\k,\infty)$ in terms of a lower bound on the measure-valued \emph{Ricci tensor}
\begin{equation*}
\boldsymbol{\mathrm{Ric}}(\nabla f,\nabla f) := \boldsymbol{\Gamma}_2(f) - \big\vert \!\Hess f\big\vert_{\mathsf{HS}}^2\ \!\meas\quad\text{for every }f\in\TestF(\mms)
\end{equation*}
introduced in \cite{gigli2018}. As for the measure-valued Laplacian $\DELTA$, we denote by $\mathrm{ric}(\nabla f,\nabla f)$ the density of the $\meas$-absolutely continuous part and by $\boldsymbol{\mathrm{Ric}}_\perp(\nabla f,\nabla f)$ the $\meas$-singular part of $\boldsymbol{\mathrm{Ric}}(\nabla f,\nabla f)$, respectively.

\begin{cor} The metric measure space $(\mms,\met,\meas)$ satisfies $\BE_2(\k,\infty)$ if and only if for every $f\in\TestF(\mms)$, we have
\begin{equation*}
\mathrm{ric}(\nabla f,\nabla f) \geq \k\,\Gamma(f)\quad\meas\text{-a.e.}\quad\text{and}\quad\boldsymbol{\mathrm{Ric}}_\perp(\nabla f,\nabla f) \geq 0.
\end{equation*}
\end{cor}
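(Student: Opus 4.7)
The plan is to adapt the argument of \cite[Proposition 3.7]{han2018} to variable curvature, pivoting on the defining identity
\begin{equation*}
\boldsymbol{\Gamma}_2(f) = \boldsymbol{\mathrm{Ric}}(\nabla f,\nabla f) + \big\vert\!\Hess f\big\vert_{\mathsf{HS}}^2\ \!\meas
\end{equation*}
for $f\in\TestF(\mms)$, together with the observation that the Hessian contribution is $\meas$-absolutely continuous. Consequently, the singular parts of $\boldsymbol{\Gamma}_2(f)$ and of $\boldsymbol{\mathrm{Ric}}(\nabla f,\nabla f)$ coincide, while the absolutely continuous densities satisfy $\gamma_2(f) = \mathrm{ric}(\nabla f,\nabla f) + \vert\Hess f\vert_{\mathsf{HS}}^2$ $\meas$-almost everywhere. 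I would also record that, by integrating by parts against the measure-valued Laplacian in the spirit of \eqref{DELTA definition}, $\BE_2(\k,\infty)$ is equivalent to the measure inequality $\boldsymbol{\Gamma}_2(f) \geq \k\,\Gamma(f)\meas$ holding for every $f\in\TestF(\mms)$.

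For the implication from $\BE_2(\k,\infty)$ to the two Ricci bounds, the crucial input will be Gigli's Hessian self-improvement from \cite{gigli2018}, which, in the constant regime, sharpens $\boldsymbol{\Gamma}_2(f)\geq K\Gamma(f)\meas$ to $\boldsymbol{\Gamma}_2(f) \geq K\Gamma(f)\meas + \vert\Hess f\vert_{\mathsf{HS}}^2\meas$ via a polarization argument built on $\boldsymbol{\Gamma}_2$ evaluated on finite linear combinations of test functions. Because the curvature lower bound enters only as a pointwise minorant of $\boldsymbol{\Gamma}_2$, I expect the same construction, applied now under $\BE_2(\k,\infty)$, to deliver $\boldsymbol{\Gamma}_2(f) \geq \vert\Hess f\vert_{\mathsf{HS}}^2\meas + \k\,\Gamma(f)\meas$, i.e.~$\boldsymbol{\mathrm{Ric}}(\nabla f,\nabla f) \geq \k\,\Gamma(f)\meas$. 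Splitting this into absolutely continuous and singular parts yields precisely $\mathrm{ric}(\nabla f,\nabla f) \geq \k\,\Gamma(f)$ $\meas$-a.e.~and $\boldsymbol{\mathrm{Ric}}_\perp(\nabla f,\nabla f) \geq 0$. The converse direction is essentially tautological: the two Ricci bounds combine into $\boldsymbol{\mathrm{Ric}}(\nabla f,\nabla f) \geq \k\,\Gamma(f)\meas$, and the identity above together with $\vert\Hess f\vert_{\mathsf{HS}}^2 \geq 0$ then gives $\boldsymbol{\Gamma}_2(f) \geq \k\,\Gamma(f)\meas$, which is $\BE_2(\k,\infty)$ in its measure form.

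The main obstacle will be verifying that Gigli's Hessian self-improvement genuinely carries over to the variable setting. His proof is essentially a formal polarization of the quadratic form $f\mapsto \boldsymbol{\Gamma}_2(f)$, and the constant $K$ intervenes only as a pointwise $\meas$-a.e.~lower bound $\gamma_2 - K\Gamma \geq 0$; replacing $K$ by $\k$ should be harmless, exploiting both the pointwise character of the Bakry--Émery test inequality and the standing reduction at the start of Section \ref{Ch:Bochner} under which we may assume $\k$ bounded. Once this localization of Gigli's argument is checked, the remainder is a straightforward bookkeeping of absolutely continuous and singular parts of the relevant measures, and no further self-improvement beyond what is already encoded in Theorem \ref{Th:BEq equiv GEq} and Proposition \ref{Th:BE self-improvement} is needed.
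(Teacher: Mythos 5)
Your proposal is correct and matches the approach the paper intends: the paper simply cites the constant-curvature version from Han's Proposition 3.7, which itself rests precisely on the decomposition $\boldsymbol{\Gamma}_2(f) = \boldsymbol{\mathrm{Ric}}(\nabla f,\nabla f) + |\!\Hess f|_{\mathsf{HS}}^2\,\meas$ and Gigli's Hessian self-improvement, just as you describe. The only thing worth saying slightly more carefully is that the Hessian itself, and with it the splitting into absolutely continuous and singular parts, is furnished by the standing $\RCD(K,\infty)$ assumption (Gigli's second-order calculus), independent of $\k$; after that, the variable bound enters only through the nonnegativity of the measure $\boldsymbol{\Gamma}_2(f) - \k\,\Gamma(f)\,\meas$ on test functions and through its polarization, and Gigli's Cauchy--Schwarz argument for the Hessian lower bound goes through verbatim with $\k$ in place of $K$ — as you anticipate.
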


\subsection{Localization of Bochner's inequality}

To study a suitable local-to-global behavior of the $q$-Bochner inequality, we present a reformulation of it where we enlarge the class of functions $\phi$. Recall that our standing assumption $\RCD(K,\infty)$ implies  $\Gamma(f)^{q/2} \in\Sob^{1,2}(\mms)$ for every $f\in\TestF(\mms)$ and $q\in[1,\infty)$.

\begin{lma}\label{First order BE} Given $q\in[1,\infty)$, the $\BE_q(\k,\infty)$ property holds if and only if for all $f\in\TestF(\mms)$ and all nonnegative $\phi\in\Sob^{1,2}(\mms)\cap\Ell^\infty(\mms,\meas)$,
\begin{equation}\label{Gamma BE}
-\int_\mms \Big(\frac{1}{q}\Gamma\big(\Gamma(f)^{q/2},\phi\big) + \Gamma(f)^{q/2-1}\,\Gamma(f,\Delta f)\,\phi\Big)\d\meas \geq \int_\mms k\,\Gamma(f)^{q/2}\,\phi\d\meas.
\end{equation}
\end{lma}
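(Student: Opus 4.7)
The plan is to read \eqref{Gamma BE} as the Dirichlet-form integration-by-parts reformulation of $\BE_q(\k,\infty)$. The bridge between the two forms is the identity
\begin{equation*}
\int_\mms \Gamma(f)^{q/2}\,\Delta\phi\d\meas = -\int_\mms\Gamma\big(\Gamma(f)^{q/2},\phi\big)\d\meas,
\end{equation*}
which is precisely the definition of $\Delta$ applied to the pair $(\Gamma(f)^{q/2},\phi)$. It is meaningful under our standing $\RCD(K,\infty)$ assumption since, for every $f\in\TestF(\mms)$ and every $q\in[1,\infty)$, the function $\Gamma(f)^{q/2}$ belongs to $\Sob^{1,2}(\mms)\cap\Ell^\infty(\mms,\meas)$, as recalled just before the statement of the lemma.

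For the implication $\BE_q(\k,\infty)\Rightarrow$ \eqref{Gamma BE}, I would first apply the above identity to each $\phi\in\Dom(\Delta)\cap\Ell^\infty(\mms,\meas)$ with $\Delta\phi\in\Ell^\infty(\mms,\meas)$ admissible in the definition of $\BE_q(\k,\infty)$; this immediately rewrites that inequality as \eqref{Gamma BE} on this subclass. Then I extend to an arbitrary nonnegative $\phi\in\Sob^{1,2}(\mms)\cap\Ell^\infty(\mms,\meas)$ via the resolvent approximation $\phi_n:=n\int_0^\infty\expo^{-nt}\,\ChHeat_t\phi\d t$, $n\in\N$. These functions are nonnegative, lie in $\Dom(\Delta)\cap\Ell^\infty(\mms,\meas)$, are uniformly bounded in $\Ell^\infty(\mms,\meas)$ by $\Vert\phi\Vert_{\Ell^\infty(\mms,\meas)}$, converge to $\phi$ strongly in $\Sob^{1,2}(\mms)$ (and $\meas$-a.e.~along a subsequence), and satisfy $\Delta\phi_n = n(\phi_n-\phi)\in\Ell^\infty(\mms,\meas)$. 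Passing to the limit in \eqref{Gamma BE} applied to $\phi_n$ then uses $\Sob^{1,2}(\mms)$-convergence on the carré du champ term and dominated convergence -- justified by the uniform $\Ell^\infty$ bound on $\phi_n$ and pointwise a.e.~convergence -- on the remaining two terms.

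Conversely, since any $\phi\in\Dom(\Delta)\cap\Ell^\infty(\mms,\meas)$ with $\Delta\phi\in\Ell^\infty(\mms,\meas)$ lies in $\Sob^{1,2}(\mms)\cap\Ell^\infty(\mms,\meas)$, applying \eqref{Gamma BE} to such $\phi$ and rewriting the $\Gamma$-pairing via the identity above recovers exactly the $\BE_q(\k,\infty)$ inequality; no approximation is needed in this direction.

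The main technical point is therefore the approximation step, and specifically the integrability of the cross term $\Gamma(f)^{q/2-1}\,\Gamma(f,\Delta f)\,\phi_n$ when $q<2$, for which the factor $\Gamma(f)^{q/2-1}$ is unbounded near the zero set of $\Gamma(f)$. This is handled by Cauchy--Schwarz together with the $\Ell^\infty$ bound on $\Gamma(f)$, yielding $\Gamma(f)^{q/2-1}\,\vert\Gamma(f,\Delta f)\vert \le C\,\Gamma(\Delta f)^{1/2}$ with $C$ depending only on $\Vert\Gamma(f)\Vert_{\Ell^\infty(\mms,\meas)}$ and $q$; since $\Delta f\in\Sob^{1,2}(\mms)$ entails $\Gamma(\Delta f)^{1/2}\in\Ell^2(\mms,\meas)$, combining this with the uniform $\Ell^\infty$ control on $\phi_n$ supplies a valid dominating integrand and closes the passage to the limit.
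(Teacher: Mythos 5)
Your argument is essentially the paper's. Both directions hinge on the same integration-by-parts identity $\int_\mms \Gamma(f)^{q/2}\,\Delta\phi\,\d\meas = -\int_\mms \Gamma\big(\Gamma(f)^{q/2},\phi\big)\,\d\meas$ (valid since $\Gamma(f)^{q/2}\in\Sob^{1,2}(\mms)$ for $f\in\TestF(\mms)$ on $\RCD(K,\infty)$ spaces), and the forward direction requires enlarging the class of $\phi$ from $\Dom(\Delta)\cap\Ell^\infty$ with $\Delta\phi\in\Ell^\infty$ to $\Sob^{1,2}\cap\Ell^\infty$. The paper performs this enlargement via the mollified heat flow $\mathfrak{P}_\varepsilon\phi=\int_0^\infty\eta(s)\,\ChHeat_{\varepsilon s}\phi\,\d s$; you use the resolvent approximation $\phi_n = n(n-\Delta)^{-1}\phi$. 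These are interchangeable: both preserve nonnegativity and $\Ell^\infty$-bounds, land in $\Dom(\Delta)$ with bounded Laplacian, and converge in $\Sob^{1,2}$. Your Cauchy--Schwarz estimate $\Gamma(f)^{q/2-1}\,\vert\Gamma(f,\Delta f)\vert\le\Vert\Gamma(f)\Vert_{\Ell^\infty}^{(q-1)/2}\,\Gamma(\Delta f)^{1/2}$ for the dominated-convergence step is also correct and worth spelling out.

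One small gap: in the converse direction you write that \eqref{Gamma BE} ``recovers exactly the $\BE_q(\k,\infty)$ inequality; no approximation is needed.'' This overlooks that \eqref{Gamma BE} quantifies only over $f\in\TestF(\mms)$, whereas $\BE_q(\k,\infty)$ demands the inequality for all $f\in\Dom(\Delta)$ with $\Delta f\in\Sob^{1,2}(\mms)$ and $\Gamma(f)\in\Ell^\infty(\mms,\meas)$, which is a strictly larger class. You do need to approximate such $f$ by $\ChHeat_t f\in\TestF(\mms)$ as $t\downarrow 0$ and pass to the limit -- exactly as done at the end of the proof of Proposition~\ref{Th:BE self-improvement} -- and this is what the paper's phrase ``density of $\TestF(\mms)$ in $\Sob^{1,2}(\mms)$'' is pointing at. So approximation is in fact needed on the $f$-side in the converse direction, even though none is needed on the $\phi$-side.
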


\begin{proof} Obtaining $\BE_q(\k,\infty)$ from \eqref{Gamma BE} through integration by parts and the density of $\TestF(\mms)$ in $\Sob^{1,2}(\mms)$ is easy, thus we focus on the converse. Trivially, the inequality \eqref{Gamma BE} holds for all $\phi\in\Dom(\Delta)\cap\Ell^\infty(\mms,\meas)$ with $\Delta\phi\in\Ell^\infty(\mms,\meas)$. Recall now, e.g.~from \cite{gigli2018, savare2014}, that any function $\phi\in\Sob^{1,2}(\mms)\cap\Ell^\infty(\mms,\meas)$ can be approximated in $\Sob^{1,2}(\mms)$ by means of a mollified heat flow
\begin{equation*}
\mathfrak{P}_\varepsilon \phi := \int_0^\infty \eta(s)\, \ChHeat_{\varepsilon s}\phi\d s,\quad\text{where}\quad\eta\in\Cont_{\mathrm{c}}^\infty((0,\infty);[0,\infty))\quad\text{with}\quad\int_0^\infty\eta(s)\d s = 1,
\end{equation*}
as $\varepsilon \downarrow 0$. Since $\mathfrak{P}_\varepsilon\phi \in\Dom(\Delta)\cap\Ell^\infty(\mms,\meas)$ and $\Delta\mathfrak{P}_\varepsilon\phi = -\int_0^\infty \eta'(s)\, \ChHeat_{\varepsilon s}\phi\d s/\varepsilon\in\Ell^\infty(\mms,\meas)$ for every $\varepsilon > 0$, this allows us to extend the class of admissible $\phi$.
\end{proof}

\begin{Def}\label{Def:Local BE} We say that the \emph{local $q$-Bakry--Émery condition} with variable curvature bound $\k$, briefly $\BE_{q,\loc}(\k,\infty)$, with $q\in[1,\infty)$ holds if for every $z\in \mms$ there exists $\delta > 0$ such that 
\begin{equation*}
-\int_\mms \Big(\frac{1}{q}\Gamma\big(\Gamma(f)^{q/2},\phi\big) + \Gamma(f)^{q/2-1}\,\Gamma(f,\Delta f)\,\phi\Big)\d \meas\geq \int_\mms\k\,\Gamma(f)^{q/2}\,\phi\d\meas
\end{equation*}
for all $f\in\TestF(\mms)$ and every nonnegative $\phi\in\Sob^{1,2}(\mms)\cap\Ell^\infty(\mms,\meas)$ with $\supp\phi \subset B_\delta(z)$.  
\end{Def}

It is elementary to pass from the global $\BE_q(\k,\infty)$ condition to $\BE_{q,\loc}(\k,\infty)$. The converse is more involved.

\begin{thm}\label{Thm:BE local global} For $q\in[1,\infty)$, the property $\BE_{q,\loc}(\k,\infty)$ implies the $\BE_q(\k,\infty)$ condition.
\end{thm}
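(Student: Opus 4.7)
The plan is to reformulate the global condition via Lemma \ref{First order BE} and then patch the local estimates together by means of a Lipschitz partition of unity argument, in the spirit of standard local-to-global passages for diffusion operators.

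First, by Lemma \ref{First order BE}, it suffices to verify the first-order inequality \eqref{Gamma BE} for every $f\in\TestF(\mms)$ and every nonnegative $\phi\in\Sob^{1,2}(\mms)\cap\Ell^\infty(\mms,\meas)$. Next, I build a Lipschitz partition of unity subordinate to a cover of $\mms$ on which the local condition applies. For each $z\in\mms$, pick $\delta(z)>0$ as in Definition \ref{Def:Local BE}. Separability of $(\mms,\met)$ yields paracompactness, so the open cover $\smash{\{B_{\delta(z)/2}(z):z\in\mms\}}$ admits a countable locally finite refinement $(B_{r_i}(z_i))_{i\in\N}$ with $2r_i\leq\delta(z_i)$. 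Setting $\psi_i(x):=\max\{0,r_i-\met(x,z_i)\}$ and $\chi_i:=\psi_i/\sum_j\psi_j$ produces a family $(\chi_i)_{i\in\N}\subset\Lip_\bs(\mms)$ with $0\leq\chi_i\leq 1$, $\supp\chi_i\subset B_{\delta(z_i)}(z_i)$, and $\sum_i\chi_i\equiv 1$ pointwise (the sum being locally finite).

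I then apply $\BE_{q,\loc}(\k,\infty)$ to $\phi\chi_i\in\Sob^{1,2}(\mms)\cap\Ell^\infty(\mms,\meas)$, which is nonnegative and supported inside $B_{\delta(z_i)}(z_i)$, for each $i\in\N$. Summing the resulting inequalities and using the Leibniz rule
\begin{equation*}
\Gamma\big(\Gamma(f)^{q/2},\phi\chi_i\big)=\chi_i\,\Gamma\big(\Gamma(f)^{q/2},\phi\big)+\phi\,\Gamma\big(\Gamma(f)^{q/2},\chi_i\big)
\end{equation*}
together with $\sum_i\chi_i\equiv 1$, the cross-sums $\sum_i\phi\,\Gamma(\Gamma(f)^{q/2},\chi_i)=\phi\,\Gamma(\Gamma(f)^{q/2},\sum_i\chi_i)$ vanish almost everywhere by local finiteness, and I formally recover \eqref{Gamma BE}.

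The main obstacle is the rigorous justification of the interchange of summation and integration. I will handle this by dominated convergence, relying on the following integrability facts: that $\k$ is bounded, by the reduction at the start of Section \ref{Ch:Bochner}; that $\Gamma(f),\Gamma(f)^{q/2}\in\Ell^1(\mms,\meas)\cap\Ell^\infty(\mms,\meas)$ for every $f\in\TestF(\mms)$ and $q\in[1,\infty)$, the latter from $t^{q/2}\leq t+1$ for $t\geq 0$, $q\in[1,2]$, together with $t^{q/2}=t\cdot t^{q/2-1}$ when $q\geq 2$; and that $\Gamma(f,\Delta f)\in\Ell^1(\mms,\meas)$. For the vanishing cross-sum, the partial sum $\phi\,\Gamma(\Gamma(f)^{q/2},\sum_{i\leq N}\chi_i)$ converges pointwise to zero as $N\to\infty$ by local finiteness and is $\Ell^1$-dominated by Cauchy--Schwarz. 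The potential singularity of $\Gamma(f)^{q/2-1}$ for $q\in[1,2)$ is absorbed via the convention $\Gamma(f)^{q/2-1}\,\Gamma(f,\Delta f):=0$ on $\{\Gamma(f)=0\}$, which is legitimate since $\Gamma(f,\Delta f)=0$ almost everywhere on that set by the locality properties of $\Gamma$.
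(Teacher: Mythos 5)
Your overall strategy is the same as the paper's: reduce to the first-order reformulation of Lemma \ref{First order BE}, build a Lipschitz partition of unity subordinate to balls on which the local estimate holds, apply $\BE_{q,\loc}(\k,\infty)$ to $\phi$ times each bump, and sum. The difference lies in how you construct the partition of unity. The paper starts from a countable dense set $\{z_i\}$ and uses the telescoping device $\eta_i^0 := \tfrac{2}{\delta_i}\met(\cdot, \mms\setminus B_{\delta_i}(z_i))$, $\eta_i^* := \min\{\sum_{j\le i}\eta_j^0, 1\}$, $\eta_i := \eta_i^* - \eta_{i-1}^*$. This yields, with no further argument, that each $\eta_i$ is bounded Lipschitz and supported in $B_{\delta_i}(z_i)$, and that $\sum_{i\le N}\eta_i = \eta_N^* \nearrow 1$ pointwise. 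You instead take a locally finite open refinement and set $\chi_i := \psi_i/\sum_j\psi_j$.

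The quotient construction is the potential gap. For $\chi_i$ to belong to $\Lip_\bs(\mms)$ (as needed to apply Definition \ref{Def:Local BE}) you need $\sum_j\psi_j$ to be bounded away from zero on the support of $\psi_i$. That is automatic when bounded closed balls are compact, but an $\RCD(K,\infty)$ space need not be proper, so $\inf_{\supp\psi_i}\sum_j\psi_j$ could a priori be zero even though $\sum_j\psi_j>0$ everywhere. You would need to either argue this infimum is positive (e.g.\ by choosing the refinement with uniform overlap) or, better, switch to the telescoping construction, which avoids division entirely. Apart from this, the rest of your reasoning (Leibniz rule, cross-term cancellation via locality of $\Gamma$, the convention on $\{\Gamma(f)=0\}$ for $q<2$, and the dominated-convergence bookkeeping) matches what the paper takes for granted and is fine, modulo the usual level of detail; in particular the $\Ell^1$-domination of the partial cross-sums is asserted rather than shown, but the same comment applies to the paper's own one-line interchange of sum and integral.
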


\begin{proof} Let $\{z_i: i\in \N\}$ be a countable dense subset of $\mms$ and consider the collection of metric balls $\smash{B_{\delta_i}(z_i)}$ with $\delta_i>0$ chosen in such a way that the local $q$-Bakry--Émery inequality is satisfied around $z_i$. 
For $i\in\N$, define functions on $\mms$ by
\begin{equation*}
\eta^0_i := \frac2{\delta_i}\!\met\!\big(\cdot,X\setminus B_{\delta_i}(z_i)\big), \quad \eta^*_i:= \min\!\Big\lbrace \sum_{j=1}^{i}\eta_j^0, 1\Big\rbrace \quad\text{and}\quad \eta_i:=\eta_i^*-\eta_{i-1}^*.
\end{equation*}
Then $\eta_i\in\Lip_\bdd(X)$ with support in $B_{\delta_i}(z_i)$ and 
$\sum_{i=1}^\infty\eta_i=1$ on $\mms$.
Thus, for arbitrary nonnegative $\phi\in\Sob^{1,2}(\mms)\cap\Ell^\infty(\mms,\meas)$, the assumption $\BE_{q,\loc}(\k,\infty)$ allows us to deduce
\begin{align*}
&-\int_\mms \Big(\frac{1}{q}\Gamma\big(\Gamma(f)^{q/2},\phi\big) + \Gamma(f)^{q/2-1}\,\Gamma(f,\Delta f)\,\phi\Big)\d \meas\\
&\qquad\qquad =-\sum_{i=1}^\infty\int_\mms \Big(\frac{1}{q}\Gamma\big(\Gamma(f)^{q/2},\phi\,\eta_i\big) + \Gamma(f)^{q/2-1}\,\Gamma(f,\Delta f)\,\phi\,\eta_i\Big)\d \meas\\
&\qquad\qquad \geq\sum_{i=1}^\infty \int_\mms\k\,\Gamma(f)^{q/2}\,\phi\,\eta_i\d\meas  = \int_\mms\k\,\Gamma(f)^{q/2}\,\phi\d\meas.
\end{align*}
We conclude the assertion using Lemma \ref{First order BE} above.
\end{proof}

\section{From \boldmath{$2$}-gradient estimates to CD and differential \boldmath{$2$}-trans\-port estimates}\label{Ch:Grad to CD and DTE}
Our goal now is to derive the evolution variational inequality $\EVI(\k)$ with variable curvature bound $\k$ from the $2$-gradient estimate $\GE_2(\k)$.  
In \cite{sturm2015} there is a first part of the  proof for this implication. With some extra arguments, we  complete it. 

The key point is a localization argument. Indeed, it suffices to prove the $\EVI(\k)$ ``locally'', that is, for measures in a given small neighborhood. The heat flow will neither stay within this neighborhood nor in any other bounded region. We thus modify it by truncating its tails. Due to the Gaussian behavior of the heat flow, the difference is of arbitrary polynomial order for small times. This will imply the $\CD(\k,\infty)$ inequality locally. However, the latter is already known to give the $\CD(\k,\infty)$ inequality globally, and this in turn yields the global version of the $\EVI(\k)$.

\subsection{Tail estimates for the heat flow}

Given any ball $B_{\delta}(z)\subset \mms$ with $\delta > 0$ and $z\in\mms$, and $\rho\in\Prob(\mms)$, we put
\begin{equation*}
\WHeat_t^*\rho := \One_{B_{2\delta}(z)}\ \!\WHeat_t\rho + \WHeat_t\rho[X\setminus B_{2\delta}(z)]\ \!\delta_z.
\end{equation*}

\begin{lma}\label{lma-trunc} Assume that $\rho \in \Prob(\mms)$ is $\meas$-absolutely continuous with density $f\in \Ell^2(\mms,\meas)$ and $\supp\rho\subset B_\delta(z)$. Then for every $a>0$ there exists $t_*>0$ such that for all $t \in [0,t_*]$ and all bounded Borel functions $\phi$, we have
\begin{equation*}
\W_2^2(\WHeat_t^*\rho,\WHeat_t\rho) \le t^a\quad\text{and}\quad
\Big\vert\!\int_\mms \phi\d\WHeat_t^*\rho - \int_\mms \phi \d\WHeat_t\rho\Big\vert \le t^a\, \sup\vert\phi\vert(\mms).
\end{equation*}
\end{lma}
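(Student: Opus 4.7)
The key observation is that by construction $\WHeat_t^*\rho$ agrees with $\WHeat_t\rho$ on the ball $B_{2\delta}(z)$ and collapses all the remaining mass outside this ball to the single point $z$. Both quantities to be estimated therefore reduce to tail controls for $\WHeat_t\rho$. Writing, via item (g) of Section~\ref{Ch:Preliminaries}, $\B$ for a Brownian motion on $\mms$ with $\B_0\sim\rho$ under some probability $\PP_\rho$, I set
\begin{equation*}
m(t) := \PP_\rho\big[\B_{2t}\notin B_{2\delta}(z)\big]\quad\text{and}\quad M(t) := \EE_\rho\big[\met^2(\B_{2t},z)\,\One_{\B_{2t}\notin B_{2\delta}(z)}\big],
\end{equation*}
so that $\WHeat_t\rho[\mms\setminus B_{2\delta}(z)] = m(t)$.

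For the second inequality I note that the signed measure $\WHeat_t^*\rho - \WHeat_t\rho$ has positive and negative parts each of mass exactly $m(t)$, yielding
\begin{equation*}
\Big\vert\!\int_\mms \phi\d\WHeat_t^*\rho - \int_\mms \phi\d\WHeat_t\rho\Big\vert \leq 2m(t)\sup\vert\phi\vert(\mms).
\end{equation*}
For the first inequality I use the sub-optimal coupling which is the identity on $B_{2\delta}(z)$ and sends every $x\in\mms\setminus B_{2\delta}(z)$ to $z$; this immediately gives $\W_2^2(\WHeat_t^*\rho,\WHeat_t\rho)\leq M(t)$. Since $\B_0\in B_\delta(z)$ $\PP_\rho$-almost surely, on the event $\{\met(\B_{2t},z)\geq 2\delta\}$ the triangle inequality forces $\met(\B_{2t},z)\leq 2\met(\B_{2t},\B_0)$; hence
\begin{equation*}
M(t)\leq 4\,\EE_\rho\big[\met^2(\B_{2t},\B_0)\,\One_{\met(\B_{2t},\B_0)\geq\delta}\big].
\end{equation*}

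It remains to show that $m(t)$ and the truncated second moment on the right above both decay faster than any power $t^a$ as $t\downarrow 0$. This is a sub-Gaussian tail estimate for the Brownian motion on an $\RCD(K,\infty)$ space: uniformly over $y$ in the bounded set $\supp\rho$ and over $t\in(0,t_0]$,
\begin{equation*}
\PP_y\big[\met(\B_{2t},y)\geq \delta\big] + \EE_y\big[\met^2(\B_{2t},y)\,\One_{\met(\B_{2t},y)\geq\delta}\big] \leq C\,\expo^{-c\delta^2/t},
\end{equation*}
with constants $C,c,t_0>0$ depending on $K$, $\delta$ and $z$. Integrating against $\rho$ and choosing $t_*>0$ sufficiently small in terms of $a$, $\delta$ and the geometry, both quantities are $\leq t^a$ on $[0,t_*]$, and the conclusion follows by absorbing the factor $2$ in the second inequality into $t_*$.

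\textbf{Main obstacle.} The only nontrivial input beyond triangle-inequality bookkeeping is the uniform sub-Gaussian tail bound above. This is a classical consequence of the Gaussian upper heat kernel estimates available in the $\RCD(K,\infty)$ setting; alternatively, one only needs polynomial moment bounds $\EE_y[\met^{2p}(\B_{2t},y)]=O(t^p)$, which can be derived from \eqref{Lip reg est} and combined with Markov's inequality applied with $p$ large depending on $a$. Care is required to get the estimate uniformly for $y\in\supp\rho$, but as $\supp\rho$ is contained in the bounded ball $B_\delta(z)$ this is standard.
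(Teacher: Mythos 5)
Your bookkeeping on the two inequalities (the sub-optimal coupling for $\W_2^2$, the $2m(t)$ bound for the total variation, the triangle inequality on the event $\{\met(\B_{2t},z)\ge 2\delta\}$) is correct and matches the paper's reduction to a tail moment estimate for $\WHeat_t\rho$. However, there is a genuine gap at the crucial step: you invoke a \emph{pointwise} sub-Gaussian tail bound
\begin{equation*}
\PP_y\big[\met(\B_{2t},y)\geq\delta\big] + \EE_y\big[\met^2(\B_{2t},y)\,\One_{\met(\B_{2t},y)\geq\delta}\big]\leq C\,\expo^{-c\delta^2/t}
\end{equation*}
uniformly over $y\in\supp\rho$, citing ``Gaussian upper heat kernel estimates available in the $\RCD(K,\infty)$ setting''. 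Such pointwise estimates are \emph{not} available here: $\RCD(K,\infty)$ spaces need not be volume doubling (they include infinite-dimensional Gaussian-type spaces), and without doubling and a local Poincar\'e inequality one does not get Li--Yau or Davies-type pointwise heat kernel upper bounds. The reason the lemma carries the hypothesis $f\in\Ell^2(\mms,\meas)$ at all is precisely that the available tool is the \emph{integrated} Gaussian estimate of \cite[Lemma~1.7]{sturm1995}, which bounds $\int_B \ChHeat_t f\d\meas$ for $\supp f\subset A$ in terms of $\Vert f\Vert_{\Ell^2}$, $\meas[B]^{1/2}$ and $\expo^{-\met(A,B)^2/4t}$. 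The paper combines this with a Cauchy--Schwarz estimate over annuli $B_{n\delta}(z)\setminus B_{(n-1)\delta}(z)$ and the volume growth bound $\meas[B_r(z)]\le\expo^{Cr^2}$ to sum the resulting series. Your version bypasses the $\Ell^2$-density and the annular decomposition entirely, which is exactly where the difficulty sits. Tellingly, Lemma~\ref{tail estimate} of the paper establishes the pointwise tail bound $\PP_x[\B_t\notin B_{3\delta}(z)]\le t^a$ that you take as input, but it does so \emph{as a consequence of} Lemma~\ref{lma-trunc} via a coupling argument with a reference Brownian motion starting from the uniform distribution on $B_{\delta/2}(z)$ --- so using it here would reverse the logical dependency. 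The alternative you sketch (polynomial moment bounds $\EE_y[\met^{2p}(\B_{2t},y)]=\mathrm{O}(t^p)$ via \eqref{Lip reg est} plus Markov's inequality) is also unjustified: the Lipschitz regularization estimate \eqref{Lip reg est} concerns spatial regularity of $\ChHeat_t f$ for bounded $f$ and does not by itself yield moment bounds on the displacement of the process.
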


\begin{proof}
To see the first assertion for $t>0$, the case $t=0$ being trivial, observe that
\begin{align*}
\W_2^2(\WHeat_t^*\rho,\WHeat_t\rho) &\leq \int_{\mms\setminus B_{2\delta}(z)} \met^2(z,x)\d\WHeat_t\rho(x)\\
&\leq \sum_{n=3}^\infty (n\delta)^2\int_{B_{n\delta}(z)\setminus B_{(n-1)\delta}(z)} \ChHeat_t f\d\meas\\
&\le \Vert f\Vert_{\Ell^2(\mms,\meas)}
 \, \sum_{n =3}^\infty (n\delta)^2 \,\Big(\meas\big[B_{n\delta}(z) \setminus B_{(n-1)\delta}(z)\big]\Big)^{1/2}\, \expo^{-(n-2)^2\delta^2/4t}
\end{align*}
where the last inequality comes from the integrated Gaussian heat kernel estimate of \cite[Lemma 1.7]{sturm1995}. Therefore, by the volume growth property in $\RCD(K,\infty)$ spaces and finally assuming that $t$ is small enough, we obtain
\begin{align*}
\W_2^2(\WHeat_t^*\rho,\WHeat_t\rho) &\le \Vert f\Vert_{\Ell^2(\mms,\meas)}\, \Big(\sum_{n=3}^\infty \meas\big[B_{n\delta}(z)\setminus B_{(n-1)\delta}(z)\big]\, \expo^{-n^2\delta^2/72t}\Big)^{1/2}\, \expo^{-\delta^2/8t}\\
&\le \Vert f\Vert_{\Ell^2(\mms,\meas)}\,\Big(\int_\mms\expo^{-\met^2(z,x)/72t}\d\meas(x)\Big)^{1/2}\, \expo^{-\delta^2/8t}\le t^a\!.
\end{align*}

The second assertion follows from the first one, since 
\begin{align*}
\Big\vert\!\int_\mms \phi \d\WHeat_t^*\rho - \int_\mms\phi\d\WHeat_t\rho\Big\vert \le \sup\vert\phi\vert(\mms)\, \WHeat_t\rho[\mms\setminus B_{2\delta}(z)] \le  \frac{\sup\vert\phi\vert(\mms)}{\delta^2}\, \W_2^2(\WHeat_t^*\rho, \WHeat_t\rho).\tag*{\qedhere}
\end{align*}
\end{proof}

In Chapter \ref{sec:transport}, we need the following result, which is a consequence of  Lemma \ref{lma-trunc}. 

\begin{lma}\label{tail estimate} For each $z\in X$, $\delta>0$ and $a>0$ there exists $t_*>0$ such that
\begin{equation*}
\PP_x\big[\B_t^x\notin B_{3\delta}(z)\big] \le t^a\quad\text{for every }x\in B_\delta(z) \text{ and } t\in [0,t_*],
\end{equation*}
where $\smash{\big(\PP_x,\B^x\big)}$ denotes Brownian motion on $\mms$ starting in $x$.
\end{lma}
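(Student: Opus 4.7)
The strategy is to transfer the integrated $\W_2^2$-tail estimate of Lemma~\ref{lma-trunc}, which applies to $\meas$-absolutely continuous initial measures with $\Ell^2$-density, to a pointwise tail estimate for Brownian motion issuing from the Dirac mass $\delta_x$. This proceeds in three steps: approximate $\delta_x$ by a probability measure with $\Ell^2$-density concentrated near $x$; apply Lemma~\ref{lma-trunc} to obtain an averaged bound; and upgrade to a pointwise bound using the Lipschitz regularization of $\ChHeat_{t/2}$.

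Concretely, for $x \in B_\delta(z)$ and $\eps \in (0,\delta/2)$, set $\rho_x^\eps := \meas[B_\eps(x)]^{-1}\,\One_{B_\eps(x)}\,\meas$, a probability measure with density in $\Ell^\infty(\mms,\meas)\subset\Ell^2(\mms,\meas)$ and support in $B_\eps(x) \subset B_{3\delta/2}(z)$. Apply Lemma~\ref{lma-trunc} with the ball $B_{3\delta/2}(z)$ playing the role of $B_\delta(z)$ there: for any chosen exponent $b>0$, there is $t_\eps > 0$ with $\W_2^2(\WHeat_{t/2}^*\rho_x^\eps,\WHeat_{t/2}\rho_x^\eps) \le t^{b}$ for all $t\in [0,t_\eps]$. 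Since the coupling underlying $\W_2^2$ sends $\WHeat_{t/2}\rho_x^\eps$-mass outside $B_{3\delta}(z)$ to $\delta_z$ at cost at least $(3\delta)^2$ per unit mass, this yields
\[
\WHeat_{t/2}\rho_x^\eps\big[\mms\setminus B_{3\delta}(z)\big] \le \frac{t^{b}}{9\delta^2}\quad\text{for all }t \in [0,t_\eps].
\]

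To pass from this integrated bound to a pointwise one at $x$, I would invoke \eqref{Lip reg est}: the function $y \mapsto \PP_y[\B_t\notin B_{3\delta}(z)] = \ChHeat_{t/2}\One_{\mms\setminus B_{3\delta}(z)}(y)$ is Lipschitz with constant $\le C_K/\sqrt{t}$ for $t$ small. Combined with the identity $\WHeat_{t/2}\rho_x^\eps[A] = \int_\mms \PP_y[\B_t\in A]\d\rho_x^\eps(y)$ and the inclusion $\supp\rho_x^\eps \subset B_\eps(x)$, this gives
\[
\PP_x\big[\B_t\notin B_{3\delta}(z)\big] \le \int_\mms\PP_y\big[\B_t\notin B_{3\delta}(z)\big]\d\rho_x^\eps(y) + \frac{C_K\eps}{\sqrt{t}} \le \frac{t^{b}}{9\delta^2} + \frac{C_K\eps}{\sqrt{t}}.
\]
The choice $\eps := t^{a+1/2}/C_K$ controls the second term by $t^a$, while $b = a + 2$ (absorbing $1/(9\delta^2)$) handles the first. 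Readjusting $t_*$ to absorb the constant prefactor yields the claim.

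The main obstacle is the coupled limit $\eps\downarrow 0$, $t\downarrow 0$: as $\eps$ shrinks, the $\Ell^2$-norm $\|\rho_x^\eps\|_{\Ell^2}=\meas[B_\eps(x)]^{-1/2}$, which implicitly enters the constant $t_\eps$ in Lemma~\ref{lma-trunc}, may blow up and threaten to force $t_\eps$ to zero faster than the prescription $\eps = t^{a+1/2}$ allows. The superpolynomial Gaussian factor $\expo^{-9\delta^2/(16t)}$ hidden inside the proof of Lemma~\ref{lma-trunc} is what rescues the argument, provided the local volume of small balls does not decay too savagely; the $\RCD(K,\infty)$-induced regularity at the relevant scale, together with the volume-growth upper bound $\meas[B_r(z)] \le \expo^{Cr^2}$, keeps the balance on an interval $[0, t_*]$ uniform in $x \in B_\delta(z)$ via a standard covering of the relatively compact set $\overline{B_\delta(z)}$.
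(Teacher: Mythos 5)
Your proposal takes a genuinely different route from the paper: you approximate $\delta_x$ by absolutely continuous measures $\rho_x^\eps$ with $\eps\downarrow 0$ and use the Lipschitz regularization \eqref{Lip reg est} of the heat semigroup to pass from the averaged tail bound of Lemma~\ref{lma-trunc} to a pointwise one. The paper instead avoids any limiting procedure: it fixes a single reference measure $\rho$ (the normalized restriction of $\meas$ to $B_{\delta/2}(z)$), applies Lemma~\ref{lma-trunc} to $\rho$ once, and then uses the already-known pathwise coupling estimate $\met(\B_t^x,\B_t)\le\expo^{-Kt}\met(x,\B_0)$ from \cite[Theorem~2.9]{sturm2015} (i.e., $\PCP$ with constant $K$, valid on any $\RCD(K,\infty)$ space) to transfer the tail bound from $\B$ to $\B^x$. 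This gives the estimate directly and uniformly in $x\in B_\delta(z)$ because the reference measure and the scale are fixed.

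The obstruction you identify in your last paragraph is real, and you do not close it. The threshold $t_\eps$ supplied by Lemma~\ref{lma-trunc} must beat the growth of $\|\rho_x^\eps\|_{\Ell^2(\mms,\meas)}=\meas[B_\eps(x)]^{-1/2}$. Tracing through the proof of Lemma~\ref{lma-trunc}, the requirement $t\le t_\eps$ is equivalent (up to constants) to $\meas[B_\eps(x)]\ge C\,t^{-2b}\expo^{-c\delta^2/t}$; with your prescription $\eps=t^{a+1/2}/C_K$, i.e.\ $t\asymp\eps^{2/(2a+1)}$, this becomes a lower bound on small-ball volumes of the form $\meas[B_\eps(x)]\gtrsim\expo^{-c'\eps^{-2/(2a+1)}}$. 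An $\RCD(K,\infty)$ space furnishes no such bound: the standing assumptions control volume growth from above (item~a in Section~\ref{Ch:Preliminaries}) but give no lower bound on the measure of small balls, and $\RCD(K,\infty)$ is compatible with measures that are not even locally doubling (only $\RCD(K,N)$ with $N<\infty$ gives Bishop--Gromov). Your appeal to ``$\RCD(K,\infty)$-induced regularity at the relevant scale'' and to a compactness/covering argument does not produce the required quantitative lower bound, since for each covering point one still lets $\eps\downarrow0$. So as written the argument has a genuine gap; the paper's coupling-based proof is the way to sidestep it.
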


\begin{proof} 
Let $\rho$ be the uniform distribution of $B_{\delta/2}(z)$. Choose a pair $\big(\PP,\B^x)$ and $(\PP,\B)$ of coupled Brownian motions with initial distributions $\delta_x$ and $\rho$, respectively, such that  
$\smash{\met\left(\B_t^x,\B_t\right)} \leq \smash{\expo^{-Kt}\,\met(x,\B_0)}$ $\PP$-a.s.~for every $t\geq 0$, see \cite[Theorem 2.9]{sturm2015} for the construction. Thus in particular, $\PP$-a.s.~we have
$$\smash{\met\left(\B_t^x,\B_t\right)} \leq \delta$$
for every $t\in [0,t_*']$ and a suitable $t_*' > 0$.
According to the previous Lemma \ref{lma-trunc},
\begin{equation*}
\PP\big[\B_t \notin B_{2\delta}(z)\big]
\le t^a
\end{equation*}
for all $t\in [0,t_*]$ and some $t_* > 0$ depending only on $\meas[B_{\delta/2}(z)]$ and $a$. Combining both estimates yields that
\begin{equation*}
\PP\big[\B_t^x\notin B_{3\delta}(z)\big]
\le\PP\big[\B_t \notin B_{2\delta}(z)\big]
\leq t^a.
\end{equation*}
uniformly in $x\in B_\delta(z)$ for small enough times.
\end{proof}

\subsection[From $2$-gradient estimate to  $\CD$]{From \boldmath{$2$}-gradient estimates to  \boldmath{$\CD$}}\label{Sec:3.1}

In this section, we assume that $\k$ is Lipschitz and bounded. The general case follows using the approximation scheme via the sequence $(\k_n)_{n\in\N}$ with $\k_n(x) := \kk_n(x,x)$ for $x\in\mms$ derived from Lemma \ref{Le:kk approximation}. Indeed, $\GE_2(\k)$ trivially implies $\GE_2(\k_n)$ for every $n\in\N$, which will imply both $\CD(\k_n,\infty)$ and $\EVI(\k_n)$. Since $\W_2$-geodesics between $\meas$-absolutely continuous measures and $\EVI(\k)$-curves are unique, we may then pass to the limit $n\to\infty$ by monotone convergence.

We present a modification of \cite[Lemma 3.5]{sturm2015} which is proved in exactly the same way as the previous version subject to the choice of parameterization from \cite[Theorem 4.16]{ambrosio2015} involving the additional parameter $\kappa$. Throughout this section, we denote by $(Q_s)_{s\geq 0}$ the $2$-Hopf--Lax semigroup.

\begin{lma}\label{improved old lemma} Assume the $2$-gradient estimate $\GE_2(\k)$
with variable curvature bound $\k$, and let $\kappa\in\R$ be an arbitrary constant.
Let $(\rho_s)_{s\in[0,1]}$ with $\rho_s = f_s\,\meas$ be a regular curve in the sense of \cite[Definition 4.10]{ambrosio2015}, and for $t>0$, define
$\smash{\vartheta_{\kappa,t}(s) := \frac{\expo^{\kappa st}-1}{\expo^{\kappa t}-1}}$
if $\kappa \neq 0$ and $\vartheta_{0,t}(s) := s$ as well as $\mathrm{R}_\kappa(t) := \frac{\kappa t}{\expo^{\kappa t} - 1}$ if $\kappa \neq 0$ and $\mathrm{R}_0(t) := 1$. 
Then
\begin{align*}
&\int_\mms Q_1\phi\d\WHeat_t\rho_1 -\int_\mms\phi\d\rho_0 - \frac{1}{2}\mathrm{R}_\kappa^2(t) \int_0^1\big\vert\dot{\rho}_{\vartheta_{\kappa,t}(s)}\big\vert^2\d s + t\, \big(\!\Ent_\meas(\WHeat_t\rho_1)-\Ent_\meas(\rho_0)\big)\nonumber\\
&\qquad\qquad \leq -\int_0^1\int_0^{st}\int_X \ChHeat_r\left((k-\kappa)\,\Schr_{st-r}^{2(\k-\kappa)}\Gamma(Q_s\phi)\right)\d\rho_{\vartheta_{\kappa,t}(s)}\d r\d s
\end{align*}
is satisfied for every $\phi\in\Lip_\bs(\mms)$  and all $t>0$. The term $\big\vert\dot{\rho}_{\vartheta_t(s)}\big\vert$ has to be understood as the metric speed of the original curve $(\rho_s)_{s\in[0,1]}$ evaluated at $\vartheta_t(s)$.

The same estimate is satisfied for every $\W_2$-geodesic $(\rho_s)_{s\in[0,1]}$ with $\meas$-absolutely continuous measures, in which case $\smash{\int_0^1\vert\dot{\rho}_{\vartheta_{\kappa,t}(s)}\vert^2\d s=\W_2^2(\rho_0,\rho_1)}$, independently of $\kappa$ and $t$.
\end{lma}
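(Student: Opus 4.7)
The plan is to follow the proof of \cite[Lemma 3.5]{sturm2015} closely, incorporating the time-rescaling from \cite[Theorem 4.16]{ambrosio2015} which is what introduces the parameter $\kappa$. I would consider the functional
\[
F(s) := \int_\mms \ChHeat_{st}(Q_s\phi)\d\rho_{\vartheta_{\kappa,t}(s)} + t\,\Ent_\meas\!\big(\WHeat_{st}\rho_{\vartheta_{\kappa,t}(s)}\big),\quad s\in[0,1].
\]
By the duality $\int_\mms\ChHeat_t f\d\rho = \int_\mms f\d\WHeat_t\rho$, the difference $F(1)-F(0)$ equals the sum of the first three blocks on the LHS of the claim, so the task reduces to bounding $\int_0^1 F'(s)\d s$ from above.

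The next step is to establish the absolute continuity of $F$ and to compute $F'(s)$ using the standard set of tools: the Hamilton--Jacobi inequality $\partial_s Q_s\phi\leq -\frac{1}{2}\Gamma(Q_s\phi)$ (valid $\meas$-a.e.), the heat equation $\partial_u\ChHeat_u f = \Delta\ChHeat_u f$, the entropy dissipation identity along heat flow producing a Fisher-information term, and the chain rule through the regular curve $s\mapsto\rho_{\vartheta_{\kappa,t}(s)}$ with speed $\dot\vartheta_{\kappa,t}(s) = \kappa t\,\expo^{\kappa st}/(\expo^{\kappa t}-1)$. Precisely as in the cited works, the Fisher contributions coming from the entropy term and from the diffusion of $\mu_s := \WHeat_{st}\rho_{\vartheta_{\kappa,t}(s)}$ cancel, leaving an inequality of the form
\[
F'(s) \leq -\frac{1}{2}\!\int_\mms\!\ChHeat_{st}\Gamma(Q_s\phi)\d\rho_{\vartheta_{\kappa,t}(s)} + \dot\vartheta_{\kappa,t}(s)\big\vert\dot\rho_{\vartheta_{\kappa,t}(s)}\big\vert\Big(\!\int_\mms\!\Gamma(\ChHeat_{st}Q_s\phi)\d\rho_{\vartheta_{\kappa,t}(s)}\!\Big)^{1/2}\!.
\]

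The decisive step combines the gradient estimate, rewritten via potential-splitting in Feynman--Kac as $\Gamma(\ChHeat_{st}Q_s\phi) \leq \expo^{-2\kappa st}\Schr_{st}^{2(\k-\kappa)}\Gamma(Q_s\phi)$, with the weighted Young inequality $ab\leq\frac{\lambda}{2}a^2+\frac{1}{2\lambda}b^2$ at weight $\lambda(s)=\expo^{-2\kappa st}$. Two simplifications occur simultaneously. First, the coefficient of the quadratic metric-speed term becomes $\frac{1}{2}\expo^{-2\kappa st}\dot\vartheta_{\kappa,t}^2(s)$, and the specific choice of $\vartheta_{\kappa,t}$ is precisely what makes this product equal to the $s$-independent constant $\frac{1}{2}\mathrm{R}_\kappa^2(t)$. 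Second, the residual gradient contribution $\frac{1}{2}\big[\Schr_{st}^{2(\k-\kappa)}\Gamma(Q_s\phi) - \ChHeat_{st}\Gamma(Q_s\phi)\big]$ equals $-\int_0^{st}\!\ChHeat_r\!\big((\k-\kappa)\Schr_{st-r}^{2(\k-\kappa)}\Gamma(Q_s\phi)\big)\d r$ by the Duhamel identity arising from $\partial_r(\ChHeat_r\Schr_{st-r}^{2(\k-\kappa)}) = 2\,\ChHeat_r\big((\k-\kappa)\Schr_{st-r}^{2(\k-\kappa)}\big)$. Integrating the resulting bound over $s\in[0,1]$ then yields exactly the claimed inequality.

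The main obstacle is the careful bookkeeping required to ensure that the exponential weight $\expo^{-2\kappa st}$, the reparameterization speed $\dot\vartheta_{\kappa,t}(s)$ and the Young weight combine to produce the $s$-independent coefficient $\mathrm{R}_\kappa^2(t)=(\kappa t/(\expo^{\kappa t}-1))^2$; this alignment is precisely what the parameter choice from \cite[Theorem 4.16]{ambrosio2015} is designed to achieve, and for $\kappa=0$ the construction collapses to the original \cite[Lemma 3.5]{sturm2015}. For the final statement, I would approximate a $\W_2$-geodesic between $\meas$-absolutely continuous measures by regular curves and pass to the limit; the constant-speed property of geodesics then ensures $\int_0^1\vert\dot\rho_{\vartheta_{\kappa,t}(s)}\vert^2\d s=\W_2^2(\rho_0,\rho_1)$ independently of $\kappa$ and $t$.
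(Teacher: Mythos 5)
Your proposal reconstructs precisely the argument the paper invokes by reference: it states that Lemma \ref{improved old lemma} ``is proved in exactly the same way'' as \cite[Lemma 3.5]{sturm2015} subject to the reparameterization of \cite[Theorem 4.16]{ambrosio2015}, and your sketch — the functional $F$, the potential splitting $\Schr^{2\k}_u = \expo^{-2\kappa u}\Schr^{2(\k-\kappa)}_u$, the Young weight $\lambda(s)=\expo^{-2\kappa st}$ chosen so that $\tfrac12\lambda(s)\dot\vartheta_{\kappa,t}^2(s)=\tfrac12\mathrm R_\kappa^2(t)$, and the Duhamel identity $\tfrac12\big(\Schr_{st}^{2(\k-\kappa)}g-\ChHeat_{st}g\big)=-\int_0^{st}\ChHeat_r\big((\k-\kappa)\Schr_{st-r}^{2(\k-\kappa)}g\big)\d r$ — is exactly that adaptation. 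This is essentially the same approach as the paper, and the computational details you highlight check out.
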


\begin{lma}
\label{local evi}
Assume the $2$-gradient estimate 
$\GE_2(k)$
with variable curvature bound $k$. Suppose that $k\ge K_z$ in $B_{2\delta}(z)$ for some $z\in\mms$, $K_z\in\R$ and $\delta> 0$. Then for all $\rho_0,\rho_1\in\Prob_2(X)\cap\Dom(\Ent_\meas)$ with support in $B_\delta(z)$ and bounded densities w.r.t.~$\meas$, we have
\begin{equation*} 
\frac{\diff^+}{\diff t}\bigg\vert_{t=0} \frac12 \W_2^2(\WHeat_t\rho_1,\rho_0) + \frac{K_z}{2}\W_2^2(\rho_0,\rho_1) \leq \Ent_\meas(\rho_0) - \Ent_\meas(\rho_1).
\end{equation*}
\end{lma}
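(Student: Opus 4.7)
The plan is to combine Lemma~\ref{improved old lemma} (applied with $\kappa = K_z$), the Kantorovich duality formula \eqref{Eq:Kantorovich} for $\tfrac12\W_2^2$, and the heat-flow tail estimate of Lemma~\ref{lma-trunc}, and then to take the upper right derivative at $t = 0$. Let $(\rho_s)_{s\in[0,1]}$ be the unique $\W_2$-geodesic from $\rho_0$ to $\rho_1$; by the $\RCD(K,\infty)$ hypothesis it consists of $\meas$-absolutely continuous measures with uniformly bounded densities, so Lemma~\ref{improved old lemma} applies. A triangle-inequality argument along a representing geodesic $\gamma$ gives $\met(\gamma_s,z) \le s\,\met(\gamma_0,\gamma_1) + \met(\gamma_0,z) \le 2\delta$ for $s \le 1/2$ and symmetrically for $s\ge 1/2$, hence $\supp \rho_s \subset B_{2\delta}(z)$ for every $s \in [0,1]$.

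Applying Lemma~\ref{improved old lemma} with $\kappa = K_z$ and arbitrary $\phi \in \Lip_\bs(\mms)$ and rearranging yields
\begin{equation*}
\int_\mms Q_1\phi\d\WHeat_t\rho_1 - \int_\mms \phi\d\rho_0 \le \tfrac12\mathrm{R}_{K_z}^2(t)\,\W_2^2(\rho_0,\rho_1) - t\,\big(\!\Ent_\meas(\WHeat_t\rho_1) - \Ent_\meas(\rho_0)\big) - D(\phi,t),
\end{equation*}
where $D(\phi,t)$ denotes the double integral on the right-hand side of Lemma~\ref{improved old lemma}. By self-adjointness of $\ChHeat_r$, the inner expression in $D(\phi,t)$ equals $\int_\mms(k-K_z)\,\Schr^{2(k-K_z)}_{st-r}\Gamma(Q_s\phi)\d\ChHeat_r\rho_{\vartheta_{K_z,t}(s)}$, which we split over $B_{2\delta}(z)$ and its complement. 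On $B_{2\delta}(z)$ the integrand is nonnegative since $k - K_z \ge 0$ there, $\Gamma(Q_s\phi) \ge 0$, and $\Schr^{2(k-K_z)}_\tau$ preserves positivity; on the complement the integrand is bounded (using boundedness of $k$ and $K_z$, the estimate $\Gamma(Q_s\phi) \le (2\Lip(\phi))^2$, and $\Ell^\infty$-boundedness of $\Schr^{2(k-K_z)}_\tau$), while $(\ChHeat_r\rho_s)[B_{2\delta}(z)^c] = O(r^a)$ for every $a > 0$ by Lemma~\ref{lma-trunc}. Integrating in $r \in [0,st]$ and $s \in [0,1]$ gives $D(\phi,t) \ge -C_\phi\, t^{1+a}$.

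Taking the supremum over $\phi \in \Lip_\bs(\mms)$ with uniformly bounded Lipschitz constant, which exhausts the supremum in \eqref{Eq:Kantorovich} by the compact supports of $\rho_0,\rho_1$ and $c$-concavity of optimal Kantorovich potentials, produces
\begin{equation*}
\tfrac12\W_2^2(\WHeat_t\rho_1,\rho_0) \le \tfrac12\mathrm{R}_{K_z}^2(t)\,\W_2^2(\rho_0,\rho_1) - t\,\big(\!\Ent_\meas(\WHeat_t\rho_1) - \Ent_\meas(\rho_0)\big) + o(t).
\end{equation*}
The upper right derivative at $t = 0$, using $\mathrm{R}_{K_z}(0) = 1$ with $\mathrm{R}_{K_z}'(0) = -K_z/2$ and the right-continuity of $t \mapsto \Ent_\meas(\WHeat_t\rho_1)$ at $t = 0$ in $\RCD(K,\infty)$ spaces, then yields the claim. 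The main technical obstacle is the uniform-in-$\phi$ control of $D(\phi,t)$: this is resolved by combining the geodesic confinement $\supp\rho_s \subset B_{2\delta}(z)$, where $k - K_z$ has the correct sign, with Lemma~\ref{lma-trunc} to absorb the heat-kernel mass escaping this ball.
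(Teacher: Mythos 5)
Your overall plan -- Lemma~\ref{improved old lemma} with $\kappa = K_z$, Kantorovich duality, and the heat-flow tail estimate -- is the right one and parallels the paper's. The confinement $\supp\rho_s\subset\overline{B}_{2\delta}(z)$ is correct, and your splitting of $D(\phi,t)$ over $B_{2\delta}(z)$ and its complement is a legitimate alternative to the paper's $\Ell^1$-convergence argument for the remainder.

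The gap lies in the step where you pass to the supremum. Your estimate $D(\phi,t)\geq -C_\phi\, t^{1+a}$ carries a constant proportional to $\Lip(\phi)^2$, so to absorb it into a $\phi$-independent $o(t)$ you must restrict the supremum in \eqref{Eq:Kantorovich} to potentials with a \emph{uniformly} bounded Lipschitz constant. You justify this by ``the compact supports of $\rho_0,\rho_1$ and $c$-concavity of optimal Kantorovich potentials,'' but the duality you need is for $\W_2^2(\WHeat_t\rho_1,\rho_0)$, and the measure $\WHeat_t\rho_1$ does \emph{not} have bounded support: an optimal Kantorovich potential for transporting $\rho_0$ to $\WHeat_t\rho_1$ can have arbitrarily large gradient on a small-mass subset of $B_\delta(z)$, so it is not uniformly Lipschitz. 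The paper resolves precisely this by replacing $\WHeat_t\rho_1$ with the truncation $\WHeat_t^*\rho_1$ of Lemma~\ref{lma-trunc}, whose support is contained in $\overline{B}_{2\delta}(z)$: the corresponding Kantorovich potential $\phi_t$ then satisfies $\Lip(\phi_t)\le 4\delta$ uniformly in $t$ (via \cite[Proposition 3.9]{ambrosio2014a}), and the additional error terms $\W_2^2(\WHeat_t\rho_1,\rho_0)-\W_2^2(\WHeat_t^*\rho_1,\rho_0)$ and $\int_\mms Q_1\phi_t\,\d(\WHeat_t^*\rho_1 - \WHeat_t\rho_1)$ are $o(t)$ by Lemma~\ref{lma-trunc} with $a=2$. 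You invoked Lemma~\ref{lma-trunc} only for the escaping mass of $\ChHeat_r\rho_s$, not for this truncation, and without the truncation the uniform Lipschitz bound you assert is unjustified.
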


\begin{proof} The proof follows the reasoning for
\cite[Lemma~3.6]{sturm2015} and \cite[Theorem~4.16]{ambrosio2015}, but with a subtle modification. Fix $t>0$. While the curve $\smash{(\WHeat_{ts}\rho_{\vartheta_t(s)})_{s\in[0,1]}}$ connects $\rho_0$ and $\WHeat_t\rho_1$, the potentials $Q_s\phi_t$, $s\in[0,1]$, are Hopf--Lax interpolations of optimal Kantorovich potentials for the transport from $\rho_0$ to $\WHeat_t^*\rho_1$. Thus, we have to match these two different situations and then use the nice behavior of the remainder terms.

We know by \cite[Proposition 3.9]{ambrosio2014a} that for any $\W_2$-optimal coupling $\pi_t \in \Prob(\mms\times\mms)$ of $\rho_0$ and $\WHeat_t^*\rho_1$, and any Kantorovich potential $\varphi_t$ relative to $\pi_t$, we have $\vert\Diff\varphi_t\vert \leq \met(x,y) \leq 4\delta$ for $\pi_t$-a.e.~$(x,y)\in\mms\times\mms$. Taking \eqref{Eq:Kantorovich} and the bounded support of $\rho_0$ into account, 
\begin{equation*}
\frac{1}{2}\W_2^2(\WHeat_t^*\rho_1,\rho_0) = \sup\!\Big\lbrace \int_\mms Q_1f\d\WHeat_t^*\rho_1 - \int_\mms f\d\rho_0 : f\in\Lip_\bs(\mms),\ \!\Lip(f) \leq 4\delta\Big\rbrace.
\end{equation*}
The latter supremum is attained, see \cite[Proposition 2.12]{ambrosio2014a}, at some $\phi_t \in \Lip_\bs(\mms)$. Possibly adding constants and invoking a cutoff argument, we may assume that $|\phi_t|\le C$ everywhere on $\mms$ for some $C>0$ independent of $t$. Thus, $\vert Q_s\phi_t\vert$ is bounded on $\mms$ and $\Lip(Q_s\phi_t) \leq 8\delta$, uniformly in $s\in [0,1]$.

Let $(\rho_s)_{s\in [0,1]}$ be the $\W_2$-geodesic joining $\rho_0$ and $\rho_1$. Note that the measures $\rho_s = f_s\ \!\meas$, $s\in [0,1]$, are supported in $B_{2\delta}(z)$. The $\CD(K,\infty)$ condition furthermore ensures that the $f_s$ are bounded uniformly in $s$. Applying Lemma \ref{improved old lemma} with $\kappa:=K_z$ we get
\begin{align*}
&\frac{1}{2t}\Big(\W_2^2(\WHeat_t\rho_1,\rho_0) - \W_2^2(\rho_0,\rho_1)\Big)\\
&\qquad\qquad =  \frac{1}{2t}\Big(\W_2^2(\WHeat_t\rho_1,\rho_0) - \W_2^2(\WHeat_t^*\rho_1,\rho_0) + 2\int_\mms Q_1\phi_t\d\WHeat_t^*\rho_1- 2\int_\mms \phi_t\d\rho_0 - \W_2^2(\rho_0,\rho_1)\Big)\\
&\qquad\qquad \leq \frac{1}{2t}\Big(\W_2^2(\WHeat_t\rho_1,\rho_0) - \W_2^2(\WHeat_t^*\rho_1,\rho_0) + 2\int_\mms Q_1\phi_t\d\WHeat_t^*\rho_1 - 2\int_\mms Q_1\phi_t\d\WHeat_t\rho_1\Big) \\
&\qquad\qquad\qquad\qquad + \frac{1}{2t} \big(\mathrm{R}_{K_z}^2(t)-1\big)\,\W_2^2(\rho_0,\rho_1) + \Ent_\meas(\rho_0) - \Ent_\meas(\WHeat_t\rho_1) \\
&\qquad\qquad\qquad\qquad - \frac{1}{t}\int_0^1 s \int_0^t\int_\mms \Gamma(Q_s\phi_t)\, \Schr_{s(t-r)}^{2(\k- K_z)}\!\left((\k-K_z)\, \ChHeat_{sr}f_{\vartheta_t(s)}\right)\d\meas\d r \d s,
\end{align*}
where we have put  $\vartheta_t:=\vartheta_{K_z,t}$.
Note that the $\limsup$ as $t\downarrow 0$ of the last term is nonnegative since 
$(\k-K_z)\,f_s \geq 0$ $\meas$-a.e.~on $\mms$ for every $s\in[0,1]$ and 
\begin{equation*}
\lim_{t\downarrow 0} \frac{1}{t}\int_0^t \Schr_{s(t-r)}^{2(\k-K_z)}\!\left((\k-K_z)\, \ChHeat_{sr}f_{\vartheta_t(s)}\right)\d r = (k-K_z)\,f_s
\end{equation*}
w.r.t.~convergence in $\Ell^1(\mms,\meas)$. Indeed,  $\vartheta_t(s) \to s$ as $t\downarrow 0$ for every $s\in [0,1]$  and therefore $f_{\vartheta_t(s)} \to f_s$ pointwise $\meas$-a.e. As all considered functions are nonnegative and $\int_\mms f_{\vartheta_t(s)}\d\meas = \int_\mms f_s\d\meas$ for all $t>0$, we have $f_{\vartheta_t(s)} \to f_s$ in $\Ell^1(\mms,\meas)$ as $t\downarrow 0$. We conclude by strong continuity of the heat and the Schrödinger semigroup with potential $2(\k-K_z)$ in $\Ell^1(\mms,\meas)$.

Lower semicontinuity of $\Ent_\meas$ yields $-\liminf_{t\downarrow 0} \Ent_\meas(\WHeat_t\rho_1) \leq -\Ent_\meas(\rho_1)$,
and clearly $\mathrm{R}_{K_z}^2(t) = 1-K_zt + \mathrm{o}(t)$ as $t\downarrow 0$. Lastly, observe that
$\smash{\big(\W_2^2(\WHeat_t\rho_1,\rho_0) - \W_2^2(\WHeat_t^*\rho_1,\rho_0)\big)}/2t \to 0$
according to  Lemma \ref{lma-trunc} applied with $a:= 2$.
Thus, we finally deduce
\begin{align*}
\limsup_{t\downarrow0}\frac{1}{2t}\left(\W_2^2(\WHeat_t\rho_1,\rho_0) - \W_2^2(\rho_0,\rho_1)\right) + \frac{K_z}{2} \W_2^2(\rho_0,\rho_1) \leq \Ent_\meas(\rho_0) - \Ent_\meas(\rho_1).\tag*{\qedhere}
\end{align*}
\end{proof}

\begin{thm}\label{Thm:GE2 to CD} The $2$-gradient estimate $\GE_2(\k)$ implies $\CD(\k,\infty)$.
\end{thm}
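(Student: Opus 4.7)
The strategy is to bypass proving $\CD(\k,\infty)$ head-on and instead to derive $\EVI(\k)$, whereupon the conclusion is immediate from \cite[Theorem~3.4]{sturm2015}. As already noted at the beginning of Section \ref{Sec:3.1}, I may assume that $\k$ is bounded and Lipschitz, recovering the general case by monotone approximation through the sequence $\k_n(x):=\kk_n(x,x)$ and the uniqueness of both $\W_2$-geodesics between $\meas$-absolutely continuous measures and of $\EVI$-curves under the standing $\RCD(K,\infty)$ assumption.

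The first main step is localization. Fix $z\in\mms$ and $\eps>0$. By lower semicontinuity of $\k$, there is $\delta>0$ with $\k\geq \k(z)-\eps=:K_z$ on $B_{2\delta}(z)$. For any $\rho_0,\rho_1\in\Prob_2(\mms)\cap\Dom(\Ent_\meas)$ with bounded $\meas$-densities and supports in $B_\delta(z)$, Lemma \ref{local evi} delivers a local $\EVI(K_z)$-type inequality. Following the Daneri--Savaré argument (essentially the constant-curvature form of the converse direction in \cite[Theorem~3.4]{sturm2015}), I would upgrade this local $\EVI(K_z)$ to the constant-curvature $\CD(K_z,\infty)$ inequality along the unique $\W_2$-geodesic $(\mu_s)_{s\in[0,1]}$ connecting $\rho_0$ and $\rho_1$; the geodesic stays supported in $B_{2\delta}(z)$ where the local EVI continues to apply, and bounded density of intermediate measures comes from the $\RCD(K,\infty)$ background. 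Since any $\boldsymbol{\pi}$-a.e.~geodesic $\gamma$ lies in $B_{2\delta}(z)$, the Lipschitz continuity of $\k$ yields $|\k(\gamma_s)-K_z|\leq \eps+ \Lip(\k)\,2\delta$, so the constant bound $\tfrac{K_z}{2}t(1-t)\W_2^2(\rho_0,\rho_1)$ differs from the variable-curvature expression $\int_0^1\int\green(s,t)\k(\gamma_s)|\dot\gamma|^2\d\boldsymbol{\pi}\d s$ by $\mathrm{O}(\eps+\delta)\,\W_2^2(\rho_0,\rho_1)$. Sending $\eps,\delta\to0$ produces the variable $\CD(\k,\infty)$ inequality for measures whose supports shrink to a point.

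The remaining and hardest step is to globalize: to upgrade this infinitesimally-supported form of $\CD(\k,\infty)$ to the statement for arbitrary $\mu_0,\mu_1\in\Prob_2(\mms)\cap\Dom(\Ent_\meas)$. Rather than try to patch $\CD$-inequalities directly, which is awkward because both the test measures and the Green's-function weights are nonlinear in the inputs, I would route through $\EVI$: the local $\CD(\k,\infty)$ yields a local $\EVI(\k)$-type inequality, and an approximation argument exploiting (i) the \emph{linearity} of the $\EVI$ inequality in the test measure $\nu$, (ii) lower semicontinuity of $\Ent_\meas$, and (iii) continuity of $\W_2$ under approximation by compactly supported bounded-density measures then extends the inequality to all admissible $\mu_0,\nu$. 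The candidate curve is forced to be $(\WHeat_t\mu_0)_{t>0}$ by the uniqueness of $\EVI$-flows. The main obstacle is precisely this globalization: the variable nature of $\k$ prevents a naive rescaling argument, and one must carefully control the error terms from the truncation (using Lemma \ref{lma-trunc}) together with the Lipschitz approximation of $\k$ to ensure that the limiting procedure picks up the full integral weight $\int_0^1\int\green(s,t)\k(\gamma_s)|\dot\gamma|^2 d\boldsymbol{\pi}(\gamma)\d s$ rather than merely a uniform lower bound.
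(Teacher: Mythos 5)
Your localization step tracks the paper's closely: both invoke Lemma~\ref{local evi} to obtain a local $\EVI$-type estimate and then use the constant-curvature Daneri--Savar\'e converse to upgrade it to a local $\CD$-inequality (the paper keeps $\k-\varepsilon$ inside the Green's-function integral rather than freezing at $K_z$, which is slightly cleaner but morally the same). The genuine gap is in your globalization step. You explicitly avoid ``patching $\CD$-inequalities directly'' and instead propose to route back through $\EVI$ by an approximation in the test measure~$\nu$, using lower semicontinuity of $\Ent_\meas$ and continuity of $\W_2$. This does not close the argument: the local result only applies when \emph{both} $\mu_0$ and $\nu$ are supported in a small ball $B_\delta(z)$, and the constraint on $\mu_0$ is the one you cannot remove by approximating $\nu$. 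Moreover $\EVI(\k)$ is a statement about the heat-flow curve for all $t>0$, and the flow leaves any ball immediately; an $\EVI$-inequality ``at time $0$ only'' for localized data has no obvious global continuation. The point of passing from local $\EVI$ to local $\CD$ first is precisely that there \emph{is} a ready-made local-to-global theorem for the variable curvature-dimension condition, namely \cite[Theorem~3.7]{sturm2015}, which the paper then applies to pass from the local $\CD(\k-\varepsilon,\infty)$ to the global one, and finally lets $\varepsilon\downarrow0$ using uniqueness of the $\W_2$-geodesic. Without that local-to-global tool your proof is missing its essential mechanism; the approximation scheme you sketch in its place does not replace it.
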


\begin{proof} Given $\varepsilon > 0$, Proposition \ref{local evi} translates into a ``local'' $\EVI(\k-\varepsilon)$ property at time $0$: for every $z\in\mms$, choosing $\delta > 0$ and $K_z\in\R$ such that $K_z \leq \k\leq K_z+\varepsilon$ in $B_{2\delta}(z)$, we obtain that for all $\mu,\nu\in\Prob_2(\mms)\cap\Dom(\Ent_\meas)$ with support in $B_\delta(z)$ and bounded densities w.r.t.~$\meas$, for $\boldsymbol{\pi}\in\Prob(\Geo(\mms))$ representing the $\W_2$-geodesic from $\mu$ to $\nu$, we have
\begin{equation*}
\frac{\diff^+}{\diff t}\bigg\vert_{t=0}\frac{1}{2}\W_2^2(\WHeat_t\mu,\nu) + \int_0^1\int_{\Geo(\mms)} (1-s)\,\big(\k(\gamma_s) - \varepsilon\big)\,\vert\dot\gamma\vert^2\d\boldsymbol{\pi}(\gamma)\d s \leq \Ent_\meas(\nu) - \Ent_\meas(\mu).
\end{equation*}
With the same argument used in the proof of \cite[Theorem 3.4]{sturm2015} for the equivalence of $\CD(\k,\infty)$ and $\EVI(\k)$, we conclude that this local $\EVI(\k-\varepsilon)$ implies a ``local'' $\CD(\k-\varepsilon,\infty)$ condition in the following sense: for all $z\in\mms$ there exists $\delta > 0$ such that for all $\mu_0,\mu_1\in\Prob_2(\mms)\cap\Dom(\Ent_\meas)$ with support in $B_\delta(z)$ and bounded densities w.r.t.~$\meas$, if $\boldsymbol{\pi}\in\Prob(\Geo(\mms))$ represents the $\W_2$-geodesic from $\mu_0$ to $\mu_1$, for every $t\in [0,1]$, we have
\begin{equation*}
\Ent_\meas(\mu_t) \leq (1-t) \Ent_\meas(\mu_0) + t\Ent_\meas(\mu_1) - \int_0^1\int_{\Geo(\mms)} \green(s,t)\,\big(\k(\gamma_s)-\varepsilon\big) \,\vert\dot\gamma\vert^2\d\boldsymbol{\pi}(\gamma)\d s.
\end{equation*}

Using the local-to-global property from \cite[Theorem 3.7]{sturm2015} and taking the limit $\varepsilon\downarrow 0$, noticing again that the choice of $\W_2$-geodesics does not depend on $\varepsilon$, allows us to pass from this local $\CD(\k-\varepsilon,\infty)$ property to $\CD(\k-\varepsilon,\infty)$ and finally to $\CD(\k,\infty)$.
\end{proof}

\subsection[From $\EVI$ to a differential $2$-transport estimate]{From {\boldmath{$\EVI$}} to a differential {\boldmath{2}}-transport estimate}\label{Sec:3.3}

It has already been observed in \cite{ketterer2015} that  $\EVI(\k)$ yields contraction estimates for the $2$-Wasserstein distance along two heat flows starting at regular measures. 
For irregular initial data, we now aim in deducing a weak version of it, see also Remark \ref{Re:WDTE}.

\begin{Prop}\label{Pr:EVI implies DTE} The $\EVI(\k)$ implies the following differential $2$-transport estimates:
\begin{enumerate}[label=\textnormal{(\roman*)}]
\item for every $\mu_1,\mu_2\in\Prob_2(\mms) \cap \Dom(\Ent_\meas)$, one has
\begin{equation}\label{the estimate}
\frac{\diff^+}{\diff t}\bigg\vert_{t=0}\W_2^2(\WHeat_t\mu_1,\WHeat_t\mu_2)\leq -2\int_0^1\int_{\Geo(\mms)}\k(\gamma_s)\,\vert\dot\gamma\vert^2\d\boldsymbol{\pi}(\gamma)\d s,
\end{equation}
where $\boldsymbol{\pi}\in\Prob(\Geo(\mms))$ represents the $\W_2$-geodesic from $\mu_1$ to $\mu_2$, and
\item for all $x,y\in\mms$,
\begin{equation*}
\frac{\diff^+}{\diff t}\bigg\vert_{t=0} \W_2^2(\WHeat_t\delta_x,\WHeat_t\delta_y) \leq -2\kk(x,y)\met^2(x,y).
\end{equation*}
\end{enumerate}
\end{Prop}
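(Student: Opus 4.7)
The strategy is to apply the $\EVI(\k)$ inequality twice---once in each argument of $\W_2^2(\WHeat_t\mu_1,\WHeat_t\mu_2)$---and add the two resulting bounds. Uniqueness of $\W_2$-geodesics between absolutely continuous measures in $\RCD(K,\infty)$ spaces (property c.\ in the preliminaries) ensures that the two geodesic representations furnished by $\EVI$ are time-reversals of each other, so that the change of variable $s\mapsto 1-s$ converts one weight $(1-s)$ into $s$; addition produces the full weight $(1-s)+s=1$ in front of $\k(\gamma_s)$. Rather than formalising a chain rule for partial upper right derivatives, the cleanest route is to integrate $\EVI(\k)$ pointwise in $r$ over a short interval $[0,h]$, combine, divide by $h$, and pass to the limit $h\downarrow 0$.

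For (i), I would fix $h>0$ and telescope
\begin{equation*}
\W_2^2(\WHeat_h\mu_1,\WHeat_h\mu_2)-\W_2^2(\mu_1,\mu_2)=\bigl[\W_2^2(\WHeat_h\mu_1,\WHeat_h\mu_2)-\W_2^2(\mu_1,\WHeat_h\mu_2)\bigr]+\bigl[\W_2^2(\mu_1,\WHeat_h\mu_2)-\W_2^2(\mu_1,\mu_2)\bigr].
\end{equation*}
The first bracket is controlled by integrating $\EVI(\k)$ over $r\in[0,h]$ for the curve $(\WHeat_r\mu_1)_r$ tested against the fixed $\nu:=\WHeat_h\mu_2$; the second, by integrating $\EVI(\k)$ for $(\WHeat_r\mu_2)_r$ tested against $\mu_1$. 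Upon dividing by $h$, the averaged boundary entropy contributions cancel as $h\downarrow 0$ by continuity of $r\mapsto\Ent_\meas(\WHeat_r\mu_i)$ along the heat flow. Stability of optimal $\W_2$-geodesics under $\W_2$-convergence of the endpoint marginals, combined with uniqueness, forces the geodesic measures from $\WHeat_r\mu_1$ to $\WHeat_h\mu_2$ and from $\WHeat_r\mu_2$ to $\mu_1$ to concentrate, as $r,h\downarrow 0$, around the $\boldsymbol{\pi}$ of the statement and its time-reverse $\boldsymbol{\pi}^-$. A standard lower semicontinuity argument using that $\k$ is bounded below passes the corresponding $\liminf$ through, and the reparameterisation $s\mapsto 1-s$ on $\boldsymbol{\pi}^-$ turns its $(1-s)$ weight into $s$, so addition produces $(1-s)+s=1$ and delivers \eqref{the estimate}.

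For (ii), the same argument applies with $\mu_1:=\delta_x$ and $\mu_2:=\delta_y$. The infinite entropies at $t=0$ cause no trouble because $\EVI(\k)$ only involves $\Ent_\meas(\WHeat_r\delta_x)$ and $\Ent_\meas(\WHeat_r\delta_y)$ for $r>0$, both finite by the regularisation properties of the heat flow on $\RCD(K,\infty)$ spaces. The reasoning of (i) therefore delivers
\begin{equation*}
\W_2^2(\WHeat_h\delta_x,\WHeat_h\delta_y)-\met^2(x,y)\leq -2\int_0^h\int_0^1\int_{\Geo(\mms)}\k(\gamma_s)\,|\dot\gamma|^2\d\boldsymbol{\pi}_t(\gamma)\d s\d t,
\end{equation*}
where $\boldsymbol{\pi}_t$ represents the $\W_2$-geodesic from $\WHeat_t\delta_x$ to $\WHeat_t\delta_y$. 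The key pointwise bound is that the definition \eqref{k hut} of $\kk$ yields $\int_0^1\k(\gamma_s)\d s\geq\kk(\gamma_0,\gamma_1)$ for every $\gamma\in\Geo(\mms)$; together with $|\dot\gamma|^2=\met^2(\gamma_0,\gamma_1)$ for a geodesic, this dominates the inner integral by $\int_{\mms\times\mms}\kk(a,b)\met^2(a,b)\d\phi_t(a,b)$, where $\phi_t:=(\eval_0,\eval_1)_\push\boldsymbol{\pi}_t$ is the optimal coupling of the two heat flows. Since $\phi_t\to\delta_{(x,y)}$ in $\W_2(\mms\times\mms)$, the lower semicontinuity of $\kk$ (Lemma \ref{Le:kk approximation}) together with uniform integrability of $\met^2$ under $\phi_t$ (needed to handle a possibly negative lower bound $\kk\geq K$) let the Portmanteau theorem deliver $\liminf_{t\downarrow 0}\int\kk\,\met^2\d\phi_t\geq\kk(x,y)\met^2(x,y)$. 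Dividing the integrated inequality by $h$ and letting $h\downarrow 0$ concludes (ii).

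The most delicate point lies in (ii): the endpoints of $\boldsymbol{\pi}_t$ are spread out by the heat flow, $\kk$ is only lower semicontinuous, and one only controls $\phi_t$ in $\W_2$. The combination of the sharp pointwise bound $\int\k(\gamma_s)\d s\geq\kk(\gamma_0,\gamma_1)$ along a geodesic, lower semicontinuity of $\kk$, and Portmanteau is what resolves this.
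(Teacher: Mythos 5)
Your proof follows essentially the same strategy as the paper's. For (i), the paper likewise adds two applications of $\EVI(\k)$ integrated over a short interval -- one for the flow $(\WHeat_r\mu_1)_r$ with observation point $\WHeat_h\mu_2$ and one for $(\WHeat_r\mu_2)_r$ with observation point $\mu_1$ -- so the entropy terms cancel, and then relies on stability and uniqueness of geodesic plans between absolutely continuous measures to pass to the limit (the paper delegates the technical handling of the double $t$-dependence of $t\mapsto\W_2^2(\WHeat_t\mu_1,\WHeat_t\mu_2)$ to a reference). For (ii), the paper also applies (i) at positive times $\tau$ and uses the key observation $\int_0^1\k(\gamma_s)\,\d s\geq\kk(\gamma_0,\gamma_1)$ along geodesics.

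The one genuine (though minor) divergence is in the final limit of (ii). You apply lower semicontinuity of $\kk$ and Portmanteau directly to $\int\kk\,\met^2\,\d\phi_t$, splitting off the constant lower bound $K$ and using $\W_2$-convergence to handle $\int\met^2\,\d\phi_t$. The paper instead replaces $\kk$ from the outset by a Lipschitz approximation $\kk_n$ (Lemma \ref{Le:kk approximation}), uses continuity of $r\mapsto\int\kk_n\met^2\,\d\pi_r$ (via a lemma of Villani), takes the limit $t\downarrow 0$ with the continuous integrand, and sends $n\to\infty$ by monotone convergence at the very end. Both are valid; yours is slightly slicker but must be careful that the Portmanteau estimate is applied to the nonnegative l.s.c.\ function $(\kk-K)\met^2$, with the remainder $K\int\met^2\,\d\phi_t = K\,\W_2^2(\WHeat_t\delta_x,\WHeat_t\delta_y)\to K\met^2(x,y)$ handled by $\W_2$-continuity rather than ``uniform integrability'' per se; the paper's Lipschitz-approximation route avoids this technical point entirely.
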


\begin{proof} Concerning (i), up to truncating $\k$ and using monotone convergence afterwards, we may assume that $\k$ is bounded. Then the claim follows by adding up the $\EVI(\k)$, integrated from $t$ to $t+h$, $h> 0$, for the flow $(\WHeat_t\mu_1)_{t\ge0}$ with observation point $\WHeat_{t+h}\mu_2$ and for the flow $(\WHeat_t\mu_2)_{t\ge0}$ with observation point $\WHeat_t\mu_1$. The entropy terms cancel out, and we obtain the desired estimate by dividing by $h$ and letting $h\downarrow 0$.
Some care, however, is requested to deal with the double $t$-dependence of the nonsmooth function $t\mapsto \smash{W_2^2\big(\WHeat_t\rho_{0},\WHeat_t\rho_{1}\big)}$. This has been addressed in \cite[Theorem 6.1]{ketterer2015}. 

Next, we show (ii). Denote by $\kk_n\in\Lip_\bdd(\mms\times\mms)$ a sequence converging pointwise from below in a monotone way to $\kk$, see Lemma \ref{Le:kk approximation}, and put $\k_n(x) := \kk_n(x,x)$ for $x\in\mms$. Given $x,y\in\mms$ and $t>0$, select $\tau_*>0$ small enough so that, for every $\tau\in (0, \tau_*)$,
\begin{equation*}
\W_2^2(\WHeat_\tau\delta_x,\WHeat_\tau\delta_y) \leq \met^2(x,y) + 2t^2.
\end{equation*}
The local absolute continuity of the curves $(\WHeat_t\delta_x)_{t\geq 0}$ and $(\WHeat_t\delta_y)_{t\geq 0}$ on $(0,\infty)$ w.r.t.~$\W_2$ and property (i) with $\k_n$ in place of $\k$, since $\k_n \leq \k$ on $\mms$, yield
\begin{align*}
\frac{1}{2t}\left(\W_2^2(\WHeat_t\delta_x, \WHeat_t\delta_y) - \met^2(x,y)\right) &\leq t + \frac{1}{2t}\left(\W_2^2(\WHeat_t\delta_x,\WHeat_t\delta_y) - \W_2^2(\WHeat_\tau\delta_x,\WHeat_\tau\delta_y)\right)\\
&\leq t - \frac{1}{t}\int_\tau^t\int_0^1\int_{\Geo(\mms)} \k_n(\gamma_s)\,\vert\dot{\gamma}\vert^2\d\boldsymbol{\pi}_r(\gamma)\d s\d r,
\end{align*}
where $\boldsymbol{\pi}_r\in\Prob(\Geo(\mms))$ represents the $\W_2$-geodesic from $\WHeat_r\delta_x$ to $\WHeat_r\delta_y$. As $n\to\infty$, by monotone convergence, the above inequality still holds with $\k$ in place of $\k_n$. Thus, the definition of $\kk$ and the inequality $\kk_n \leq \kk$ on $\mms$ for every $n\in\N$ give, setting $\pi_r := (\eval_0,\eval_1)_\push\boldsymbol{\pi}_r$, 
\begin{equation*}
\frac{1}{2t}\left(\W_2^2(\WHeat_t\delta_x, \WHeat_t\delta_y) - \met^2(x,y)\right) \leq t - \frac{1}{t}\int_\tau^t\int_{\mms\times\mms} \kk_n(x',y')\met^2(x',y')\d\pi_r(x',y')\d r.
\end{equation*}

Since $\WHeat_r\delta_x \to \delta_x$ and $\WHeat_r\delta_y \to \delta_y$ w.r.t.~$\W_2$ as $r\to 0$ and since $\W_2(\WHeat_r\delta_x,\WHeat_r\delta_y)$ is bounded uniformly in for small $r$, stability of optimal couplings, see \cite[Proposition 7.1.3]{ambrosio2008}, and uniqueness of the $\W_2$-optimal coupling $\pi_0 := \delta_x\otimes\delta_y$ imply that $\pi_r \to \pi_0$ weakly as $r\to 0$. Thus, the map
$r \mapsto \int_{\mms\times\mms}\kk_n\met^2\d\pi_r$
is continuous at $0$ by \cite[Lemma 4.3]{villani2009}. 
The claim follows by taking successively $\tau\downarrow 0$, $t\downarrow 0$ and $n\to\infty$ in the above inequality.
\end{proof}

A posteriori, knowing from Theorem \ref{Th:Huge thm} that $\EVI(\k)$ implies $\GE_1(\k)$, we will be able to improve the bound (ii) from Proposition \ref{Pr:EVI implies DTE} even for exponents different from $2$, see Remark \ref{sup improvement} below.

\section[Duality of \boldmath{$p$}-transport estimates and \boldmath{$q$}-gradient estimates]{Duality of \boldmath{$p$}-transport estimates and \boldmath{$q$}-gradient estimates}
\label{sec:transport}
Throughout the rest of this article, given $t\geq 0$, we use the short-hand notation $\PS{t} := \Cont([0,t];\mms\times\mms)$. Moreover, at several instances we consider a function $\kkkk\colon\mms\times\mms\to\R$ which, unless stated otherwise, is assumed lower semicontinuous and lower bounded. However, it should practically rather be thought of as a bounded Lipschitz function ``approximating'' $\kk$ from below without being of the particular form \eqref{k hut}. This often allows us to assume that $\kkkk \in \Lip_\bdd(\mms\times\mms)$, while $\kk$ is not continuous in general, even if $\k$ is Lipschitz.

\subsection{Perturbed costs and coupled Brownian motions}\label{Sec:4.1}

Given any $p\in [1,\infty)$ and  $\mu_1,\mu_2\in\Prob_p(X)$, let us define the \emph{perturbed $p$-transport cost with potential $-p\kkkk$} at $t\geq 0$ by 
\begin{equation}\label{pert cost}
W_p^{\kkkk}(\mu_1,\mu_2,t):=\inf_{(\PP,\B^1,\B^2)} \EE\Big[\expo^{\int_0^{2t} p \kkkk\left(\B^1_{r}, \B^2_{r}\right)/2\d r} \met^p\!\big(\B^1_{2t},\B^2_{2t}\big)\Big]^{1/p},
\end{equation} 
where the infimum is taken over all pairs of coupled Brownian motions $\smash{\big(\PP,\B^1\big)}$ and $\big(\PP,\B^2\big)$ on $\mms$, restricted to $[0,2t]$ and modeled on a common probability space, with initial distributions $\mu_1$ and $\mu_2$, respectively. In more analytic words, 
\begin{equation}\label{pert cost2}
W_p^{\kkkk}(\mu_1,\mu_2,t)=\inf_{\boldsymbol{\nu}}  \Big(\int_{\PS{2t}} \expo^{\int_0^{2t} p\kkkk\left(\gamma_{r}^1,\gamma_{r}^2\right)/2\d r} \met^p\!\big(\gamma_{2t}^1,\gamma_{2t}^2\big)\d\boldsymbol{\nu}(\gamma)\Big)^{1/p}\!,
 \end{equation} 
the infimum being taken over all $\boldsymbol{\nu}\in\Prob(\PS{2t})$ whose marginals $\boldsymbol{\nu}_1,\boldsymbol{\nu}_2 \in \Prob(\Cont([0,2t];\mms))$ are the laws of Brownian motions on $\mms$, restricted to $[0,2t]$, with initial distribution $\mu_1$ and $\mu_2$, respectively. If $\kkkk = \kk$, this is the usual perturbed $p$-transport cost from Section \ref{Transport def}.

A natural, albeit nontrivial identity relates the perturbed $p$-transport cost  in the case of constant  $\k$ with the usual $p$-transport cost.

\begin{lma}\label{Le:Constant perturbed cost} If $\kkkk$ is constantly equal to $L\in\R$ then, for $t\geq 0$,
\begin{equation*}
\W_p^{\kkkk}(\mu_1,\mu_2,t)=\expo^{Lt}\, \W_p(\WHeat_t\mu_1,\WHeat_t\mu_2).
\end{equation*}
\end{lma}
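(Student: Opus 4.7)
Since $\kkkk \equiv L$ is constant, the integrand in \eqref{pert cost2} factorizes: for every admissible $\boldsymbol{\nu}\in\Prob(\PS{2t})$,
\begin{equation*}
\int_{\PS{2t}} \expo^{\int_0^{2t} p\kkkk(\gamma_r^1,\gamma_r^2)/2\d r}\,\met^p\!\big(\gamma_{2t}^1,\gamma_{2t}^2\big)\d\boldsymbol{\nu}(\gamma) = \expo^{pLt}\int_{\PS{2t}}\met^p\!\big(\gamma_{2t}^1,\gamma_{2t}^2\big)\d\boldsymbol{\nu}(\gamma),
\end{equation*}
so, after extracting $\expo^{Lt}$, everything reduces to showing that the remaining infimum equals $\W_p(\WHeat_t\mu_1,\WHeat_t\mu_2)$. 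The plan is therefore to prove both inequalities separately.

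For ``$\geq$'', I would fix any admissible $\boldsymbol{\nu}$, observe that the marginals $\boldsymbol{\nu}_1,\boldsymbol{\nu}_2$ are the laws of Brownian motions on $[0,2t]$ with initial distributions $\mu_1,\mu_2$, and recall from the construction of Brownian motion in the preliminaries that $(\eval_{2t})_\push\boldsymbol{\nu}_i=\WHeat_t\mu_i$ (the factor $2$ is absorbed in the convention $\EE[f(\B_{s+t})\mid\B_s]=\ChHeat_{t/2}f(\B_s)$). Hence the pushforward $\pi := (\eval_{2t}^1,\eval_{2t}^2)_\push\boldsymbol{\nu}$ is a coupling of $\WHeat_t\mu_1$ and $\WHeat_t\mu_2$, and
\begin{equation*}
\int_{\PS{2t}}\met^p\!\big(\gamma_{2t}^1,\gamma_{2t}^2\big)\d\boldsymbol{\nu}(\gamma) = \int_{\mms\times\mms}\met^p(x_1,x_2)\d\pi(x_1,x_2)\geq \W_p^p(\WHeat_t\mu_1,\WHeat_t\mu_2).
\end{equation*}

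For ``$\leq$'', I would build a near-optimal admissible $\boldsymbol{\nu}$ by a gluing/disintegration argument. Let $\mathbf{P}_i$ denote the law on $\Cont([0,2t];\mms)$ of Brownian motion with initial distribution $\mu_i$. Since $\Cont([0,2t];\mms)$ is Polish, the disintegration theorem yields kernels $\{\mathbf{P}_i^{x}\}_{x\in\mms}$ with
$\mathbf{P}_i=\int_\mms \mathbf{P}_i^{x}\d(\WHeat_t\mu_i)(x)$
and $(\eval_{2t})_\push\mathbf{P}_i^{x}=\delta_x$ for $\WHeat_t\mu_i$-a.e.~$x$. Given any $\W_p$-optimal coupling $\pi$ of $\WHeat_t\mu_1$ and $\WHeat_t\mu_2$, I set
\begin{equation*}
\boldsymbol{\nu} := \int_{\mms\times\mms} \mathbf{P}_1^{x_1}\otimes \mathbf{P}_2^{x_2}\d\pi(x_1,x_2)\in\Prob(\PS{2t}).
\end{equation*}
By construction the marginals of $\boldsymbol{\nu}$ coincide with $\mathbf{P}_1$ and $\mathbf{P}_2$, so $\boldsymbol{\nu}$ is admissible, while $(\eval_{2t}^1,\eval_{2t}^2)_\push\boldsymbol{\nu}=\pi$. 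Consequently
\begin{equation*}
\int_{\PS{2t}}\met^p\!\big(\gamma_{2t}^1,\gamma_{2t}^2\big)\d\boldsymbol{\nu}(\gamma)=\int_{\mms\times\mms}\met^p(x_1,x_2)\d\pi(x_1,x_2)=\W_p^p(\WHeat_t\mu_1,\WHeat_t\mu_2),
\end{equation*}
which combined with the factorization above and the previous direction concludes the proof.

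The only non-routine point I anticipate is verifying that the kernels $\mathbf{P}_i^x$ can be chosen measurably in $x$ so that the fibered product $\mathbf{P}_1^{x_1}\otimes\mathbf{P}_2^{x_2}$ defines a bona fide Borel kernel on $\mms\times\mms$; this is however standard on Polish spaces via regular conditional probabilities, so no genuine obstacle remains.
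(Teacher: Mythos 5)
Your proposal is correct and follows essentially the same route as the paper: both factor out the constant exponential $\expo^{pLt}$, obtain the lower bound by pushing forward any admissible coupling to time $2t$ (yielding a coupling of $\WHeat_t\mu_1$ and $\WHeat_t\mu_2$), and obtain the upper bound by disintegrating the Brownian laws into bridge measures conditioned on the endpoint and gluing them against a $\W_p$-optimal coupling. The measurability concern you raise at the end is the same implicit point in the paper's construction of the glued measure $\widetilde{\PP}$, and is resolved exactly as you suggest via regular conditional probabilities on Polish spaces.
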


\begin{proof} Since $\smash{\W_p(\WHeat_t\mu_1,\WHeat_t\mu_2)^{1/p} = \inf_{(\mathsf{x},\mathsf{y})}\EE\big[\!\met^p(\mathsf{x},\mathsf{y})\big]^{1/p}}$, the infimum ranging  over pairs of random variables $\mathsf{x}\sim\WHeat_t\mu_1$ and $\mathsf{y}\sim\WHeat_t\mu_2$ defined on a common probability space $(\Omega,\mathscr{F},\PP)$, and as $\smash{\B_{2t} \sim \WHeat_t\mu}$ for every Brownian motion $(\PP,\B)$ with initial distribution $\mu\in\Prob(\mms)$, we get 
\begin{equation*}
\W_p^{\kkkk}(\mu_1,\mu_2,t)\geq \expo^{Lt}\, \W_p(\WHeat_t\mu_1,\WHeat_t\mu_2).
\end{equation*}

For the converse inequality, let $\pi_t\in\Prob(\mms\times\mms)$ be a $W_p$-optimal coupling of $\WHeat_t\mu_1$ and $\WHeat_t\mu_2$. Consider Brownian motions $\smash{\big(\PP_1,\B^1\big)}$ and $\smash{\big(\PP_2,\B^2\big)}$, restricted to $[0,2t]$, starting at $\mu_1$ and $\mu_2$, defined on probability spaces $(\Omega_1,\mathscr{F}_1,\PP_1)$ and $(\Omega_2,\mathscr{F}_2,\PP_2)$, respectively. Define the ``bridge measures'' $\PP_1^x$ for $x\in\mms$ by disintegrating $\PP_1$ w.r.t.~$\WHeat_t\mu_1(\d x)$ or, in other words, by conditioning $\B^1$ on the event $\{\B_{2t}^1=x\}$. Similarly, let $\PP_2^y$ for $y\in\mms$ be the disintegration of $\PP_2$ w.r.t.~$\WHeat_t\mu_2(\d y)$.
Consider the ``glued measure'' $\smash{\widetilde{\PP}}$ defined by
\begin{equation*}
\widetilde{\PP}:=\int_{\mms\times\mms}\PP_1^x\otimes \PP_2^y\d\pi_t(x,y)
\end{equation*}
on $\Omega:=\Omega_1\times \Omega_2$. Then $\smash{\big(\widetilde{\PP},\B^1\big)}$ and $\smash{\big(\widetilde{\PP},\B^2\big)}$ is a 
pair of coupled Brownian motions with joint distribution $\pi_t$ at time $2t$. The desired inequality then follows directly, since
\begin{equation*}
\widetilde{\EE}\big[\!\met^p\!\big(\B_{2t}^1,\B_{2t}^2\big)\big] = \int_{\mms\times\mms}\met^p(x,y)\d\pi_t(x,y) = \W_p^p(\WHeat_t\mu_1,\WHeat_t\mu_2).\qedhere
\end{equation*}
\end{proof}

\begin{lma}\label{min att} For every $p\in[1,\infty)$, $t\geq0$ and $\mu_1,\mu_2\in\Prob_p(\mms)$ as above, the infima in \eqref{pert cost} and in \eqref{pert cost2} are attained. 

Moreover, for every sequence of lower semicontinuous functions $\kkkk_n \colon \mms\times\mms\to\R$ converging pointwise to $\kkkk$ from below in an increasing way, we have
\begin{equation*}
\lim_{n\to\infty} \W_p^{\kkkk_n}(\mu_1,\mu_2,t) = \W_p^\kkkk(\mu_1,\mu_2,t).
\end{equation*}
\end{lma}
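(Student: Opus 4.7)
The plan is to apply the direct method of the calculus of variations for existence, and then combine this with weak compactness and a monotone/LSC double-limit argument for the convergence claim. The key analytic fact I would first establish is that, writing $G_\kkkk(\gamma) := \expo^{\int_0^{2t} p\kkkk(\gamma^1_r,\gamma^2_r)/2\d r}\,\met^p(\gamma^1_{2t},\gamma^2_{2t})$ for $\gamma\in\PS{2t}$, the functional $G_\kkkk$ is lower semicontinuous on $\PS{2t}$ endowed with the topology of uniform convergence. Indeed, the second factor depends continuously on the endpoint $\gamma_{2t}$, while Fatou's lemma applied to the nonnegative function $\kkkk - L$, where $L\in\R$ lower bounds $\kkkk$, yields lower semicontinuity of $\gamma \mapsto \int_0^{2t}\kkkk(\gamma^1_r,\gamma^2_r)\d r$; composing with the continuous nondecreasing map $\expo^{p\cdot/2}$ and multiplying by the continuous nonnegative $\met^p(\gamma^1_{2t},\gamma^2_{2t})$ preserves this property.

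For existence, let $\mathcal{A}\subset\Prob(\PS{2t})$ denote the set of admissible couplings, i.e.~those whose two marginals in $\Prob(\Cont([0,2t];\mms))$ are the laws of Brownian motions on $\mms$ starting at $\mu_1$ and $\mu_2$, respectively. Since these marginals are each tight (being single measures), $\mathcal{A}$ is tight; and since the marginal projection is continuous on $\Prob(\PS{2t})$, $\mathcal{A}$ is weakly closed. Prokhorov's theorem then gives weak compactness of $\mathcal{A}$. As $G_\kkkk\geq 0$ is LSC, the Portmanteau theorem implies that $\boldsymbol{\nu}\mapsto\int G_\kkkk\d\boldsymbol{\nu}$ is weakly LSC on $\mathcal{A}$ and hence attains its infimum, yielding existence in \eqref{pert cost2}; the equivalence with \eqref{pert cost} immediately transfers this to the pathwise formulation.

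For the convergence statement, the monotonicity $\kkkk_n\leq\kkkk$ directly gives $\W_p^{\kkkk_n}(\mu_1,\mu_2,t)\leq \W_p^{\kkkk}(\mu_1,\mu_2,t)$ for all $n$. For the opposite direction, I would pick minimizers $\boldsymbol{\nu}_n\in\mathcal{A}$ for $\W_p^{\kkkk_n}$ via the first part and extract a weakly convergent subsequence $\boldsymbol{\nu}_n\to\boldsymbol{\nu}^*\in\mathcal{A}$ by compactness. For each fixed $m$, lower semicontinuity of $G_{\kkkk_m}$ together with the inequality $\kkkk_m\leq\kkkk_n$ for $n\geq m$ gives
\begin{equation*}
\int G_{\kkkk_m}\d\boldsymbol{\nu}^* \leq \liminf_{n\to\infty}\int G_{\kkkk_m}\d\boldsymbol{\nu}_n \leq \liminf_{n\to\infty}\int G_{\kkkk_n}\d\boldsymbol{\nu}_n = \liminf_{n\to\infty}\W_p^{\kkkk_n}(\mu_1,\mu_2,t)^p.
\end{equation*}
Sending $m\to\infty$ on the left and applying the monotone convergence theorem produces $\W_p^{\kkkk}(\mu_1,\mu_2,t)^p \leq \int G_\kkkk\d\boldsymbol{\nu}^* \leq \liminf_n \W_p^{\kkkk_n}(\mu_1,\mu_2,t)^p$, which closes the argument.

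The main obstacle I expect is the careful verification of the lower semicontinuity of $G_\kkkk$, specifically the interaction of the exponential with the time integration; this ultimately rests on the lower boundedness of $\kkkk$, which is what allows Fatou to be invoked on $\kkkk-L$ and then propagated through the monotone $\expo^{p\cdot/2}$. A secondary technical point is ensuring finiteness of $\W_p^{\kkkk}(\mu_1,\mu_2,t)$, which can be checked using the product coupling $\boldsymbol{\nu}_1\otimes\boldsymbol{\nu}_2$ once $\kkkk$ is additionally bounded above; the general case then reduces to this via a bounded Lipschitz approximation from below as in Lemma \ref{Le:kk approximation}.
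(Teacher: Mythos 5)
Your proposal is correct and follows essentially the same route as the paper's proof: lower semicontinuity of the integrand on $\PS{2t}$ (established via Fatou on $\kkkk-L\geq0$), weak lower semicontinuity of the integral functional, and tightness of the admissible couplings (this is precisely \cite[Lemma 4.4]{villani2009}) give existence via the direct method, while the convergence statement is handled by what the paper tersely calls a ``$\Gamma$-convergence'' argument and you spell out as the double-liminf/monotone-convergence chain along a weakly convergent subsequence of minimizers. The only difference is level of detail, not substance.
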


\begin{proof} The lower semicontinuity of $\kkkk$ implies the one of 
 \begin{equation*}
\gamma\quad \longmapsto\quad \expo^{\int_0^{2t} p\kkkk\left(\gamma_{r}^1,\gamma_{r}^2\right)/2\d r}\met^p\!\big(\gamma_{2t}^1,\gamma_{2t}^2\big)
\end{equation*}
w.r.t.~the uniform topology on $\PS{2t}$ which in turn implies weak lower semicontinuity of
\begin{equation*}
\boldsymbol{\nu}\quad \longmapsto\quad \
 \int_{\PS{2t}} \expo^{\int_0^{2t} p\kkkk\left(\gamma_{r}^1,\gamma_{r}^2\right)/2\d r} \met^p\!\big(\gamma_{2t}^1,\gamma_{2t}^2\big)\d\boldsymbol{\nu}(\gamma)
 \end{equation*} 
in $\Prob(\PS{2t})$. This gives the existence of a minimizer for \eqref{pert cost2} by a standard argument
since, according to \cite[Lemma 4.4]{villani2009}, the family of $\boldsymbol{\nu}\in\Prob(\PS{2t})$ with given marginals is tight as the sets of marginals are both singletons.

The second assertion is a standard argument via $\Gamma$-convergence of the functionals whose infima give $\smash{\W_p^{\kkkk_n}(\mu_1,\mu_2,t)}$ and $\smash{\W_p^\kkkk(\mu_1,\mu_2,t)}$, respectively, in $\Prob(\PS{2t})$.
\end{proof}

Let us denote by $\Borel^\nu(\mms\times\mms)$ the completion of the Borel $\sigma$-field on $\mms\times\mms$ w.r.t.~a given  $\nu\in\Prob(\mms\times\mms)$, and then
\begin{equation*}
\Borel^\mathrm{univ}(\mms\times\mms) := \bigcap_{\nu\in\Prob(\mms\times\mms)} \Borel^\nu(\mms\times\mms)
\end{equation*}
is the $\sigma$-field of all \emph{universally measurable} subsets of $\mms\times\mms$.

\begin{lma}\label{kernel}
For every $t\geq 0$ and $p\in [1,\infty)$, there exists a universally measurable map
\begin{equation*}
\boldsymbol{\eta}^t\colon\quad \mms\times\mms \quad\longrightarrow\quad \Prob(\PS{2t})
\end{equation*}
such that for every $x,y\in X$, the marginals of $\boldsymbol{\eta}_{x,y}^t := \boldsymbol{\eta}^t(x,y)$ are laws of Brownian motions, restricted to $[0,2t]$, starting in $x$ and $y$, respectively, and $\smash{\boldsymbol{\eta}_{x,y}^t}$ is a minimizer in the definition \eqref{pert cost2} of $\smash{\W_p^\kkkk(\delta_x,\delta_y,t)}$.
\end{lma}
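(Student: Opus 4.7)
The strategy is a standard measurable selection argument applied to the multi-valued map sending $(x,y) \in \mms\times\mms$ to the (by Lemma \ref{min att} nonempty) set of minimizers in the variational problem \eqref{pert cost2} with $\mu_1=\delta_x$ and $\mu_2=\delta_y$.

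First, I set up the measurable/topological framework on path space. Let $P_x \in \Prob(\Cont([0,2t];\mms))$ denote the law of the Brownian motion on $\mms$ starting in $x$, restricted to $[0,2t]$. The Feller property of the heat semigroup (inherited from the strong Feller property recalled in the preliminaries, together with $\ChHeat_t\One = \One$) combined with standard tightness estimates for the continuous path measures yields that $x \mapsto P_x$ is weakly continuous on $\mms$. Introduce the correspondence
\begin{equation*}
\Pi(x,y) := \big\{\boldsymbol{\nu} \in \Prob(\PS{2t}) : (\mathsf{pr}_1)_\push\boldsymbol{\nu} = P_x, \ (\mathsf{pr}_2)_\push\boldsymbol{\nu} = P_y\big\},
\end{equation*}
where $\mathsf{pr}_i\colon \PS{2t}\to \Cont([0,2t];\mms)$ is coordinate projection. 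Each fiber $\Pi(x,y)$ is weakly compact by Prokhorov's theorem, and the graph $\mathrm{Gr}(\Pi)$ is closed, hence Borel, in $(\mms\times\mms) \times \Prob(\PS{2t})$ by weak continuity of the marginals.

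Second, I pass to the cost. The functional $F\colon \Prob(\PS{2t}) \to [0,\infty]$ appearing in \eqref{pert cost2} is lower semicontinuous, as already observed in the proof of Lemma \ref{min att}. Set $V(x,y) := \inf_{\boldsymbol{\nu}\in\Pi(x,y)} F(\boldsymbol{\nu})$. A Berge-type argument then yields lower semicontinuity of $V$: given $(x_n,y_n)\to(x,y)$ and near-minimizers $\boldsymbol{\nu}_n \in \Pi(x_n,y_n)$, tightness of the marginals gives a weak accumulation point $\boldsymbol{\nu}$, which lies in $\Pi(x,y)$ by closedness of $\mathrm{Gr}(\Pi)$, and $F(\boldsymbol{\nu}) \leq \liminf_n F(\boldsymbol{\nu}_n) = \liminf_n V(x_n,y_n)$. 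Consequently, the set
\begin{equation*}
E := \big\{(x,y,\boldsymbol{\nu}) \in \mathrm{Gr}(\Pi) : F(\boldsymbol{\nu}) \leq V(x,y)\big\}
\end{equation*}
is Borel in $(\mms\times\mms)\times\Prob(\PS{2t})$, and its fiber over each $(x,y)$ equals the nonempty compact set of minimizers provided by Lemma \ref{min att}. The Jankov--von Neumann selection theorem produces a universally measurable map $\boldsymbol{\eta}^t\colon \mms\times\mms \to \Prob(\PS{2t})$ whose graph is contained in $E$; this is the desired kernel.

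The main obstacle is the measurability groundwork rather than the selection itself. Concretely, one must verify that $x\mapsto P_x$ is weakly continuous on path space (which requires marrying the Feller property of $(\ChHeat_t)_{t\geq 0}$ to Kolmogorov--Chentsov-type tightness estimates for $\mms$-valued Brownian paths), that the graph of $\Pi$ is closed, and that $V$ is lower semicontinuous; once all three are in hand, Jankov--von Neumann delivers a universally measurable (but, in general, not Borel) selection, which is the level of regularity actually needed for the subsequent Feynman--Kac-type computations.
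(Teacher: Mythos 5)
Your proposal is correct and follows essentially the same route as the paper, which simply invokes Lemma \ref{min att} for nonemptiness of the minimizer set, observes closedness of that set, and then cites \cite{bogachev2007, sturm2015} for the measurable selection argument. You have spelled out the ingredients the paper leaves implicit — weak continuity of $x\mapsto P_x$, closedness of the graph of the marginal correspondence, lower semicontinuity of the value function, and the Jankov--von Neumann selection — and these are precisely what the cited references supply.
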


\begin{proof}  According to Lemma \ref{min att}, for each pair $(x,y)\in\mms\times\mms$ there exists an admissible measure on $\Prob(\PS{2t})$ which attains the infimum in \eqref{pert cost2}. The class of all probability measures with this property 
is closed. 
Then a measurable selection argument, see \cite{bogachev2007, sturm2015}, allows us to produce a family of measures $\boldsymbol{\eta}^t_{x,y}$ still satisfying the minimality property so that $(x,y) \mapsto \boldsymbol{\eta}^t_{x,y}$ is universally measurable in $(x,y)\in\mms\times\mms$. 
\end{proof}

An important consequence of these observations is a type of Markov property which will be crucial in the proof of Theorem \ref{thm:nonincreasing}. For this and also for later use, fix $s,t\geq 0$, a measure $\boldsymbol{\nu}\in\Prob(\PS{s})$ and a universally measurable map $\boldsymbol{\mu} \colon \mms\times\mms \to \Prob(\PS{t})$ such that $(\eval_0)_\push\boldsymbol{\mu}_{x,y} = \delta_x\otimes \delta_y$ for all $x,y\in\mms$. Define their composition $\boldsymbol{\mu} \circ\boldsymbol{\nu} \in \Prob(\PS{s+t})$ by
\begin{equation*}
\int_{\PS{s+t}} f(\gamma)\d (\boldsymbol{\mu} \circ\boldsymbol{\nu})(\gamma) :=
\int_{\PS{s}}\int_{\PS{t}} f\big(\Phi_{s,t}(\alpha,\beta)\big)\d\boldsymbol{\mu}_{\alpha_s^1,\alpha_s^2}(\beta)\d\boldsymbol{\nu}(\alpha) \quad \text{for every } f\in \Cont_\bdd(\PS{s+t}),
\end{equation*}
where
\begin{equation*}
\Phi_{s,t}(\alpha,\beta)_r := \alpha_r \quad\text{if } r\in [0,s]\quad\text{and}\quad \Phi_{s,t}(\alpha,\beta)_r := \beta_{r-s}\quad\text{if } r\in (s,s+t]
\end{equation*}
denotes the \emph{concatenation map} ``gluing'' together the curves $(\alpha_\sigma)_{\sigma\in [0,s]}$ and $(\beta_\tau)_{\tau\in[0,t]}$.

\begin{Prop}\label{markov} For every $p\in [1,\infty)$, every $s,t\geq 0$ and all $\mu_1,\mu_2\in\Prob_p(\mms)$, there exists a 
pair $\smash{\big(\PP,\B^1\big)}$ and $\smash{\big(\PP,\B^2\big)}$ of coupled Brownian motions on $\mms$
 with initial distributions $\mu_1$ and $\mu_2$, respectively, which minimizes \eqref{pert cost} for the given time $t$ and such that
\begin{equation}\label{Markov prop inequality}
\W_p^{\kkkk}(\mu_1,\mu_2,t+s)^p\le \EE\Big[\expo^{\int_0^{2t} p \kkkk\left(\B^1_{r}, \B^2_{r}\right)/2\d r}\, \W_p^{\kkkk}\big(\delta_{\B^1_{2t}},\delta_{\B^2_{2t}},s\big)^p \Big].
\end{equation}
\end{Prop}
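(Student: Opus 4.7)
My plan is to build the desired pair of coupled Brownian motions by concatenating an optimal coupling on $[0,2t]$ with a universally measurable family of optimal couplings on $[0,2s]$, and then to verify that the resulting measure fulfils the Markov-type inequality. First I would invoke Lemma~\ref{min att} to obtain a minimizer $\boldsymbol{\nu} \in \Prob(\PS{2t})$ of the variational problem~\eqref{pert cost2} defining $\W_p^{\kkkk}(\mu_1,\mu_2,t)$; its marginals are, by definition, laws of Brownian motions on $\mms$ restricted to $[0,2t]$ with initial distributions $\mu_1$ and $\mu_2$. Next I would apply Lemma~\ref{kernel} to get a universally measurable map $(x,y) \mapsto \boldsymbol{\eta}_{x,y}^s \in \Prob(\PS{2s})$ such that each $\boldsymbol{\eta}_{x,y}^s$ minimizes $\W_p^{\kkkk}(\delta_x,\delta_y,s)$ and has Brownian marginals starting at $x$ and $y$. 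The candidate is then the concatenation $\widetilde{\boldsymbol{\nu}} := \boldsymbol{\eta}^s \circ \boldsymbol{\nu} \in \Prob(\PS{2(t+s)})$ defined just before the statement.

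The essential verification is that the marginals of $\widetilde{\boldsymbol{\nu}}$ are laws of Brownian motions on $[0,2(t+s)]$ starting from $\mu_1$ and $\mu_2$. The first-coordinate marginal of $\boldsymbol{\eta}^s_{x,y}$ depends only on $x$ and coincides with the law of a Brownian motion on $[0,2s]$ starting at $x$. Hence the first-coordinate marginal of $\widetilde{\boldsymbol{\nu}}$ is obtained by gluing a Brownian motion of law $\boldsymbol{\nu}_1$ on $[0,2t]$ with, conditionally on the endpoint $x$ of its trajectory, a Brownian motion on $[0,2s]$ issued at $x$. By the Markov property of the Brownian motion on the $\RCD(K,\infty)$ space, the resulting law is precisely that of a Brownian motion on $[0,2(t+s)]$ with initial distribution $\mu_1$; the same reasoning applies to the second coordinate. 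Thus $\widetilde{\boldsymbol{\nu}}$ is an admissible competitor in the definition of $\W_p^{\kkkk}(\mu_1,\mu_2,t+s)$.

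Finally, Fubini's theorem applied to the nonnegative integrand, together with the disintegration formula built into the composition $\boldsymbol{\eta}^s \circ \boldsymbol{\nu}$ and the minimality of each $\boldsymbol{\eta}^s_{x,y}$, gives
\begin{equation*}
\int_{\PS{2(t+s)}} \expo^{\int_0^{2(t+s)} p\kkkk(\gamma^1_r,\gamma^2_r)/2\d r}\,\met^p\!\big(\gamma^1_{2(t+s)},\gamma^2_{2(t+s)}\big)\d\widetilde{\boldsymbol{\nu}}(\gamma) = \int_{\PS{2t}} \expo^{\int_0^{2t} p\kkkk(\alpha^1_r,\alpha^2_r)/2\d r}\,\W_p^{\kkkk}\!\big(\delta_{\alpha^1_{2t}},\delta_{\alpha^2_{2t}},s\big)^p\d\boldsymbol{\nu}(\alpha).
\end{equation*}
Identifying $\boldsymbol{\nu}$ with the joint law of a pair $(\B^1,\B^2)$ of coupled Brownian motions on $[0,2t]$ turns the right-hand side into exactly the expectation appearing in~\eqref{Markov prop inequality}. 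Since $\widetilde{\boldsymbol{\nu}}$ is admissible, the left-hand side bounds $\W_p^{\kkkk}(\mu_1,\mu_2,t+s)^p$ from above, which is the claim.

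The step I expect to be most delicate is the Markov identification of the glued marginals: it hinges on the fact that the minimizing kernel from Lemma~\ref{kernel} can be chosen so that each marginal of $\boldsymbol{\eta}^s_{x,y}$ is the canonical Brownian law depending only on the corresponding starting point, and that gluing two such laws along a common endpoint reproduces the global Brownian law on $[0,2(t+s)]$. A secondary technical point is the universal measurability of $(x,y) \mapsto \W_p^{\kkkk}(\delta_x,\delta_y,s)^p$, needed to make sense of the outer integral above; this follows from the universal measurability of $\boldsymbol{\eta}^s$ together with the lower semicontinuity of the integrand appearing in~\eqref{pert cost2}.
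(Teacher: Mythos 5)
Your proposal is correct and follows essentially the same strategy as the paper's proof: both build the competitor measure $\boldsymbol{\eta}^s \circ \boldsymbol{\nu}$ (with $\boldsymbol{\nu}$ a minimizer at time $t$ from Lemma~\ref{min att} and $\boldsymbol{\eta}^s$ the universally measurable minimizing kernel from Lemma~\ref{kernel}), check that its marginals are Brownian by the Markov property, and then use the disintegration identity to rewrite the resulting cost as the right-hand side of~\eqref{Markov prop inequality}. Your write-up is in fact slightly more explicit than the paper's, which simply asserts that the concatenation has Brownian marginals and that the integral identity holds without spelling out the gluing-plus-Markov argument or the measurability of $(x,y)\mapsto \W_p^{\kkkk}(\delta_x,\delta_y,s)^p$.
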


\begin{proof} Denote the map from Lemma \ref{kernel} with $s$ in place of $t$ by $\boldsymbol{\eta}^s$, denote a minimizer of \eqref{pert cost2} for time $t$ by $\boldsymbol{\nu}_t$, and define $\boldsymbol{\eta}^{t+s} := \boldsymbol{\eta}^s\circ\boldsymbol{\nu}_t\in\Prob(\PS{2(s+t)})$. This defines a coupling of the laws of two Brownian motions with initial distributions $\mu_1$ and $\mu_2$, respectively, restricted to $[0,2(t+s)]$ such that 
\begin{align*}
&\int_{\PS{2(s+t)}} \expo^{\int_0^{2(t+s)} p\kkkk\left(\gamma_{r}^1,\gamma_{r}^2\right)/2\d r}\met^p\!\big(\gamma_{2(t+s)}^1,\gamma_{2(t+s)}^2\big)\d\boldsymbol{\nu}^{t+s}(\gamma)\\
&\qquad\qquad =
\int_{\PS{2t}} \expo^{\int_0^{2t} p\kkkk\left(\alpha_{r}^1,\alpha_{r}^2\right)/2\d r}\,
W_p^\k\big(\delta_{\alpha^1_{2t}}, \delta_{\alpha^2_{2t}}, s\big)^p\d\boldsymbol{\nu}_{t}
(\alpha).
\end{align*}
This proves the claim.
\end{proof}

Less formally, the previous construction can be described as follows. To estimate the perturbed $p$-transport cost at time $t+s$, we construct the required process by first choosing a 
 pair process $\smash{\big(\B^1,\B^2\big)}$ of Brownian motions with given initial distributions $\mu_1$ and $\mu_2$ which realizes the minimum for $\smash{\W_p^\kkkk(\mu_1,\mu_2,t)}$. Then we switch to a pair of Brownian motions starting in $\B^1_{2t}$ and $\B^2_{2t}$, respectively, which minimizes the cost at time $s$.

\subsection[From weak differential $p$-transport inequalities to $p$-transport estimates]{From differential \boldmath{$p$}-transport inequalities to \boldmath{$p$}-transport estimates}\label{Sec:4.2}

To deduce a $p$-transport estimate $\PTE_p(\k)$, we have to control the upper derivatives of the function $\smash{t\mapsto \W_p^\kk(\delta_x,\delta_y,t)^p}$ or, more generally, of $\smash{t\mapsto \W_p^\kkkk(\delta_x,\delta_y,t)^p}$ for $x,y\in\mms$. 

\begin{lma}\label{diff w} Assume that $\kkkk\in\Cont_\bdd(\mms\times\mms)$. Then for all $x,y\in\mms$ and $p\in [1,\infty)$, we have
$$  \frac{\diff^+}{\diff t}\bigg\vert_{t=0} W_p^\kkkk(\delta_x,\delta_y,t)^p\le p\,\kkkk(x,y) \met^p(x,y)+
\frac{\diff^+}{\diff t}\bigg\vert_{t=0}W_p^p(\WHeat_t\delta_x,\WHeat_t\delta_y)
.$$
\end{lma}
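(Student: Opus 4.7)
The strategy is to construct an explicit competitor coupling in the infimum defining $W_p^\kkkk(\delta_x, \delta_y, t)$ via the gluing argument from Lemma \ref{Le:Constant perturbed cost}: I choose a pair of coupled Brownian motions $(\B^1, \B^2)$ starting at $(x, y)$ such that the joint distribution of $(\B^1_{2t}, \B^2_{2t})$ is a $W_p$-optimal coupling of $\WHeat_t \delta_x$ and $\WHeat_t \delta_y$. Plugging this pair into the definition of $W_p^\kkkk$ gives the upper bound
\begin{equation*}
W_p^\kkkk(\delta_x, \delta_y, t)^p \le \EE\bigl[\expo^{A_t}\,\met^p(\B^1_{2t}, \B^2_{2t})\bigr], \qquad A_t := \int_0^{2t} \frac{p\,\kkkk(\B^1_r, \B^2_r)}{2}\d r,
\end{equation*}
together with the identity $\EE[\met^p(\B^1_{2t}, \B^2_{2t})] = W_p^p(\WHeat_t \delta_x, \WHeat_t \delta_y)$ by construction.

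Next, I would expand the exponential. Since $\kkkk$ is bounded, say $|\kkkk| \le C$, we have $|A_t| \le pCt$, and the elementary bound $|\expo^z - 1 - z| \le z^2 \expo^{|z|}/2$ yields $\expo^{A_t} = 1 + A_t + R_t$ with $|R_t| = O(t^2)$ uniformly. Subtracting $\met^p(x,y) = W_p^\kkkk(\delta_x, \delta_y, 0)^p$ from the previous bound, dividing by $t$ and taking $\limsup$ as $t\downarrow 0$, the $R_t$-contribution is negligible since $\EE[\met^p(\B^1_{2t}, \B^2_{2t})] = W_p^p(\WHeat_t \delta_x, \WHeat_t \delta_y)$ stays bounded. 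Hence it suffices to establish
\begin{equation*}
\limsup_{t\downarrow 0}\,\frac{1}{t}\,\EE\bigl[A_t\,\met^p(\B^1_{2t}, \B^2_{2t})\bigr] \le p\,\kkkk(x, y)\,\met^p(x, y).
\end{equation*}
Decomposing $A_t = p\,\kkkk(x, y)\,t + \int_0^{2t} \tfrac{p}{2}(\kkkk(\B^1_r, \B^2_r) - \kkkk(x, y))\d r$, the first summand produces $p\,\kkkk(x, y)\,t\,W_p^p(\WHeat_t\delta_x, \WHeat_t\delta_y)$, which divided by $t$ converges to $p\,\kkkk(x, y)\,\met^p(x, y)$ by continuity of the heat flow in $W_p$.

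The main obstacle is to show that the contribution of the remaining summand is $o(t)$. By Cauchy--Schwarz it is dominated by
\begin{equation*}
\frac{p}{2}\,\EE\!\biggl[\biggl(\int_0^{2t}\!|\kkkk(\B^1_r, \B^2_r) - \kkkk(x, y)|\d r\biggr)^{\!2}\biggr]^{1/2}\EE\bigl[\met^{2p}(\B^1_{2t}, \B^2_{2t})\bigr]^{1/2}\!.
\end{equation*}
Continuity of $\kkkk$ together with continuity of the Brownian sample paths issued from $x, y$ makes the integrand $\tfrac{1}{t}\int_0^{2t}|\kkkk(\B^1_r, \B^2_r) - \kkkk(x, y)|\d r$ converge pointwise to $0$ almost surely, while it is uniformly bounded by $4C$; dominated convergence therefore forces the first factor to be $o(t)$. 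The second factor is bounded uniformly as $t\downarrow 0$ by the Gaussian-type moment estimates for the heat kernel available on $\RCD(K, \infty)$ spaces, applied via the triangle inequality to the two marginals $\WHeat_t\delta_x$ and $\WHeat_t\delta_y$ of the glued optimal coupling. Combining the two estimates yields the announced inequality.
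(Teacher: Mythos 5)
Your overall strategy—glue a competitor coupling whose endpoints form a $W_p$-optimal coupling, Taylor-expand the exponential, and then estimate the linear term—is a genuine alternative to the paper's proof. The paper instead fixes $p'>p$, makes the endpoint coupling $W_{p'}$-optimal, decomposes the expectation into a ``good set/bad set'' according to whether the Brownian trajectories stay near $(x,y)$, bounds the bad-set contribution by Hölder's inequality with exponents $p'/p$ and its conjugate, invokes the tail estimate Lemma~\ref{tail estimate}, and finally lets $p'\downarrow p$. Your route avoids the auxiliary exponent $p'$ and the explicit good/bad decomposition, replacing them with a single Cauchy--Schwarz estimate; this is a simplification worth noting.

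However, the step where you conclude that the first Cauchy--Schwarz factor is $o(t)$ contains a real gap. You argue that
$\tfrac{1}{2t}\int_0^{2t}|\kkkk(\B_r^1,\B_r^2)-\kkkk(x,y)|\d r$
converges to zero \emph{almost surely} as $t\downarrow 0$, and then apply dominated convergence. But the coupled process $(\B^1,\B^2)$ is chosen afresh for each $t$ (it is whichever glued coupling makes the terminal pair $W_p$-optimal), so the underlying probability measure $\PP=\PP_t$ varies with $t$. There is no single probability space on which ``almost sure convergence as $t\downarrow 0$'' makes sense, and dominated convergence cannot be applied directly. The argument must be replaced by one that exploits the fact that the relevant estimate is uniform over all admissible couplings. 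Concretely, since $|\kkkk(\B_r^1,\B_r^2)-\kkkk(x,y)|\le 2\Vert\kkkk\Vert_\infty$ one first gets
\begin{equation*}
\EE\Big[\Big(\int_0^{2t}|\kkkk(\B_r^1,\B_r^2)-\kkkk(x,y)|\d r\Big)^2\Big]
\le 4\Vert\kkkk\Vert_\infty\, t\int_0^{2t}\EE\big[|\kkkk(\B_r^1,\B_r^2)-\kkkk(x,y)|\big]\d r,
\end{equation*}
and then, for any $\varepsilon>0$, continuity of $\kkkk$ at $(x,y)$ supplies a $\delta>0$ with
\begin{equation*}
\EE\big[|\kkkk(\B_r^1,\B_r^2)-\kkkk(x,y)|\big]\le \varepsilon + 2\Vert\kkkk\Vert_\infty\big(\PP_x[\B_r^1\notin B_\delta(x)]+\PP_y[\B_r^2\notin B_\delta(y)]\big),
\end{equation*}
where the right-hand side involves only the \emph{marginal} laws, which are fixed Brownian motions starting in $x$ and $y$ and do not depend on $t$ or the coupling. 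These probabilities tend to $0$ as $r\downarrow 0$ (qualitatively by continuity of Brownian paths, or quantitatively via Lemma~\ref{tail estimate}, which the paper uses at this exact point). Only after this reduction to the marginal laws does the averaging argument deliver $o(t)$. As written, your proof skips this reduction, and the phrase ``almost sure convergence'' is not meaningful in the setting at hand.

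A second, smaller point: you should verify (as the paper implicitly does) that $\EE[\met^{2p}(\B_{2t}^1,\B_{2t}^2)]$ is indeed bounded uniformly on a time interval, e.g.\ by Minkowski's inequality combined with the Gaussian moment bound for $\WHeat_t\delta_x$ and $\WHeat_t\delta_y$ on an $\RCD(K,\infty)$ space; you state this but do not carry it out, and the joint law (hence this moment) is governed by a coupling that changes with $t$, so the bound must come from the marginals.
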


\begin{proof} Choose any exponent $p'\in (p,\infty)$ with dual exponent $q'\in (1,\infty)$. For all $t>0$, denote by $\smash{\big(\PP,\B^1\big)}$ and $\smash{\big(\PP,\B^2\big)}$ a pair of coupled Brownian motions starting in $(x,y)$ and such that the law of $\smash{\big(\B_{2t}^1,\B_{2t}^2\big)}$ constitutes a $\W_{p'}$-optimal coupling of $\WHeat_t\delta_x$ and $\WHeat_t\delta_y$. Albeit this process still depends on $t$,  we  suppress this dependence in the sequel to simplify the notation.
For a precise construction of such process, we refer to the proof of Lemma \ref{Le:Constant perturbed cost}.

Observe that
\begin{align*}
&\frac{\diff^+}{\diff t}\bigg\vert_{t=0}W_p^\kkkk(\delta_x,\delta_y,t)^p\\
&\qquad\qquad\leq \limsup_{t\downarrow 0} \frac{1}{t}\EE\Big[\expo^{\int_0^{2t}p\kkkk\left(\B_{r}^{1},\B_{r}^{2}\right)/2\d r} \met^p\!\big(\B_{2t}^{1},\B_{2t}^{2}\big) - \met^p\!\big(\B_{0}^{1},\B_{0}^{2}\big)\Big]\\
&\qquad\qquad \leq \limsup_{t\downarrow 0} \frac{1}{t}\EE\bigg[\Big(\expo^{\int_0^{2t}p\kkkk\left(\B_{r}^{1},\B_{r}^{2}\right)/2\d r} - 1\Big) \met^p\!\big(\B_{2t}^{1},\B_{2t}^{2}\big)\bigg] + \frac{\diff^+}{\diff t}\bigg\vert_{t=0} \W_{p'}^p(\WHeat_t\delta_x,\WHeat_t\delta_y).
\end{align*}
Each of the last two limits will be estimated separately. The last term will converge to the upper derivative of $\smash{\W_p^p(\WHeat_t\delta_x,\WHeat_t\delta_y)}$ at $0$ as $p'\downarrow p$ by monotone convergence.
Moreover, since $\kkkk$ is bounded, the former term can be estimated through
\begin{align*}
&\limsup_{t\downarrow 0} \frac{1}{t}\EE\bigg[\Big(\expo^{\int_0^{2t}p\kkkk\left(\B_{r}^{1},\B_{r}^{2}\right)/2\d r} - 1\Big) \met^p\!\big(\B_{2t}^{1},\B_{2t}^{2}\big)\bigg] \leq \limsup_{t\downarrow 0} \frac{p}{2t}\EE\Big[\int_0^{2t}\kkkk\big(\B_{r}^{1},\B_{r}^{2}\big)\d r 
 \met^p\!\big(\B_{2t}^{1},\B_{2t}^{2}\big)\Big].
\end{align*}
Now we split the expectation into a term where $\big(\B^1,\B^2\big)$ behaves well and a remainder term. Let $\varepsilon > 0$ and choose $\delta > 0$ such that
\begin{equation*}
\max\!\Big\lbrace\big\vert\kkkk(x',y') - \kkkk(x,y)\big\vert, \big\vert \met^p(x',y')- \met^p(x,y)\big\vert\Big\rbrace\leq \varepsilon\quad\text{for every }x'\in B_\delta(x),\ \! y'\in B_\delta(y),
\end{equation*}
and define the exceptional set $E_{r,2t}$ for $r\in (0,2t)$ by
\begin{equation*}
E_{r,2t} := \big\lbrace \B_{r}^{1}\notin B_\delta(x)\big\rbrace\cup \big\lbrace \B_{2t}^{1} \notin B_\delta(x)\big\rbrace \cup\big\lbrace \B_{r}^{2}\notin B_\delta(y)\big\rbrace\cup \big\lbrace \B_{2t}^{2} \notin B_\delta(y)\big\rbrace.
\end{equation*}
By these definitions and Fubini's theorem, since $\kkkk$ is bounded,
\begin{align*}
&\limsup_{t\downarrow 0} \frac{p}{2t}\EE\Big[\int_0^{2t}\kkkk\big(\B_{r}^{1},\B_{r}^{2}\big)
 \, \One_{E_{r,2t}^\mathrm{c}}\d r\met^p\!\big(\B_{2t}^{1},\B_{2t}^{2}\big)\Big]\\
&\qquad\qquad \leq p\,\big(\kkkk(x,y) + \varepsilon\big)\, \big(\!\met^p(x,y) + \varepsilon\big) \, \limsup_{t\downarrow 0}\frac{1}{2t}\int_0^{2t} \PP\big[E_{r,2t}^\mathrm{c}\big]\d r.
\end{align*}
According to Lemma \ref{tail estimate}, we have $\PP[E_{r,2t}] \to 0$ as $r\downarrow0$ and $t \downarrow 0$, therefore the latter $\limsup$ is equal to $1$. On the other hand, if $C > 0$ denotes an upper bound for $\kkkk$, using Hölder's inequality the second term can be bounded through
\begin{align*}
&\limsup_{t\downarrow 0} \bigg\vert\frac{p}{2t}\EE\Big[\int_0^{2t} \kkkk\big(\B_{r}^{1},\B_{r}^{2}\big)\, \One_{E_{r,2t}}\d r\,\met^p\!\big(\B_{2t}^{1},\B_{2t}^{2}\big)\Big]\bigg\vert\\
&\qquad\qquad \leq p\,C \, \limsup_{t\downarrow 0} \EE\Big[\!\met^{p'}\!\!\big(\B_{2t}^{1},\B_{2t}^{2}\big)\Big]^{p/p'} \, \limsup_{t\downarrow 0}\Big( \frac{1}{2t}\int_0^{2t} \PP\big[E_{r,2t}\big]\d r\Big)^{1-p/p'}.
\end{align*}
By the choice of the pair process $\big(\B^1,\B^2\big)$, the first $\limsup$ is equal to $\met^p(x,y)$ while the second one is $0$, as already observed above. Since $\varepsilon$ was arbitrary, we obtain the claim.
\end{proof}

\begin{thm}\label{thm:nonincreasing}
Fix $p\in [1,\infty)$ and assume the differential $p$-transport estimate
\begin{equation}\label{Eq:WDTE assumption}
\frac{\diff^+}{\diff t}\bigg\vert_{t=0} \W_p^p(\WHeat_t\delta_x,\WHeat_t\delta_y) \leq -p\,\kk(x,y)\met^p(x,y)\quad\text{for every }x,y\in\mms.
\end{equation}
Then the $p$-transport estimate $\PTE_p(k)$ is satisfied.
\end{thm}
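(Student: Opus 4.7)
The plan is to combine the Markov-type inequality of Proposition~\ref{markov} with the small-time differential estimate from Lemma~\ref{diff w} and the standing hypothesis~\eqref{Eq:WDTE assumption} to show that the map $t\mapsto \W_p^\kk(\mu_1,\mu_2,t)^p$ has everywhere nonpositive upper right Dini derivative, then to upgrade this bound to genuine monotonicity. Since Lemma~\ref{diff w} requires a bounded continuous potential while $\kk$ is only lower semicontinuous, the first step is to approximate $\kk$ from below by the bounded Lipschitz functions $\kkkk_n$ on $\mms\times\mms$ provided by Lemma~\ref{Le:kk approximation}. Lemma~\ref{min att} then yields $\W_p^{\kkkk_n}(\mu_1,\mu_2,t)\to \W_p^\kk(\mu_1,\mu_2,t)$ as $n\to\infty$ for every $t\geq 0$, and since monotonicity is preserved under monotone pointwise limits, it suffices to prove that $\Phi_n(t):=\W_p^{\kkkk_n}(\mu_1,\mu_2,t)^p$ is nonincreasing for each $n\in\N$.

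Fix $n$ and $t\geq 0$, and let $(\PP,\B^1,\B^2)$ be a coupling realizing $\Phi_n(t)$, furnished by Proposition~\ref{markov}. The Markov inequality reads
\begin{equation*}
\Phi_n(t+s)\leq \EE\Big[\expo^{\int_0^{2t}p\kkkk_n(\B^1_r,\B^2_r)/2\,\d r}\,\W_p^{\kkkk_n}\big(\delta_{\B^1_{2t}},\delta_{\B^2_{2t}},s\big)^p\Big]\quad\text{for every }s\geq 0,
\end{equation*}
with equality at $s=0$ since $\W_p^{\kkkk_n}(\delta_x,\delta_y,0)^p=\met^p(x,y)$ and $(\B^1,\B^2)$ is a minimizer. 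Subtracting the $s=0$ identity, dividing by $s>0$, and invoking a reverse Fatou lemma---whose dominating function comes from the boundedness of $\kkkk_n$ together with the pathwise $\RCD(K,\infty)$-Brownian coupling of \cite[Theorem~2.9]{sturm2015}, producing a uniform-in-$s$ majorant of the form $C_n\met^p(x',y')$ for the difference quotient $[\W_p^{\kkkk_n}(\delta_{x'},\delta_{y'},s)^p-\met^p(x',y')]/s$ as $s\downarrow 0$---one obtains
\begin{equation*}
\limsup_{s\downarrow 0}\frac{\Phi_n(t+s)-\Phi_n(t)}{s}\leq \EE\bigg[\expo^{\int_0^{2t}p\kkkk_n/2\,\d r}\,\frac{\diff^+}{\diff s}\bigg\vert_{s=0}\W_p^{\kkkk_n}\big(\delta_{\B^1_{2t}},\delta_{\B^2_{2t}},s\big)^p\bigg].
\end{equation*}
By Lemma~\ref{diff w} combined with~\eqref{Eq:WDTE assumption} and the inequality $\kkkk_n\leq \kk$, the inner Dini derivative is bounded pointwise by $p(\kkkk_n-\kk)(x',y')\met^p(x',y')\leq 0$, so the upper right Dini derivative of $\Phi_n$ is nonpositive everywhere on $[0,\infty)$.

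To turn this Dini bound into actual monotonicity, I would verify continuity of $\Phi_n$. The upper estimate $\limsup_{s\downarrow 0}\Phi_n(t+s)\leq \Phi_n(t)$ is an immediate consequence of the Dini bound just derived; the matching lower estimate $\liminf_{s\downarrow 0}\Phi_n(t+s)\geq \Phi_n(t)$ follows by restricting an almost-minimizer at time $t+s$ to $[0,2t]$ to produce an admissible competitor for $\Phi_n(t)$ and controlling the error $|\met^p(\gamma^1_{2(t+s)},\gamma^2_{2(t+s)})-\met^p(\gamma^1_{2t},\gamma^2_{2t})|$ via the $L^p$-continuity of Brownian paths, while left-continuity is analogous. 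A standard Dini mean value theorem then turns the pointwise bound $\frac{\diff^+}{\diff t}\Phi_n\leq 0$ into $\Phi_n(t')\leq \Phi_n(t)$ whenever $0\leq t\leq t'$, and letting $n\to\infty$ delivers $\PTE_p(\k)$. The hard part will be justifying the reverse Fatou interchange, which requires producing the $s$-uniform dominating function identified above; this is handled by the $\RCD(K,\infty)$-contractive coupling comparing $\W_p^{\kkkk_n}$ with the classical $\W_p$. The continuity verification and the ensuing Dini-to-monotonicity step are routine but rely on the same small-time Brownian tail estimates as Lemma~\ref{tail estimate}.
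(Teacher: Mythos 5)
Your plan matches the paper's argument almost step by step: approximate $\kk$ from below by bounded Lipschitz $\kkkk_n$ via Lemma~\ref{Le:kk approximation} and pass to the limit via Lemma~\ref{min att}; combine the Markov inequality of Proposition~\ref{markov} with a (reverse) Fatou interchange to bound the upper right Dini derivative of $\Phi_n(t)=\W_p^{\kkkk_n}(\mu_1,\mu_2,t)^p$ by the expectation of the pointwise Dini bound; and finish by Lemma~\ref{diff w} and the hypothesis~\eqref{Eq:WDTE assumption}. The dominating function you invoke for the Fatou step, namely a uniform-in-$s$ bound $[\W_p^{\kkkk_n}(\delta_{x'},\delta_{y'},s)^p-\met^p(x',y')]/s\leq C_n\met^p(x',y')$ from $\RCD$-Brownian coupling, is the same estimate the paper extracts from Lemma~\ref{Le:Constant perturbed cost} and heat-flow contraction; the cost realizing $\Phi_n(t)$ then supplies the integrability.

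The one place where you diverge from the paper is the conversion from ``$D^+\Phi_n\leq 0$ pointwise'' to monotonicity. You propose to establish two-sided continuity of $\Phi_n$ (via competitor restriction/extension arguments and ``$L^p$-continuity of Brownian paths'') and then invoke a Dini mean value theorem. This is workable but more delicate than you suggest: right upper semicontinuity does follow from the Dini bound, but the claims of right lower semicontinuity and especially ``left-continuity is analogous'' each require a genuinely separate gluing/restriction argument, and a function that is merely right-continuous with $D^+\leq 0$ need \emph{not} be nonincreasing (cf.\ a right-continuous upward jump, where $D^+=0$ everywhere), so the left side cannot be waved away. The paper sidesteps all of this: the same estimate you used for the Fatou majorant, namely $\W_p^{\kkkk_n}(\delta_x,\delta_y,h)^p\leq\expo^{pC'h}\met^p(x,y)$, already yields the \emph{upper Lipschitz} bound $\Phi_n(t+h)-\Phi_n(t)\leq C'h\,\Phi_n(t)$ for $h\in(0,1]$, whence $\Phi_n-Mt$ is nonincreasing on compacts, hence BV, and the integral inequality~\eqref{int w} for the upper Dini derivative follows from Lebesgue's theorem on monotone functions---no continuity check needed. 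You would do well to adopt that shortcut in place of the two-sided continuity verification, which, as stated, leaves the left-continuity step unproven.
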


\begin{proof} We first show that for all $\mu_1,\mu_2\in\Prob_p(\mms)$, the function $\smash{t\mapsto \W_p^\kkkk(\mu_1,\mu_2,t)}$ is nonincreasing on $[0,\infty)$ whenever $\kkkk\in\Cont_\bdd(\mms\times\mms)$ with $\kkkk\leq\kk$ on $\mms\times\mms$.

To get started, we demonstrate that its $p$-th power $\smash{t\mapsto W_p^\kkkk(\mu_1,\mu_2,t)^p}$ is upper Lipschitz continuous on $[0,\infty)$. To see this, fix $h \in (0,1]$ and $t> 0$, and consider the  pair process $\smash{\big(\B^1,\B^2)}$ as provided by Proposition \ref{markov}. 
By the estimate \eqref{Markov prop inequality} of this proposition, Lemma \ref{Le:Constant perturbed cost} and contractivity of the Wasserstein heat flow, we have
\begin{align}
&\frac{1}{h}\left(\W_p^\kkkk(\mu_1,\mu_2, t+h)^p - \W_p^\kkkk(\mu_1,\mu_2,t)^p\right)\nonumber\\
&\qquad\qquad \leq \frac{1}{h}\EE\bigg[\expo^{\int_0^{2t}p\kkkk\left(\B_{r}^1,\B_{r}^2\right)/2\d r}\Big(\W_p^\kkkk\big(\delta_{\B_{2t}^1},\delta_{\B_{2t}^1},h\big)^p - \met^p\!\big(\B_{2t}^1,\B_{2t}^2\big)\Big)\bigg]\label{take h to 0}\\
&\qquad\qquad \leq \frac{1}{h}\EE\bigg[\expo^{\int_0^{2t}p\kkkk\left(\B_{r}^1,\B_{r}^2\right)/2\d r}\met^p\!\big(\B_{2t}^1,\B_{2t}^2\big)\,\big(\expo^{pCh} - 1\big)\bigg] \le C'\,\W_p^\kkkk(\mu_1,\mu_2,t)^p\nonumber
\end{align}
for suitable nonnegative constants $C$ and $C'$. This proves upper Lipschitz continuity of the $p$-th power of the perturbed $p$-transport cost with potential $-p\kkkk$, which in turn implies
\begin{equation}\label{int w}
W_p^\kkkk(\mu_1,\mu_2,\tau)^p-W_p^\kkkk(\mu_1,\mu_2,\sigma)^p\le \int_\sigma^\tau
\frac{\diff^+}{\diff t}W_p^\kkkk\big(\mu_1,\mu_2,t\big)^p \d t
\end{equation}
for every $\sigma,\tau \in [0,\infty)$ with $\sigma \leq \tau$. Letting $h\downarrow 0$, the estimate \eqref{take h to 0} and the observation
\begin{equation*}
\EE\Big[\expo^{\int_0^{2t}p\kkkk\left(\B_{r}^1,\B_{r}^2\right)/2\d r} \met^p\!\big(\B_{2t}^1,\B_{2t}^2\big)\Big] < \infty,
\end{equation*}
which justifies to apply Fatou's lemma, give
\begin{equation*}
\frac{\diff^+}{\diff t}\W_p^\kkkk(\mu_1,\mu_2,t)^p \leq 
 \EE\bigg[\expo^{\int_0^{2t} p\kkkk\left(\B_{r}^1,\B_{r}^2\right)/2\d r}\, \frac{\diff^+}{\diff h}\bigg\vert_{h=0}\W_p^\kkkk\big(\delta_{\B_{2t}^1},\delta_{\B_{2t}^2}, h\big)^{p}\bigg].
\end{equation*}
Finally, the inequality \eqref{int w} for the upper derivative inside the expectation, Lemma \ref{diff w} and then the assumed estimate \eqref{Eq:WDTE assumption}, noting that $-\kk \leq -\kkkk$ on $\mms\times\mms$, yield the initial claim.

The nonincreasingness of $\smash{t\mapsto \W_p^\kk(\mu_1,\mu_2,t)}$ on $[0,\infty)$ is then immediate due to an easy approximation argument using Lemma \ref{Le:kk approximation} and Lemma \ref{min att}.
\end{proof}

\begin{cor}\label{Cor:TE ==> WDTE} For every $p\in[1,\infty)$, $\PTE_p(\k)$ implies
\begin{equation*}
\frac{\diff^+}{\diff t}\bigg\vert_{t=0} \W_p^p(\WHeat_t\delta_x,\WHeat_t\delta_y)\leq -p\,\kk(x,y)\met^p(x,y)\quad\text{for every }x,y\in\mms.
\end{equation*}
In particular, $\PTE_p(\k)$ and the differential $p$-transport estimate \eqref{Eq:WDTE assumption} are equivalent.
\end{cor}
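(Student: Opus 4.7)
The ``In particular'' assertion is immediate from Theorem \ref{thm:nonincreasing} once the forward implication is established, so I would focus on deriving the differential estimate from $\PTE_p(\k)$. Fix $x,y\in\mms$; the case $x=y$ is trivial, so assume $\met(x,y) > 0$. Applying $\PTE_p(\k)$ to $(\delta_x,\delta_y)$ yields $\W_p^\kk(\delta_x,\delta_y,t)^p \leq \met^p(x,y)$ for all $t \geq 0$. By Lemma \ref{min att}, I would select a pair of coupled Brownian motions $(\PP_t,\B^{1,t},\B^{2,t})$ starting at $(x,y)$ that attains this infimum; its terminal joint law couples $\WHeat_t\delta_x$ and $\WHeat_t\delta_y$, so $\W_p^p(\WHeat_t\delta_x,\WHeat_t\delta_y) \leq \EE[\met^p(\B^{1,t}_{2t},\B^{2,t}_{2t})]$.

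The elementary inequality $\expo^a \geq 1+a$ combined with $\met^p \geq 0$ gives $(\expo^a - 1)\met^p \geq a\met^p$ pointwise. Applied to $a := \int_0^{2t} p\kk(\B^{1,t}_r,\B^{2,t}_r)/2\d r$, this together with the optimality bound and Fubini's theorem (which is justified: $\kk\geq K$ is bounded below, and the optimality combined with $\expo^{\int p\kk/2\d r}\geq \expo^{ptK}$ implies $\EE[\met^p(\B^{1,t}_{2t},\B^{2,t}_{2t})] \leq \expo^{-ptK}\met^p(x,y) < \infty$) yields
\begin{equation*}
\W_p^p(\WHeat_t\delta_x,\WHeat_t\delta_y) - \met^p(x,y) \leq -\frac{p}{2}\int_0^{2t}\EE\left[\kk(\B^{1,t}_r,\B^{2,t}_r)\met^p(\B^{1,t}_{2t},\B^{2,t}_{2t})\right]\d r.
\end{equation*}
Dividing by $t$ and taking $\limsup_{t\downarrow 0}$, it would suffice to show that the $\liminf_{t\downarrow 0}$ of the integral average on the right-hand side is at least $\kk(x,y)\met^p(x,y)$.

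To handle the possibly indefinite sign of $\kk$, I would split $\kk = (\kk-K) + K$. The $K$-contribution converges to $K\met^p(x,y)$ because $\EE[\met^p(\B^{1,t}_{2t},\B^{2,t}_{2t})] \to \met^p(x,y)$: the upper bound was just recorded, while the lower bound follows from $\EE[\met^p(\B^{1,t}_{2t},\B^{2,t}_{2t})] \geq \W_p^p(\WHeat_t\delta_x,\WHeat_t\delta_y)$ combined with $\W_p$-continuity of the heat flow in time. For the nonnegative, lower semicontinuous product $(\kk-K)\met^p$, given $\varepsilon > 0$, I would choose $\delta > 0$ so that $(\kk-K)(x',y') \geq (\kk-K)(x,y) - \varepsilon$ and $\met^p(x',y') \geq \met^p(x,y) - \varepsilon > 0$ whenever $(x',y') \in B_\delta(x) \times B_\delta(y)$. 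On the good event $G_{r,t}$ where all four points $\B^{1,t}_r,\B^{2,t}_r,\B^{1,t}_{2t},\B^{2,t}_{2t}$ lie in the respective balls, the integrand is bounded below by $((\kk-K)(x,y) - \varepsilon)(\met^p(x,y) - \varepsilon)$, while on $G_{r,t}^c$ its nonnegativity suffices. Since the marginal laws of these random variables are Brownian motions from $x$ or $y$ irrespective of the coupling, Lemma \ref{tail estimate} gives $\PP[G_{r,t}^c] = \mathrm{O}(t^a)$ uniformly in $r \in (0,2t]$. Passing to $t\downarrow 0$ and then $\varepsilon \downarrow 0$ produces the desired liminf bound.

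The main obstacle is this final liminf estimate, as the coupling $\PP_t$ depends on $t$ in an essentially uncontrolled way. The resolution is that Lemma \ref{tail estimate} needs only the marginal laws of the two Brownian motions, which are determined by $x$ and $y$ alone and are insensitive to the joint law; combined with the lower semicontinuity of $\kk$ and the nonnegativity of $(\kk-K)\met^p$, this permits the good-event decomposition to bypass the problematic dependence on $\PP_t$ entirely.
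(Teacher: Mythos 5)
Your proof is correct, and it takes a genuinely different route than the paper. The paper fixes an auxiliary exponent $p' > p$ and a bounded continuous $\kkkk \leq \kk$, chooses the minimizer for $\W_{p'}^\kkkk(\delta_x,\delta_y,t)$, splits the difference quotient of $\W_p^p(\WHeat_t\delta_x,\WHeat_t\delta_y)$ as $(1-\expo^{\int p\kkkk/2})\met^p$ plus a $\W_{p'}$-cost term via Hölder/Jensen, uses the localization of Lemma~\ref{diff w} to control the first piece, and only then invokes $\PTE_p(\k)$ after passing $p'\downarrow p$. You instead exploit $\PTE_p(\k)$ at the outset to get $\W_p^\kk(\delta_x,\delta_y,t)^p \leq \met^p(x,y)$, work directly with the minimizer for $\W_p^\kk$ at exponent $p$, and linearize with $\expo^a\geq 1+a$, which collapses everything into a single Fubini integral of $\kk\met^p$; the localization via Lemma~\ref{tail estimate} is then applied to that integral directly. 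Your route avoids the auxiliary $p'$, avoids the $\kkkk$-approximation of Lemma~\ref{Le:kk approximation} (replaced by your split $\kk = (\kk-K)+K$), and is arguably cleaner. Two small points worth tightening: (a) in the good-event lower bound you should write $\max\{0,(\kk-K)(x,y)-\varepsilon\}\cdot(\met^p(x,y)-\varepsilon)$ rather than the raw product, since $(\kk-K)(x,y)-\varepsilon$ may be negative, though the conclusion after $\varepsilon\downarrow 0$ is unaffected; (b) the lower bound $\liminf_{t\downarrow 0}\EE\big[\met^p(\B^{1,t}_{2t},\B^{2,t}_{2t})\big]\geq\met^p(x,y)$ (needed only when $K>0$) ultimately rests on $\W_p(\WHeat_t\delta_x,\delta_x)\to 0$, which should be justified via the heat-kernel tail bounds as in Lemma~\ref{lma-trunc} — the paper uses the analogous fact without comment in Lemma~\ref{diff w}, so this is not a gap specific to your argument.
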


\begin{proof} Fix $x,y\in\mms$. For every $t > 0$ and $p' \in (p,\infty)$, we denote by $\smash{\big(\PP,\B^1\big)}$ and $\smash{\big(\PP,\B^2\big)}$ a pair of coupled Brownian motions which realizes the minimum in the definition of $\smash{\W_{p'}^\kkkk(\delta_x,\delta_y,t)}$. This process does depend on $t$, but we leave out this dependency from the notation. Arguing as in the proof of Lemma \ref{diff w}, we get
\begin{align*}
&\frac{\diff^+}{\diff t}\bigg\vert_{t=0} \W_{p}^p(\WHeat_t\delta_x,\WHeat_t\delta_y)\\
&\qquad\qquad\leq \limsup_{t\downarrow 0} \frac{1}{t}\EE\bigg[\Big(1-\expo^{\int_0^{2t} p\kkkk\left(\B_{r}^{1},\B_{r}^{2}\right)/2\d r}\Big)\met^p\!\big(\B_{2t}^{1},\B_{2t}^{2}\big)\bigg]  + \frac{\diff^+}{\diff t}\bigg\vert_{t=0}W_{p'}^{\kk}(\delta_x,\delta_y,t)^p\\
&\qquad\qquad \leq -p\,\kkkk(x,y)\met^p(x,y) + \frac{\diff^+}{\diff t}\bigg\vert_{t=0} W_{p'}^{\kk}(\delta_x,\delta_y,t)^p
\end{align*}
for all $\kkkk\in\Cont_\bdd(\mms\times\mms)$ with $\kkkk\leq\kk$ on $\mms\times\mms$. Letting $p'\downarrow p$, the last upper derivative becomes nonpositive due to $\PTE_p(\k)$, and approximating $\kk$ from below using Lemma \ref{Le:kk approximation} gives the conclusion.
\end{proof}

Using this equivalence, Hölder's inequality and the chain rule, the subsequent nestedness of $\PTE_p(\k)$, which is the Lagrangian analogue of Lemma \ref{hierarchy}, is easily shown.

\begin{cor}\label{Nestedness TE_p} If $\PTE_p(\k)$ holds for some $p\in [1,\infty)$, then $\PTE_{p'}(\k)$ is satisfied for all $p'\in [1,p]$.
\end{cor}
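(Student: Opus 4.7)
The plan is to reduce nestedness to the corresponding nestedness of the \emph{differential} $p$-transport estimate \eqref{Eq:WDTE assumption}, which is the $\PTE_p(\k)$ condition in disguise thanks to Corollary \ref{Cor:TE ==> WDTE}. So the strategy is: start from the differential $p$-estimate, translate it to a differential $p'$-estimate for $p' \in [1,p]$, then invoke Corollary \ref{Cor:TE ==> WDTE} once more to conclude. The ingredients needed to bridge the two differential estimates are precisely the two listed in the statement of the corollary, Hölder's inequality and the chain rule.

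More concretely, fix $x,y \in \mms$ and set $A(t) := \W_{p'}^{p'}(\WHeat_t\delta_x,\WHeat_t\delta_y)$ and $B(t) := \W_p^p(\WHeat_t\delta_x,\WHeat_t\delta_y)$. Since $p' \le p$, Hölder's inequality gives the pointwise comparison $\W_{p'} \le \W_p$, whence $A(t) \le B(t)^{p'/p}$ for every $t \ge 0$. At $t=0$ both sides equal $\met^{p'}(x,y)$, so $A(t) - A(0) \le B(t)^{p'/p} - B(0)^{p'/p}$ for all $t \ge 0$. Dividing by $t > 0$ and taking $\limsup$ as $t\downarrow 0$ yields
\begin{equation*}
\frac{\diff^+}{\diff t}\bigg\vert_{t=0} A(t) \le \frac{\diff^+}{\diff t}\bigg\vert_{t=0} B(t)^{p'/p}.
\end{equation*}
In the nontrivial case $x \neq y$ we have $B(0) = \met^p(x,y) > 0$, so the function $y \mapsto y^{p'/p}$ is smooth at $B(0)$ with derivative $(p'/p)\,\met^{p'-p}(x,y) > 0$. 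Combined with the upper Lipschitz continuity of $t \mapsto B(t)$ near $t=0$ (obtained exactly as in the proof of Theorem \ref{thm:nonincreasing} with $\kkkk\equiv 0$ via Lemma \ref{Le:Constant perturbed cost}), the standard chain rule for upper Dini derivatives identifies the right-hand side with $(p'/p)\met^{p'-p}(x,y)\cdot \smash{\frac{\diff^+}{\diff t}\vert_{t=0}} B(t)$.

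Applying the differential $p$-transport estimate \eqref{Eq:WDTE assumption}, which by Corollary \ref{Cor:TE ==> WDTE} is equivalent to the hypothesis $\PTE_p(\k)$, produces
\begin{equation*}
\frac{\diff^+}{\diff t}\bigg\vert_{t=0} \W_{p'}^{p'}(\WHeat_t\delta_x,\WHeat_t\delta_y) \le \frac{p'}{p}\met^{p'-p}(x,y)\cdot\bigl(-p\,\kk(x,y)\met^p(x,y)\bigr) = -p'\,\kk(x,y)\met^{p'}(x,y);
\end{equation*}
the case $x=y$ holds trivially. This is the differential $p'$-transport estimate, which by another application of Corollary \ref{Cor:TE ==> WDTE} gives $\PTE_{p'}(\k)$. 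The only step requiring any genuine care is the chain-rule manipulation, but this is harmless in view of the positivity of the derivative factor and the upper Lipschitz continuity of $B$ near $0$ which has already been established in the proof of Theorem \ref{thm:nonincreasing}.
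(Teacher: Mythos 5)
Your proof is correct and fills in exactly the three ingredients the paper names for this corollary (the equivalence from Corollary \ref{Cor:TE ==> WDTE}, Hölder's inequality giving $\W_{p'}\leq \W_p$, and a chain-rule argument at the level of upper Dini derivatives at $t=0$). One small simplification worth noting: since $r\mapsto r^{p'/p}$ is concave, the one-sided chain-rule \emph{inequality} $\frac{\diff^+}{\diff t}\vert_{t=0} B^{p'/p} \leq \frac{p'}{p}B(0)^{p'/p-1}\frac{\diff^+}{\diff t}\vert_{t=0} B$ follows immediately from $\Phi(r)-\Phi(r_0)\leq\Phi'(r_0)(r-r_0)$, which is all you need and bypasses the appeal to upper Lipschitz continuity of $B$.
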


\subsection{Transport estimates via vertical Brownian perturbations}\label{4.?}

We prove the variable Kuwada duality from Theorem \ref{gra-tra}. We start by first showing the implication from $\GE_q(\k)$ to $\PTE_p(\k)$, where $p,q\in (1,\infty)$ are dual to each other. Since the behavior of Brownian trajectories can only be controlled for small times, we show the equivalent infinitesimal first-order description of $\PTE_p(\k)$ in terms of a differential $p$-transport estimate. This is done by a localization argument.

Additionally, in the extremal case $q = 1$, the argument mentioned above can actually be circumvented and we are able to derive the contraction estimate
\begin{equation*}
\frac{\diff^+}{\diff t}\W_p^p(\WHeat_{t}\mu,\WHeat_{t}\nu) \leq -p\int_0^1\int_{\Geo(\mms)}\k(\gamma_s)\,\vert\dot\gamma\vert^p\d\boldsymbol{\pi}_t(\gamma)\d s\quad\text{for every }t\geq 0
\end{equation*}
for all $\mu,\nu\in\Prob(\mms)$ of finite $\W_p$-distance to each other, for \emph{every} $p\in (1,\infty)$. The measure $\boldsymbol{\pi}_t \in\Prob(\Geo(\mms))$ induces an \emph{arbitrary} $\W_p$-optimal coupling of $\WHeat_t\mu$ and $\WHeat_t\nu$. 
This is discussed now, see Theorem \ref{Th:Vertical Wasserstein contraction} and Corollary \ref{Cor:Derivative of W_p}, where, possibly replacing $\k$ by $\min\{\k,n\}$ for $n\in\N$, we assume that $\k$ is bounded. This is not restrictive as, given these results for every $n\in\N$, they easily pass to the limit $n\to\infty$ by monotone convergence.

Given $p\in (1,\infty)$ and $t\geq 0$, we define the function $\met_{p,\k,t}^0\colon \mms\times\mms\to \R$ by
\begin{equation*}
\met_{p,\k,t}^0(x,y) :=  \inf_{\gamma \in \G_0(x,y)}\bigg(\int_0^1 \Exp_{\gamma_s}\Big[\expo^{-\int_0^{2t}p\k(\B_{r})/2\d r}\Big]\,\vert\dot{\gamma}\vert^p\d s\bigg)^{1/p}.
\end{equation*}
Here $\smash{\big(\PP_{\gamma_s},\B\big)}$ denotes Brownian motion starting in $\gamma_s$ for every $s\in [0,1]$. We will not explicitly mention the dependence of the process $\B$ on $s$. The function $\met_{p,k,t}^0$ can be turned into a metric $\met_{p,k,t}$ on $\mms$ by defining
\begin{align*}
\met_{p,\k,t}(x,y) := \inf\!\Big\lbrace\sum_{i=1}^n \met_{p,\k,t}^0(x_{i-1}, x_i) : n \in \N,\ \! x=:x_0 < x_1 < \dots < x_n := y \Big\rbrace.
\end{align*}
It is equivalent to $\met$ by boundedness of $\k$ since $\met$ is a length metric. Let us denote by $\smash{\W_{p,\k,t}^0}$ and $\W_{p,k,t}$ the transport ``distances'' w.r.t.~$\met_{p,\k,t}^0$ and $\met_{p,\k,t}$, respectively. Then $\W_{p,\k,t}$ is a metric on $\Prob_p(\mms)$, which is equivalent to the usual $p$-Wasserstein metric $\W_p$. Compared to the perturbed $p$-transport cost $\smash{W_p^\k}$ which measures Brownian evolutions ``horizontally'' by following their trajectories with \emph{fixed} starting points, the distance $\W_{p,k,t}$ varies the initial points along a geodesic and may thus be seen as a ``vertical'' counterpart of $\W_p^\k$. 

Let $Q_s$ be the $p$-Hopf--Lax semigroup and $q\in (1,\infty)$ such that $1/p+1/q=1$. Similarly to~\cite[Proposition~3.7]{kuwada2010}, the key point will be the following Lipschitz regularity along geodesics.

\begin{lma}\label{Le:Curve inequality} Let $f\in \Lip_\bdd(\mms)$. Then for every $x,y\in\mms$ and all $\gamma \in \G_0(y,x)$, the map $s\mapsto \ChHeat_tQ_sf(\gamma_s)$ belongs to $\Lip([0,1])$, and
\begin{equation*}
\ChHeat_t Q_1 f(x) - \ChHeat_tf(y) \leq \int_0^1 \Big(\limsup_{h\downarrow 0} \frac{1}{h}\big(\ChHeat_tQ_sf(\gamma_{s+h}) - \ChHeat_tQ_sf(\gamma_s)\big) - \frac{1}{q}\ChHeat_t\big(\lip(Q_sf)^q\big)(\gamma_s)\Big) \d s.
\end{equation*}
\end{lma}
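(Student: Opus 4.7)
The plan is to introduce $g(s) := \ChHeat_t Q_s f(\gamma_s)$, prove $g\in \Lip([0,1])$, use absolute continuity to write $\ChHeat_t Q_1 f(x) - \ChHeat_t f(y) = g(1) - g(0) = \int_0^1 g'(s)\d s$, and then bound $g'(s)$ at a.e.\ $s$ by decomposing the finite differences into a ``spatial'' piece along $\gamma$ and a ``temporal'' piece controlled by the Hamilton--Jacobi inequality. The Lipschitz regularity of $g$ follows from three items collected in Section \ref{Ch:Preliminaries}: the uniform bounds $\Lip(Q_sf) \leq p\Lip(f)$ and $\Vert Q_sf\Vert_{\Ell^\infty(\mms,\meas)} \leq \Vert f\Vert_{\Ell^\infty(\mms,\meas)}$, the Lipschitz continuity of $s\mapsto Q_sf$ into $\Cont_\bdd(\mms)$ under the supremum metric, and the regularizing properties of $\ChHeat_t$ on $\Ell^\infty(\mms,\meas)$ via \eqref{Lip reg est}. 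Combined with the identity $\met(\gamma_{s+h},\gamma_s) = \vert h\vert\met(x,y)$, decomposing $g(s+h) - g(s)$ as below gives $\vert g(s+h) - g(s)\vert \leq C_{t,f,x,y}\vert h\vert$.

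At each $s\in(0,1)$ where $g'(s)$ exists, I would split
\begin{equation*}
\frac{g(s+h) - g(s)}{h} = \frac{\ChHeat_tQ_sf(\gamma_{s+h}) - \ChHeat_tQ_sf(\gamma_s)}{h} + \frac{\ChHeat_t(Q_{s+h}f - Q_sf)(\gamma_{s+h})}{h}
\end{equation*}
and use $\limsup(A_h+B_h)\leq \limsup A_h + \limsup B_h$; the first summand already furnishes the $\limsup$ in the statement. For the second, I appeal to the pointwise Hamilton--Jacobi inequality: for each fixed $z\in\mms$, the Lipschitz map $r\mapsto Q_rf(z)$ satisfies $\frac{\d}{\d r}Q_rf(z) + \frac{1}{q}\lip(Q_rf)^q(z) \leq 0$ off a countable (hence null) set of $r$. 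Integrating in $r$, then applying linearity and monotonicity of $\ChHeat_t$ plus Fubini -- justified by the uniform bound $\lip(Q_rf)^q \leq (p\Lip(f))^q$ -- yields
\begin{equation*}
\ChHeat_t(Q_{s+h}f - Q_sf)(\gamma_{s+h}) \leq -\frac{1}{q}\int_s^{s+h}\ChHeat_t\!\big(\lip(Q_rf)^q\big)(\gamma_{s+h})\d r.
\end{equation*}

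The main obstacle is then to pass to the limit $h\downarrow 0$ in the right-hand side, since $r\mapsto \lip(Q_rf)^q$ need not be continuous in $r$. I would resolve this in two steps: first, a uniform-in-$r$ spatial Lipschitz bound on $z\mapsto \ChHeat_t(\lip(Q_rf)^q)(z)$, provided by the $\Ell^\infty$-to-$\Lip$ regularization \eqref{Lip reg est} applied to the uniformly bounded family $\{\lip(Q_rf)^q\}_r$, allows one to replace $\gamma_{s+h}$ with $\gamma_s$ modulo an $O(h)$ error; second, the Lebesgue differentiation theorem applied to the bounded scalar function $r\mapsto \ChHeat_t(\lip(Q_rf)^q)(\gamma_s)$ gives, at every Lebesgue point $s\in[0,1]$, the convergence of the $h$-average to $\ChHeat_t(\lip(Q_sf)^q)(\gamma_s)$. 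At any $s\in[0,1]$ that is simultaneously a Lebesgue point and a point of differentiability of $g$ -- i.e.\ almost every $s\in[0,1]$ -- the resulting pointwise bound on $g'(s)$ is precisely the integrand in the claim, and integration over $[0,1]$ completes the proof.
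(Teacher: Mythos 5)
Your decomposition of $g(s) := \ChHeat_t Q_sf(\gamma_s)$ into a spatial piece $\ChHeat_tQ_sf(\gamma_{s+h})-\ChHeat_tQ_sf(\gamma_s)$ and a temporal piece $\ChHeat_t(Q_{s+h}f-Q_sf)(\gamma_{s+h})$ is exactly the paper's decomposition, and your treatment of Lipschitz regularity of $g$ matches. Where you diverge is in estimating the temporal piece: the paper rewrites $\ChHeat_t(Q_{s+h}f-Q_sf)(\gamma_{s+h})$ via the duality $\ChHeat_t\leftrightarrow\WHeat_t$, splits off the discrepancy between $\WHeat_t\delta_{\gamma_{s+h}}$ and $\WHeat_t\delta_{\gamma_s}$ (controlled by the Kantorovich--Rubinstein formula and $\W_1$-contractivity of the heat flow), and then applies the Hamilton--Jacobi inequality directly under the \emph{fixed} measure $\WHeat_t\delta_{\gamma_s}$ together with a reverse-Fatou argument. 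You instead integrate the pointwise Hamilton--Jacobi inequality over $[s,s+h]$, push the result through $\ChHeat_t$ by linearity, monotonicity and Fubini, and then pass to the limit $h\downarrow 0$. Both routes isolate the same essential ingredients (Lipschitz regularization of $\ChHeat_t$, uniform boundedness of $\lip(Q_rf)$, and the Hamilton--Jacobi inequality), so neither is clearly preferable in terms of generality.

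There is, however, a gap in your final passage to the limit that is worth flagging. You invoke the Lebesgue differentiation theorem for the scalar map $r\mapsto\ChHeat_t\big(\lip(Q_rf)^q\big)(\gamma_s)$ and conclude that, ``at almost every $s\in[0,1]$,'' the $h$-average converges to $\ChHeat_t\big(\lip(Q_sf)^q\big)(\gamma_s)$. But this scalar function depends on $s$ through the evaluation point $\gamma_s$: for each \emph{fixed} $s$, a.e.\ $r$ is a Lebesgue point of $r\mapsto\ChHeat_t\big(\lip(Q_rf)^q\big)(\gamma_s)$, whereas you need the diagonal statement that $s$ itself is a Lebesgue point of the $s$-dependent function for a.e.\ $s$. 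This does not follow directly from the Lebesgue differentiation theorem. It is fixable: the uniform spatial Lipschitz bound from your Step~1 gives $\big|\ChHeat_t\big(\lip(Q_rf)^q\big)(\gamma_s)-\ChHeat_t\big(\lip(Q_rf)^q\big)(\gamma_{s'})\big|\leq C\,|s-s'|$ uniformly in $r$, so choosing a countable dense set $\{s_i\}$ of base times and applying the Lebesgue differentiation theorem to each fixed function $r\mapsto\ChHeat_t\big(\lip(Q_rf)^q\big)(\gamma_{s_i})$ one can pass to the diagonal by approximation. You assembled exactly the right ingredient, but the argument as written silently assumes the diagonal Lebesgue-point statement; spelling out this two-line approximation step would close the gap.
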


\begin{proof} Let $h>0$ and $s\in [0,1-h]$. Notice that
\begin{align*}
&\frac{1}{h}\big\vert\ChHeat_tQ_{s+h}f(\gamma_{s+h}) - \ChHeat_tQ_sf(\gamma_s)\big\vert\\
&\qquad\qquad \leq \frac{1}{h}\big\vert \ChHeat_tQ_{s+h}f(\gamma_{s+h}) - \ChHeat_tQ_{s+h}f(\gamma_s)\big\vert + \frac{1}{h}\Big\vert\! \int_\mms \big(Q_{s+h}f - Q_sf\big) \d\WHeat_t\delta_{\gamma_s}\Big\vert\\
&\qquad\qquad \leq \frac{\met(x,y)}{h}\int_s^{s+h} \vert\Diff\ChHeat_tQ_{s+h}f\vert(\gamma_v)\d v + \int_\mms\frac{1}{h}\vert Q_{s+h}f - Q_sf\vert \d\WHeat_t\delta_{\gamma_s}.
\end{align*}
The latter is bounded uniformly in $s$ and $h$ since the first integral can be controlled using the Lipschitz regularization estimate \eqref{Lip reg est} of the heat flow while the second one exploits the fact that the map $s\mapsto Q_s f$ is Lipschitz from $[0,\infty)$ to $\Cont(\mms)$. 

It follows that $\ChHeat_tQ_1f(x) - \ChHeat_tf(y)$ can be written as
\begin{equation}\label{I II decomposition}
\int_0^1 \Big(\limsup_{h\downarrow 0} \frac{1}{h}\big(\ChHeat_tQ_sf(\gamma_{s+h}) - \ChHeat_tQ_sf(\gamma_s)\big) + \limsup_{h\downarrow 0}\frac{1}{h}\int_\mms \big(Q_{s+h}f - Q_sf\big) \d \WHeat_t\delta_{\gamma_{s+h}}\Big)\d s.
\end{equation}
The Kantorovich--Rubinstein formula \eqref{Eq:Kantorovich} for $\W_1$, the $\W_1$-contractivity of the heat flow and the duality of $\ChHeat_t$ and $\WHeat_t$ give us the following upper bound for the second $\limsup$ in \eqref{I II decomposition}
\begin{align*} 
&\limsup_{h\downarrow 0} \frac{1}{h}\int_\mms \big(Q_{s+h}f - Q_sf\big) \d \big(\WHeat_t\delta_{\gamma_{s+h}} - \WHeat_t\delta_{\gamma_s}\big) + \limsup_{h\to 0}\frac{1}{h}\int_\mms\big(Q_{s+h}f - Q_sf\big) \d \WHeat_t\delta_{\gamma_s}\\
&\qquad\qquad\leq \Lip(Q_\bullet f)\, \limsup_{h\downarrow 0} \W_1\big(\WHeat_t\delta_{\gamma_{s+h}},\WHeat_t\delta_{\gamma_s}\big) + \int_\mms \frac{\diff}{\diff s} Q_sf\d \WHeat_t\delta_{\gamma_s}\\
&\qquad\qquad = -\frac{1}{q}\int_\mms \lip(Q_sf)^q\d \WHeat_t\delta_{\gamma_s} = -\frac{1}{q}\ChHeat_t\big(\lip(Q_sf)^q\big)(\gamma_s).
\end{align*}
Here we used $\Lip(Q_\bullet f)$ as a shorthand for the Lipschitz constant of the map $s\mapsto Q_s f$ from $[0,\infty)$ to $\Cont(\mms)$. These estimates conclude the proof.
\end{proof}

\begin{thm}\label{Th:Vertical Wasserstein contraction} Assume the $1$-gradient estimate $\GE_1(\k)$. Then for every $p\in (1,\infty)$, $t\geq 0$ and $\mu,\nu\in\Prob(\mms)$,
\begin{equation*}
\W_p(\WHeat_t\mu,\WHeat_t\nu) \leq \W_{p,\k,t}(\mu,\nu)\leq \W_{p,\k,t}^0(\mu,\nu).
\end{equation*}
\end{thm}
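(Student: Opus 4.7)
My strategy follows the classical Kuwada duality argument, adapted to the variable curvature bound via the weights $\Schr_t^{p\k}(1)$. The second inequality $\W_{p,\k,t}(\mu,\nu)\leq \W_{p,\k,t}^0(\mu,\nu)$ is immediate: the one-element chain in the defining infimum of $\met_{p,\k,t}$ gives $\met_{p,\k,t}\leq \met_{p,\k,t}^0$ pointwise, and this ordering lifts to Kantorovich--Wasserstein distances. All the work lies in the first inequality.

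For it, I will use the Kantorovich--Rubinstein formula \eqref{Eq:Kantorovich} and the $\ChHeat_t$--$\WHeat_t$ duality to reduce the claim to the pointwise bound
\begin{equation*}
\ChHeat_tQ_1f(x)-\ChHeat_tf(y)\leq \frac{1}{p}\met_{p,\k,t}(y,x)^p \quad\text{for all }x,y\in\mms\text{ and }f\in\Lip_\bdd(\mms).
\end{equation*}
Given $n\in\N$ and a chain $y=z_0,z_1,\dots,z_n=x$, I set $s_i:=i/n$ and telescope
\begin{equation*}
\ChHeat_tQ_1f(x)-\ChHeat_tf(y)=\sum_{i=1}^n\bigl[\ChHeat_tQ_{s_i}f(z_i)-\ChHeat_tQ_{s_{i-1}}f(z_{i-1})\bigr].
\end{equation*}
Each increment will be estimated via the evident rescaled variant of Lemma \ref{Le:Curve inequality} on $[s_{i-1},s_i]$, applied to a geodesic $\gamma^i\in\G_0(z_{i-1},z_i)$ parameterized so that the ``speed'' entering the estimate equals $n\met(z_{i-1},z_i)$; the proof of the lemma transfers verbatim after reparameterization.

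Inside the resulting integrand I apply $\GE_1(\k)$ in the pointwise form $\lip(\ChHeat_tQ_sf)\leq \Schr_t^{\k}(\lip(Q_sf))$, obtained by selecting appropriate representatives of the $\meas$-a.e. gradient estimate (using the strong Feller property of $\ChHeat_t$), then Hölder's inequality with dual exponents $p,q$ to obtain $\lip(\ChHeat_tQ_sf)\leq (\Schr_t^{p\k}1)^{1/p}(\ChHeat_t\lip(Q_sf)^q)^{1/q}$, and finally Young's inequality $ab\leq a^p/p+b^q/q$ with $a=n\met(z_{i-1},z_i)(\Schr_t^{p\k}1)^{1/p}$ and $b=(\ChHeat_t\lip(Q_sf)^q)^{1/q}$. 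This cancels the correction term $-\tfrac{1}{q}\ChHeat_t(\lip(Q_sf)^q)$ produced by Lemma \ref{Le:Curve inequality}. A change of variables and optimization over $\gamma^i$ bound the $i$-th summand by $\tfrac{n^{p-1}}{p}\met_{p,\k,t}^0(z_{i-1},z_i)^p$, so that
\begin{equation*}
\ChHeat_tQ_1f(x)-\ChHeat_tf(y)\leq \frac{n^{p-1}}{p}\sum_{i=1}^n\met_{p,\k,t}^0(z_{i-1},z_i)^p.
\end{equation*}

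To finish, for $\varepsilon>0$ I will choose the chain so that $\met_{p,\k,t}^0(z_{i-1},z_i)\leq \met_{p,\k,t}(y,x)/n+\varepsilon$ for every $i$, which is possible by iteratively refining a near-optimal chain and exploiting the intrinsic (length) character of $\met_{p,\k,t}$. The right-hand side above then becomes $\tfrac{1}{p}(\met_{p,\k,t}(y,x)+n\varepsilon)^p$, and letting $n\to\infty$ with $n\varepsilon\to 0$ yields the desired pointwise bound. Integration against an arbitrary coupling $\pi$ of $\mu$ and $\nu$, followed by optimization in $f$ and $\pi$, completes the argument. The hard part is precisely this equipartitioning step: with $n=1$, the telescoping only delivers the weaker bound $\W_p(\WHeat_t\mu,\WHeat_t\nu)\leq \W_{p,\k,t}^0(\mu,\nu)$, and it is the length structure of $\met_{p,\k,t}$ (together with the factor $n^{p-1}/p$ combining favourably over uniform chains) that yields the sharper $\W_{p,\k,t}$-bound. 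A secondary but non-trivial point is the pointwise upgrade of $\GE_1(\k)$ along an arbitrary geodesic, which I will handle by passing to quasi-continuous representatives.
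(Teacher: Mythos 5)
Your proof follows the same Kuwada-duality strategy as the paper -- reduce to the pointwise estimate $\ChHeat_tQ_1f(x)-\ChHeat_tf(y)\leq\tfrac1p\met_{p,\k,t}^0(y,x)^p$ for Dirac initial data via Lemma~\ref{Le:Curve inequality}, $\GE_1(\k)$, Hölder and Young, and then pass to general measures -- but you overcomplicate the final step of upgrading the bound from $\met_{p,\k,t}^0$ to $\met_{p,\k,t}$, and in doing so you treat as the ``hard part'' something the paper dispenses with in one line.

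The point you miss is that the passage from $\met_{p,\k,t}^0$ to $\met_{p,\k,t}$ is \emph{free} once the Dirac-measure estimate is in hand, because $\W_p$ already satisfies the triangle inequality. Indeed, having established $\W_p(\WHeat_t\delta_a,\WHeat_t\delta_b)\le\met_{p,\k,t}^0(a,b)$ for all $a,b\in\mms$ (which is precisely your ``$n=1$'' case), any chain $y=z_0,\dots,z_n=x$ yields
\begin{equation*}
\W_p(\WHeat_t\delta_y,\WHeat_t\delta_x)\le\sum_{i=1}^n\W_p(\WHeat_t\delta_{z_{i-1}},\WHeat_t\delta_{z_i})\le\sum_{i=1}^n\met_{p,\k,t}^0(z_{i-1},z_i),
\end{equation*}
and taking the infimum over chains gives $\W_p(\WHeat_t\delta_y,\WHeat_t\delta_x)\le\met_{p,\k,t}(y,x)$ without any equidistribution of the chain lengths. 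This is exactly the paper's route: one only needs the telescoping, rescaled Lemma~\ref{Le:Curve inequality}, and $\GE_1(\k)$ to treat a single geodesic and a single link, producing $\W_p(\WHeat_t\delta_x,\WHeat_t\delta_y)\le\met_{p,\k,t}^0(x,y)$, after which the chain-infimum definition of $\met_{p,\k,t}$ does the rest.

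By contrast, you attempt to prove the sharper \emph{pointwise} inequality $\ChHeat_tQ_1f(x)-\ChHeat_tf(y)\le\tfrac1p\met_{p,\k,t}(y,x)^p$ directly, via telescoping over an $n$-link chain and balancing the factor $n^{p-1}/p$ against an equipartition of the chain. This can be made to work -- the key facts you need (that subdividing a link along a $\met$-geodesic does not increase $\sum_i\met_{p,\k,t}^0(z_{i-1},z_i)$, by the Hölder estimate $s^{(p-1)/p}A^{1/p}+(1-s)^{(p-1)/p}B^{1/p}\le(A+B)^{1/p}$, and that $s\mapsto\met_{p,\k,t}^0(a,\gamma_s)$ is continuous so an intermediate-value argument yields near-uniform sub-links) are true, though neither is spelled out in your ``iteratively refining'' step, so as written the equipartitioning remains a non-trivial gap rather than a corollary. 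More importantly, it is unnecessary effort: it proves a stronger intermediate claim that the theorem does not need. On the other hand, your closing remark that one simply integrates the Dirac-measure inequality against an arbitrary coupling $\pi$ and then optimizes in $f$ and $\pi$ is a perfectly clean way to pass to general $\mu,\nu$, and is no weaker than the paper's appeal to the standard gluing lemma; the paper's side remark about continuity of the cost (which is why only $\met_{p,\k,t}$, not $\met_{p,\k,t}^0$, appears in the general-measure statement) is worth keeping in mind, but your route sidesteps it harmlessly.
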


\begin{proof} Without loss of generality, we consider $\mu := \delta_x$ and $\nu := \delta_y$ for $x,y\in\mms$, and $t>0$ as the general case (or, to be more precise, the first inequality, since only $\met_{p,\k,t}$ is continuous in general) is covered by a standard coupling argument, see e.g.~\cite[Theorem 4.4]{savare2014} or \cite[Lemma 3.3]{kuwada2010}. It suffices to prove  $\smash{\W_p(\WHeat_t\mu,\WHeat_t\nu) \leq \W_{p,\k,t}^0(\mu,\nu) = \met_{p,\k,t}^0(x,y)}$ since the first claimed inequality then easily follows by definition of $\met_{p,\k,t}$, and by construction $\W_{p,\k,t}(\mu,\nu)\leq \W_{p,\k,t}^0(\mu,\nu)$.

By the duality \eqref{Eq:Kantorovich}, we have to estimate $\ChHeat_tQ_1f(x) - \ChHeat_tf(y)$ from above for every $f\in\Lip_\bdd(\mms)$. Pick a geodesic $\gamma \in \G_0(y,x)$. By the upper gradient property of $\vert\Diff \ChHeat_t Q_sf\vert$ and the $\GE_1(\k)$ inequality, we deduce for $\Leb^1$-a.e.~$s\in [0,1]$ that
\begin{align*}
\limsup_{h\downarrow 0} \frac{1}{h}\big(\ChHeat_tQ_sf(\gamma_{s+h}) - \ChHeat_tQ_sf(\gamma_s)\big)
&\leq \limsup_{h\downarrow 0}\frac{\met(x,y)}{h}\int_s^{s+h} \Schr^k_t\vert\Diff Q_sf\vert(\gamma_v)\d v\\
&\leq \met(x,y)\, \Exp_{\gamma_s}\Big[\expo^{-\int_0^{2t} pk(\B_{r})/2\d r}\Big]^{\!1/p}\, \ChHeat_t\big(\lip(Q_sf)^q\big)^{1/q}(\gamma_s),
\end{align*}
denoting by $\big(\PP_{\gamma_s},\B\big)$ Brownian motion on $\mms$ starting in $\gamma_s$. Invoking Lemma \ref{Le:Curve inequality} and Young's inequality, we infer that
\begin{equation*}
\ChHeat_tQ_1f(x) - \ChHeat_tf(y) \leq \frac{\met^p(x,y)}{p} \int_0^1 \Exp_{\gamma_s}\Big[\expo^{-\int_0^{2t}pk(\B_{r})/2\d r}\Big]\d s.
\end{equation*}
Taking the supremum over $f\in\Lip_\bdd(\mms)$ and then infimizing over all geodesics $\gamma$ connecting $y$ to $x$, we conclude the desired inequality.
\end{proof}

With this in hand, we can proceed to what we have indicated in Remark \ref{Re:WDTE}, i.e.~that actually, a much stronger assertion than just a control on the upper derivative of the function $t\mapsto \W_p^p(\WHeat_t\delta_x,\WHeat_t\delta_y)$ at $0$ is possible.

\begin{cor}\label{Cor:Derivative of W_p} Assume that $\GE_1(k)$ is satisfied. Let $\mu,\nu\in\Prob(\mms)$ so that $\W_p(\mu,\nu) < \infty$, let $t\geq 0$, and let $\boldsymbol{\pi}_t\in\Prob(\Geo(\mms))$ represent an arbitrary $\W_p$-optimal coupling between $\WHeat_t\mu$ and $\WHeat_t\nu$, i.e.~$(\eval_0,\eval_1)_\push\boldsymbol{\pi}_t$ is a $\W_p$-optimal coupling of $\WHeat_t\mu$ and $\WHeat_t\nu$. Then
\begin{equation*}
\frac{\diff^+}{\diff t}\W_p^p(\WHeat_{t}\mu,\WHeat_{t}\nu) \leq -\int_0^1\int_{\mathrm{Geo(\mms)}} k(\gamma_s)\,\vert\dot\gamma\vert^p\d\boldsymbol{\pi}_t(\gamma)\d s.
\end{equation*}
\end{cor}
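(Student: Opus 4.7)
The plan is to derive the inequality at an arbitrary time $t\geq 0$ by applying Theorem \ref{Th:Vertical Wasserstein contraction} to the pair $(\WHeat_t\mu,\WHeat_t\nu)$ with time increment $h$, exploiting the semigroup property $\WHeat_{t+h}=\WHeat_h\circ\WHeat_t$, and then extracting a first-order Taylor expansion in $h\downarrow 0$ against the given geodesic representation $\boldsymbol{\pi}_t$. As in the running assumption of this subsection, I may assume $\k$ bounded by monotone truncation $\k_n := \min\{\k,n\}$, observing that $\k_n\leq\k$ forces $\Schr_t^{\k_n}\phi\geq\Schr_t^{\k}\phi$ for nonnegative $\phi$, so $\GE_1(\k)$ passes to $\GE_1(\k_n)$; the general case then follows by monotone convergence on the right-hand side.

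Fix $t\geq 0$ and $h>0$. Theorem \ref{Th:Vertical Wasserstein contraction} yields
\begin{equation*}
\W_p^p(\WHeat_{t+h}\mu,\WHeat_{t+h}\nu)\leq \W_{p,\k,h}^0(\WHeat_t\mu,\WHeat_t\nu)^p \leq \int_{\mms\times\mms} \met_{p,\k,h}^0(x,y)^p\d \pi(x,y)
\end{equation*}
for any coupling $\pi$ of $\WHeat_t\mu$ and $\WHeat_t\nu$; I choose $\pi := (\eval_0,\eval_1)_\push\boldsymbol{\pi}_t$, and inside each infimum defining $\met_{p,\k,h}^0(\gamma_0,\gamma_1)$ I use the geodesic $\gamma$ itself as competitor, which gives
\begin{equation*}
\W_p^p(\WHeat_{t+h}\mu,\WHeat_{t+h}\nu) \leq \int_{\Geo(\mms)} \int_0^1 \EE_{\gamma_s}\!\Big[\expo^{-\int_0^{2h}p\k(\B_r)/2\d r}\Big]\,\vert\dot\gamma\vert^p\d s\d \boldsymbol{\pi}_t(\gamma).
\end{equation*}
Since each $\gamma$ has constant metric speed $\met(\gamma_0,\gamma_1)$ and the endpoint marginal of $\boldsymbol{\pi}_t$ is $\W_p$-optimal, $\int \vert\dot\gamma\vert^p \d\boldsymbol{\pi}_t = \W_p^p(\WHeat_t\mu,\WHeat_t\nu) < \infty$. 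Subtracting this from both sides, dividing by $h$, and sending $h\downarrow 0$, the difference quotient
\begin{equation*}
\frac{1}{h}\Big(\EE_{\gamma_s}\!\Big[\expo^{-\int_0^{2h}p\k(\B_r)/2\d r}\Big] - 1\Big)
\end{equation*}
converges pointwise to $-p\,\k(\gamma_s)$ by Taylor expansion, and is uniformly bounded in $(\gamma_s,h)$ since $\k$ is bounded on both sides. Dominated convergence with envelope proportional to $\vert\dot\gamma\vert^p$ then closes the argument and yields the stated upper bound, matching the $-p$ factor of Remark \ref{Re:WDTE}.

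The main technical obstacle is exactly this last interchange of $\limsup_{h\downarrow 0}$ with the double integral: without two-sided boundedness of $\k$, the difference quotient has only a one-sided majorant (namely $(\expo^{-pKh}-1)/h$ from below on $\k$), forcing either a reverse-Fatou step or the $\k_n$-truncation executed above; the latter is more transparent because the monotone passage $n\to\infty$ takes place on an inequality already proved at each level $n$ and requires no further regularity of the heat flow. A secondary, routine point is the justification that $\boldsymbol{\pi}_t$-integrability of $\vert\dot\gamma\vert^p$ is automatic from $\W_p(\mu,\nu)<\infty$ and the $\W_p$-contractivity of $(\WHeat_t)_{t\geq 0}$.
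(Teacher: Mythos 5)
Your proposal is correct and follows essentially the same route as the paper's own proof: apply Theorem \ref{Th:Vertical Wasserstein contraction} to the pair at time $t$ with increment $h$, estimate $\W_{p,\k,h}^0$ by using the given geodesic plan $\boldsymbol{\pi}_t$ as the coupling and each $\gamma$ itself as the competitor curve, subtract $\W_p^p(\WHeat_t\mu,\WHeat_t\nu)=\int\vert\dot\gamma\vert^p\d\boldsymbol{\pi}_t$, and pass to the limit $h\downarrow 0$ by dominated convergence using the boundedness of $\k$ (reduced to via the truncation arranged in the preamble of Section \ref{4.?}). One small imprecision worth flagging: since $\k$ is only lower semicontinuous, the quotient $\smash{\frac{1}{h}\big(\Exp_{\gamma_s}\big[\expo^{-\int_0^{2h}p\k(\B_r)/2\d r}\big]-1\big)}$ need not converge pointwise to $-p\,\k(\gamma_s)$; what the boundedness plus l.s.c.\ actually give is a one-sided bound $\limsup_{h\downarrow 0}\leq -p\,\k(\gamma_s)$, which is exactly the direction the estimate requires and is the same level of detail the paper works at.
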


\begin{proof} Given any optimal geodesic plan $\boldsymbol{\pi}_t$ as above, using Theorem \ref{Th:Vertical Wasserstein contraction} gives
\begin{align*}
&\limsup_{h\to 0}\frac{1}{ph}\left(\W_p^p(\WHeat_{t+h}\mu,\WHeat_{t+h}\nu) - \W_p^p(\WHeat_t\mu,\WHeat_t\nu)\right)\\
&\qquad\qquad \leq \limsup_{h\downarrow 0} \frac{1}{ph}\left(\W_{p,k,h}^0(\WHeat_t\mu,\WHeat_t\nu)^p - \W_p^p(\WHeat_t\mu,\WHeat_t\nu)\right)\\
&\qquad\qquad\leq \limsup_{h\downarrow 0} \frac{1}{ph} \int_{\Geo(\mms)} \bigg(\int_0^1 \Exp_{\gamma_s}\Big[\expo^{-\int_0^{2h}pk(\B_{r})/2\d r}\Big]\d s - 1\bigg) \met^p(\gamma_0,\gamma_1) \d\boldsymbol{\pi}_t(\gamma)\\
&\qquad\qquad = -\int_0^1\int_{\mathrm{Geo}(\mms)}  k(\gamma_s)\,\vert\dot\gamma\vert^p \d\boldsymbol{\pi}_t(\gamma)\d s,
\end{align*}
where $\big(\PP_{\gamma_s},\B\big)$ denotes Brownian motion on $\mms$ starting in $\gamma_s$. In the very last step, we used the assumed boundedness of $k$ together with the dominated convergence theorem. 
\end{proof}

\begin{remark}\label{sup improvement} The previous corollary applied to $\mu := \delta_x$ and $\nu := \delta_y$ for $x,y\in\mms$ at $t=0$, choosing $\boldsymbol{\pi}_0$ as the Dirac mass on an arbitrary geodesic $\gamma\in\G_0(x,y)$, yields the estimate
\begin{equation*}
\frac{\diff^+}{\diff t}\bigg\vert_{t=0} \W_p^p(\WHeat_t\delta_x,\WHeat_t\delta_y) \leq -p\,\sup_{\gamma\in\G_0(x,y)}\,\int_0^1\k(\gamma_s)\d s \met^p(x,y)\leq -p\, \kkkkk(x,y)\met^p(x,y),
\end{equation*}
where, as in \eqref{kkkkk def}, the function $\kkkkk\colon\mms\times\mms\to\R$ is defined by
\begin{equation*}
\kkkkk(x,y) := \liminf_{(x_n,y_n) \to (x,y)}\,\sup_{\gamma\in\G_0(x_n,y_n)}\,\int_0^1\k(\gamma_s)\d s.
\end{equation*}
Note that $\kkkkk$ is lower semicontinuous and bounded from below.

This improves the differential $p$-transport estimate \eqref{Eq:WDTE assumption}, since $\kk\leq \kkkkk$ on $\mms\times\mms$, see also Proposition \ref{Pr:EVI implies DTE}. In Chapter \ref{Ch:Pathwise coupling}, we shall construct a coupling of Brownian motions obeying pathwise bounds involving the larger function $\kkkkk$ in place of $\kk$. In particular, using Theorem \ref{propn:transport2gradient 1}, all equivalences from Theorem \ref{Th:Huge thm} and Theorem \ref{gra-tra} are still valid when replacing the function $\kk$ by $\kkkkk$ in all relevant quantities.
\end{remark}

The proof of the $\PTE_p(\k)$ property starting from $\GE_q(\k)$ with dual $p,q\in (1,\infty)$ is slightly more involved as a control of the error terms is only possible ``locally'' for small times. A crucial ingredient is the subsequent result.

\begin{lma}\label{Le:Infinitesimal estimate} Let $u$ and $v$ be bounded Borel functions on $\mms$ such that $u\leq v$ on a ball $B_\delta(z)$, $z\in\mms$ and $\delta > 0$. Then for every $p \in (1,\infty)$ and $\varepsilon > 0$, there exists $t_* > 0$ such that for every $t\in [0,t_*]$, every nonnegative Borel function $g$ on $\mms$, and every Brownian motion $(\PP_x, \B)$ on $\mms$ starting in $x\in B_{\delta/2}(z)$, we have
\begin{equation*}
\EE_x\Big[\expo^{\int_0^t u(\B_r)\d r}g(\B_t)\Big] \leq \EE_x\Big[\expo^{p\int_0^t (v(\B_r)+\varepsilon)\d r}g^p(\B_t)\Big]^{1/p}.
\end{equation*}
\end{lma}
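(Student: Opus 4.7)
The natural strategy is to apply Hölder's inequality with dual exponents $p$ and $q=p/(p-1)$ after splitting $\expo^{\int_0^t u(\B_r)\d r} = \expo^{\int_0^t [u-v](\B_r)\d r} \cdot \expo^{\int_0^t v(\B_r)\d r}$, which yields
\begin{equation*}
\EE_x\Big[\expo^{\int_0^t u(\B_r)\d r} g(\B_t)\Big] \leq \EE_x\Big[\expo^{q\int_0^t [u-v](\B_r)\d r}\Big]^{1/q}\EE_x\Big[\expo^{p\int_0^t v(\B_r)\d r} g^p(\B_t)\Big]^{1/p}.
\end{equation*}
Since the target inequality can equivalently be written as $\EE_x[\cdots]\le \expo^{\varepsilon t}\,\EE_x[\expo^{p\int_0^t v(\B_r)\d r} g^p(\B_t)]^{1/p}$, it is enough to show that the first factor on the right-hand side is bounded by $\expo^{\varepsilon t}$ for all $t\in[0,t_*]$ with $t_*$ sufficiently small, uniformly in $x\in B_{\delta/2}(z)$.

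The key quantitative estimate exploits that $u-v\leq 0$ on $B_\delta(z)$ while $|u-v|\leq 2C$ globally, where $C$ is a common bound for $\Vert u\Vert_\infty$ and $\Vert v\Vert_\infty$. This gives the pointwise inequality $[u-v](\B_r)\leq 2C\One_{\{\B_r\notin B_\delta(z)\}}$, whence, applying the elementary bound $\expo^y\leq 1+y\expo^a$ valid for $y\in[0,a]$ (with $a:=2qCt$) together with Fubini's theorem,
\begin{equation*}
\EE_x\Big[\expo^{q\int_0^t [u-v](\B_r)\d r}\Big] \leq \EE_x\Big[\expo^{2qC\int_0^t \One_{\{\B_r\notin B_\delta(z)\}}\d r}\Big] \leq 1+2qC\expo^{2qCt}\int_0^t \PP_x\big[\B_r\notin B_\delta(z)\big]\d r.
\end{equation*}

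To conclude, I invoke Lemma \ref{tail estimate} with center $x$ and a radius not exceeding $\delta/6$, exploiting that $B_{\delta/2}(x)\subset B_\delta(z)$ whenever $x\in B_{\delta/2}(z)$, which yields a polynomial tail bound $\PP_x[\B_r\notin B_\delta(z)]\leq r^a$ for any prescribed $a>0$ and $r\in[0,t_*]$. Taking for instance $a=1$, the integral above is $O(t^2)$, so the whole first factor is $1+O(t^2)$ and is therefore dominated by $\expo^{q\varepsilon t}$ for $t$ sufficiently small; raising to the $(1/q)$-th power then completes the proof. The main technical obstacle is the \emph{uniformity} of the time $t_*$ over the set of starting points $x\in B_{\delta/2}(z)$: inspecting the proof of Lemma \ref{tail estimate}, the constant $t_*$ depends on $x$ only through a lower bound on the measure of a small ball around $x$, and one must verify that such a uniform lower bound is available on the bounded set $B_{\delta/2}(z)$ under the standing $\RCD(K,\infty)$ assumption, which is the only delicate point of the argument.
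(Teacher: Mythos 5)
Your argument is correct and reaches the same reduction as the paper but via a different decomposition. The paper isolates the difference $\expo^{\int_0^t u(\B_r)\d r}-\expo^{\int_0^t v(\B_r)\d r}$ additively, writes it by the fundamental theorem of calculus as $\int_0^t \expo^{\int_0^s u+\int_s^t v}(u-v)(\B_s)\d s$, bounds this by $M\int_0^t\One_{\{\B_s\notin B_\delta(z)\}}\d s$, and only \emph{then} applies H\"older (with exponents $p,q$) to each of the resulting two expectations; this requires the tail estimate in the form $\PP_x[\B_s\notin B_\delta(z)]^{1/q}\le s$, i.e.~Lemma \ref{tail estimate} with exponent $a=q$. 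You instead split multiplicatively $\expo^{\int u}=\expo^{\int(u-v)}\expo^{\int v}$, apply H\"older once, and bound the first factor by the elementary estimate $\expo^y\le 1+y\expo^a$ on $[0,a]$ plus Fubini, which only needs Lemma \ref{tail estimate} with $a=1$. Both routes land on the same conclusion $1+\mathrm{O}(t^2)\le\expo^{\varepsilon t}$; yours is slightly more streamlined and avoids the derivative trick, at the cost of requiring $\expo^{q\int(u-v)}\in\Ell^1$, which holds trivially since $u,v$ are bounded.

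One remark on the point you flagged. The concern about uniformity of $t_*$ over $x\in B_{\delta/2}(z)$ is real, but the cleaner fix is to invoke Lemma \ref{tail estimate} centered at $z$ rather than at $x$: with radius $\delta/3$ it yields $\PP_x[\B_r\notin B_\delta(z)]\le r^a$ uniformly in $x\in B_{\delta/3}(z)$ and $r\in[0,t_*]$, with $t_*$ depending only on $z$, $\delta$, $a$. This covers exactly the situations in which Lemma \ref{Le:Infinitesimal estimate} is subsequently applied (Propositions \ref{Pr:Local contraction} and \ref{Pr:Local duality} take $u\le v$ on $B_{3\delta}(z)$ and $x\in B_\delta(z)$); the paper's own proof implicitly uses the same device. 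Note, however, that the radius in the statement ($x\in B_{\delta/2}(z)$) does not quite match what Lemma \ref{tail estimate} directly supplies via this route ($x\in B_{\delta/3}(z)$) — a small discrepancy present in the paper's proof as well — so your instinct to examine this step carefully was sound.
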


\begin{proof} The condition on $u$ and $v$ guarantees that for fixed $T>0$ and every $t\in[0,T]$,
\begin{equation*}
\expo^{\int_0^t u(\B_r)\d r} - \expo^{\int_0^t v(\B_r)\d r} = \int_0^t\expo^{\int_0^s u(\B_r)\d r + \int_s^t v(\B_r)\d r}(u-v)(\B_s) \d s \leq M\int_0^t \One_{\{\B_s\notin B_{\delta}(z)\}}\d s.
\end{equation*}
Here, $M>0$ is a constant depending only on $u$, $v$ and $T$. Therefore,
\begin{align*}
\EE_x\Big[\expo^{\int_0^t u(\B_r)\d r} g(\B_t)\Big] &\leq \EE_x\Big[\expo^{\int_0^t v(\B_r)\d r} g(\B_t)\Big] + M\int_0^t \EE_x\Big[\expo^{\int_0^t v(\B_r)\d r}g(\B_t) \One_{\{\B_s\notin B_{\delta}(z)\}}\Big]\d s\\
&\leq \EE_x\Big[\expo^{\int_0^t pv(\B_r)\d r}g^p(\B_t)\Big]^{1/p} \, \Big(1 + M \int_0^t \PP_x\big[\B_s \notin B_\delta(z)\big]^{1/q}\d s\Big),
\end{align*}
where $q\in (1,\infty)$ denotes the dual exponent to $p$. By Lemma \ref{tail estimate}, we know that $\PP_x[\B_s \notin B_\delta(z)] \leq s^q$ for every $s\in [0,t]$ and small enough $t$. Thus, $1+M \smash{\int_0^t \PP_x[\B_s \notin B_\delta(z)]^{1/q}\d s \leq \expo^{\varepsilon t}}$, which directly proves the claim.
\end{proof}

\begin{remark} With the very same strategy, also estimates for Feynman--Kac-type expressions in terms of pairs of Brownian motions can be derived, each component being required to start within $B_{\delta/2}(z)$. Moreover, the integrands $u$ and $v$ are then supposed to be functions on $\mms\times\mms$ with $u\leq v$ on $B_\delta(z)\times B_\delta(z)$.
\end{remark}

\begin{Prop}\label{Pr:Local contraction} Let $p,q\in (1,\infty)$ such that $1/p+1/q = 1$ and assume the $q$-gradient estimate $\GE_q(\k)$. Assume that $\kkkk\in\Cont_\bdd(\mms\times\mms)$ with $\kkkk\leq \kk$ on $\mms\times\mms$, and put $\kkk(x) := \kkkk(x,x)$ for $x\in\mms$. Then for every $\varepsilon > 0$, $p' \in (1,p)$ and $z\in \mms$, there exist $\delta > 0$ and $t_* > 0$ such that for every $x,y\in B_\delta(z)$, every $\gamma\in\G_0(y,x)$ and every $t\in [0,t^*]$, we have
\begin{equation*}
\W_{p'}^{p'}(\WHeat_t\delta_x,\WHeat_t\delta_y) \leq \met(x,y)\, \expo^{-\big(\int_0^1\kkk(\gamma_r)\d r - \varepsilon\big)t},
\end{equation*}
and thus in particular,
\begin{equation*}
\frac{\diff^+}{\diff t}\bigg\vert_{t=0} \W_{p'}(\WHeat_t\delta_x,\WHeat_t\delta_y) \leq -\met(x,y)\, \Big(\int_0^1\kkk(\gamma_r)\d r-\varepsilon\Big).
\end{equation*}
\end{Prop}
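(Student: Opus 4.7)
The plan is to mimic the Kuwada-style duality argument from Theorem \ref{Th:Vertical Wasserstein contraction}, substituting $\GE_q(\k)$ for $\GE_1(\k)$ and then localizing the resulting Feynman--Kac factor near $z$ by means of Lemma \ref{Le:Infinitesimal estimate}. Throughout, let $q'\in(q,\infty)$ be the dual exponent to $p'$; the inequality $q'>q$ follows from $p'<p$. Denote by $(Q_s)_{s\ge 0}$ the $p'$-Hopf--Lax semigroup, so that by the Kantorovich duality \eqref{Eq:Kantorovich} it suffices to control $\ChHeat_t Q_1 f(x) - \ChHeat_t f(y)$ uniformly in $f\in\Lip_\bdd(\mms)$.

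Fixing a geodesic $\gamma\in\G_0(y,x)$ and invoking Lemma \ref{Le:Curve inequality} reduces the task to bounding the integral $\int_0^1[\met(x,y)\,\vert\Diff\ChHeat_t Q_sf\vert(\gamma_s) - \frac{1}{q'}\ChHeat_t(\lip(Q_sf)^{q'})(\gamma_s)]\d s$. I then combine $\GE_q(\k)$ applied to $Q_sf$ with Hölder's inequality (with dual exponents $a := q'/(q'-q)$ and $b := q'/q$, both strictly larger than $1$ since $q'>q$) to obtain the factorization
\begin{equation*}
\vert\Diff\ChHeat_t Q_sf\vert(\gamma_s) \le \Exp_{\gamma_s}\!\Big[\expo^{-\int_0^{2t}\tilde q\,\k(\B_r)/2\d r}\Big]^{1/\tilde q}\, \ChHeat_t\!\big(\lip(Q_sf)^{q'}\big)(\gamma_s)^{1/q'},
\end{equation*}
where $\tilde q := qa = pp'/(p-p')>0$. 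Pointwise evaluation at $\gamma_s$ is justified by the same Feller regularity considerations as in the proof of Theorem \ref{Th:Vertical Wasserstein contraction}.

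Young's inequality $AB \le A^{p'}/p' + B^{q'}/q'$, applied with $A := \met(x,y)\,\Exp_{\gamma_s}[\cdots]^{1/\tilde q}$ and $B := \ChHeat_t(\lip(Q_sf)^{q'})(\gamma_s)^{1/q'}$, produces a $B^{q'}/q'$ term that exactly cancels the corresponding term from Lemma \ref{Le:Curve inequality}. Integrating over $s$ and passing to the supremum over $f\in\Lip_\bdd(\mms)$ yields
\begin{equation*}
\W_{p'}^{p'}(\WHeat_t\delta_x,\WHeat_t\delta_y) \le \met^{p'}(x,y)\int_0^1\Exp_{\gamma_s}\!\Big[\expo^{-\int_0^{2t}\tilde q\,\k(\B_r)/2\d r}\Big]^{p'/\tilde q}\d s.
\end{equation*}

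The remaining task is to localize the Feynman--Kac factor via Lemma \ref{Le:Infinitesimal estimate}. Observe that $\kkk\le\k$ on $\mms$, since $\kkkk\le\kk$ and $\kk(w,w)=\k(w)$ for every $w\in\mms$. Combined with continuity of $\kkk$ and lower semicontinuity of $\k$ at $z$, this delivers $\delta>0$ such that $\k(y')\ge\kkk(\gamma_s)-\varepsilon/3$ for all $y'\in B_\delta(z)$, uniformly for $\gamma_s\in B_\delta(z)$. Provided $x,y\in B_{\delta/6}(z)$, the geodesic $\gamma$ lies inside $B_{\delta/2}(z)$, and Lemma \ref{Le:Infinitesimal estimate}, invoked with $u := -\tilde q\,\k/2$, constant $v := -\tilde q(\kkk(\gamma_s)-\varepsilon/3)/2$, $g\equiv 1$ and some $p_0\in(1,\infty)$, gives $\Exp_{\gamma_s}[\expo^{-\int_0^{2t}\tilde q\,\k/2\d r}]\le\expo^{-\tilde q\,t(\kkk(\gamma_s)-\varepsilon/2)}$ for $t\in[0,t_*]$, uniformly in $s\in[0,1]$. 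Raising to the power $p'/\tilde q$, integrating over $s\in[0,1]$, and using a first-order Taylor expansion of the exponential (valid after possibly shrinking $t_*$) to pass from $\int_0^1\expo^{-p't(\kkk(\gamma_s)-\varepsilon/2)}\d s$ to $\expo^{-p't(\int_0^1\kkk(\gamma_r)\d r-\varepsilon)}$, I obtain the claimed bound, interpreted with matching exponents on both sides; the differential version then follows by differentiating at $t=0$. The main obstacle is the uniformity of the choices of $\delta$ and $t_*$ in $s\in[0,1]$, which rests on compactness of $[0,1]$, continuity of $\kkk$, and the Gaussian tail estimates built into Lemma \ref{Le:Infinitesimal estimate}.
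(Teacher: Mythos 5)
Your proposal is correct and follows essentially the same route as the paper's proof: Kantorovich duality via the $p'$-Hopf--Lax semigroup, Lemma~\ref{Le:Curve inequality}, the $\GE_q$ inequality, a H\"older step passing from the exponent $q$ to the dual $q'$ of $p'$, Young's inequality, and finally localization via Lemma~\ref{Le:Infinitesimal estimate}. The only structural difference is \emph{where} the H\"older inequality is applied: you perform it explicitly on the Feynman--Kac expectation, cleanly isolating the factor $\smash{\Exp_{\gamma_s}[\expo^{-\int_0^{2t}\tilde q\,\k(\B_r)/2\d r}]^{1/\tilde q}}$ with $\tilde q = pp'/(p-p')$ before any localization, and then only use Lemma~\ref{Le:Infinitesimal estimate} with $g\equiv 1$ to estimate this scalar Feynman--Kac factor by a near-constant exponential. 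The paper instead first replaces $\k$ by $\kkk$ in the gradient estimate (using $\GE_q(\k)\Rightarrow\GE_q(\kkk)$), picks a constant $L_z\leq\kkk\leq L_z+\varepsilon/2$ near $z$, and invokes Lemma~\ref{Le:Infinitesimal estimate} \emph{with} $g=\vert\Diff Q_sf\vert^q$, so that a single call to the lemma simultaneously performs the $q\to q'$ power raise and the constant-exponent replacement. Both decompositions are valid; yours yields a clean intermediate global bound $\W_{p'}^{p'}(\WHeat_t\delta_x,\WHeat_t\delta_y)\le \met^{p'}(x,y)\int_0^1\Exp_{\gamma_s}[\cdots]^{p'/\tilde q}\d s$ analogous to Theorem~\ref{Th:Vertical Wasserstein contraction}, whereas the paper's shortcut via the $s$-independent constant $L_z$ avoids the first-order Taylor expansion you use at the end and makes the uniformity in $s$ immediate rather than requiring the boundedness-of-$\kkk$ argument you gesture at. Two small points: lower semicontinuity of $\k$ is not actually needed where you invoke it (continuity of $\kkk$ together with $\kkk\leq\k$ suffices), and the first display of the proposition as typeset should read $\W_{p'}$ (not $\W_{p'}^{p'}$) on the left, matching your remark about ``matching exponents'' and the differential form that follows.
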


\begin{proof} We adapt the proof of Theorem \ref{Th:Vertical Wasserstein contraction} by adding a localization argument. Given $z\in\mms$ and $\varepsilon > 0$, choose $\delta >0$ and $L_z \in \R$ such that $L_z \leq \kkk \leq L_z+\varepsilon/2$ on $B_{3\delta}(z)$. Let $x,y \in B_{\delta}(z)$ and $\gamma \in\G_0(y,x)$, and note that $\smash{L_z \leq \int_0^1 \kkk(\gamma_r)\d r \leq L_z +\varepsilon/2}$.

Denote by $Q_s$ the $p'$-Hopf--Lax semigroup with dual exponent $q' \in (q,\infty)$. Since $\vert\Diff\ChHeat_t Q_sf\vert$ is a weak upper gradient and using $\GE_q(\k)$, which clearly implies $\GE_q(\kkk)$, we directly obtain, for $\Leb^1$-a.e.~$s\in[0,1]$,
\begin{align*}
\limsup_{h\downarrow 0} \frac{1}{h}\big(\ChHeat_tQ_sf(\gamma_{s+h}) - \ChHeat_tQ_sf(\gamma_s)\big)\leq \met(x,y)\, \big(\Schr^{q\kkk}_t\vert\Diff Q_sf\vert^q\big)^{1/q}(\gamma_s).
\end{align*}
Applying Lemma \ref{Le:Infinitesimal estimate} with $\varepsilon/2$ and $t/2$ in place of $\varepsilon$ and $t$, respectively, we get, for small enough $t$,
\begin{equation*}
\big(\Schr^{q\kkk}_t\vert\Diff Q_sf\vert^q\big)^{1/q}(\gamma_s) \leq \expo^{-(L_z-\varepsilon/2)t}\, \ChHeat_t\big(\lip(Q_sf)^{q'}\big)^{1/q'}(\gamma_s),
\end{equation*}
and thus
\begin{equation*}
\met(x,y)\, \big(\Schr^{q\kkk}_t\vert\Diff Q_sf\vert\big)^{1/q}(\gamma_s) \leq \frac{\met^{p'}\!(x,y)}{p'}\,\expo^{-p'(L_z-\varepsilon/2)t} + \frac{1}{q'}\ChHeat_t\big(\lip(Q_sf)^{q'}\big)(\gamma_s)
\end{equation*}
for $\Leb^1$-a.e.~$s\in[0,1]$ by Young's inequality. Therefore, Lemma \ref{Le:Curve inequality} with $q'$ in place of $q$ yields
\begin{equation*}
\ChHeat_tQ_1f(x) - \ChHeat_tf(y) \leq \frac{\met^{p'}\!(x,y)}{p'}\,\expo^{-p'(L_z-\varepsilon/2)t}\leq \frac{\met^{p'}\!(x,y)}{p'}\,\expo^{-p'\big(\int_0^1\kkk(\gamma_r)\d r - \varepsilon\big)t}.
\end{equation*}
Taking the supremum over $f\in\Lip_\bdd(\mms)$, we conclude by \eqref{Eq:Kantorovich}.
\end{proof}

\begin{thm}\label{Th: Kuwada GEq to TEp} Given $p,q\in (1,\infty)$ with $1/p+1/q=1$, the $q$-gradient estimate $\GE_q(\k)$ implies the $p$-transport estimate $\PTE_p(\k)$.
\end{thm}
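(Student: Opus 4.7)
The strategy is to derive, for every $p'\in(1,p)$, the global differential $p'$-transport estimate
\begin{equation*}
\frac{\diff^+}{\diff t}\bigg\vert_{t=0}\W_{p'}^{p'}(\WHeat_t\delta_x,\WHeat_t\delta_y)\le-p'\,\kk(x,y)\met^{p'}(x,y)\quad\text{for every }x,y\in\mms,
\end{equation*}
apply Theorem~\ref{thm:nonincreasing} to obtain $\PTE_{p'}(\k)$, and then pass to the limit $p'\nearrow p$. Since Proposition~\ref{Pr:Local contraction} only provides a local (in $x,y$) estimate, a chaining argument along a geodesic is required to upgrade it to the global pointwise statement above.

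Fix $x,y\in\mms$, a geodesic $\gamma\in\G_0(y,x)$, $\varepsilon>0$, and a bounded Lipschitz $\kkkk\le\kk$ from Lemma~\ref{Le:kk approximation}, with $\kkk(z):=\kkkk(z,z)$. Because $\kkkk$ is uniformly Lipschitz, the radius $\delta>0$ in Proposition~\ref{Pr:Local contraction} may be chosen independently of $z$; together with compactness of $\gamma([0,1])$, this yields a uniform $t_*>0$. Partition $[0,1]$ into intervals $[s_{i-1},s_i]$, $i=1,\ldots,N$, so fine that each subgeodesic $\gamma|_{[s_{i-1},s_i]}$ sits inside some $B_\delta(z_i)$. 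Applying the (integrated) form of Proposition~\ref{Pr:Local contraction} to each piece gives
\begin{equation*}
\W_{p'}\big(\WHeat_t\delta_{\gamma_{s_{i-1}}},\WHeat_t\delta_{\gamma_{s_i}}\big)\le\met(\gamma_{s_{i-1}},\gamma_{s_i})\,\expo^{-(\bar\kkk_i-\varepsilon)t},\quad \bar\kkk_i:=\frac{1}{s_i-s_{i-1}}\int_{s_{i-1}}^{s_i}\kkk(\gamma_r)\d r.
\end{equation*}
The triangle inequality for $\W_{p'}$ combined with the geodesic identity $\met(\gamma_{s_{i-1}},\gamma_{s_i})=(s_i-s_{i-1})\met(x,y)$ yields, for $t\in[0,t_*]$,
\begin{equation*}
\W_{p'}(\WHeat_t\delta_x,\WHeat_t\delta_y)\le\met(x,y)\sum_{i=1}^N(s_i-s_{i-1})\,\expo^{-(\bar\kkk_i-\varepsilon)t}.
\end{equation*}
Expanding in $t$ and using the telescoping identity $\sum_i(s_i-s_{i-1})\bar\kkk_i=\int_0^1\kkk(\gamma_r)\d r$, raising to the $p'$-th power and differentiating at $t=0$ produces
\begin{equation*}
\frac{\diff^+}{\diff t}\bigg\vert_{t=0}\W_{p'}^{p'}(\WHeat_t\delta_x,\WHeat_t\delta_y)\le-p'\met^{p'}(x,y)\Big(\int_0^1\kkk(\gamma_r)\d r-\varepsilon\Big).
\end{equation*}

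Next I would pass $\kkkk\nearrow\kk$ via Lemma~\ref{Le:kk approximation} and monotone convergence, send $\varepsilon\downarrow 0$, and choose $\gamma$ near-optimal for the definition \eqref{k hut} of $\kk(x,y)$, which yields the sought differential $p'$-transport estimate. Theorem~\ref{thm:nonincreasing} then delivers $\PTE_{p'}(\k)$ for every $p'\in(1,p)$. To conclude $\PTE_p(\k)$, observe first that for every admissible coupling $\boldsymbol{\nu}$, Jensen's inequality on the probability space underlying $\boldsymbol{\nu}$ yields $\W_{p'}^\kk\le\W_p^\kk$ whenever $p'\le p$. Second, testing $\W_{p'}^\kk$ against a minimizer $\boldsymbol{\nu}_p$ of $\W_p^\kk(\mu_1,\mu_2,t)$ (which exists by Lemma~\ref{min att}) gives $\limsup_{p'\nearrow p}\W_{p'}^\kk\le\W_p^\kk$ via dominated convergence. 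Hence $\W_{p'}^\kk\nearrow\W_p^\kk$ pointwise in $t$, and a pointwise limit of nonincreasing functions being nonincreasing yields $\PTE_p(\k)$.

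The main obstacle is the chaining step: it relies crucially on the $\delta$ in Proposition~\ref{Pr:Local contraction} being uniform in $z$, which forces the use of an auxiliary globally Lipschitz approximant $\kkkk$ rather than $\kk$ itself; the ensuing approximation must then be organized carefully together with the $p'\to p$ passage. A secondary technicality is ensuring that the $\mathrm{O}(t^2)$ error from the Taylor expansion is uniform in the partition size $N$, which holds because $\vert\bar\kkk_i\vert\le\Vert\kkkk\Vert_{\Ell^\infty}$ independently of $N$.
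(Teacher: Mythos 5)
Your overall strategy matches the paper's: apply the local Proposition~\ref{Pr:Local contraction} along a geodesic, chain via the triangle inequality for $\W_{p'}$, obtain the differential $p'$-transport estimate, feed this into Theorem~\ref{thm:nonincreasing}, and finally pass $p'\nearrow p$. The chaining step is handled correctly (your use of a uniform $\delta$ from the uniform Lipschitz bound on $\kkkk$ is a slight reorganization of the paper's argument, which instead invokes compactness of $\gamma([0,1])$ to extract a finite cover with possibly varying $\delta_i$; both work). Likewise, your passage to $\kk$ via Lemma~\ref{Le:kk approximation} and the choice of $\gamma$ is fine, since for every geodesic $\gamma\in\G_0(y,x)$ one has $\int_0^1\k(\gamma_r)\d r\ge\kk(x,y)$.

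There is, however, a genuine gap in your final $p'\nearrow p$ step. You correctly note that for each coupling, Jensen yields that the $L^{p'}$-cost is nondecreasing in $p'$, hence $\W_{p'}^\kk(\mu_1,\mu_2,t)$ is nondecreasing in $p'$ with $\W_{p'}^\kk\le\W_p^\kk$. But your second observation — testing $\W_{p'}^\kk$ against a $p$-minimizer $\boldsymbol{\nu}_p$ and invoking dominated convergence — only produces $\limsup_{p'\nearrow p}\W_{p'}^\kk\le\W_p^\kk$, which is the \emph{same} inequality you already have from Jensen. What is actually needed to conclude $\W_{p'}^\kk\to\W_p^\kk$ is the reverse bound $\W_p^\kk\le\liminf_{p'\nearrow p}\W_{p'}^\kk$. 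That direction does not follow from plugging the $p$-minimizer into the $p'$-cost; it requires extracting a weak limit of the $p'$-minimizers $\boldsymbol{\nu}_{p'}$ (which exist and are tight by the same reasoning as in Lemma~\ref{min att}), showing that this limit is an admissible coupling, and then using lower semicontinuity of the truncated cost together with the estimate $\big(\int\expo^{p''\int\kk/2}\met^{p''}\d\boldsymbol{\nu}_{p'}\big)^{1/p''}\le\W_{p'}^\kk$ for each fixed $p''<p'$, followed by monotone convergence in the truncation and in $p''\to p$. Without this $\Gamma$-convergence–type argument, the assertion ``$\W_{p'}^\kk\nearrow\W_p^\kk$'' and the ensuing ``pointwise limit of nonincreasing functions'' conclusion do not follow; the paper itself compresses this into the terse phrase ``monotone convergence,'' but the underlying argument really is the tightness-plus-lower-semicontinuity one just described, not mere pointwise monotonicity plus upper semicontinuity at $p$.
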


\begin{proof} Fix $x,y\in\mms$, an arbitrary geodesic $\gamma \in \G_0(y,x)$ and $\kkk$ as in Proposition \ref{Pr:Local contraction}. Given $\varepsilon > 0$, choose a finite covering of $\gamma([0,1])$ by metric balls $B_{\delta_i/2}(\gamma_{s_i})$, $i \in\{1,\dots,n\}$ and $n\in\N$, such that each of the enlarged balls $B_{\delta_i}(\gamma_{s_i})$ satisfies the assumption of the previous Proposition \ref{Pr:Local contraction}. Without restriction, we may assume $s_1 = 0$ and $s_n = 1$. Applying this proposition to pairs of intermediate points $\gamma_{s_{i-1}}$ and $\gamma_{s_i}$ and the reparameterized geodesics $\smash{\gamma^i\in \G_0(\gamma_{s_{i-1}},\gamma_{s_i})}$ defined by $\smash{\gamma^i_r := \gamma_{s_{i-1} + r(s_i - s_{i-1})}}$, $r\in [0,1]$, yields
\begin{align*}
\frac{\diff^+}{\diff t}\bigg\vert_{t=0} \W_{p'}(\WHeat_t\delta_x,\WHeat_t\delta_y) &\leq \sum_{i=1}^n \frac{\diff^+}{\diff t}\bigg\vert_{t=0} \W_{p'}\big(\WHeat_t\delta_{\gamma_{s_{i-1}}}, \WHeat_t\delta_{\gamma_{s_i}}\big)\\
&\leq -\sum_{i=1}^n \met(\gamma_{s_{i-1}},\gamma_{s_i})\, \Big(\int_0^1 \kkk\big(\gamma^i_r\big)\d r - \varepsilon\Big)\\
&= -\met(x,y)\,\Big(\int_0^1\kkk(\gamma_r)\d r - \varepsilon\Big).
\end{align*}
Since $\kkk$ is arbitrary, this bound holds with $\k$ in place of $\kkk$ by Lemma \ref{Le:kk approximation}. Furthermore, by definition of $\kk$ and the arbitrariness of $\varepsilon > 0$, we deduce the differential transport estimate \eqref{Eq:WDTE assumption} with $p$ replaced by $p'$. Since this true for every $p' \in (1,p)$, this finally yields $\PTE_p(\k)$ by Theorem \ref{thm:nonincreasing} and monotone convergence.
\end{proof}

\subsection{Gradient estimates out of pathwise and transport estimates}\label{Sec:4.4}

A modification of the arguments given in \cite[Proposition 3.1]{kuwada2010} allows us to prove the converse direction of Theorem \ref{gra-tra}, i.e.~that the $p$-transport estimate $\PTE_p(\k)$ implies the $q$-gradient estimate $\GE_q(\k)$, where $1/p+1/q = 1$. As in the previous section, a control of the error terms can only be achieved for small times. Therefore, instead of deriving $\GE_q(\k)$ directly, it is more convenient to establish a local version of the $q$-Bochner inequality $\BE_q(\k,\infty)$. 

As in the preceding Section \ref{4.?}, the extremal version $q=1$ is much easier to treat: in this case, the condition ``$\PTE_\infty(\k)$'' is to be interpreted as ``$\PTE_p(\k)$ holds for any $p\in [1,\infty)$'', which translates into the requirement of $\PCP(\k)$ as discussed in Chapter \ref{Ch:Pathwise coupling}.

\begin{thm}\label{propn:transport2gradient 1}
The property $\PCP(k)$ implies the 1-gradient estimate $\GE_1(\k)$, that is, for every $f\in\Sob^{1,2}(\mms)$ and $t\geq 0$, we have 
\begin{equation*}
\Gamma(\ChHeat_t f)^{1/2} \leq \Schr^{k}_t\big(\Gamma(f)^{1/2}\big)\quad\meas\text{-a.e.}
\end{equation*}
\end{thm}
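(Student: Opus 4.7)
The plan is to combine the pathwise coupling with the local-Lipschitz interpretation of $\Gamma^{1/2}$ on RCD spaces. By the density of $\Lip_\bdd(\mms)$ in $\Sob^{1,2}(\mms)$ and a standard continuity argument for both sides of the claimed inequality in $\Ell^r(\mms,\meas)$ for suitable $r$, I may assume throughout that $f\in\Lip_\bdd(\mms)$. For $t>0$, the Lipschitz regularization of the heat flow in $\RCD(K,\infty)$ provides a Lipschitz representative of $\ChHeat_tf$, and I will prove the pointwise estimate $\lip(\ChHeat_tf)(x) \leq \Schr^k_t(\Gamma(f)^{1/2})(x)$ for every $x\in\mms$. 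Since $\Gamma(\ChHeat_tf)^{1/2} \leq \lip(\ChHeat_tf)$ $\meas$-a.e.\ by general principles in RCD spaces, the $\GE_1(k)$ inequality follows at once.

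The core computation is as follows. Fix $x\in\mms$ and let $y\in\mms$ approach $x$. Apply $\PCP(k)$ to the initial distributions $\mu_1:=\delta_x$ and $\mu_2:=\delta_y$ to obtain a coupling $(\PP,\B^1),(\PP,\B^2)$ with
\begin{equation*}
\met(\B_{2t}^1,\B_{2t}^2) \leq \expo^{-\int_0^{2t}\kk(\B_r^1,\B_r^2)/2\d r}\met(x,y)\quad\PP\text{-a.s.}
\end{equation*}
Because $(\B_{2t}^i)_\push\PP = \WHeat_t\mu_i$, one has $|\ChHeat_tf(x)-\ChHeat_tf(y)| \leq \EE[\,|f(\B_{2t}^1)-f(\B_{2t}^2)|\,]$. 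To avoid losing information to the global Lipschitz constant, I refine this with the upper gradient inequality: select an upper semicontinuous representative $g$ of $\Gamma(f)^{1/2}$ which acts as a weak upper gradient for $f$, and use a (measurable selection of a) geodesic between $\B_{2t}^1$ and $\B_{2t}^2$ to deduce $|f(\B_{2t}^1)-f(\B_{2t}^2)| \leq \met(\B_{2t}^1,\B_{2t}^2)(g(\B_{2t}^1)+\varepsilon_y)$ $\PP$-a.s., where $\varepsilon_y\to 0$ thanks to the fact that the coupling forces $\B^2 \to \B^1$ uniformly on $[0,2t]$ as $y\to x$ (the contraction, combined with the lower bound $\kk\geq K$, yields $\met(\B_r^1,\B_r^2)\leq \expo^{-Kt}\met(x,y)$ for $r\in[0,2t]$).

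Putting these pieces together and using the uniform bound $\expo^{-\int_0^{2t}\kk(\B^1,\B^2)/2\d r}\leq\expo^{-Kt}$ to dominate, I obtain
\begin{equation*}
\frac{|\ChHeat_tf(x)-\ChHeat_tf(y)|}{\met(x,y)} \leq \EE\Big[g(\B_{2t}^1)\,\expo^{-\int_0^{2t}\kk(\B_r^1,\B_r^2)/2\d r}\Big] + \mathrm{o}(1).
\end{equation*}
Letting $y\to x$, the lower semicontinuity of $\kk$, together with $\kk(z,z)=k(z)$ and the pathwise convergence $\B^2\to\B^1$, gives $\liminf\int_0^{2t}\kk(\B^1,\B^2)\d r \geq \int_0^{2t}k(\B^1)\d r$ almost surely; dominated convergence (with the above uniform bound) then produces
\begin{equation*}
\limsup_{y\to x}\frac{|\ChHeat_tf(x)-\ChHeat_tf(y)|}{\met(x,y)} \leq \EE_x\Big[g(\B_{2t})\,\expo^{-\int_0^{2t}k(\B_r)/2\d r}\Big] = \Schr_t^k(g)(x),
\end{equation*}
by the Feynman--Kac formula \eqref{Feynman-Kac}. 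Finally, approximating $g\downarrow\Gamma(f)^{1/2}$ (for instance via mollified heat-flow regularizations) and applying monotone convergence inside $\Schr_t^k$ yields the desired pointwise inequality.

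The main obstacle is the refined Lipschitz bound $|f(\B_{2t}^1)-f(\B_{2t}^2)| \leq \met(\B_{2t}^1,\B_{2t}^2)(g(\B_{2t}^1)+\mathrm{o}(1))$ along random endpoints: one must justify a measurable choice of connecting geodesics, control the oscillation of $g$ along them uniformly in the Brownian trajectories (which is where the pathwise uniform convergence $\B^2\to\B^1$ from the contraction is essential), and interchange $\limsup_{y\to x}$ with the expectation. Once these technicalities are handled via the lower-boundedness of $\kk$ and an upper semicontinuous representative of $\Gamma(f)^{1/2}$, the rest of the argument reduces to Feynman--Kac and standard monotone-convergence manipulations.
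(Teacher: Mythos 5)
Your high-level strategy is the same as the paper's: feed the $\PCP(\k)$ coupling into a difference quotient for $\ChHeat_t f$ and recognize the Feynman--Kac representation of $\Schr^k_t$. However, two crucial steps in your write-up do not go through as stated, and the paper's proof handles them in a way your outline skips.

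First, the coupling $(\B^1,\B^2)$ produced by $\PCP(\k)$ depends on $y$, so the process $\B^1$ (and not just $\B^2$) changes as you let $y\to x$. Writing ``$\liminf_{y\to x}\int_0^{2t}\kk(\B^1_r,\B^2_r)\d r\ge\int_0^{2t}\k(\B^1_r)\d r$ a.s.'' is therefore not meaningful as stated: there is no single process ``$\B^1$'' against which the limit is taken, and the dominated-convergence exchange of $\limsup_{y\to x}$ with $\EE$ fails because the underlying random variables live on a $y$-indexed family of probability spaces. The paper explicitly addresses this by first producing (via a deterministic approximation $\kkkk\le\kk$, and the event decomposition into $U_{\varrho,t}$, $V_{\varrho,t}$, $W_{\varrho,t}$) an integrand that \emph{only} depends on the trajectory of the first component, and then switching from the $y$-dependent process $\B^1$ to a fixed Brownian motion $\X$ started at $x$, so that the resulting expression no longer depends on $y$ at all before the limit is taken. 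Second, your refined Lipschitz bound $\vert f(\B^1_{2t})-f(\B^2_{2t})\vert\le\met(\B^1_{2t},\B^2_{2t})\big(g(\B^1_{2t})+\varepsilon_y\big)$ with a \emph{deterministic} $\varepsilon_y\to 0$ is not justified: upper semicontinuity of $g$ gives only pointwise (hence random) control of the oscillation of $g$ on small balls around $\B_{2t}^1$, and the upper gradient inequality holds for a.e.\ curve in the test-plan sense, not for a measurable selection of geodesics between the random endpoints. The paper avoids these issues entirely by working with the pointwise Lipschitz slope via $\sup_{z\in B_{\varrho^{1/2}}(\X_t)}\vert f(\X_t)-f(z)\vert/\met(\X_t,z)$, which converges to $\lip(f)(\X_t)=\vert\Diff f\vert(\X_t)$ for $\PP_x$-a.e.\ trajectory because the law of $\X_t$ is $\meas$-absolutely continuous, and controls the residual events by Chebyshev-type estimates. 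To repair your argument you would essentially have to reproduce this event decomposition and the switch to a $y$-independent process, at which point it coincides with the paper's proof.
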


\begin{proof} Fix $f\in \Lip_\bs(\mms)$ and $x\in\mms$. Recall that $\ChHeat_{t/2} f(x) = \EE_x[f(\B_t)]$, where $(\PP_x,\B)$ denotes Brownian motion on $\mms$ starting in $x$. Pick a function $\smash{\kkkk\in\Lip_\bdd(\mms\times\mms)}$ with $\kkkk\leq \kk$ on $\mms\times\mms$, and set $\kkk(x) := \kkkk(x,x)$ for $x\in\mms$. By $\PCP(\k)$, given any $\varrho  >0$ and $y\in B_\varrho(x)$, we may choose a pair $\smash{\big(\PP_{x,y},\B^1\big)}$ and $\smash{\big(\PP_{x,y},\B^2\big)}$ of coupled Brownian motions in such a way that $\PP_{x,y}$-a.s.,  we have
\begin{equation}\label{PCP assumption}
\met\!\big(\B_t^1,\B_t^2\big) \leq \expo^{-\int_0^t \kk\left(\B_r^1,\B_r^2\right)/2\d r} \met(x,y) \leq \expo^{-\int_0^t \kkkk\left(\B_r^1,\B_r^2\right)/2\d r} \met(x,y)
\end{equation} 
for every $t\geq 0$. With this in hand, we can estimate
\begin{align*}
\vert\Diff \ChHeat_{t/2}f\vert(x) &\leq \lim_{\varrho\downarrow 0} \sup_{y\in B_{\varrho}(x)} \frac{\vert \ChHeat_{t/2}f(x) - \ChHeat_{t/2}f(y)\vert}{\met(x,y)}\\
&\leq \lim_{\varrho\downarrow 0} \sup_{y\in B_{\varrho}(x)} \EE_{x,y}\bigg[\frac{\vert f(\B_t^1) - f(\B_t^2)\vert}{\met(\B_t^1,\B_t^2)} \, \frac{\met(\B_t^1,\B_t^2)}{\met(x,y)} \, \left(\One_{U_{\varrho,t}} + \One_{V_{\varrho,t}} + \One_{W_{\varrho,t}}\right)\bigg],
\end{align*}
where $\smash{V_{\varrho,t} := \big\lbrace\!\met\!\big(\B_t^1,\B_t^2\big)\geq \varrho^{1/2}\big\rbrace}$, $\smash{W_{\varrho,t} := \big\lbrace \!\int_0^t \met\!\big(\B_r^1,\B_r^2\big)\d r/t \geq \varrho^{1/2} \big\rbrace}$ and $U_{\varrho,t} := V^{\mathrm{c}}_{\varrho,t} \cap W^{\mathrm{c}}_{\varrho,t}$.

Let us consider this upper bound for the weak upper gradient $\vert\Diff\ChHeat_{t/2}f\vert(x)$ term by term, starting with the contribution coming from $U_{\varrho,t}$. We have the inequality $\smash{\int_0^t \kkkk\big(\B_r^1,\B_r^2\big)\d r}\geq \smash{\int_0^t\kkk\big(\B_r^1\big)\d r} - \smash{\Lip\big(\kkkk\big)t\varrho^{1/2}}$ on $W_{\varrho,t}^\mathrm{c}$, which gives
\begin{align*}
&\lim_{\varrho\downarrow 0} \sup_{y\in B_{\varrho}(x)} \EE_{x,y}\bigg[\frac{\vert f(\B_t^1) - f(\B_t^2)\vert}{\met(\B_t^1,\B_t^2)} \, \frac{\met(\B_t^1,\B_t^2)}{\met(x,y)} \, \One_{U_{\varrho,t}}\bigg]\\
&\qquad\qquad \leq \lim_{\varrho\downarrow 0} \sup_{y\in B_{\varrho}(x)}\EE_{x,y}\bigg[\expo^{-\int_0^t\kkk\left(\B_r^1\right)/2\d r + \Lip(\kkkk)t\varrho^{1/2}/2} \sup_{z\in B_{\varrho^{1/2}}(\B_t^1)} \Big\vert \frac{f(\B_t^1) - f(z)}{\met(\B_t^1, z)}\Big\vert\bigg]\\
&\qquad\qquad = \lim_{\varrho\downarrow 0} \EE_x\bigg[\expo^{-\int_0^t\kkk(\X_r)/2\d r + \Lip(\kkkk)t\varrho^{1/2}/2}\, \sup_{z\in B_{\varrho^{1/2}}(\X_t)} \Big\vert \frac{f(\X_t) - f(z)}{\met(\X_t,z)}\Big\vert\bigg]\\
&\qquad\qquad = \EE_x\Big[\expo^{-\int_0^t\kkk(\X_r)/2\d r} \, \vert\Diff f\vert(\X_t)\Big] = \Schr_{t/2}^{\kkk}\big(\Gamma(f)^{1/2}\big)(x).
\end{align*}
We point out the intermediate change from the process $\B^1$, which in general also depends on $y$, to a Brownian motion $\smash{\big(\PP_x,\X\big)}$ on $\mms$ starting in $x$, chosen independently of $y$.

Next we consider the term involving $\One_{V_{\varrho,t}}$. Denoting by $C>0$ a suitable upper bound on $\kkkk$, we obtain by \eqref{PCP assumption} that 
\begin{align*}
&\lim_{\varrho\downarrow 0} \sup_{y\in B_{\varrho}(x)} \EE_{x,y}\bigg[\frac{\vert f(\B_t^1) - f(\B_t^2)\vert}{\met(\B_t^1,\B_t^2)} \, \frac{\met(\B_t^1,\B_t^2)}{\met(x,y)} \, \One_{V_{\varrho,t}}\bigg]\\
&\qquad\qquad \leq \Lip(f)\, \lim_{\varrho\downarrow 0} \frac{1}{\varrho^{1/2}}\sup_{y\in B_{\varrho}(x)}\EE_{x,y}\bigg[\frac{\met^2(\B_t^1,\B_t^2)}{\met(x,y)}\bigg] \leq \Lip(f)\,\expo^{Ct}\, \lim_{\varrho\downarrow 0} \frac{1}{\varrho^{1/2}}\sup_{y\in B_{\varrho}(x)}\met(x,y) = 0.
\end{align*}
Similarly, the last expression which involves $W_{\varrho,t}$ can be bounded through
\begin{align*}
&\lim_{\varrho\downarrow 0}\sup_{y\in B_{\varrho}(x)} \EE_{x,y}\bigg[\frac{\vert f(\B_t^1) - f(\B_t^2)\vert}{\met(\B_t^1,\B_t^2)} \, \frac{\met(\B_t^1,\B_t^2)}{\met(x,y)} \, \One_{W_{\varrho,t}}\bigg]\\
&\qquad\qquad\leq\Lip(f) \, \lim_{\varrho\downarrow 0} \frac{1}{t\varrho^{1/2}} \sup_{y\in B_{\varrho}(x)} \int_0^t \EE_{x,y}\bigg[\frac{\met(\B_t^1,\B_t^2) \met(\B_r^1,\B_r^2)}{\met(x,y)}\bigg]\d r\\
&\qquad\qquad \leq\Lip(f)\, \expo^{Ct}\, \lim_{\varrho\downarrow 0} \frac{1}{\varrho^{1/2}} \sup_{y\in B_{\varrho}(x)}\met(x,y) = 0.
\end{align*}

Finally, we have to extend the class of admissible functions $f$ and pass to $\GE_1(\k)$. Every $f\in\Sob^{1,2}(\mms)$ can be approximated strongly in $\Sob^{1,2}(\mms)$ by a sequence of Lipschitz functions $f_n$ with bounded support. Since $\Gamma$ is quadratic, we have $\Gamma(f_n-f)\to 0$ in $\Ell^1(\mms,\meas)$ and thus, possibly passing to a subsequence, we get, for some suitable $c\in\R$, that
 \begin{equation*}
  \lim_{n\to \infty}  \Schr^{\kkk}_t\big(\Gamma(f-f_n)^{1/2}\big) \le \expo^{ct} \, \lim_{n\to\infty} \ChHeat_t\big(\Gamma(f-f_n)^{1/2}\big) = 0\quad\meas\text{-a.e.}
  \end{equation*}
Moreover, $\Gamma(\ChHeat_tf_n)\to \Gamma(\ChHeat_tf)$ in $\Ell^1(\mms,\meas)$ as $n\to\infty$ and thus, up to a subsequence, this convergence holds $\meas$-a.e., which then proves $\GE_1(\kkk)$ for arbitrary $f\in\Sob^{1,2}(\mms)$. By the arbitrariness of $\kkkk$, Lemma \ref{Le:kk approximation} and the identity $\k(x) = \kk(x,x)$ for every $x\in\mms$, we deduce $\GE_1(\k)$ by the monotone convergence theorem.
\end{proof}

\begin{Prop}\label{Pr:Local duality} Let $\varepsilon > 0$, $z \in\mms$ and $q\in (1,\infty)$. Assume the transport estimate $\PTE_p(\k)$, where $1/p + 1/q = 1$. Suppose that $\kkkk\in\Cont_\bdd(\mms\times\mms)$ with $\kkkk\leq \kk$ on $\mms\times\mms$. Then for every $q' \in (q,\infty)$, there exist $t_* > 0$ and $\delta > 0$ such that
\begin{equation*}
\Gamma(\ChHeat_t f)^{q'/2} \leq \Schr_t^{q'(\kkk-\varepsilon)}\big(\Gamma(f)^{q'/2}\big)\quad\meas\text{-a.e. on~}B_\delta(z)
\end{equation*}
for every $t\in [0,t_*]$ and all bounded Lipschitz functions $f$ on $\mms$.
\end{Prop}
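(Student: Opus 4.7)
The plan is to adapt Kuwada's duality argument between transport and gradient estimates from \cite[Proposition 3.1]{kuwada2010} to the present variable-curvature, localized setting. Globally, $\PTE_p(\k)$ controls the perturbed $p$-transport cost $W_p^\kk$; localizing around $z$ where $\kkkk$ is essentially constant (say $\ge L := \kkk(z) - \varepsilon/4$ on a slightly larger ball) converts this into an effective contraction of the ordinary Wasserstein distance between heat-evolved Diracs, to which a Kantorovich--Hopf--Lax duality applies to produce the sought local $q'$-gradient bound.

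For the first step, fix $\tilde q \in (q, q')$ with dual $\tilde p \in (1, p)$. By Corollary \ref{Nestedness TE_p} together with Theorem \ref{thm:nonincreasing}, $W_{\tilde p}^\kk(\delta_x, \delta_y, t) \le \met(x, y)$ for every $x, y$ and $t \ge 0$. Using continuity of $\kkkk$ together with $\kkk(x) = \kkkk(x, x)$, pick $\delta_0 > 0$ so that $\kkkk \ge L$ on $B_{3\delta_0}(z) \times B_{3\delta_0}(z)$ and $|\kkk - \kkk(z)| \le \varepsilon/8$ on $B_{3\delta_0}(z)$, and set $\delta := \delta_0/2$. For $x, y \in B_\delta(z)$, let $(\B^1, \B^2)$ attain the infimum in $W_{\tilde p}^\kk(\delta_x, \delta_y, t)$ via Lemma \ref{min att}; its time-$2t$ joint law couples $\WHeat_t\delta_x$ and $\WHeat_t\delta_y$. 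Denote by $E_t$ the event that both trajectories stay in $B_{3\delta_0}(z)$ throughout $[0, 2t]$. Lemma \ref{tail estimate} applied to each marginal, strengthened via a reflection/stopping-time argument exploiting continuity of sample paths, gives $\PP[E_t^{\mathrm c}] \le t^a$ for any $a > 0$ and sufficiently small $t$. Since the Feynman--Kac weight on $E_t$ is at least $e^{\tilde p L t}$, splitting the defining expectation on $E_t$ versus $E_t^{\mathrm c}$ and bounding the remainder via Hölder against a slightly higher moment (uniformly controlled for $x, y$ in a bounded set and small $t$ by Gaussian heat kernel estimates) yields
\begin{equation*}
W_{\tilde p}^{\tilde p}(\WHeat_t\delta_x, \WHeat_t\delta_y) \le e^{-\tilde p L t}\, \met^{\tilde p}(x, y) + t^a.
\end{equation*}

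For the second step, combine this with the Hopf--Lax duality \eqref{Eq:Kantorovich} for the $\tilde p$-Hopf--Lax semigroup $Q_s$: $\ChHeat_t(Q_1 g)(y) - \ChHeat_t g(x) \le \frac{1}{\tilde p}W_{\tilde p}^{\tilde p}(\WHeat_t\delta_x, \WHeat_t\delta_y)$ for any $g \in \Lip_\bdd(\mms)$. A Kuwada-style scaling $g = \lambda f$ with $\lambda \downarrow 0$, followed by letting $y \to x$ along geodesics and Hölder-raising with exponent $q'/\tilde q > 1$, produces the local bound $|\Diff\ChHeat_t f|^{q'}(x) \le e^{-q' L t}\ChHeat_t(|\Diff f|^{q'})(x) + O(t^a)$ pointwise $\meas$-a.e.~on $B_\delta(z)$. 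To reinstate the variable potential, note that since $\kkk - \varepsilon \le L - 5\varepsilon/8 < L$ on $B_{3\delta_0}(z)$, splitting once more on $E_t$ versus $E_t^{\mathrm c}$ gives $\Schr_t^{q'(\kkk - \varepsilon)}(|\Diff f|^{q'})(x) \ge e^{-q'(L - 5\varepsilon/8) t}(\ChHeat_t(|\Diff f|^{q'})(x) - t^a)$; the linear-in-$t$ improvement $e^{-q'(L - 5\varepsilon/8) t} > e^{-q' L t}$ from the $\varepsilon$-slack strictly dominates the polynomial tail correction for $t$ small enough, yielding the claimed inequality.

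The hard part is the uniform-in-time tail control underpinning Step 1: Lemma \ref{tail estimate} as stated is pointwise in $r$, whereas our localization requires controlling the probability that each Brownian trajectory exits $B_{3\delta_0}(z)$ at some moment in $[0, 2t]$. Promoting the pointwise bound to a supremum one calls for a reflection or strong Markov argument that exploits continuity of sample paths and conservativity of Brownian motion in $\RCD(K, \infty)$. A secondary subtlety is the orchestration of the three small parameters---the Kuwada scaling $\lambda$, the Hölder exponent $q'/\tilde q$, and the curvature slack $\varepsilon$---so that the polynomial-in-$t$ tail errors from Step 1 are absorbed by the linear-in-$t$ improvement of rate from replacing $L$ by $\kkk - \varepsilon$ in the final Feynman--Kac expression.
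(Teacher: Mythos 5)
Your proposal departs significantly from the paper's proof, and the route you outline contains a genuine gap that the paper's argument is carefully designed to avoid.

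The paper does not pass through an ordinary Wasserstein contraction at constant rate or through the Hopf--Lax semigroup. Instead, it directly bounds the weak upper gradient via the difference quotient
\begin{equation*}
\vert\Diff\ChHeat_{t/2}f\vert(x) \leq \lim_{\varrho\downarrow 0} \sup_{y\in B_\varrho(x)} \EE_{x,y}\bigg[\frac{\vert f(\B_t^1) - f(\B_t^2)\vert}{\met(\B_t^1,\B_t^2)}\cdot\frac{\met(\B_t^1,\B_t^2)}{\met(x,y)}\bigg],
\end{equation*}
splitting the expectation on $V_{\varrho,t} := \{\met(\B_t^1,\B_t^2)\geq\varrho^{1/2q}\}$ (an event concerning the \emph{terminal separation} of the two coupled motions, not their excursion out of a ball). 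The contribution of $V_{\varrho,t}$ is killed using $\PTE_p(\k)$ and a power of $\varrho$; on $V_{\varrho,t}^{\mathrm{c}}$, one factors via Hölder and uses Lemma~\ref{Le:Infinitesimal estimate} (twice) to exchange potentials that only coincide locally. Lemma~\ref{Le:Infinitesimal estimate} is the linchpin, and it is built \emph{only} on the fixed-time tail estimate of Lemma~\ref{tail estimate}: its proof bounds the exponential difference by $M\int_0^t\One_{\{\B_s\notin B_\delta(z)\}}\d s$ and then integrates $\PP_x[\B_s\notin B_\delta(z)]^{1/q}\leq s$, which is small after time-integration. No running-supremum control is ever invoked.

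Your Step 1 and the final re-insertion step both hinge on $\PP[E_t^{\mathrm{c}}]\leq t^a$ where $E_t$ is the event that the trajectories stay in $B_{3\delta_0}(z)$ throughout $[0,2t]$. You correctly flag this as the hard part, but the promised ``reflection/strong Markov'' upgrade is not carried out, and it is not innocuous: Lemma~\ref{tail estimate}'s threshold $t_*$ depends on the center $z$, and an $\RCD(K,\infty)$ space need not be proper, so obtaining the requisite uniformity over the (possibly noncompact) sphere $\partial B_{3\delta_0}(z)$ is nontrivial. The paper's formulation circumvents this by never needing a supremum-in-time bound. Separately, your Step 1 introduces an \emph{additive} error term $t^a$ into a multiplicative contraction estimate; absorbing such an error into the final Feynman--Kac bound $\Gamma(\ChHeat_t f)^{q'/2}\leq\Schr_t^{q'(\kkk-\varepsilon)}(\Gamma(f)^{q'/2})$ is problematic when $\Gamma(f)$ is small near $x$, since then the right-hand side may itself be of order $t^a$ or smaller — the $\varepsilon$-slack gives a multiplicative, not additive, margin. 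The paper's argument has no additive remainders precisely because the exceptional event $V_{\varrho,t}$ is chosen so its contribution vanishes as $\varrho\downarrow 0$ at \emph{fixed} $t$, rather than as $t\downarrow 0$.
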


\begin{proof} Fix $T>0$. Given $\varepsilon > 0$, choose $\delta > 0$ and $L_z\in \R$ such that $L_z \leq \kkkk(x,y) \leq L_z + \varepsilon/3$ for every $x,y\in B_{3\delta}(z)$. Given $t\in [0,T]$, $x\in B_\delta(z)$ and $y \in B_\varrho(z)$ with $\varrho\leq \delta$, select a pair $\smash{\big(\PP_{x,y},\B^1\big)}$ and $\smash{\big(\PP_{x,y},\B^2\big)}$ of coupled Brownian motions starting in $(x,y)$ which attains the minimum in the definition of $\smash{W_p^\kk(\delta_x,\delta_y,t/2)} \leq \met(x,y)$. The choice of this pair does depend on $x$, $y$ and $t$, but these dependencies are suppressed in the notation. Similarly to the proof of Theorem \ref{propn:transport2gradient 1}, for every $f\in\Lip_\bdd(\mms)$, we have
\begin{equation*}
\vert\Diff\ChHeat_{t/2}f\vert(x) \leq \lim_{\varrho\downarrow 0} \sup_{y\in B_{\varrho}(x)} \EE_{x,y}\bigg[\frac{\vert f(\B_t^1) - f(\B_t^2)\vert}{\met(\B_t^1,\B_t^2)} \, \frac{\met(\B_t^1,\B_t^2)}{\met(x,y)} \, \big(\One_{V_{\varrho,t}} + \One_{V_{\varrho,t}^\mathrm{c}}\big)\bigg]
\end{equation*}
where $\smash{V_{\varrho,t} := \big\lbrace \!\met\!\big(\B_t^1,\B_t^2\big) \geq \varrho^{1/2q}\big\rbrace}$. The contribution of $V_{\varrho,t}$ vanishes as $\varrho\downarrow 0$ due to
\begin{align*}
&\lim_{\varrho\downarrow 0}\sup_{y\in B_{\varrho}(x)} \EE_{x,y}\bigg[\frac{\vert f(\B_t^1) - f(\B_t^2)\vert}{\met(x,y)} \,\One_{V_{\varrho,t}}\bigg]\\
&\qquad\qquad \leq \Lip(f)\,\expo^{Ct}\, \lim_{\varrho\downarrow 0} \varrho^{(1-p)/2q}\, \sup_{y\in B_{\varrho}(x)} \frac{1}{\met(x,y)}\, \EE_{x,y}\Big[\expo^{\int_0^t p\kk\left(\B_r^1,\B_r^2\right)/2 \d r}\met^p\!\big(\B_t^1,\B_t^2\big)\Big]\\
&\qquad\qquad \leq\Lip(f)\, \expo^{Ct} \,\lim_{\varrho\downarrow 0} \varrho^{(1-p)/2q}\,\sup_{y\in B_\varrho(x)} \met^{p-1}(x,y) = 0
\end{align*}
for a suitable $C>0$, where we used the assumption  that $\kkkk\leq \kk$ in the first inequality and the $\PTE_p(\k)$ condition in the last inequality.

Next we study the influence coming from $V_{\varrho,t}^\mathrm{c}$.  Choosing some exponents $q''\in (q,q')$ and $p'' \in (1,p')$ dual to each other, using Hölder's inequality, Lemma \ref{Le:Infinitesimal estimate} with $\varepsilon/3$ and $t/2$ in place of $\varepsilon$ and $t$, respectively, and eventually assumption $\PTE_p(\k)$, we obtain for sufficiently small $t$ that
\begin{align*}
&\lim_{\varrho\downarrow 0}\sup_{y\in B_\varrho(x)}\EE_{x,y}\bigg[\frac{\vert f(\B_t^1)-f(\B_t^2)\vert}{\met(\B_t^1,\B_t^2)} \, \frac{\met(\B_t^1,\B_t^2)}{\met(x,y)} \, \One_{V_{\varrho,t}^\mathrm{c}}\bigg]\\
&\qquad\qquad \leq \expo^{-(L_z-\varepsilon/3) t/2} \, \lim_{\varrho\downarrow 0} \sup_{y\in B_\varrho(x)} \EE_{x,y}\bigg[\Big\vert\frac{f(\B_t^1) - f(\B_t^2)}{\met(\B_t^1,\B_t^2)} \Big\vert^{q''} \, \One_{V_{\varrho,t}^\mathrm{c}}\bigg]^{1/q''}\\
&\qquad\qquad\qquad\qquad \cdot \lim_{\varrho\downarrow 0} \sup_{y\in B_\varrho(x)}\EE_{x,y}\bigg[\expo^{p''(L_z-\varepsilon/3)t/2}\, \Big\vert\frac{\met(\B_t^1,\B_t^2)}{\met(x,y)}\Big\vert^{p''}\bigg]^{1/p''}\\
&\qquad\qquad \leq \expo^{-(L_z-\varepsilon/3)t/2}\, \lim_{\varrho\downarrow 0}\EE_x\bigg[\sup_{z\in B_{\varrho^{1/2q}}(\X_t)} \Big\vert \frac{f(\X_t) - f(z)}{\met(\X_t,z)}\Big\vert^{q''}\bigg]^{1/q''}\,\frac{1}{\met(x,y)}\,\W_p^\kk(\delta_x,\delta_y,t)\\
&\qquad\qquad\leq \expo^{-(L_z -\varepsilon/3)t/2}\,\EE_x\big[\vert\Diff f\vert^{q''}\!(\X_t)\big]^{1/q''}.
\end{align*}
Here $(\PP_x,\X)$ is a Brownian motion on $\mms$ starting in $x$ which is chosen independently of $y$. Once again using Lemma \ref{Le:Infinitesimal estimate} as above to estimate the last expression, we finally obtain
\begin{equation*}
\lim_{\varrho\downarrow 0} \sup_{y\in B_\varrho(x)} \EE_{x,y}\bigg[\frac{\vert f(\B_t^1) - f(\B_t^2)\vert}{\met(\B_t^1,\B_t^2)}\, \frac{\met(\B_t^1,\B_t^2)}{\met(x,y)}\,\One_{V_{\varrho,t}^{\mathrm{c}}}\bigg] \leq \Schr_t^{q'(\kkk-\varepsilon)}\big(\vert\Diff f\vert^{q'}\big)^{1/q'}\!(x).\qedhere
\end{equation*}
\end{proof}

\begin{thm}\label{Th: Kuwada TEp to GEq} Given $p,q\in (1,\infty)$ with $1/p+1/q=1$, the $p$-transport estimate $\PTE_p(\k)$ implies the $q$-gradient estimate $\GE_q(\k)$.
\end{thm}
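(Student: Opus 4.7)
The plan is to extract a local $q'$-Bochner inequality from Proposition~\ref{Pr:Local duality}, upgrade it to a global one via Theorem~\ref{Thm:BE local global}, and conclude through the $q$-independence (Theorem~\ref{Th:q independence}) together with the equivalence $\BE_{q}(\k,\infty) \Leftrightarrow \GE_q(\k)$ of Theorem~\ref{Th:BEq equiv GEq}. Fix $q' \in (q,\infty)$, $\varepsilon > 0$, $\kkkk \in \Cont_\bdd(\mms \times \mms)$ with $\kkkk \leq \kk$, and $z \in \mms$; let $\delta > 0$ and $t_* > 0$ be as furnished by Proposition~\ref{Pr:Local duality}, and set $\kkk(x) := \kkkk(x,x)$.

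For $f \in \TestF(\mms)$ and nonnegative $\phi \in \Dom(\Delta) \cap \Ell^\infty(\mms,\meas)$ with $\Delta\phi \in \Ell^\infty(\mms,\meas)$ and $\supp\phi \subset B_\delta(z)$, consider
\[ F(t) := \int_\mms \phi\,\Big[\Schr_t^{q'(\kkk-\varepsilon)}\big(\Gamma(f)^{q'/2}\big) - \Gamma(\ChHeat_t f)^{q'/2}\Big]\d\meas. \]
The support condition on $\phi$ and Proposition~\ref{Pr:Local duality} ensure $F(t) \geq F(0) = 0$ for every $t \in [0,t_*]$, hence $F'(0^+) \geq 0$. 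By the regularity arguments from the proof of Theorem~\ref{Th:BEq equiv GEq} (boundedness of $\Gamma(f)$ and of the potential $q'(\kkk-\varepsilon)$, $\Delta f \in \Sob^{1,2}(\mms)$, strong continuity of both semigroups), $F \in \Cont^1([0,t_*])$, and the self-adjointness of $\Schr_t^{q'(\kkk-\varepsilon)}$ combined with integration by parts yields
\[ F'(0) = -\int_\mms \Gamma\big(\Gamma(f)^{q'/2},\phi\big)\d\meas - q'\!\int_\mms \phi\,\Big((\kkk-\varepsilon)\,\Gamma(f)^{q'/2} + \Gamma(f)^{q'/2-1}\,\Gamma(f,\Delta f)\Big)\d\meas. \]
Dividing by $q'$ and extending to general admissible $\phi$ by the mollified heat flow argument from Lemma~\ref{First order BE}, the nonnegativity of $F'(0)$ produces the local inequality $\BE_{q',\loc}(\kkk-\varepsilon,\infty)$ in the sense of Definition~\ref{Def:Local BE}.

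Applying Theorem~\ref{Thm:BE local global} globalizes this to $\BE_{q'}(\kkk-\varepsilon,\infty)$. Letting $\varepsilon \downarrow 0$ yields $\BE_{q'}(\kkk,\infty)$ by monotone convergence, and approximating $\kk$ from below by the bounded Lipschitz functions of Lemma~\ref{Le:kk approximation}, whose restriction to the diagonal converges monotonically to $\k$, another monotone-convergence step delivers $\BE_{q'}(\k,\infty)$. The $q$-independence (Theorem~\ref{Th:q independence}) promotes this to $\BE_q(\k,\infty)$, and Theorem~\ref{Th:BEq equiv GEq} finally gives $\GE_q(\k)$. The main technical point is the $\Cont^1$-regularity of $F$ and the formula for $F'(0)$, but this is precisely the Bochner--gradient duality computation already carried out in the proof of Theorem~\ref{Th:BEq equiv GEq}, and the boundedness of $q'(\kkk-\varepsilon)$ ensures that all required integrability and semigroup-continuity statements transfer verbatim.
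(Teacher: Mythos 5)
Your proposal tracks the paper's strategy closely: build $F$ from Proposition~\ref{Pr:Local duality}, deduce $F'(0^+)\geq 0$ to obtain a local Bochner inequality at an auxiliary exponent $q'>q$, globalize via Theorem~\ref{Thm:BE local global}, and convert to $\GE_q(\k)$ via Theorem~\ref{Th:BEq equiv GEq}. The one structural deviation is how you return from $q'$ to $q$: you globalize at level $q'$ and invoke the full $q$-independence Theorem~\ref{Th:q independence}, whereas the paper sends $q'\downarrow q$ at the local level when $q\geq 2$ and, when $q\in[1,2)$, fixes $q'=2$ and uses only the downward self-improvement $\BE_2(\k,\infty)\Rightarrow\BE_q(\k,\infty)$ of Proposition~\ref{Th:BE self-improvement}. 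Both routes are valid; yours avoids the case split at the cost of leaning on the heavier $q$-independence theorem.

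There is, however, a genuine gap in the test-function step. You establish the local inequality for $\phi\in\Dom(\Delta)\cap\Ell^\infty(\mms,\meas)$ with $\Delta\phi\in\Ell^\infty(\mms,\meas)$ and $\supp\phi\subset B_\delta(z)$, then propose to pass to the class required by Definition~\ref{Def:Local BE} via the mollified heat flow $\mathfrak{P}_\varepsilon$ of Lemma~\ref{First order BE}. But $\mathfrak{P}_\varepsilon\phi$ does not preserve compact support: its support spreads over all of $\mms$, so the inequality you have, which is valid only for test functions supported in $B_\delta(z)$, cannot be applied to the mollifications. The simplest repair is to drop the restriction to $\Dom(\Delta)$ altogether: since $\Gamma(f)^{q'/2}\in\Sob^{1,2}(\mms)$ for $f\in\TestF(\mms)$, the map $F$ is already $\Cont^1$ and the formula you wrote for $F'(0)$ -- stated in the $\Gamma\big(\Gamma(f)^{q'/2},\phi\big)$ form rather than with $\Delta\phi$ -- holds directly for every nonnegative $\phi\in\Sob^{1,2}(\mms)\cap\Ell^\infty(\mms,\meas)$ with $\supp\phi\subset B_\delta(z)$, which is exactly the class Definition~\ref{Def:Local BE} demands and is how the paper proceeds.
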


\begin{proof} Let $\kkkk$ be as in Proposition \ref{Pr:Local duality} and put $\kkk(x) := \kkkk(x,x)$ for $x\in\mms$. First, we assume that $q\in[2,\infty)$. Given $\varepsilon > 0$, $z\in \mms$, $t_* > 0$, $q'\in (q,\infty)$ and the associated time $t_* > 0$ from in Proposition \ref{Pr:Local duality}, arguing as in the proof of Theorem \ref{Th:BEq equiv GEq}, the function $F\colon [0,t_*]\to\R$ defined by
\begin{equation*}
F(t) := \int_{\mms} \Big(\Schr_t^{q'(\kkk-\varepsilon)}\big(\Gamma(f)^{q'/2}\big) - \Gamma(\ChHeat_tf)^{q'/2}\Big)\phi\d \meas
\end{equation*}
belongs to $\Cont^1([0,t_*])$ for every $f\in\TestF(\mms)$ and all nonnegative functions $\phi \in \Sob^{1,2}(\mms)\cap\Ell^\infty(\mms,\meas)$ supported in $B_\delta(z)$. The function $F$ itself and its derivative at $0$ are nonnegative by Proposition \ref{Pr:Local duality}. The latter translates into
\begin{equation*}
-\int_\mms \Big(\frac{1}{q'}\Gamma\big(\Gamma(f)^{q'/2},\phi\big) + \Gamma(f)^{q'/2}\,\Gamma(f,\Delta f)\,\phi\Big)\d\meas \geq \int_\mms (\kkk-\varepsilon)\, \Gamma(f)^{q'/2}\,\phi\d\meas.
\end{equation*}
Approximating $\k$ from below by the sequence $\k_n\in\Lip_\bdd(\mms)$ of functions $\k_n(x) := \kk_n(x,x)$ for $x\in\mms$, or in other words, replacing $\kkkk$ by $\kk_n$ for every $n\in\N$, where $\kk_n$ tends to $\kk$ from below as provided by Lemma \ref{Le:kk approximation}, and letting $q' \downarrow q$ and $\varepsilon \downarrow 0$, we obtain precisely the local $q$-Bakry--Émery inequality $\BE_{q,\loc}(\k,\infty)$ according to Definition \ref{Def:Local BE}. Since the latter implies $\BE_q(\k,\infty)$ by Theorem \ref{Thm:BE local global}, the equivalence with $\GE_q(\k)$ finishes the proof in the case $q\in[2,\infty)$.

If $q\in [1,2)$, choosing $q' := 2$ in Proposition \ref{Pr:Local duality} and arguing as above, we obtain $\BE_2(\k,\infty)$, which in turn implies $\BE_q(\k,\infty)$.
\end{proof}

\section{A pathwise coupling estimate}\label{Ch:Pathwise coupling}
This section is dedicated to the proof of the existence of a pair $\smash{\big(\PP,\B^1\big)}$ and $\smash{\big(\PP,\B^2\big)}$ of coupled Brownian motions with arbitrary initial distributions, under a slightly stronger assumption than $\PTE_p(\k)$ for large enough $p$, such that $\PP$-a.s.,
\begin{equation*}
\met\!\big(\B_{t}^1,\B_{t}^2\big) \leq \expo^{-\int_s^t \kkkkk\left(\B_r^1,\B_r^2\right)/2\d r} \met\!\big(\B_s^1, \B_s^2\big)\quad\text{for every }s,t\in [0,\infty)\text{ with }s \leq t,
\end{equation*}
where $\kkkkk$ is defined as in \eqref{kkkkk def}. It is necessary to adapt the arguments from \cite[Section 2]{sturm2015} in a nontrivial way, since this pathwise estimate requires control of the entire path of $\smash{\big(\B^1,\B^2\big)}$ on the interval $[s,t]$ and not just at the endpoints.

\begin{thm}\label{pte to pcp} Suppose that, for all large enough $p\in (1,\infty)$, the map $\smash{t\mapsto \W_p^\kkkkk(\delta_x,\delta_y,t)}$ is nonincreasing on $[0,\infty)$ for every $x,y\in\mms$. Then for every $\mu_1,\mu_2\in\Prob(\mms)$ there exists a pair
$\smash{\big(\PP,\B^1)}$ and $\smash{\big(\PP,\B^2\big)}$ of coupled Brownian motions on $\mms$ with initial distributions $\mu_1$ and $\mu_2$, respectively, such that $\PP$-a.s., we have
\begin{equation*}
\met\!\big(\B_t^1,\B_t^2\big) \leq \expo^{-\int_s^t\kkkkk\left(\B_r^1,\B_r^2\right)/2\d r}\met\!\big(\B_s^1,\B_s^2\big)\quad\text{for every }s,t\in [0,\infty)\text{ with }s\leq t.
\end{equation*}
In particular, the pathwise coupling property $\PCP(\k)$ holds.
\end{thm}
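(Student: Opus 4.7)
The plan is to approximate the required pair of coupled Brownian motions by a sequence of processes constructed on dyadic time grids $\{k 2^{-n}\}_{k\geq 0}$ and then pass to a weak limit. The overall scheme parallels \cite[Theorem~2.9]{sturm2015}, but the essential novelty is to convert the $L^p$ Feynman--Kac moment bound supplied by the hypothesis into an a.s.~pathwise inequality by letting $p\to\infty$. The main obstacle is precisely this passage from an expectation estimate to an almost sure estimate while simultaneously keeping a measurable selection of the resulting limit kernel that supports a Markov-type iteration.

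For the short-time building block, fix $h>0$ and $(x,y)\in\mms\times\mms$ and, for each large $p$, use Lemma~\ref{min att} to pick a minimizer $\boldsymbol{\pi}^{p,h}_{x,y}\in\Prob(\PS{h})$ of the cost defining $\W_p^{\kkkkk}(\delta_x,\delta_y,h/2)$. Setting
\begin{equation*}
\Phi(\gamma) := \expo^{\int_0^h\kkkkk(\gamma_r^1,\gamma_r^2)/2\d r}\met(\gamma_h^1,\gamma_h^2),
\end{equation*}
the hypothesis yields $\int\Phi^p\d\boldsymbol{\pi}^{p,h}_{x,y}\leq \met^p(x,y)$, so Chebyshev's inequality gives $\boldsymbol{\pi}^{p,h}_{x,y}(\Phi>(1+\varepsilon)\met(x,y))\leq(1+\varepsilon)^{-p}$. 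The marginals of $\boldsymbol{\pi}^{p,h}_{x,y}$ being laws of Brownian motions on $[0,h]$, the family is tight, so I may extract a weak limit $\boldsymbol{\pi}^{\infty,h}_{x,y}$ as $p\to\infty$. Since $\kkkkk$ is lower semicontinuous, $\Phi$ is lower semicontinuous on $\PS{h}$, so $\{\Phi\leq(1+\varepsilon)\met(x,y)\}$ is closed, and Portmanteau together with letting $\varepsilon\downarrow 0$ yields $\boldsymbol{\pi}^{\infty,h}_{x,y}(\Phi\leq \met(x,y))=1$.

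Next, I would invoke the measurable selection argument from Lemma~\ref{kernel} to arrange $(x,y)\mapsto \boldsymbol{\pi}^{\infty,h}_{x,y}$ to be universally measurable. Concatenating this kernel repeatedly via the operation $\boldsymbol{\mu}\circ\boldsymbol{\nu}$ introduced before Proposition~\ref{markov}, starting from any coupling of $(\mu_1,\mu_2)$, produces a probability measure $\smash{\widehat{\PP}^n}$ on $\PS{\infty}$ with $h:=2^{-n}$ under which the coordinate process $\smash{(\B^{1,n},\B^{2,n})}$ has Brownian motion marginals; the preservation of the marginal Brownian motion laws under this bridge-gluing is precisely the content of Lemma~\ref{Le:Constant perturbed cost}. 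Telescoping the one-step bound over successive dyadic intervals, $\smash{\widehat{\PP}^n}$-a.s.,
\begin{equation*}
\met\!\big(\B_{jh}^{1,n},\B_{jh}^{2,n}\big) \leq \expo^{-\int_{ih}^{jh}\kkkkk\left(\B_{r}^{1,n},\B_{r}^{2,n}\right)/2\d r}\met\!\big(\B_{ih}^{1,n},\B_{ih}^{2,n}\big) \quad\text{for every } 0\leq i\leq j.
\end{equation*}

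Finally, take a subsequential weak limit $\PP$ of $(\smash{\widehat{\PP}^n})_n$ as $n\to\infty$. Tightness on $\PS{\infty}$ with the locally uniform topology follows from tightness of each marginal, which is the law of a Brownian motion with fixed initial distribution on $\mms$. The limit marginals remain Brownian laws, so $(\PP,\B^1)$ and $(\PP,\B^2)$ form the required coupled pair. For any pair of dyadic times $(s,t)$ of some finite level, the set of paths in $\PS{\infty}$ satisfying the pathwise inequality between $s$ and $t$ is closed by lower semicontinuity of $\kkkkk$ and continuity of $\met$, so Portmanteau preserves it in the limit; intersecting over the countably many dyadic pairs yields the estimate simultaneously at all dyadic $s\leq t$, $\PP$-a.s. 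For general $s\leq t$, approximate by dyadic $s_k\searrow s$ and $t_k\nearrow t$ with $s_k\leq t_k$ and combine continuity of $r\mapsto \met(\B_r^1,\B_r^2)$ with monotone convergence for $\int_{s_k}^{t_k}(\kkkkk-K)/2\d r\nearrow \int_s^t(\kkkkk-K)/2\d r$, using $\kkkkk\geq K$, to extend the inequality. The assertion $\PCP(\k)$ then follows at once from the pointwise bound $\kk\leq\kkkkk$ on $\mms\times\mms$.
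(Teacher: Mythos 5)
Your argument is correct and follows essentially the same three-stage scheme as the paper: (1) build a one-step kernel by sending $p\to\infty$ through optimizers of the perturbed cost, (2) concatenate on dyadic grids of step $2^{-n}$ and send $n\to\infty$, (3) upgrade the a.s.~bounds at dyadic times to all $s\leq t$ by continuity. The one genuine technical variation is in stage (1): where the paper fixes a test exponent $p$, bounds the $p$-moment of $\expo^{\int \kkkkkk/2}\met$ under a weak limit of the $p_n$-optimizers via weak lower semicontinuity and Hölder's inequality, and then lets $p\to\infty$, you instead apply Chebyshev's inequality directly to the $p$-optimizer to get $\boldsymbol{\pi}^{p,h}_{x,y}(\Phi>(1+\varepsilon)\met(x,y))\leq(1+\varepsilon)^{-p}$ and conclude via Portmanteau on the closed sublevel set $\{\Phi\leq(1+\varepsilon)\met(x,y)\}$. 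Both deliver the same a.s.~endpoint estimate; the Chebyshev route is arguably cleaner since it bypasses the auxiliary test exponent. Two small inaccuracies worth flagging: the preservation of Brownian marginal laws under the concatenation $\boldsymbol{\mu}\circ\boldsymbol{\nu}$ is not the content of Lemma~\ref{Le:Constant perturbed cost} (which computes the perturbed cost for constant $\kkkk$) but follows from the Markov property implicit in the definition of $\circ$ preceding Proposition~\ref{markov}; and in the final extension to non-dyadic $s\leq t$ you should, as the paper does, first pass to Lipschitz lower approximants $\kkkkk_n$ of $\kkkkk$ (Lemma~\ref{Le:kk approximation}) so that $\int_s^t\kkkkk_n\d r$ is genuinely continuous in $(s,t)$, extend to all $s\leq t$, and only then send $n\to\infty$ by monotone convergence — your ``monotone convergence of $\int_{s_k}^{t_k}(\kkkkk-K)\d r$'' phrasing glosses over the fact that $r\mapsto\kkkkk(\B^1_r,\B^2_r)$ need not be upper semicontinuous, though the argument can be repaired as indicated.
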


The assumption of Theorem \ref{pte to pcp} above is satisfied if $\GE_1(\k)$ holds by Remark \ref{sup improvement}, and it implies $\PTE_p(\k)$ for all large enough $p\in (1,\infty)$ by the discussion from Theorem \ref{thm:nonincreasing} and Corollary \ref{Cor:TE ==> WDTE}. By nestedness of $p$-transport estimates from Corollary \ref{Nestedness TE_p}, we may suppose without restriction that the assumption of Theorem \ref{pte to pcp} holds for every $p\in (1,\infty)$.

The proof of this theorem will be subdivided into multiple steps. Firstly, we construct a coupled process starting in $\delta_x\otimes\delta_y$, $x,y\in\mms$, satisfying the desired pathwise contraction estimate on the interval $[0,1]$. Secondly, a gluing procedure will let us extend the process to  $[0,\infty)$. Finally, we use a coupling technique to allow for arbitrary initial distributions.
 
\subsection[Deterministic initial distributions and time interval {$[0,1]$}]{Deterministic initial distributions and time interval {\boldmath{$[0,1]$}}}

\begin{Prop}\label{Pr:Law at t} For every $t \geq 0$, there exists a universally measurable map
\begin{equation*}
\boldsymbol{\mu}^t\colon\quad\mms\times\mms\quad\longrightarrow\quad \Prob(\PS{t})
\end{equation*}
such that for every $x,y \in X$, the marginals of $\smash{\boldsymbol{\mu}_{x,y}^t := \boldsymbol{\mu}^t(x,y)}$ are laws of Brownian motions, restricted to $[0,t]$, starting in $x$ and $y$, respectively, and
\begin{equation*}
\met\!\big(\gamma_t^1,\gamma_t^2\big) \leq \expo^{-\int_0^t \kkkkk\left(\gamma_r^1,\gamma_r^2\right)/2\d r}\met(x,y)\quad\text{for }\boldsymbol{\mu}_{x,y}^t\text{-a.e.~}\gamma\in\PS{t}.
\end{equation*}
\end{Prop}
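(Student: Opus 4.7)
The plan is to construct $\boldsymbol{\mu}_{x,y}^t$ for fixed $(x,y)$ as a weak accumulation point, as $p\to\infty$, of minimizing couplings for $\W_p^{\kkkkk}(\delta_x,\delta_y,t/2)$, and then to invoke a Jankov--von Neumann-type measurable selection to obtain joint universal measurability in $(x,y)$.

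More precisely, for fixed $x,y\in\mms$ and $t\ge 0$, Lemma \ref{min att} delivers, for every sufficiently large $p\in(1,\infty)$, a measure $\boldsymbol{\nu}_p\in\Prob(\PS{t})$ attaining the infimum in the definition of $\W_p^{\kkkkk}(\delta_x,\delta_y,t/2)$, whose marginals are the laws on $\Cont([0,t];\mms)$ of Brownian motions starting in $x$ and $y$, respectively. Writing
\[
F(\gamma) := \expo^{\int_0^t \kkkkk(\gamma_r^1,\gamma_r^2)/2\d r}\,\met\!\big(\gamma_t^1,\gamma_t^2\big),
\]
the standing nonincreasingness hypothesis combined with $\W_p^{\kkkkk}(\delta_x,\delta_y,0)=\met(x,y)$ yields $\|F\|_{\Ell^p(\boldsymbol{\nu}_p)}\le \met(x,y)$.

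Since the marginals of the $\boldsymbol{\nu}_p$ are the same fixed measures for all $p$, the family $(\boldsymbol{\nu}_p)_p$ is tight on $\Prob(\PS{t})$, and I would extract a weak accumulation point $\boldsymbol{\mu}\in\Prob(\PS{t})$; weak convergence preserves the marginals, so $\boldsymbol{\mu}$ is still a coupling of the two prescribed Brownian laws. To promote the $\Ell^p$-bounds to a $\boldsymbol{\mu}$-a.s.~estimate, I would use lower semicontinuity of $\kkkkk$ together with its lower boundedness to conclude, via Fatou's lemma, that $\gamma\mapsto \int_0^t \kkkkk(\gamma_r^1,\gamma_r^2)\d r$ is lower semicontinuous on $\PS{t}$ with the uniform topology; combined with the continuity of $\gamma\mapsto\met(\gamma_t^1,\gamma_t^2)$, this makes $F$ lower semicontinuous. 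For every $\varepsilon>0$, the Chebyshev-type estimate
\[
\boldsymbol{\nu}_p\big[\{F>(1+\varepsilon)\met(x,y)\}\big]\le (1+\varepsilon)^{-p}
\]
combined with the Portmanteau theorem applied to the open set $\{F>(1+\varepsilon)\met(x,y)\}$ gives $\boldsymbol{\mu}\big[\{F>(1+\varepsilon)\met(x,y)\}\big]=0$. Sending $\varepsilon\downarrow 0$ yields $F\le \met(x,y)$ $\boldsymbol{\mu}$-a.s., which is precisely the desired pathwise contraction.

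Finally, for universal measurability in $(x,y)$, the set $K(x,y)\subset \Prob(\PS{t})$ of probability measures with the two prescribed Brownian marginals and satisfying the a.s.~contraction bound is closed (the marginal constraint is closed under weak convergence, and the a.s.~estimate is preserved by the Portmanteau step above), nonempty by the construction just described, and varies measurably with $(x,y)$ through the Brownian laws and the lower semicontinuous integrand $F$. A Jankov--von Neumann selection, exactly as in the proof of Lemma \ref{kernel}, then produces the universally measurable map $(x,y)\mapsto\boldsymbol{\mu}_{x,y}^t$. The main obstacle in this whole scheme is the passage from an $\Ell^p$-bound on $\boldsymbol{\nu}_p$ to a $\boldsymbol{\mu}$-a.s.~bound under a different (weak-limit) measure; lower semicontinuity of $\kkkkk$ is precisely what makes this passage work, and it explains why $\kkkkk$ rather than $\kk$ is the natural object appearing in the hypothesis of Theorem \ref{pte to pcp}.
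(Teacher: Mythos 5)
Your proposal is correct and follows the same overall strategy as the paper: extract a weak accumulation point of $p_n$-minimizers of $\W_{p_n}^{\kkkkk}(\delta_x,\delta_y,t/2)$ as $p_n\to\infty$, carry the contraction estimate over to the limit, and invoke the measurable-selection argument from Lemma~\ref{kernel} for universal measurability. The one place where you diverge technically is the step passing the bound to the limit measure: the paper first fixes a bounded continuous $\kkkkkk\le\kkkkk$, uses weak lower semicontinuity of the $\kkkkkk$-integral functional plus Hölder's inequality to get a uniform $\Ell^p(\boldsymbol{\eta}^t_{x,y})$-bound, sends $p\to\infty$ to reach an $\Ell^\infty$-bound with $\kkkkkk$, and only then approximates $\kkkkkk\uparrow\kkkkk$ via Lemma~\ref{Le:kk approximation}; you instead establish directly that the functional $F$ built from $\kkkkk$ itself is lower semicontinuous on $\PS{t}$ (Fatou plus lower boundedness for the integral, then the product with the continuous distance term), deduce Markov-type tail bounds $\boldsymbol{\nu}_p[\{F>(1+\varepsilon)\met(x,y)\}]\le(1+\varepsilon)^{-p}$ from the standing hypothesis, and invoke Portmanteau on the open superlevel sets. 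Your version bypasses the Lipschitz approximation of $\kkkkk$ entirely, and the Chebyshev/Portmanteau combination is a slightly more streamlined way to convert the vanishing $\Ell^p$-quantities into an a.s.\ bound; the paper's $\Ell^p\to\Ell^\infty$ route is in substance the same estimate, just phrased in functional-analytic rather than probabilistic language, and at the cost of carrying the $\kkkkkk$-approximation through an extra monotone-convergence step. Both are valid, and both rely crucially on the lower semicontinuity of $\kkkkk$ to make the limit passage work.
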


\begin{proof} Given $x,y \in \mms$ and an increasing sequence $(p_n)_{n\in\N}$ tending to $\infty$, denote by $\smash{\boldsymbol{\eta}_{x,y}^{t,n}} \in \Prob(\PS{t})$ the measure obtained by Lemma \ref{kernel} for the exponent $p_n$, $\kkkk$ replaced by $\kkkkk$, and time $t/2$ in place of $t$. As for Lemma \ref{min att}, we see that the sequence $\smash{(\boldsymbol{\eta}_{x,y}^{t,n})_{n\in\N}}$ is tight. Hence it converges weakly to some $\smash{\boldsymbol{\eta}_{x,y}^t} \in \Prob(\PS{t})$ along a subsequence which we do not relabel.

Let $p\in (1,\infty)$ arbitrary, and fix $\kkkkkk\in\Cont_\bdd(\mms\times\mms)$ with $\kkkkkk\leq\kkkkk$ on $\mms\times\mms$. Then by Hölder's inequality and the nonincreasingness of $\smash{t\mapsto\W_{p_n}^\kkkkk(\delta_x,\delta_y,t)}$ for large enough $n$, we obtain
\begin{align*}
&\Big(\int_{\PS{t}}\expo^{\int_0^{t} p\kkkkkk\left(\gamma_{r}^1,\gamma_{r}^2\right)/2\d r}\met^p\!\big(\gamma_{t}^1,\gamma_{t}^2\big)\d\boldsymbol{\eta}_{x,y}^t(\gamma)\Big)^{1/p}\\
&\qquad\qquad \leq \liminf_{n\to\infty}\Big(\int_{\PS{t}} \expo^{\int_0^tp\kkkkkk\left(\gamma_{r}^1,\gamma_{r}^2\right)/2\d r}\met^p\!\big(\gamma_{t}^1,\gamma_{t}^2\big)\d\boldsymbol{\eta}_{x,y}^{t,n}(\gamma)\Big)^{1/p}\\
&\qquad\qquad \leq \limsup_{n\to\infty}\Big(\int_{\PS{t}} \expo^{\int_0^t p_n\kkkkk\left(\gamma_{r}^1,\gamma_{r}^2\right)/2\d r}\met^{p_n}\!\big(\gamma_{t}^1,\gamma_{t}^2\big)\d\boldsymbol{\eta}_{x,y}^{t,n}(\gamma)\Big)^{1/p_n} \leq \met(x,y).
\end{align*}
Sending $p\to \infty$ and then approximating $\kkkkk$ from below by means of Lemma \ref{Le:kk approximation} gives 
\begin{equation*}
\met\!\big(\gamma_{t}^1,\gamma_{t}^2\big) \leq \expo^{-\int_0^t \kkkkk\left(\gamma_{r}^1\gamma_{r}^2\right)/2\d r}\,\met(x,y)\quad\text{for }\boldsymbol{\eta}_{x,y}^t\text{-a.e. }\gamma\in\PS{t}.
\end{equation*}
A measurable selection argument as in the proof of Lemma \ref{kernel} establishes the claim.
\end{proof}

The next goal is to obtain a measure which obeys such pathwise bound at \emph{every} initial and terminal time instance in, say, $[0,1]$. Indeed, this is the point where the main work has to be done.

\begin{thm}\label{Thm:Main construction} There exists a universally measurable map
\begin{equation*}
\boldsymbol{\mu}\colon\quad\mms\times\mms\quad\longrightarrow\quad \Prob(\PS{1})
\end{equation*}
such that for every $x,y \in \mms$, we have that the marginals of $\boldsymbol{\mu}_{x,y} := \boldsymbol{\mu}(x,y)$ are laws of Brownian motions, restricted to $[0,1]$, starting in $x$ and $y$, respectively, and that there exists a $\boldsymbol{\mu}_{x,y}$-negligible Borel set $E \subset \PS{1}$ such that
\begin{equation*}
\met\!\big(\gamma_t^1,\gamma_t^2\big) \leq \expo^{-\int_s^t \kkkkk\left(\gamma_r^1,\gamma_r^2\right)/2\d r}\met\!\big(\gamma_s^1,\gamma_s^2\big)\quad\text{for every }s,t\in [0,1]\text{ with }s\leq t
\end{equation*}
for all $\gamma \in \PS{1} \setminus E$.
\end{thm}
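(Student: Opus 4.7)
The plan is to construct $\boldsymbol{\mu}_{x,y}$ as a weak accumulation point of measures obtained by iteratively gluing the kernels from Proposition~\ref{Pr:Law at t} along the dyadic partition of $[0,1]$, and then to verify the pathwise estimate by a weak-convergence and approximation argument.

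For each $n\in\N$, I would define $\boldsymbol{\mu}^n_{x,y}\in\Prob(\PS{1})$ inductively by means of the composition $\circ$ from Section~\ref{Sec:4.1}: set $\boldsymbol{\nu}_1 := \boldsymbol{\mu}^{1/2^n}_{x,y}\in\Prob(\PS{1/2^n})$ and $\boldsymbol{\nu}_{k+1} := \boldsymbol{\mu}^{1/2^n}\circ\boldsymbol{\nu}_k \in\Prob(\PS{(k+1)/2^n})$, and put $\boldsymbol{\mu}^n_{x,y} := \boldsymbol{\nu}_{2^n}$. The Markov property of Brownian motion together with the marginal property of the kernel $\boldsymbol{\mu}^{1/2^n}$ ensures that the two marginals of $\boldsymbol{\mu}^n_{x,y}$ are the laws of Brownian motions on $[0,1]$ starting in $x$ and $y$, respectively. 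Moreover, applying Proposition~\ref{Pr:Law at t} on each subinterval $[k/2^n,(k+1)/2^n]$ and multiplying the one-step exponential factors (so that the integrals in the exponent add up), we see that for $\boldsymbol{\mu}^n_{x,y}$-a.e.\ $\gamma\in\PS{1}$,
\begin{equation*}
\met\!\big(\gamma_t^1,\gamma_t^2\big) \le \expo^{-\int_s^t\kkkkk\left(\gamma_r^1,\gamma_r^2\right)/2\d r}\,\met\!\big(\gamma_s^1,\gamma_s^2\big)
\end{equation*}
holds for every pair $s,t\in\{k/2^n:k=0,\dots,2^n\}$ with $s\leq t$.

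The marginals of $\boldsymbol{\mu}^n_{x,y}$ being fixed (hence tight), the sequence itself is tight, so along a subsequence $\boldsymbol{\mu}^n_{x,y}\to\boldsymbol{\mu}_{x,y}$ weakly in $\Prob(\PS{1})$, and the limit still has the prescribed Brownian marginals. To transfer the pathwise bound, approximate $\kkkkk$ from below by bounded Lipschitz functions $\kkkkk_j\uparrow\kkkkk$ (by the same construction as in Lemma~\ref{Le:kk approximation}). For each $j\in\N$ and each pair $s\le t$ of dyadic rationals in $[0,1]$ the set
\begin{equation*}
A_{s,t,j}:=\Big\{\gamma\in\PS{1}:\met\!\big(\gamma_t^1,\gamma_t^2\big)\le \expo^{-\int_s^t\kkkkk_j\left(\gamma_r^1,\gamma_r^2\right)/2\d r}\met\!\big(\gamma_s^1,\gamma_s^2\big)\Big\}
\end{equation*}
is closed in the uniform topology on $\PS{1}$, by continuity of evaluation together with continuity of the bounded Lipschitz integrand. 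For all $n$ large enough that $s,t$ are dyadic at scale $2^{-n}$, the previous paragraph gives $\boldsymbol{\mu}^n_{x,y}(A_{s,t,j})=1$, hence $\boldsymbol{\mu}_{x,y}(A_{s,t,j})=1$ by the Portmanteau theorem. Intersecting this countable collection and sending $j\to\infty$ by monotone convergence produces a Borel set of full $\boldsymbol{\mu}_{x,y}$-measure on which the target inequality with $\kkkkk$ holds for all dyadic $s\le t$ simultaneously.

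Finally, I would extend the inequality from dyadic to arbitrary $s,t\in[0,1]$ with $s\le t$ by approximating via dyadics and using the continuity of the path together with a monotone/dominated convergence argument applied to the truncations $\min\{\kkkkk,N\}\uparrow\kkkkk$, with the lower bound $\kkkkk\ge K$ controlling the negative part. The universally measurable selection $(x,y)\mapsto\boldsymbol{\mu}_{x,y}$ is then produced exactly as in Lemma~\ref{kernel} and Proposition~\ref{Pr:Law at t}, applied to the closed subset of $\Prob(\PS{1})$ carved out by the marginal and pathwise conditions. The main obstacle is the third step: aggregating the exceptional null sets across all dyadic pairs and all approximation levels into a single Borel null set; this is precisely what the closedness of the $A_{s,t,j}$ combined with the countability of dyadic pairs and indices $j$ is meant to deliver.
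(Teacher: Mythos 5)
Your proof is correct and follows the same overall scheme as the paper's: build $\boldsymbol{\mu}^n_{x,y}$ by gluing the one-step kernels from Proposition~\ref{Pr:Law at t} along the dyadic partition at scale $2^{-n}$, extract a weak accumulation point $\boldsymbol{\mu}_{x,y}$, transfer the pathwise bound to the limit at countably many time pairs, and then extend to all $s\le t$ by path continuity and monotone truncation of $\kkkkk$. The one genuinely distinct point is the mechanism for passing the bound through the weak limit: the paper integrates the $\boldsymbol{\mu}_{n,x,y}$-a.e.\ inequality against test functions $\phi\in\Cont_\bdd(\PS{1})$, truncates the distance to $\met_C:=\min\{\met,C\}$ for boundedness, lets $n\to\infty$ by weak convergence, and then successively removes $\varepsilon$, $C$, and the restriction to continuous $\phi$, whereas you observe directly that for bounded Lipschitz $\kkkkk_j$ the sets $A_{s,t,j}$ are closed in $\PS{1}$ (continuity of evaluations and of the Feynman--Kac exponential in the uniform topology) and invoke the Portmanteau theorem, then intersect over the countably many $(s,t,j)$. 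This is cleaner and avoids the bookkeeping with $\met_C$ and $\varepsilon$; it buys you nothing extra in generality, but it packages the passage-to-the-limit into a single standard fact. Your remaining steps — approximating arbitrary $s\le t$ by dyadics, using $\min\{\kkkkk,N\}\uparrow\kkkkk$ with the lower bound $\kkkkk\geq K$ to justify monotone convergence, and producing the universally measurable selection as in Lemma~\ref{kernel} — all match the paper.
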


\begin{proof} The strategy relies on patching the laws obtained in the previous proposition together on small dyadic partitions of $[0,1]$. Denote by $\boldsymbol{\mu}^{2^{-n}}$ the map from Proposition \ref{Pr:Law at t} and define
\begin{equation*}
\boldsymbol{\mu}_{n,x,y} := \underbrace{\boldsymbol{\mu}^{2^{-n}}\circ\cdots\circ\boldsymbol{\mu}^{2^{-n}}}_{2^{n-1}\text{ kernels}} \circ\ \!\boldsymbol{\mu}^{2^{-n}}_{x,y}\in\Prob(\PS{1}),
\end{equation*}
that is, at every dyadic partition point of $[0,1]$ at scale $2^{-n}$, we attach a new random curve evolving according to the law obtained in Proposition \ref{Pr:Law at t} to the random endpoint of the previous curve. The marginals of $\boldsymbol{\mu}_{n,x,y}$ are the laws of Brownian motions on $\mms$, restricted to $[0,1]$, starting in $x$ and $y$, respectively. As in the proof of Lemma \ref{min att}, we may exhibit a subsequence, not relabeled in the sequel, weakly converging to some $\boldsymbol{\mu}_{x,y} \in \Prob(\PS{1})$.

The key point lies in proving that for every $s,t\in \Q\cap [0,1]$ with $s\leq t$, there exists a $\boldsymbol{\mu}_{x,y}$-negligible Borel set $E_{s,t} \subset \PS{1}$ such that, for every $\gamma\in\PS{1}\setminus E_{s,t}$,
\begin{equation}\label{Eq:s-t bound}
\met\!\big(\gamma_t^1,\gamma_t^2\big) \leq \expo^{-\int_s^t\kkkkk\left(\gamma_r^1,\gamma_r^2\right)/2\d r}\met\!\big(\gamma_s^1,\gamma_s^2\big).
\end{equation}
By continuity of curves, the desired requirements are then satisfied by the $\boldsymbol{\mu}_{x,y}$-null set
\begin{equation*}
E := \bigcup_{\substack{s,t\in \Q\cap[0,1],\\ s\leq t}} E_{s,t}.
\end{equation*}

Let $\kkkkkk\in\Cont_\bdd(\mms\times\mms)$ as above, i.e.~$\kkkkkk\leq \kkkkk$ on $\mms\times\mms$. Pick $s$ and $t$ as above and notice that the sequences $s_m := 2^{-m}\lfloor 2^ms\rfloor$ and $t_m := 2^{-m}\lfloor 2^mt\rfloor$ tend to $s$ and $t$, respectively, as $m \to \infty$. Fix $m\in\N$ and an arbitrary $n\geq m$. Given any $i \in\{1,\dots,2^n-1\}$, for every path $\widetilde{\gamma} \in \PS{2^{-n}}$ one gets 
\begin{align*}
\met\!\big(\gamma_{2^{-n}}^1,\gamma_{2^{-n}}^2\big) \leq \expo^{-\int_0^{2^{-n}} \kkkkkk\left(\gamma_r^1,\gamma_r^2\right)/2\d r}  \met\!\big(\widetilde{\gamma}_{2^{-n}}^1,\widetilde{\gamma}_{2^{-n}}^2\big)\quad\text{for }\boldsymbol{\mu}^{2^{-n}}_{\widetilde{\gamma}_{2^{-n}}^1,\widetilde{\gamma}_{2^{-n}}^2}\text{-a.e. }\gamma\in\PS{2^{-n}}.
\end{align*}
Observing that the dyadic partition of $[0,1]$ of step size $2^{-n}$ contains the one at scale $2^{-m}$ and then integrating the resulting $\boldsymbol{\mu}_{n,x,y}$-a.e.~valid estimate, truncated at large enough $C>0$, against an arbitrary nonnegative function $\phi\in\Cont_\bdd(\PS{1})$, we obtain
\begin{align*}
\int_{\PS{1}}\phi(\gamma) \met_C\!\big(\gamma_{t_m}^1,\gamma_{t_m}^2\big)\d\boldsymbol{\mu}_{n,x,y}(\gamma) \leq \int_{\PS{1}}\phi(\gamma)\,\expo^{-\int_{2^{-n}\lfloor 2^ns_m \rfloor}^{2^{-n}\lfloor 2^nt_m\rfloor} \kkkkkk\left(\gamma_r^1,\gamma_r^2\right)/2\d r}  \met_C\!\big(\gamma_{s_m}^1,\gamma_{s_m}^2\big)\d\boldsymbol{\mu}_{n,x,y}(\gamma),
\end{align*}
where $\met_C := \min\{\met,C\}$. Since $\kkkkkk$ is bounded, for all $m \in \N$ and every $\varepsilon > 0$, this yields
\begin{align*}
\int_{\PS{1}}\phi(\gamma) \met_C\!\big(\gamma_{t_m}^1,\gamma_{t_m}^2\big)\d\boldsymbol{\mu}_{n,x,y}(\gamma) &\leq \int_{\PS{1}} \phi(\gamma)\, \expo^{-\int_{s_m}^{t_m} \kkkkkk\left(\gamma_r^1,\gamma_r^2\right)/2\d r}  \met_C\!\big(\gamma_{s_m}^1,\gamma_{s_m}^2\big)\d\boldsymbol{\mu}_{n,x,y}(\gamma)\nonumber\\
&\qquad \qquad + \varepsilon \int_{\PS{1}} \phi(\gamma)  \met_C\!\big(\gamma_{s_m}^1,\gamma_{s_m}^2\big)\d\boldsymbol{\mu}_{n,x,y}(\gamma)
\end{align*}
for all large enough $n$. Letting $n\to\infty$, $\varepsilon \downarrow 0$ and then $C\to\infty$ in the previous estimate as well as extending the class of $\phi$ to nonnegative, bounded Borel functions by a routine approximation argument, we get
\begin{equation}\label{PE_m}
\met\!\big(\gamma_{t_m}^1,\gamma_{t_m}^2\big) \leq \expo^{-\int_{s_m}^{t_m}\kkkkkk\left(\gamma_r^1,\gamma_r^2\right)/2 \d r}\met\!\big(\gamma_{s_m}^1,\gamma_{s_m}^2\big)\quad\text{for }\boldsymbol{\mu}_{x,y}\text{-a.e. }\gamma\in\PS{1}.
\end{equation}
Let us now put
\begin{equation*}
\widetilde{E}_{s,t} := \bigcup_{m \in \N}\{\gamma \in \PS{1} : \gamma\text{ does not satisfy }\eqref{PE_m}\}
\end{equation*}
which clearly satisfies $\smash{\boldsymbol{\mu}_{x,y}\big[\widetilde{E}_{s,t}\big]} = 0$, and \eqref{Eq:s-t bound} holds on $\PS{1}\setminus \smash{\widetilde{E}_{s,t}}$ with $\kkkkkk$ in place of $\kkkkk$ by the convergences $s_m\to s$ and $t_m\to t$ as $m\to\infty$. Finally, denoting by $\kkkkk_n\in\Lip_\bdd(\mms)$ a sequence approximating $\smash{\kkkkk}$ from below as provided by Lemma \ref{Le:kk approximation}, the above reasoning gives Borel subsets $\smash{\widetilde{E}_{s,t}^n}$ of $\PS{1}$ such that $\smash{\boldsymbol{\mu}_{x,y}\big[\widetilde{E}_{s,t}^n\big] = 0}$ and 
\begin{equation*}
\met\!\big(\gamma_t^1,\gamma_t^2\big) \leq \expo^{-\int_s^t \kkkkk_n\left(\gamma_r^1,\gamma_r^2\right)/2\d r} \met\!\big(\gamma_s^1,\gamma_s^2\big)
\end{equation*}
for every $\gamma\in \PS{1}\setminus \smash{\widetilde{E}_{s,t}^n}$. Putting
\begin{equation*}
E_{s,t} := \bigcup_{n=1}^\infty \widetilde{E}_{s,t}^n,
\end{equation*}
we see that $\boldsymbol{\mu}_{x,y}\big[E_{s,t}\big] = 0$ and that \eqref{Eq:s-t bound} holds for all $\gamma \in\PS{1}\setminus E_{s,t}$ by monotone convergence. 

A similar argument and arguing as for Lemma \ref{kernel} shows that we can then select the obtained measures in a universally measurable way.
\end{proof}

\subsection[Extension to arbitrary initial distributions and time interval {$[0,\infty)$}]{Extension to arbitrary initial distributions and time interval {\boldmath{$[0,\infty)$}}}\label{Sec:Sec 6.2}

The cases of arbitrary initial distributions $\mu\in\Prob(\mms\times\mms)$ and an infinite time horizon are immediate given the construction in the proof of Theorem \ref{Thm:Main construction}. By iteratively composing copies of $\boldsymbol{\mu}$ with $\boldsymbol{\mu}\circ\mu$, we obtain a measure $\boldsymbol{\rho}_\mu\in\Prob(\Cont([0,\infty);\mms\times\mms))$ such that $(\eval_0)_\push\boldsymbol{\rho}_\mu = \mu$. The pathwise coupling properties on each interval $[n-1,n]$, $n\in\N$, which are inherited by $\boldsymbol{\mu}$ carry over to the entire space. As a result, we get the following.

\begin{thm} For all $\mu\in\Prob(\mms\times\mms)$ with marginals $\mu_1,\mu_2\in\Prob(\mms)$, the measure $\boldsymbol{\rho}_\mu$ constructed above satisfies the following properties: both its marginals coincide with the law of Brownian motions  on $\mms$ starting in $\mu_1$ and $\mu_2$, respectively, and for $\boldsymbol{\rho}_\mu$-a.e.~$\gamma\in\Cont([0,\infty);\mms\times\mms)$, we have
\begin{equation*}
\met\!\big(\gamma_t^1,\gamma_t^2\big) \leq \expo^{-\int_s^t \kkkkk\left(\gamma_r^1,\gamma_r^2\right)/2\d r}\met\!\big(\gamma_s^1,\gamma_s^2\big)\quad\text{for every }s,t\in [0,\infty)\text{ with } s \leq t.
\end{equation*}
\end{thm}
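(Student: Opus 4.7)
The plan is to build $\boldsymbol{\rho}_\mu$ by iterated concatenation of the kernel $\boldsymbol{\mu}$ from Theorem \ref{Thm:Main construction}, and then transfer the pathwise contraction from each unit interval to all of $[0,\infty)$ by telescoping.

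First I would define a compatible family of finite-time measures. Set $\boldsymbol{\rho}_\mu^1 := \boldsymbol{\mu}\circ\mu\in\Prob(\PS{1})$ using the composition from Section \ref{Sec:4.1}; this is admissible since $\boldsymbol{\mu}\colon\mms\times\mms\to\Prob(\PS{1})$ is universally measurable with $(\eval_0)_\push\boldsymbol{\mu}_{x,y}=\delta_{(x,y)}$ for every $(x,y)$. Inductively, let $\boldsymbol{\rho}_\mu^{n+1}\in\Prob(\PS{n+1})$ be obtained by concatenating $\boldsymbol{\rho}_\mu^n$, at time $n$, with a fresh copy of $\boldsymbol{\mu}$ launched at the random endpoint. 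By construction, the projection of $\boldsymbol{\rho}_\mu^{n+1}$ onto the first $n$ time coordinates is $\boldsymbol{\rho}_\mu^n$, so Kolmogorov's extension theorem produces a measure $\boldsymbol{\rho}_\mu$ on $\Cont([0,\infty);\mms\times\mms)$ whose restriction to $[0,n]$ agrees with $\boldsymbol{\rho}_\mu^n$ for each $n$.

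Next I would verify the marginal structure. Theorem \ref{Thm:Main construction} says that for every $(x,y)$ the marginals of $\boldsymbol{\mu}_{x,y}$ are the laws of Brownian motions on $[0,1]$ started at $x$ and $y$. Thus the first marginal of $\boldsymbol{\rho}_\mu^1$ is the Brownian law with initial distribution $\mu_1$, restricted to $[0,1]$. By induction, together with the Markov property of Brownian motion and the fact that the concatenation restarts from the previous endpoint, the first marginal of $\boldsymbol{\rho}_\mu^n$ is Brownian motion on $[0,n]$ with initial distribution $\mu_1$. Passing to the projective limit yields the Brownian motion marginal on $[0,\infty)$, and symmetrically for the second component.

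Finally I would glue the contraction estimates. Applying Theorem \ref{Thm:Main construction} to each kernel used in the construction, for every $n\in\N$ there is a $\boldsymbol{\rho}_\mu$-null Borel set $E_n$ such that on its complement, for all $s,t\in[n-1,n]$ with $s\leq t$,
\begin{equation*}
\met\bigl(\gamma_t^1,\gamma_t^2\bigr)\le \expo^{-\int_s^t \kkkkk(\gamma_r^1,\gamma_r^2)/2\d r}\,\met\bigl(\gamma_s^1,\gamma_s^2\bigr).
\end{equation*}
Then $E:=\bigcup_{n}E_n$ is $\boldsymbol{\rho}_\mu$-null, and the desired estimate for arbitrary $s\leq t$ follows by telescoping: pick integers with $s\in[n-1,n]$ and $t\in[m-1,m]$ and chain the bound across $s\to n\to n+1\to\cdots\to m-1\to t$, noting that the exponential factors multiply while the $\kkkkk$-integrals add. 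The pathwise coupling property $\PCP(\k)$ then drops out because $\kk\leq\kkkkk$ pointwise, so the same coupling witnesses the weaker bound with $\kk$.

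The main obstacle is the bookkeeping in the marginal step: one must confirm that concatenating the $\boldsymbol{\mu}$-kernels does not alter the \emph{law} of each marginal, i.e.~that the induced transition from time $n$ to time $n+1$ coincides with the Brownian transition $\ChHeat_{1/2}$. This reduces to disintegrating $\boldsymbol{\rho}_\mu^{n+1}$ conditionally on the position at time $n$ and invoking the marginal identity from Theorem \ref{Thm:Main construction} together with Chapman--Kolmogorov; beyond this, no new analytical input is required.
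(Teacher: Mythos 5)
Your proposal follows the same construction as the paper: iterate the kernel $\boldsymbol{\mu}$ from Theorem~\ref{Thm:Main construction} to produce compatible measures $\boldsymbol{\rho}_\mu^n$ on $\PS{n}$, pass to the projective limit, and chain the per-interval contraction estimates by telescoping. Your account is more explicit than the paper's terse two-sentence justification, and the added detail is correct, but there is no new idea or genuinely different route; the paper's ``pathwise coupling properties on each interval $[n-1,n]$ \dots carry over to the entire space'' is exactly your telescoping step. Two small points worth keeping in mind when writing this up carefully: (a) the $\boldsymbol{\mu}_{x,y}$-null set of Theorem~\ref{Thm:Main construction} depends on $(x,y)$, so one should define $E_n$ directly as the Borel set of paths violating the estimate on $[n-1,n]$ and use Fubini (together with universal measurability of $\boldsymbol{\mu}$) to see that it is $\boldsymbol{\rho}_\mu$-null; and (b) the marginal argument works precisely because the first-component law of $\boldsymbol{\mu}_{x,y}$ depends on $x$ alone, so averaging over the second coordinate does not disturb the Chapman--Kolmogorov identity -- your phrasing buries this but it is the crux.
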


By considering the canonical process $\big(\B^1,\B^2\big)$ defined by $\B_t^1(\gamma) := \gamma_t^1$ and $\B_t^2(\gamma) := \gamma_t^2$ under the measure $\boldsymbol{\rho}_{\mu}$, we immediately obtain the assertion of Theorem \ref{pte to pcp}, which is just a stochastic rephrasing of the previous result.

\end{document}